\DeclareFontFamily{U}{rsfs}{} \DeclareFontShape{U}{rsfs}{n}{it}{<->
rsfs10}{} \DeclareSymbolFont{mscr}{U}{rsfs}{n}{it}
\DeclareSymbolFontAlphabet{\scr}{mscr}
\def\mathscr{\scr}
\begin{document}

\newcommand{\red}[1]{\textcolor{red}{#1}}
\def\e#1\e{\begin{equation}#1\end{equation}}
\def\ea#1\ea{\begin{align}#1\end{align}}
\def\eas#1\eas{\begin{align*}#1\end{align*}}
\def\eq#1{{\rm(\ref{#1})}}
\newenvironment{claim}[1]{\par\noindent\underline{Claim:}\space#1}{}
\newenvironment{claimproof}[1]{\par\noindent\underline{Proof:}\space#1}{\hfill $\blacksquare$}
\theoremstyle{plain}
\newtheorem{thm}{Theorem}[section]
\newtheorem{prop}[thm]{Proposition}
\newtheorem{lem}[thm]{Lemma}
\newtheorem{cor}[thm]{Corollary}
\newtheorem{quest}[thm]{Question}
\newtheorem{conj}[thm]{Conjecture}
\theoremstyle{definition}
\newtheorem{dfn}[thm]{Definition}
\newtheorem{ex}[thm]{Example}
\newtheorem{rem}[thm]{Remark}
\numberwithin{equation}{section}

\def\loc{{\mathop{\rm loc}\nolimits}}
\def\ope{{\mathop{\rm op}\nolimits}}

\def\cyl{{\mathop{\rm cyl}\nolimits}}
\def\con{{\mathop{\rm con}\nolimits}}

\def\cay{\mathop{\rm cay}\nolimits}
\def\Cay{\mathop{\rm Cay}\nolimits}
\def\ACay{\mathop{\rm ACay}\nolimits}

\def\ACyl{{\mathop{\rm ACyl}\nolimits}}
\def\AC{{\mathop{\rm AC}\nolimits}}
\def\CS{{\mathop{\rm CS}\nolimits}}
\def\supp{\mathop{\rm supp}\nolimits}
\def\dist{\mathop{\rm dist}\nolimits}
\def\sgn{\mathop{\rm sgn}\nolimits}
\def\dim{\mathop{\rm dim}\nolimits}
\def\Ker{\mathop{\rm Ker}}
\def\Coker{\mathop{\rm Coker}}
\def\Ho{{\mathop{\rm H}}}
\def\sign{\mathop{\rm sign}\nolimits}
\def\id{\mathop{\rm id}\nolimits}
\def\dvol{\mathop{\rm dvol}\nolimits}
\def\spn{\mathop{\rm span}\nolimits}
\def\SO{\mathop{\rm SO}\nolimits}
\def\Orth{\mathop{\rm O}\nolimits}
\def\Fr{\mathop{\rm Fr}\nolimits}
\def\Gr{\mathop{\rm Gr}\nolimits}
\def\cay{\mathop{\rm cay}\nolimits}
\def\inj{\mathop{\rm inj}\nolimits}
\def\SF{\mathop{\rm SF}\nolimits}
\def\Or{\mathop{\rm Or}\nolimits}
\def\ad{\mathop{\rm ad}\nolimits}
\def\Hom{\mathop{\rm Hom}\nolimits}
\def\Map{\mathop{\rm Map}\nolimits}
\def\Crit{\mathop{\rm Crit}\nolimits}
\def\ev{\mathop{\rm ev}\nolimits}
\def\Univ{\mathop{\rm Univ}\nolimits}
\def\Fix{\mathop{\rm Fix}\nolimits}
\def\Hol{\mathop{\rm Hol}\nolimits}
\def\Iso{\mathop{\rm Iso}\nolimits}
\def\Hess{\mathop{\rm Hess}\nolimits}
\def\Stab{\mathop{\rm Stab}\nolimits}
\def\Pd{\mathop{\rm Pd}\nolimits}
\def\Aut{\mathop{\rm Aut}\nolimits}
\def\Diff{\mathop{\rm Diff}\nolimits}
\def\boFlag{\mathop{\rm Flag}\nolimits}
\def\boFlagSt{\mathop{\rm FlagSt}\nolimits}
\def\dOrb{{\mathop{\bf dOrb}}}
\def\dMan{{\mathop{\bf dMan}}}
\def\mKur{{\mathop{\bf mKur}}}
\def\Kur{{\mathop{\bf Kur}}}
\def\Re{\mathop{\rm Re}}
\def\Im{\mathop{\rm Im}}
\def\re{\mathop{\rm re}}
\def\im{\mathop{\rm im}}
\def\SU{\mathop{\rm SU}}
\def\Sp{\mathop{\rm Sp}}
\def\Spin{\mathop{\rm Spin}}
\def\GL{\mathop{\rm GL}}
\def\ind{\mathop{\rm ind}}
\def\area{\mathop{\rm area}}
\def\U{{\rm U}}
\def\vol{\mathop{\rm vol}\nolimits}
\def\virt{{\rm virt}}
\def\emb{{\rm emb}}
\def\bs{\boldsymbol}
\def\ge{\geqslant}
\def\le{\leqslant\nobreak}
\def\O{{\mathbin{\mathcal O}}}
\def\cA{{\mathbin{\mathcal A}}}
\def\cB{{\mathbin{\mathcal B}}}
\def\cC{{\mathbin{\mathcal C}}}
\def\cD{{\mathbin{\scr D}}}
\def\cDHS{{\mathbin{\scr D}_{\Q HS}}}
\def\cE{{\mathbin{\mathcal E}}}
\def\boE{{\mathbin{\mathbf E}}}
\def\cF{{\mathbin{\mathcal F}}}
\def\boF{{\mathbin{\mathbf F}}}
\def\cG{{\mathbin{\mathcal G}}}
\def\cH{{\mathbin{\mathcal H}}}
\def\cI{{\mathbin{\mathcal I}}}
\def\cJ{{\mathbin{\mathcal J}}}
\def\cK{{\mathbin{\mathcal K}}}
\def\cL{{\mathbin{\mathcal L}}}
\def\cM{{\mathbin{\mathcal M}}}
\def\bcM{{\mathbin{\bs{\mathcal M}}}}
\def\cN{{\mathbin{\mathcal N}}}
\def\cO{{\mathbin{\mathcal O}}}
\def\cP{{\mathbin{\mathcal P}}}
\def\boR{{\mathbin{\mathbf R}}}
\def\cS{{\mathbin{\mathcal S}}}
\def\cT{{\mathbin{\mathcal T}}}
\def\cU{{\mathbin{\mathcal U}}}
\def\cQ{{\mathbin{\mathcal Q}}}
\def\boQ{{\mathbin{\mathbf Q}}}
\def\cW{{\mathbin{\mathcal W}}}
\def\C{{\mathbin{\mathbb C}}}
\def\bQ{{\mathbin{\mathbb Q}}}
\def\bV{{\mathbin{\mathbb V}}}
\def\bE{{\mathbin{\mathbb E}}}
\def\bD{{\mathbin{\mathbb D}}}
\def\boF{{\mathbin{\mathbf F}}}
\def\bF{{\mathbin{\mathbb F}}}
\def\H{{\mathbin{\mathbb H}}}
\def\N{{\mathbin{\mathbb N}}}
\def\Q{{\mathbin{\mathbb Q}}}
\def\R{{\mathbin{\mathbb R}}}
\def\bS{{\mathbin{\mathbb S}}}
\def\Z{{\mathbin{\mathbb Z}}}
\def\sF{{\mathbin{\mathscr F}}}
\def\al{\alpha}
\def\be{\beta}
\def\ga{\gamma}
\def\de{\delta}
\def\io{\iota}
\def\ep{\epsilon}
\def\eps{\epsilon}
\def\la{\lambda}
\def\ka{\kappa}
\def\th{\theta}
\def\ze{\zeta}
\def\up{\upsilon}
\def\vp{\varphi}
\def\si{\sigma}
\def\om{\omega}
\def\De{\de}
\def\La{\Lambda}
\def\Si{\Sigma}
\def\Th{\Theta}
\def\Om{\Omega}
\def\Ga{\Gamma}
\def\Up{\Upsilon}
\def\pd{\partial}
\def\ts{\textstyle}
\def\st{\scriptstyle}
\def\sst{\scriptscriptstyle}
\def\w{\wedge}
\def\sm{\setminus}
\def\bu{\bullet}
\def\op{\oplus}
\def\ot{\otimes}
\def\ov{\overline}
\def\ul{\underline}
\def\bigop{\bigoplus}
\def\bigot{\bigotimes}
\def\iy{\infty}
\def\es{\emptyset}
\def\ra{\rightarrow}
\def\Ra{\Rightarrow}
\def\Longra{\Longrightarrow}
\def\ab{\allowbreak}
\def\longra{\longrightarrow}
\def\hookra{\hookrightarrow}
\def\dashra{\dashrightarrow}
\def\t{\times}
\def\ci{\circ}
\def\ti{\tilde}
\def\d{{\rm d}}
\def\dt{{\rm dt}}
\def\D{{\rm D}}
\def\Lie{{\mathcal{L}}}
\def\ha{{\ts\frac{1}{2}}}
\def\md#1{\vert #1 \vert}
\def\bmd#1{\big\vert #1 \big\vert}
\def\ms#1{\vert #1 \vert^2}
\def\nm#1{\Vert #1 \Vert}
\title{Conically singular Cayley submanifolds I: Deformations }
\author{Gilles Englebert}
\date{\today}
\maketitle

\begin{abstract} 
This is the first in a series of three papers working towards constructing fibrations of compact $\Spin(7)$ manifolds by Cayley submanifolds. In this paper we describe the deformation theory of conically singular and asymptotically conical Cayley submanifolds.
\end{abstract}

\setcounter{tocdepth}{2}
\tableofcontents
\section{Introduction}
Riemannian $8$-manifolds with holonomy group $\Spin(7)$ first arose as objects of study after Berger proved the classification theorem for holonomy groups of simply-connected, irreducible and non-symmetric Riemannian manifolds  \cite[Thm. 3]{bergerGroupesHolonomieHomogenes1955}. In Berger's theorem, these manifolds appeared as an exceptional case associated to the octonions. However, he did not prove their existence. This question remained open until Bryant provided local examples in \cite{bryantMetricsExceptionalHolonomy1987} and complete but noncompact examples were later supplied by Bryant and Salamon in \cite{bryantConstructionCompleteMetrics1989}. The last major contribution to the existence problem came when Joyce constructed closed $\Spin(7)$-manifolds in \cite{joyceCompact8manifoldsHolonomy1996} by using gluing techniques. One reason for the interest in finding examples of $\Spin(7)$-manifolds is that they are necessarily Ricci-flat, which was first noted by Bonan in \cite{bonanVarietesRiemanniennesGroupe1966}. Since closed Ricci-flat manifolds are rare, the construction of compact $\Spin(7)$-manifolds is of great interest. Another reason is that a $\Spin(7)$-manifold is an example of a calibrated geometry as introduced by Harvey and Lawson in \cite{HarvLaws}. That is, it comes equipped with a closed $p$-form which satisfies a calibration inequality. The prototypical example is a Kähler manifold $M$ together with its Kähler $2$-form $\omega$. The so called $\omega$-calibrated submanifolds in $M$ are the complex curves. Similarly, every $\Spin(7)$-manifold admits a distinguished closed $4$-form called the Cayley form $\Phi$, and the $\Phi$-calibrated submanifolds are four-dimensional submanifolds called Cayley submanifolds, which in particular are minimal submanifolds. Finally, $\Spin(7)$-manifolds are interesting from the point of view of String theory, as they admit a constant spinor, meaning that they admit a supersymmetry. Similarly, Cayley cycles admit a supersymmetry as well, due to their calibration property.

Thus, the manifolds we will study in this paper are analogues of complex submanifolds in $\Spin(7)$-manifolds. We will however not restrict ourselves to the torsion-free setting (i.e. we may have $\d \Phi \ne 0$), but work with the more general class of almost $\Spin(7)$-manifolds and their Cayley submanifolds, which we will define precisely in the next section. The deformation theory of compact Cayleys was studied by McLean in the foundational paper \cite{mcleanDeformationsCalibratedSubmanifolds1998}. Recently, some progress has been made in the deformation theory of noncompact Cayley submanifolds. Ohst took on the case of asymptotically cylindrical Cayleys in \cite{ohstDeformationsCayleySubmanifolds2016}, that is Cayley submanifolds which have ends modeled on Riemannian cylinders at infinity. Moore described the case of conically singular Cayleys in a torsion-free $\Spin(7)$-manifold in \cite{mooreDeformationsConicallySingular2019}. Here, conically singular means that the Cayleys are compact topological manifolds, smooth away from finitely many singular points. Locally, around every singular point, they are modeled on Riemannian cones. In this paper we extend the work of Moore to be applicable to families of not necessarily torsion-free $\Spin(7)$-manifolds. In addition to this we describe the deformation theory of asympotically conical manifolds in $\R^8$, where the $\Spin(7)$-structure is allowed to be asympotically conical as well. Before we look at noncompact Cayleys we revisit the compact theory and extend it to submanifolds that are not Cayley themselves. This will allow for a more concise treatment of the family moduli spaces in the noncompact case.

The motivation for studying the deformation theory of conical Cayley submanifolds comes from the desire to construct fibrations of compact $\Spin(7)$-manifolds by Cayleys, in analogy to the programme by Kovalev in the $G_2$-case \cite{KovalevFibration}. The key idea is to construct a fibration on a $\Spin(7)$-manifold with small torsion from simpler pieces, and then perturb the $\Spin(7)$-structure to a torsion free one. The fibration will be perturbed with it and hopefully remain fibering after the perturbation. A topological argument shows that more generally a fibration of a compact manifold of holonomy $\Spin(7)$ by nonsingular Cayleys must necessarily include singular fibres. However there is hope that in some cases the singular fibres may admit only conical singularities. In such a case one can then understand the fibration near the singularities via a desingularisation procedure that we discuss in the second paper in this series. In the final paper we  investigate the question of stability of fibrations and construct examples.

\subsubsection*{Notation}
We will denote by $C$ an unspecified constant, which may refer to different constants within the same derivation. To indicate the dependence of this constant on other variables $x, y, \dots$, we will write it as $C(x, y, \dots)$. Similarly, if an inequality holds up to an unspecified constant, we will write $A \lesssim B$ instead of $A \le C B$.
We denote by $T^p_qM = (TM)^{\ot^p} \ot (T^*M)^{\ot^q} $ the bundle of $(p,q)$-tensors.

\subsubsection*{Acknowledgments}
This research has been supported by the Simons Collaboration on Special Holonomy in Geometry, Analysis, and Physics. I want to thank my DPhil supervisor Dominic Joyce for his excellent guidance during this project.

\section{Preliminaries}

We first recall basic results from the theory of $\Spin(7)$-manifolds and their Cayley submanifolds, as well as the Fredholm properties of elliptic operators on manifolds with ends.

\subsection{\texorpdfstring{Geometry of $\Spin(7)$-manifolds}{Geometry of Spin(7)-manifolds}}

The group $\Spin(7)$ is the double cover of $\SO(7)$, and thus a $21$-dimensional connected, simply-connected and compact Lie group. Its real spinor representation $\de_7 : \Spin(7) \ra \GL(8, \R)$ gives an embedding into $\SO(8)$, after choosing an invariant metric. Alternatively, this subgroup of $\SO(8)$ can be seen as the stabiliser of the \textbf{standard Cayley form} in $\R^8$. If $\R^8$ has coordinates $(x_1, \dots, x_8)$ then this form is given by: 
\ea
\label{2_1_cayley_form}
\Phi_0 = \d x_{1234} - \d x_{1256} - \d x_{1278} - \d x_{1357} + \d x_{1368} - \d x_{1458} - \d x_{1467} \nonumber \\
- \d x_{2358} - \d x_{2367} + \d x_{2457} - \d x_{2468} - \d x_{3456} - \d x_{3478} + \d x_{5678}, 
\ea 
where $\d x_{ijkl} = \d x_i \wedge \d x_j \wedge \d x_k \wedge \d x_l$.                                                                                                                                                                                                                                                                                                                                                                                                                                                                                                                                                                                                                                                                                                                                                                                                                                                                                                                                                                                                                                                                                                                                                                                                                                                                                                                                                                                                                                                                                                                                                                                                                                                                                                                                                                                                                                                                                                                                                                                                                                                                                                                                                                                                                                                                                                                           

More generally, we say that a 4-form  $\Phi$ on an $8$-dimensional vector space $V$ is a \textbf{Cayley form} if $V$ admits an isomorphism with $\R^8$ taking $\Phi$ to $\Phi_0$. We call the pair $(V, \Phi)$ a \textbf{$\Spin(7)$-vector space}. Any such form then determines a $\Spin(7)$-subgroup  $\Spin_\Phi(7) \subset \GL(V)$. Let $(V, \Phi)$ be a $\Spin(7)$-vector space. Then $\Phi$ induces a Riemannian metric $g_\Phi$ on $V$ obtained as the pullback of the standard metric $g_0 = \sum_{i = 1} ^8 \d x_i^2 $ via the isomorphism $V \simeq \R^8$. Note that the isomorphism is not unique, but since $\Spin(7) \subset \SO(8)$ the pullback metric will be independent of the choice of identification with $\R^8$. Pulling back the standard orientation on $\R^8$ orientation induces a well-defined orientation on $V$ in the same manner. Thus a Cayley form induces a metric and an orientation. In fact, the unoriented vector space $V$ admits two classes of Cayley forms,  determined by the orientation they induce. This is reflected in the fact that $\SO(8)$ admits exactly two conjugacy classes of $\Spin(7)$-subgroups, which are conjugated inside $\Orth(8)$ \cite[Thm. 1.3]{varadarajanSpinSubgroupsSpin2001}. Consequently, when we consider a vector space which already admits an orientation, we will only consider Cayley forms which induce the given orientation. If $V$ does not have an orientation, we allow the Cayley form to induce the orientation. In particular the Cayley form $\Phi$ then induces a Hodge star operator $\star: \Lambda^k V^* \ra \Lambda^{8-k}V^*$ and musical isomorphisms $\flat : V \ra V^*$ and $\sharp: V^* \ra V$. The Cayley form is self-dual with respect to the Hodge star it induces. Next, the action of $\Spin_\Phi(7)$ on $V$ induces representations on the tensor and exterior bundles, which decompose into irreducible representations of $\Spin_\Phi(7)$. We are mostly interested in the action on $2$-forms, which decomposes as follows:
\begin{prop}[{\cite[p. 546]{bryantMetricsExceptionalHolonomy1987}}]
\label{2.two_form_splitting}
There is an orthogonal splitting: 
\e 
\Lambda^2 V^* = \Lambda^2_7 V^* \op \Lambda^2_{21} V^*, 
\e
where $\Lambda^2_i$ is an $i$-dimensional irreducible representation. Explicitly they are given by: 
\ea
\Lambda^2_7 V^* &= \{ \omega \in \Lambda^2 V: \star(\Phi \wedge \omega) = 3\omega\}\nonumber \\ 
 &=  \{ u^\flat \wedge v^\flat - \iota(u)\iota(v)\Phi: u, v \in V \}, \label{2_2_7_def}\\
\Lambda^2_{21} V^* &= \{ \omega \in \Lambda^2 V: \star(\Phi \wedge \omega) = - \omega\}. \label{2_2_21_def}
\ea
\end{prop}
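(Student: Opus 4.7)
The plan is to exhibit a $\Spin_\Phi(7)$-equivariant endomorphism of $\Lambda^2 V^*$ whose eigenspaces realise the claimed splitting. Define
\[ T_\Phi : \Lambda^2 V^* \longrightarrow \Lambda^2 V^*, \qquad T_\Phi(\omega) = \star(\Phi \w \omega). \]
Since $\Phi$ and the Hodge star it induces are both $\Spin_\Phi(7)$-invariant, $T_\Phi$ commutes with the induced action, so its eigenspaces are automatically $\Spin_\Phi(7)$-invariant subspaces of $\Lambda^2 V^*$.

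By equivariance it suffices to compute $T_\Phi$ in the model $(\R^8, \Phi_0)$ on the basis $\{\d x_{ij}\}_{i<j}$ of $\Lambda^2 (\R^8)^*$. Using \eqref{2_1_cayley_form}, a direct calculation yields the polynomial identity $T_\Phi^2 - 2 T_\Phi - 3 \cdot \id = 0$; since $(x-3)(x+1)$ has distinct roots, $T_\Phi$ is diagonalisable with eigenvalues $3$ and $-1$, giving
\[ \Lambda^2 V^* = E_3 \op E_{-1}. \]
By the very definitions \eqref{2_2_7_def} and \eqref{2_2_21_def}, these eigenspaces are precisely $\Lambda^2_7$ and $\Lambda^2_{21}$. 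To fix the dimensions, I would identify $\Lambda^2 V^* \cong \mathfrak{so}(V, g_\Phi)$; under the inclusion $\Spin_\Phi(7) \subset \SO(V, g_\Phi)$ this splits as $\mathfrak{spin}_\Phi(7) \op \R^7$, since $\dim \Spin(7) = 21$ and the quotient $\SO(8)/\Spin(7)$ is the $7$-sphere. Comparing the two decompositions pins down $\dim E_3 = 7$ and $\dim E_{-1} = 21$. Irreducibility is then immediate: the $21$-dimensional summand is the adjoint representation of $\Spin(7)$, which is simple, while $\Spin(7)$ admits no nontrivial real representation of dimension strictly less than $7$.

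For the alternative description of $\Lambda^2_7$ in \eqref{2_2_7_def}, consider the $\Spin_\Phi(7)$-equivariant linear map
\[ \mu : \Lambda^2 V \longrightarrow \Lambda^2 V^*, \qquad \mu(u \w v) = u^\flat \w v^\flat - \iota(u)\iota(v)\Phi. \]
Its image is a $\Spin_\Phi(7)$-invariant subspace of $\Lambda^2 V^*$. Evaluating $\mu(e_1 \w e_2) = \d x_{12} - \iota(e_1)\iota(e_2)\Phi_0$ from \eqref{2_1_cayley_form} and applying $T_{\Phi_0}$ shows that this is a nonzero element of $E_3$, so $\im(\mu) \subseteq \Lambda^2_7$ by equivariance. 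Irreducibility of $\Lambda^2_7$ then forces $\im(\mu) = \Lambda^2_7$.

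The main obstacle is purely computational: verifying $T_\Phi^2 = 2T_\Phi + 3 \cdot \id$ and the membership of $\mu(e_1 \w e_2)$ in $E_3$ each require tedious sign bookkeeping against the $14$ monomials comprising $\Phi_0$. Once these are dispatched, the rest follows from equivariance and elementary representation theory of $\Spin(7)$.
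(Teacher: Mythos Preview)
The paper gives no proof of this proposition; it is stated with a citation to Bryant and used thereafter as a black box. Your argument is sound and self-contained, so there is no proof in the paper to compare against.

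One small remark on the logical flow: the sentence ``comparing the two decompositions pins down $\dim E_3 = 7$'' is not quite automatic. From the two splittings you only get $\{E_3, E_{-1}\} = \{\mathfrak{spin}_\Phi(7), \R^7\}$ as an unordered pair of subspaces; something must break the symmetry. The quickest fix is to observe that $\mathrm{tr}\,T_\Phi = 0$, which is immediate from your basis computation (each $\d x_{ij}$ is sent to a combination of $\d x_{kl}$ with $\{k,l\}$ disjoint from $\{i,j\}$). Then $3\dim E_3 - \dim E_{-1} = 0$ together with $\dim E_3 + \dim E_{-1} = 28$ forces $\dim E_3 = 7$. Alternatively, your later verification that $\mu(e_1\wedge e_2)$ lies in $E_3$ already does the job once you note that $\im\mu$ is a nonzero invariant subspace contained in a single eigenspace.
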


Using the Cayley form we now define various product structures on a $\Spin(7)$-vector space $(V, \Phi)$. First we define the \textbf{cross product} as the bilinear map $V \times V \ra \Lambda^2_7 V^*$:
\ea
\label{2_1_cross_prod}
    u \times v =  \pi_7(u^\flat \wedge v^\flat) := \frac{1}{4}\left(u^\flat \wedge v^\flat - \iota(u)\iota(v)\Phi\right),
\ea

Here $\pi_7(\omega) = \frac{1}{4}(\omega -\star (\omega \wedge \Phi))$ for $\omega \in \Lambda^2 V^*$ is the orthogonal projection onto the $\Lambda^2_7$-summand. The \textbf{triple product} is a trilinear map $V \times V \times V \ra V$ defined by: 
\ea
\label{2_1_trip_prod}
    u \times v \times  w =  \left(\iota(u)\iota(v)\iota(w)\Phi\right)^\sharp,
\ea

Finally, the \textbf{quadruple product} is a $\Lambda^2_7 V^*$-valued four-form:
\ea
\label{2_1_quad_prod}
    \tau(u,v, w,x) =  u \times (v& \times w \times x) - g_\phi(u,v)(w \times x) \nonumber \\
    -& g_\phi(u,w)(u \times x) +g_\phi(u,x)(v \times w). 
\ea
On $(\R^8, \Phi_0)$, this form has the following coordinate expression: 
\ea
\label{2_1_tau_coord}
\tau =& \frac{1}{4}\sum_{1 \le i < j \le 8} (e^j\wedge (\iota(e_i)\Phi)-e^i\wedge (\iota(e_j)\Phi))\ot (e^i \times e^j) \nonumber \\ 
=& (\d x_{1358}+\d x_{1367}-\d x_{1457}+\d x_{1468} \nonumber \\
&-\d x_{2357}+\d x_{2368}-\d x_{2458}-\d x_{2467}) \ot (e_1 \times e_2) \nonumber \\
&+(-\d x_{1258}-\d x_{1267}+\d x_{1456}+\d x_{1478}\nonumber \\
&+\d x_{2356}+\d x_{2378}-\d x_{3458}-\d x_{3467}) \ot (e_1 \times e_3) \nonumber\\
&+(\d x_{1257}-\d x_{1268}-\d x_{1356}-\d x_{1378}\nonumber \\
&+\d x_{2456}+\d x_{2478}+\d x_{3457}-\d x_{3468}) \ot (e_1 \times e_4) \nonumber \\
&+(\d x_{1238}-\d x_{1247}+\d x_{1346}-\d x_{1678}\nonumber \\
&-\d x_{2345}+\d x_{2578}-\d x_{3568}+\d x_{4567}) \ot (e_1 \times e_5) \nonumber \\ 
&+(\d x_{1237}+\d x_{1248}-\d x_{1345}+\d x_{1578}\nonumber \\
&-\d x_{2346}+\d x_{2678}-\d x_{3568}-\d x_{4568}) \ot (e_1 \times e_6) \nonumber \\ 
&+(-\d x_{1236}+\d x_{1245}+\d x_{1348}-\d x_{1568}\nonumber \\
&-\d x_{2347}+\d x_{2567}+\d x_{3678}-\d x_{1568}) \ot (e_1 \times e_7) \nonumber \\ 
&+(-\d x_{1235}-\d x_{1246}-\d x_{1347}+\d x_{1568}\nonumber \\
&-\d x_{2348}+\d x_{2568}+\d x_{3578}+\d x_{4678}) \ot (e_1 \times e_8). 
\ea

We now introduce $\Spin(7)$-manifolds by applying these linear algebraic constructions to the tangent bundle of smooth $8$-manifolds. To be precise, we take a \textbf{$\Spin(7)$-manifold} to be a smooth $8$-dimensional manifold $M$ together with a choice of $\Spin(7)$-structure, i.e. a choice of $\Spin(7)$-subbundle $\Fr_{\Spin(7)}$ of the frame bundle $\Fr(M)$. This data is equivalent to the choice of a smooth differential 4-form $\Phi$ on $M$ which is a Cayley form at every point. In other words, $\Phi$ is a smooth section of a bundle $\mathscr{A}(M)$ whose fiber over the point $x$ is the set of all Cayley forms of $T_xM$. With a choice of $\Spin(7)$-structure $T_xM$ is a $\Spin(7)$-vector space at every point $x\in M$, which gives $M$ the structure of an oriented Riemannian manifold. If we already assume an orientation on $M$, we require the $\Spin(7)$-structure to be compatible pointwise. The pair $(M, \Phi)$ will also be called a $\Spin(7)$-structure. 
\begin{ex}
\label{2_1_ex_other_spin}
There are examples of $\Spin(7)$ manifolds which come from dimensional reductions.
\begin{itemize}
\item $G_2$ \textbf{geometry}: We can write the Cayley form in \eqref{2_1_cayley_form} as $\Phi_0 = \d x_1 \wedge \phi_0 + \star_7 \phi_0$ for a three form $\phi_0\in \La^3\R^7$, and where $\star_7$ is the Hodge star on 	$\{0\}\times \R^7$. Define the group $G_2$ to be the stabiliser of the form	$\phi_0$ under the action of $\SO(7)$, which is a $14$-dimensional subgroup of 	$\Spin(7)$. A  $G_2	$-manifold is a smooth seven-manifold $M$ together with a three-form $\phi$ that is point-wise isomorphic to $\phi_0$. If we are given a $G_2$-manifold, then it induces a $\Spin(7)$-structure on $\R \times M$ with Cayley form $\Phi = \d t \wedge \phi + \star_M \phi.$ 
\item \textbf{Calabi-Yau geometry}: Let $(M^{n},g,\omega, J)$ be a Kähler manifold of complex dimension $n$. If in addition the bundle of $(n, 0)$-forms admits a nowhere vanishing holomorphic section $\Omega$ we call it an \textbf{ almost Calabi-Yau} manifold. Such a manifold is modelled at each point on  $(\C^n, g_0,\omega_0, J_0, \Omega_0)$, where $g_0$ and $J_0$ are the standard Riemannian metric and complex structure respectively and:
\eas
\omega_0 &= \sum_{i=1}^n \d z_i\wedge \d \bar{z}_i, \\
\Omega_0 &= \d z_1\wedge \dots \wedge \d z_n.
\eas
In the complex four-dimensional case, i.e. on  $\C^4$. we have $\Phi_0 = \Re \Omega_0 + \ha \omega_0 \wedge \omega_0$. Thus in particular any almost Calabi-Yau fourfold (CY4) is also a $\Spin(7)$ manifold. 
\end{itemize}
\end{ex}

Let $(M, \Phi)$ be a $\Spin(7)$-manifold with $\Spin(7)$-bundle $\Fr_{\Spin(7)}$. Then the tensor and exterior bundles of $M$ are associated to $\Fr_{\Spin(7)}$ via representations induced from the embedding $\Spin(7) \subset \SO(8)$. Thus the fibres of these bundles can be seen as representations of $\Spin(7)$, and as such decompose into bundles of irreducible representations. For two-forms, Proposition \ref{2.two_form_splitting} implies that there is an orthogonal splitting:  
\e 
\Lambda^2 T^*M = \Lambda^2_{21}\op \Lambda^2_7,
\e
where the fibres of $\Lambda^2_{21}$ and $\Lambda^2_7$ are given by \eq{2_2_21_def} and \eq{2_2_7_def} respectively. On a $\Spin(7)$-manifold $(M, \Phi)$ we can define the cross, triple and quadruple product of tangent vectors using the differential form $\Phi$, and these extend to bundle homomorphisms.

\subsection{Geometry of Cayley submanifolds}

Let $(V, \Phi)$ be a $\Spin(7)$-vector space. A fundamental property of the Cayley form $\Phi$ is that when restricted to any four-plane $\xi = \spn\{e_1, e_2, e_3, e_4\} $ with $\{e_1, e_2, e_3, e_4\}$ a positively oriented, $g_\Phi$-orthonormal basis, the \textbf{Cayley inequality} holds \cite[Th. 1.24, Ch. IV]{HarvLaws}): 
\e
\label{2_2_cay}
\Phi(e_1, e_2, e_3, e_4) \le 1.
\e
This means that $\Phi|_\xi = \alpha \cdot \dvol_\xi$ with $\alpha \le 1$, where $\dvol_\xi$ is the four-dimensional volume form induced by $g_\Phi$ on $\xi$. The oriented four-planes for which $\alpha$ attains its maximal value $1$ are the \textbf{Cayley planes}. They are said to be \textbf{calibrated} by $\Phi$. Note that if $\xi$ is Cayley, its orthogonal complement will be Cayley as well. If $u, v, w \in V$ are three independent vectors, then there is a unique Cayley plane which contains them, namely $\xi = \spn \{u, v, w, u\times v \times w \}$. Moreover, a four-plane is Cayley exactly when the quadruple product $\tau$ vanishes on it. 

Given a Cayley plane $\xi$ in a $\Spin(7)$-vector space $(V, \Phi)$, the cross product on $V$ decomposes with regards to the splitting $V = \xi \op \xi^\perp$, which we will now explain. Define:

\e
\label{2_2_E_bundle}
E_\xi = \{ \omega \in \Lambda^2_7V^*: \omega|_\xi = 0\},
\e
which is a rank four subspace of $\Lambda^2_7V^*$ (with an orthonormal basis given by $\pi_7(\d x_1 \wedge \d x_i)$ for $i \in \{5, 6, 7, 8\}$). Also note that any $\omega \in \Lambda^2_-\xi$ can be extended by $0$ to a two-form on $V$, and their projections under $\pi_7$ form the rank three subspace of $\Lambda^2_7V^*$ that we will also denote by $\Lambda^2_-\xi$. It has an orthonormal basis given by $\pi_7(\d x_1 \wedge \d x_i)$ for $i \in \{2, 3, 4\}$. Denote the orthogonal projection map to $E_\xi$ by $\pi_E: \La^2_7 \longra E_\xi$. From the above we see that there is an orthogonal splitting: $\Lambda^2_7 V^* = E_\xi \op \Lambda^2_-\xi$. The cross product then restricts as follows: 
\ea
\xi \times \xi &\longra \Lambda^2_- \xi, \nonumber \\
\xi^\perp \times \xi^\perp &\longra \Lambda^2_- \xi, \nonumber \\
\xi \times \xi^\perp &\longra E_\xi.
\ea
Let now $(M, \Phi)$ be an almost $\Spin(7)$-manifold. We call a four-dimensional submanifold $N \subset M$ all of whose tangent planes are Cayley planes a \textbf{Cayley submanifold}. In this situation, the cross product splits into: 
\ea
TN \times TN &\longra \Lambda^2_- TN,  \nonumber\\
\nu(N) \times \nu(N) &\longra \Lambda^2_- TN,  \nonumber\\
TN \times \nu(N) &\longra E.
\ea
Note that we can carry out the same construction whenever we are given a rank $4$ subbundle of $TM|_N$ whose fibres are Cayley planes, irrespective of whether $N$ is Cayley. 
\begin{ex}
\label{2_2_ex_cplx_sl}
The $\Spin(7)$-manifolds coming from reductions of the structure group to $G_2$ and $\SU(4)$ (see Example \ref{2_1_ex_other_spin}) admit their own classes of calibrated submanifolds, which give examples of Cayley submanifolds.
\begin{itemize}

\item $G_2$ \textbf{geometry}: The three-form $\phi_0$ and the four-form $\star_7\phi$ satisfy a calibration inequality which is analogous to the Cayley inequality \eqref{2_2_21_def}. The calibrated hyperplanes are called \textbf{associative} $3$-planes and \textbf{coassociative} $4$-planes respectively. If we have an associative submanifold $A^3$ in $(N^7,\phi)$, then $\R \times A$ will be Cayley in the $\Spin(7)$-manifold $\R\times N$ with the Cayley form $\Phi = \d t \wedge \phi + \star_7 \phi$. Similarly, if $C^4$ is coassociative in $(N^7,\phi)$, then $\{t\} \times C$ will be Cayley in $\R\times N$.

\item \textbf{Calabi-Yau geometry}: The almost Calabi-Yau fourfold $(M^4,g, \om, J, \Om)$ admits two kinds of calibrated four-dimensional manifolds. First we have the \textbf{complex surfaces}, which are calibrated by $\ha \om \wedge \om$. Second we have the \textbf{special Lagrangian manifolds},  calibrated by $\Re \Om$. As the Cayley form on $M$ is $\Phi = \ha \om \wedge \om + \Re \Om$, which is the sum of both the previous calibrations, both complex surfaces and special Lagrangian submanifolds will be Cayley in the induced $\Spin(7)$ manifold. 
\end{itemize}

\end{ex}
\subsection*{The Dirac bundle associated to a Cayley}

We will see later that the linearised deformation operator associated to a spin Cayley is a twisted Dirac operator. On a non-spin Cayley the situation is more complicated, as neither the spinor bundles nor the bundle by which they are twisted are well-defined on their own, however one can still make sense of their product, in the form of a \textbf{Dirac bundle} (for a precise definition, see \cite[Ch. II.5]{lawsonSpinGeometryPMS382016}). The Cayley deformation operator will then linearise to the Dirac operator associated to this Dirac bundle, which we will define for any Cayley submanifold $N$ (be it spin or not) in a $\Spin(7)$-manifold $(M, \Phi)$. We have previously introduced the $\Spin(7)$-frame bundle associated to $\Phi$, which can be described as: 
\e
 (\Fr_{\Spin(7)})_x = \{ e: T_xM \xrightarrow{\simeq} \R^8 : e^*(\Phi_0) = \Phi_x\}.
\e
Using the splitting $TM|_N = TN \op \nu(N)$, where in the Cayley case both summands are bundles of Cayley planes, we can define the \textbf{adapted $\Spin(7)$-frame bundle}  $\Fr_{\Spin(7), N} \subset \Fr_{\Spin(7)}|_N$  as: 
\ea
	(\Fr_{\Spin(7), N})_x = \{e: T_xM \xrightarrow{\simeq} \R^8 :  e^*(\Phi_0)& = \Phi_x, e(T_xN) = \R^4 \times 0,  \nonumber \\
	e(\nu(N)) & = 0 \times \R^4 \}.
\ea

The structure group of this bundle is isomorphic to the stabiliser of a given Cayley plane (since it  automatically preserves the orthogonal complement).  It is given by 
\[
H = (\Sp(1) \times \Sp(1) \times \Sp(1))/ (\pm (1, 1, 1)),
\]
as shown in \cite[Thm. IV.1.8]{HarvLaws}. Here $H \subset \Spin(7) \subset \SO(8)$ via the following action on $\R^8 \simeq \H \op \H$. For $[p, q, r] \in H$ and $(u, v) \in \H \op \H$ we have: 
\e
 \label{2_2_H_action}
 [p_1, p_2, q] \cdot (u, v) = (p_1 u \bar{q}, p_2 v \bar{q}).
\e

Using the embedding $H \subset \SO(8)$, a number of bundles over $N$ can be represented as associated bundles to $\Fr_{\Spin(7), N}$. Here $u, v \in \H$ and $w \in \im \H$.
\begin{itemize}
\item $TN$ is associated via $\rho_{TN}( [p_1,p_2,q])\cdot u = (p_1 u \bar{q})$, since the projection $[p_1,p_2, q] \mapsto [p_1, q]$ maps $H$ surjectively onto $\SO(\R^4 \times 0)$.
\item $\nu(N)$ is associated via $\rho_{\nu(N)}([p_1,p_2,q])\cdot u = (p_2 u \bar{q})$, as $H$ also surjects onto $\SO(0 \times \R^4)$

\item If $N$ is spin, then the adapted $\Spin(7)$-frame bundle admits a double cover by a $G =  \Sp\left(1\right)^3$-bundle, which we will denote by $\widetilde{\Fr}_{\Spin(7), N} $. This can be seen as follows: as $N$ is spin, we can lift a co-cycle for the tangent bundle to $\Spin(4) \simeq \Sp(1)^2$. Similarly, since $M$ admits a spin structure induced by the $\Spin(7)$ structure (as $\Spin(7)$ is a simply connected subgroup of $\SO(8)$), the normal bundle $\nu(N)$ will also be canonically spin \cite[Prop. II.1.15]{lawsonSpinGeometryPMS382016}, thus a describing co-cycle can be lifted to $\Spin(4)$ as well. Using these two lifts one can then write down a lift to $G$ for a co-cycle of $ \Fr_{\Spin(7), N}$. The tangent and normal bundle will then be associated to this double cover via the lift of the representations $\rho_{TN}$ and $\rho_{\nu(N)}$ respectively. Furthermore, the spinor bundles of $N$ are associated bundles to this double cover as follows:
\e
\slashed{S}_\pm = \widetilde{\Fr}_{\Spin(7), N} \times _{\de_\pm} \H, \nonumber
\e 
where $\de_+ \op \de_- $ acts on $\H\op \H$ via $(p_1,p_2,q)(u,v) = (u\bar{p_1}, v \bar{q})$. Similarly the spinor bundles of $\nu(N)$ are associated via the representation $(p_1,p_2,q)(u,v) = (u\bar{p_2}, v \bar{q})$.

\item The irreducible representation $\La^2_7$ of $\Spin(7)$ restricted to $H$ can be described as follows. Let $\R^7 \simeq \im \H \op \H$ via the obvious isomorphism. Then we have the following (see \cite{mcleanDeformationsCalibratedSubmanifolds1998}): 
\e 
 \rho_7([p_1,p_2,q]) (w, u) = (\bar{q} w q, p_2 u \bar{p_1}). \nonumber
\e 

It turns out that in this splitting, the bundle associated via $[p_1,p_2,q]w =\bar{q} w q $ is exactly the bundle $\La^2_- N$ of anti-self-dual two-forms, and the bundle associated via $[p_1,p_2,q]w =p_2 u \bar{p_1}$ is $E$ .
\end{itemize}

From this discussion, we see that the suggestively named bundle $\slashed{S} := E \op \nu(N)$ arises from the representation:
\e
 \rho: [p_1,p_2, q] \cdot (u,v) = (p_2 u \bar{p_1}, p_2 v \bar{q}).
\e

The notation is explained as follows. If $N$ is spin, then consider the quaternionic line bundle $L$ associated to $\widetilde{\Fr}_{\Spin(7), N} $ via the representation $\rho_L: (p_1,p_2,q)u = p_2 u$. We then see from the representations, that as quaternionic bundles, $E \simeq \slashed{S}_+ \ot_\H L$ and similarly $\nu(N) \simeq \slashed{S}_- \ot_\H L$, which allows to represent the bundle $E \op \nu(N)$ as a twisted spinor bundle, if $N$ is spin.

 To complete the construction of the Dirac bundle, we need to define a Clifford multiplication and a compatible metric and connection. This is done in the following proposition: 

\begin{prop} [Dirac bundle]
\label{2_2_dirac_bundle}
There is a Clifford multiplication map $c: TN \times \slashed{S} \ra \slashed{S}$, a metric $h$ and a connection $\nabla$ on $\slashed{S}$ such that $(\slashed{S}, c, h, \nabla)$ is a Dirac bundle. In an adapted $\Spin(7)$-frame $\{e_i\}_{i=1, \dots, 8}$ the negative Dirac operator acts on $v \in C^\infty(\nu(N))$ as: 
\e
\label{2_2_dirac_op}
\slashed{D} v = \sum_{i = 1}^4 e_i \times \nabla^\perp_{e_i} v \in C^\infty (E),
\e
where $\nabla^\perp$ is induced from the Levi-Civita connection  on $M$.
\end{prop}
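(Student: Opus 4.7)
The plan is to build the Dirac bundle structure $H$-equivariantly on the model fibre $\H \op \H$ and transfer it to $\slashed{S} = E \op \nu(N)$ via its description as an associated bundle to $\Fr_{\Spin(7), N}$. Since $TN$, $\nu(N)$ and $E$ are all associated bundles by explicit $H$-representations on $\H \op \H$, the Clifford multiplication, metric, and compatibility axioms reduce to pointwise algebraic identities in the cross product and the splitting $\La^2_7 = E \op \La^2_- TN$ recorded just before the proposition.

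First I would define the Clifford multiplication $c: TN \ot \slashed{S} \ra \slashed{S}$ by the following recipe. For $e \in TN$ and $v \in \nu(N) \subset \slashed{S}$ set $c(e) v := e \times v$, which lies in $E \subset \slashed{S}$ by the splitting of the cross product; for $w \in E$ set $c(e) w \in \nu(N)$ to be the unique element satisfying $\langle c(e) w, v\rangle = -\langle w, e \times v\rangle$ for every $v \in \nu(N)$. Take $h$ to be the restriction of the $\Spin(7)$-metric to each summand of $\slashed{S}$, so that $c(e)$ is skew-adjoint by construction (possibly up to a universal scaling fixed by the normalisation of the cross product in \eqref{2_1_cross_prod}). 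Verifying the Clifford relation $c(e)^2 = -|e|^2 \id$ reduces, on the $\nu(N)$ summand, to a norm identity of the form $|e \times v|^2 = c_0 |e|^2 |v|^2$ for orthogonal $e \in TN$, $v \in \nu(N)$, which is a direct computation using \eqref{2_1_cross_prod} on a standard Cayley pair, followed by polarisation; on the $E$ summand it then follows automatically from skew-adjointness.

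For the connection $\nabla$ on $\slashed{S}$, I would take the orthogonal projection onto $E \op \nu(N)$ of the Levi-Civita connection $\nabla^M$ acting on $TM|_N \op \La^2_7 T^*M|_N$. On the $\nu(N)$ summand this coincides with the normal connection $\nabla^\perp$; on the $E$ summand it is the projection of the connection induced on $\La^2_7$. Metric compatibility is immediate from the fact that $\nabla^M$ is a metric connection and the projections are orthogonal. For the compatibility $\nabla c = 0$ one uses $\Spin(7)$-invariance of the cross product: in the torsion-free case this follows from $\nabla^M \Phi = 0$ together with $H$-equivariance, while in the almost $\Spin(7)$ setting the defect $\nabla^M \times$ is a tensor built from $\nabla^M \Phi$ which can be absorbed into a compensating endomorphism-valued one-form modifying $\nabla$, leaving the Dirac bundle axioms intact.

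Finally, the formula \eqref{2_2_dirac_op} is immediate from the definitions: in an adapted frame $\{e_i\}_{i=1}^8$ with $e_1, \dots, e_4$ spanning $T_xN$, the negative Dirac operator is $\slashed{D} = \sum_{i=1}^4 c(e_i) \nabla_{e_i}$, and for $v \in C^\infty(\nu(N))$ we have $c(e_i) v = e_i \times v \in E$ and $\nabla_{e_i} v = \nabla^\perp_{e_i} v$, yielding the stated expression. The main obstacle is verifying $\nabla c = 0$ in the almost $\Spin(7)$ setting, where the cross product is not parallel under $\nabla^M$; this requires either absorbing a torsion-dependent one-form into $\nabla$ or, as is standard in the almost calibrated setting, accepting a zeroth-order correction to $\slashed{D}$ that does not affect its principal symbol, which is all that is needed for the subsequent elliptic and index-theoretic arguments.
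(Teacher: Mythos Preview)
Your construction of the Clifford multiplication and metric is essentially the same as the paper's: both use $H$-equivariance on the model fibre $\H\op\H$, identify $c(e)v = e\times v$ on the $\nu(N)$ summand, and take the metric induced from $g_\Phi$. The paper writes the Clifford action explicitly as $c(h)(v_1,v_2)=(v_2\bar h,-v_1 h)$ on $\H\op\H$ and checks it commutes with $\rho_{TN}$ and $\rho_E\op\rho_{\nu(N)}$, whereas you define the $E$-component via skew-adjointness; these agree.

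The genuine difference is in the connection. You take the orthogonal projection of the Levi--Civita connection on both summands and then have to confront $\nabla c=0$ in the almost $\Spin(7)$ setting, where the cross product is not $\nabla^M$-parallel. Your proposed remedies (absorb a torsion-dependent endomorphism-valued one-form, or accept a zeroth-order correction to $\slashed D$) are somewhat vague, and the second does not actually yield a Dirac bundle in the strict Lawson--Michelsohn sense. The paper sidesteps this entirely: it keeps $\nabla^\perp$ on $\nu(N)$ but \emph{defines} the connection on $E$ to be the unique one for which $c(e_1)v$ is parallel along a curve whenever $v\in C^\infty(\nu(N))$ is. With this choice the Leibniz axiom $\nabla c=0$ holds by construction, independently of whether $\Phi$ has torsion, and nothing needs to be absorbed or corrected. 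Your first fix is in spirit the same manoeuvre, but the paper's formulation is cleaner and makes the proposition hold on the nose rather than up to lower-order terms.
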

\begin{proof}
Consider the Clifford multiplication on the Clifford algebra $Cl(\H) \simeq \H \op \H$ (here $\H$ is equipped with the standard metric), whose action by vectors in $\H$ can be given as: 
\ea
\label{2_3_clifford_mult}
c: \H \times (\H \op \H)& \longra \H \op \H \\ \nonumber
(h, (v_1, v_2)) &\longmapsto (v_2 \bar{h}, -v_1h).
\ea
This action commutes with the representation determining $\slashed{S}$, and $TN$, in the sense that: 
\e
	 c \circ (\rho_{TN}, \rho_{E}\op \rho_{\nu(N)}) = (\rho_{E}\op \rho_{\nu(N)}) \circ c \nonumber.
\e
Thus we can extend $c$ to a map $c: TN \times \slashed{S} \ra \slashed{S}$ as required. For an adapted $\Spin(7)$-frame $\{e_i\}_{i=1, \dots, 8}$, we identify $(1,0)$, $(i, 0)$, $(j, 0)$ and $(k, 0)$ with the basis elements $e_1 \times e_i$ for ($5 \le i \le 8$) of $E$ and we identify $(0,1)$,  $(0, i)$, $(0, j)$ and $(0, k)$ with the basis elements $e_i (5 \le i \le 8)$ of $\nu(N)$. Using this identification we see that the Clifford multiplication $c : TN \times \nu(N) \ra E$ is exactly given by the cross-product. Since the $e_i$ are orthonormal with respect to the metric $g_\Phi$, as are $e_1 \times e_i$ for ($5 \le i \le 8$) with respect to the metric $g_E$ induced from $g_\Phi$ on the bundle of forms, we see that $c(v)$ is an isometry of $(\slashed{S}, h = g_\Phi \op  g_E)$, whenever $v$ is a unit vector. Finally, we choose as our connection $\nabla$ to be the given on $\nu(N)$ as the connection $\nabla^\perp$. On $E$ we choose the unique connection such that $c(e_1)v$ is a parallel section (along a curve), whenever $v$ is a parallel section along a curve in $\nu(N)$. From these definitions it follows readily that $(\slashed{S}, c, h, \nabla)$ is a Dirac bundle. The Dirac operator restricted to $\nu(N)$ is then of the required form.
\end{proof}
\begin{ex}
When $C$ is a Cayley cone in $\R^8$, with link $L = C \cap S^7$, then the Dirac operator $\slashed{D}$ can be rewritten as an evolution equation of vector fields $u\in \nu_{S^7}(L)$. For this, we introduce the nearly parallel $G_2$-structures on round spheres. Let $\partial_r$ be the outward radial unit vector field on $\R^8\setminus 0$. Then at the point $(r, p) \in \R_+ \times S^7 \simeq \R^8\setminus 0$ we have: 
\eas
\Phi_{r,p} = \d r \wedge (\phi^r)_p + (\star_{rS^7}\phi^r)_p.
\eas
Here $\phi^r$ is the associative form corresponding to the nearly parallel $G_2$-structure on the round sphere of radius $r$. Let $\{e_1, e_2, e_3\}$ be an orthonormal frame around a point $p \in L$ with dual coframe $\{e^1, e^2, e^3\}$. Then we can rewrite the Dirac operator \eqref{2_2_dirac_op} as follows for $v \in C^\infty(\nu(C))$: 
\eas
\slashed{D}v &= \partial_r \times \nabla^\perp_{\partial_r} v + \sum_{i = 1}^3 e_i \times \nabla^\perp_{e_i} v|_{rL} \\
&= \d r \wedge (\nabla^\perp_{\partial_r} v)^\flat - \iota (\nabla^\perp_{\partial_r} v)\phi^{r} + \sum_{i = 1}^3 e^i \wedge (\nabla^\perp_{e_i} v|_{rL})^\flat - \iota(e_i) \iota (\nabla^\perp_{e_i} v)\star_{rS^7}\phi^{r}\\
&= A_p(\nabla^\perp_{\partial_r} v) + B_r (v|_{rL}).
\eas
Here $A_p: \nu_{r, p}(C) \longrightarrow E_{r, p}$ is a linear map that is independent of the radius, and
\e
B_r: C^\infty(\nu_{rS^7}(rL)) \longra C^\infty(E|_{rL})
\e 
are a family of  first order partial differential operators on the links $rL$. We can furthermore identify $E|_{rL} \simeq\nu_{rS^7}(rL)$ via the map $\omega \longmapsto (\iota(\partial_r)\omega )^\sharp$, and identify sections $C^\infty( \nu_{S^7}(L)) \simeq C^\infty( \nu_{rS^7}(rL))$ via rescaling, at which point the operator has the following shape:
\ea
\label{2_2_dirac_op_cone}
\slashed{D}: C^\infty(\R_+, C^\infty( \nu_{S^7}(L))) &\longra C^\infty(\R_+, C^\infty( \nu_{S^7}(L))) \nonumber \\
 v &\longmapsto \frac{\d}{\d r} v + D_L v(r). 
\ea
Here:
\ea
D_L: C^\infty( \nu_{S^7}(L)) &\longra C^\infty( \nu_{S^7}(L))\nonumber \\
u &\longmapsto B_1 (u) = \sum_{i = 1}^3 e_i \times \nabla^\perp_{e_i} u,  
\ea
where $\times$ is the vector product associated with the associative manifold $L \subset (S^7, \phi^1)$. It is determined by the identity $g(u \times v, w) = \phi^1 (u, v, w)$.

\end{ex}
\begin{ex}
We noted in Example \ref{2_2_ex_cplx_sl} that complex surfaces $N$ in a CY4 manifold $M$ are examples of Cayley submanifolds. In this case the linearised Cayley deformation operator is a twisted Dirac operator on a Kähler surface, and thus necessarily of the form $\bar{\partial}+\bar{\partial}^*$ with twisted coefficients \cite{morganSeibergWittenEquationsApplications1996}. It has been computed in \cite{mooreDeformationTheoryCayley2017} and can be identified with:
\e
\bar{\partial}+\bar{\partial}^* : C^\infty (\nu^{1,0}(N)\op \La^{0,2}N \ot\nu^{1,0}(N)) \longra C^\infty(\La^{0,1}N \ot \nu^{1,0}(N)).
\e
For any complex surface, the kernel and cokernel of this operator are the complexifications of the kernel and cokernel respectively of $\slashed{D}$. Thus the real expected dimension of the Cayley moduli space is equal to the complex index of $\bar{\partial}+\bar{\partial}^*$. 
This can be compared to the linearised deformation operator of complex surfaces which is just $\bar{\partial}\op\bar{\partial}^*$. Hence being Cayley is a more general condition than being complex.
\end{ex}

We also noted in Example \ref{2_2_ex_cplx_sl} that special Lagrangians in CY4 manifolds are  examples of Cayley submanifolds. McLean (\cite{mcleanDeformationsCalibratedSubmanifolds1998}) showed that the infinitesimal deformations of a special Lagrangian $N \subset M$ are given by the kernel of the operator
\e
-\d\star\op \d : \Omega^1(N) \longra \Omega^n(N)\op \Omega^2(N),
\e 
which are the closed and co-closed forms. The Cayley deformation operator of a special Lagrangian is formed by a subset of these equations, reflecting the fact that the Cayley condition is a priori less restrictive than the special Lagrangian condition.

\begin{prop}
\label{2_2_cayley_sl}
Let $N$ be a special Lagrangian submanifold in the almost Calabi-Yau manifold $(M, g, \om, J, \Om)$.  Then the infinitesimal Cayley deformation operator can be identified with:
\e
-\d\star\op \d^- : \Omega^1(N) \longra \Omega^4(N)\op \Omega^{2,-}(N).
\e
Here $\Omega^{2,-}(N)$ is the bundle of self-dual two forms on $(M,g)$, and $\d^- = \pi^- \circ \d$, where $\pi^- (\eta) = \ha(\eta -\star_N \eta) $ is the projection onto the anti-self-dual forms.
\end{prop}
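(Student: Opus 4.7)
The plan is to identify the bundles appearing in the Dirac operator of Proposition \ref{2_2_dirac_bundle} with bundles of forms on $N$, and then translate the Dirac operator into an operator in the exterior calculus on $N$. Two bundle isomorphisms are needed. First, since $N$ is Lagrangian, the almost complex structure $J$ maps $TN$ isomorphically onto $\nu(N)$; combined with the musical isomorphism this yields
\[
\iota_1 : \Omega^1(N) \xrightarrow{\ \simeq\ } C^\infty(\nu(N)), \qquad \alpha \longmapsto J(\alpha^\sharp).
\]
Second, since $\Phi = \tfrac12\omega\w\omega + \Re\Om$ and $N$ is special Lagrangian, both $\omega|_{TN}$ and $\Im\Om|_{TN}$ vanish identically. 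Working pointwise in the standard $\SU(4)$-model $\R^4\subset\C^4$, one checks that the rank-$4$ subbundle $E|_N\subset\Lambda^2_7 T^*M|_N$ of forms vanishing on $TN$ has a canonical orthogonal splitting into a line spanned by $\omega|_{\nu(N)}$ and a rank-$3$ summand, and that the pullback-along-$J$ identification of these with forms on $N$ gives an isomorphism
\[
\iota_2 : C^\infty(E|_N) \xrightarrow{\ \simeq\ } \Omega^{2,-}(N) \op \Omega^4(N).
\]

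With these identifications in place, I would next express the cross product $TN\times\nu(N)\to E$ explicitly. For $u \in TN$ and $v = J\be^\sharp \in \nu(N)$, the formula \eqref{2_1_cross_prod} gives $u\times v = \tfrac14\bigl(u^\flat\w v^\flat - \iota(u)\iota(v)\Phi\bigr)$; using $\omega|_{TN}=\Im\Om|_{TN}=0$ together with the explicit form of $\Phi$ one finds that under $\iota_2$ this cross product becomes the sum of the anti-self-dual part of $u^\flat\w\be$ and a multiple of $\langle u,\be^\sharp\rangle\,\dvol_N$. Then, using that for a special Lagrangian the Levi-Civita connection is compatible with $J$ up to torsion terms (in the almost Calabi-Yau case) that do not enter the principal symbol of the linearised operator, one has $\nabla^\perp_{e_i}(J\al^\sharp) = J(\nabla_{e_i}\al^\sharp)$ pointwise in a suitable frame, so that
\[
\slashed{D}(J\al^\sharp) \;=\; \sum_{i=1}^4 e_i \times J(\nabla_{e_i}\al^\sharp)
\]
unpacks under $\iota_2$ into $d^-\al$ in the $\Omega^{2,-}(N)$ component and $-d{\star}\al$ in the $\Omega^4(N)$ component.

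The hard part will be the bookkeeping in Step 2, specifically showing that the rank-$3$ factor of $E$ is identified with the \emph{anti}-self-dual two-forms on $N$ (with the orientation inherited from $\Phi$) rather than with the self-dual ones, and verifying the normalisations so that the line component of $\slashed{D}$ lands on $-d{\star}\al$ with the correct sign. This reduces to an explicit pointwise computation on $(\R^8,\Phi_0)$ with the SL model $\R^4\times\{0\}$, and the answer can be cross-checked against McLean's full special Lagrangian deformation operator $-d{\star}\op d$ by observing that the Cayley operator must be a quotient of the SL operator obtained by projecting $\Omega^2(N) = \Omega^{2,+}(N)\op\Omega^{2,-}(N)$ onto $\Omega^{2,-}(N)$, consistent with the fact that being Cayley is less restrictive than being special Lagrangian.
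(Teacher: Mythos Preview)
Your proposal is correct and follows essentially the same route as the paper: both define the isomorphism $\Omega^1(N)\to C^\infty(\nu(N))$ via $\alpha\mapsto J\alpha^\sharp$, both identify $E$ with $\Lambda^4\oplus\Lambda^2_-$ by pulling back along $\id\oplus J$, and both then compute $\slashed D$ through the cross product. The paper carries out the last step by an explicit coordinate computation of $v\times w$ in a local parallel frame $\{e_i,f_i=Je_i\}$, whereas you phrase it more conceptually via $\nabla^\perp_{e_i}(J\alpha^\sharp)=J(\nabla_{e_i}\alpha^\sharp)$; note that in this paper ``almost Calabi--Yau'' is K\"ahler, so $\nabla J=0$ and your hedging about torsion terms is unnecessary---the identification is exact, not merely at the symbol level. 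One small correction: strictly $\nabla^\perp_{e_i}(J\alpha^\sharp)=J(\nabla^N_{e_i}\alpha^\sharp)$, since the second-fundamental-form part of $\nabla_{e_i}\alpha^\sharp$ is normal and gets mapped by $J$ back into $TN$ where $\pi_{\nu(N)}$ kills it; this does not affect your conclusion, as $d$ and $d{\star}$ are built from $\nabla^N$ anyway.
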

\begin{proof}
First, we show that there are canonical isomorphisms $m: TN^*  \simeq  \nu(N)$ and $n: E \simeq \La^4 \op \La^2_-$. We can take $m(\si) = J \si^\sharp $ to be the composition of the musical isomorphism $\sharp: TN^*\longra TN$ and $J$. Note that $J$ maps the tangent bundle of any Lagrangian to its normal bundle as $g(v, Jw) = \om(v, w) = 0$ for any pair of vectors $v,w \in T_pN$ by the Lagrangian condition. As for the morphism $n$, we can pull back forms on $T_pM$ via the map $\id \op J: TN \longra TN \op \nu(N)$, which when restricted to $E$ gives an injection into the anti-self-dual forms on $TN$. The kernel of this map is spanned by $v^\flat \wedge (Jv)^\flat$, and the projection onto these forms gives the $\La^4$ summand. More concretely, recall that $E_p$ is spanned by $e_1 \times Je_i$, where $\{e_i\}_{1 \le i \le 4}$ is an orthonormal basis of $T_pN$. The morphism $n$ then sends $e_i \times Je_i$ to the $\La^4$ summand, and identifies $e_i \times e_j$ for $i\ne j$ with the anti-self-dual form $\al_{ij} = \d x_{ij} -\d x_{kl}$, where the $\d x_i$ are dual to $e_i$ and $(i,j,k,l)$ is a positive permutation of $(1,2,3,4)$. Let now $f_i = Je_i \in \nu(N)$ complete the $e_i$ to a frame of $T_pM$, and suppose that $\d y_i$ are the corresponding dual $1$-forms. A computation shows that the vector product of $v = \sum_{i=1} a_i e_i \in T_pN$ and $w = \sum_{i=1} b_i f_i \in \nu(N)$ is given by:
\eas
v \times w &= \sum_{i=1}^4 a_i b_i \d x _i \wedge \d y_i \\
&+ \sum_{\si (i,j,k,l) =1} \frac{a_i b_j}{4}\underbrace{(\d x _i \wedge \d y_j-\d x _j \wedge \d y_i-\d x _k \wedge \d y_l+\d x _l \wedge \d y_k)}_{\be_{ij}}.
\eas
Here $\si(i,j,k,l)=1$ means that $(i,j,k,l)$ is a positive permutation of $(1,2,3,4)$. Note that $k,l$ are uniquely determined by $i$ and $j$. We now look at $n \circ \slashed{D} \circ m$ , where $\slashed{D}$ is the Dirac operator from equation \eqref{2_2_dirac_op}. For a one form $\eta = \sum_{i=1}a_ie_i \in \Om^1(N)$, where we extended the basis $\{e_1, e_2, e_3, e_4, f_1, f_2, f_3, f_4\}$ to a local parallel frame, we have:
\eas
\slashed{D}  (m(\eta )) &= \sum_{i,j=1}^4 e_i \times \nabla_{e_i} ^\perp(a_j f_j)\\
&= \sum_{i,j=1}^4 e_i \times \frac{\partial a_j}{\partial x_i} f_i\\
&= \sum_{i=1}^4\frac{\partial a_i}{\partial x_i} \d x_i \wedge \d y_i +\ha\sum_{i\ne j} \frac{\partial a_i}{\partial x_j} \be_{ij}.
\eas
As $n$ maps $\d x_i \wedge \d y_i$ to $\dvol \in \La^4$, and $\be_{ij}$ to $\al_{ij}$, we see that  
\eas
n\circ \slashed{D}  (m(\eta )) &=\sum_{i=1}^4\frac{\partial a_i}{\partial x_i}\dvol + \sum_{i< j}\left ( \frac{\partial a_i}{\partial x_j}-\frac{\partial a_j}{\partial x_i}\right)\al_{ij}\\
&= \d\star \eta +\ha\sum_{i< j}(\d \eta)_{ij}\al_{ij} = \d\star \eta +\pi^- \d \eta.
\eas

\end{proof}

\subsection{Analysis on manifolds with ends}

Below we investigate the deformation theory of Cayley submanifolds which admit conical ends. In this section we lay the groundwork for the analysis on Riemannian manifolds of this kind. The Fredholm properties of elliptic operators on compact manifolds can be extended to these special classes of noncompact manifolds by adapting the results by Lockhart and McOwen \cite{lockhartEllipticDifferentialOperators1985}.

\subsubsection{Manifolds with ends}

\begin{dfn}
A Riemannian $n$-manifold $(M, g)$ is \textbf{asymptotically conical} with rate $\eta < 1 $ ($\AC_\eta$) if there is a compact set $K \subset M$, a compact $(n-1)$-dimensional Riemannian manifold $(L,h)$ and a diffeomorphism $\Psi:  (r_0, \infty) \times L \ra  M\setminus K$ (for some $r_0 > 0$), such that for $i \in \N$:
\e
\md{\nabla^i(\Psi^*(g)-g_{\con})} = O(r^{\lambda -1- i}) \text{ as } r \ra \infty,\label{2_3_ac_conv}
\e
where $g_{\con} = \d r^2 +r^2h$ is the conical metric on $(r_0, \infty) \times L$, and $\nabla$, $\md{\cdot}$ are taken with respect to the conical metric. 
\end{dfn}

\begin{dfn}
Suppose that $(\R^8, \Phi)$ is an $\AC_\eta$ manifold for some $\eta < 1$ with asymptotic cone $\R^8$. Let  $A^m \subset \R^8$ be a smooth submanifold which has link $L^{m-1} \subset L_M$. Then $A$ is an $\AC_\la$ \textbf{submanifold} of $M$ ($\eta < \la < 1$), asymptotic to the cone $C = \R_+ \times L $ if there is a compact subset $K \subset A$ and a diffeomorphism $\Theta: (r_0, \infty) \times L \ra A\sm K$ such that if $\iota(r, p) = r\cdot p$ is the embedding of the cone $C \hookra \R^8$, then for every $i \in N$: 
\ea
&\io(r, p)-\Psi_M^{-1}\circ \Th(r, p)\in \nu_{(r, p)}(C) \\
&\md{\nabla^i(\Psi_M^{-1}\circ \Th(r,p)-\io(r,p))} \in O(r^{\la-i}), \text { as } r \ra \infty. \label{2_3_extrinsic_ac}
\ea
Here the norm is computed with respect to the conical metric on $(r_0, \infty) \times L$ coming from the embedding $\iota$, and the $\nabla^i$ are the higher covariant derivatives coming from the conical metric on $C$ coupled to the flat connection on $C \times \R^8$ given by the Levi-Civita connection on $\R^8$.
\end{dfn}

\begin{dfn}
Let $(M, \Phi)$ be an almost $\Spin(7)$-manifold and consider a point $p \in M$. We say that a parametrisation $\chi : B_\eta(0) \ra U$ of an open neighbourhood $U$ of $p$ is a \textbf{$\Spin(7)$-parametrisation} around $p$ if $\chi(0) = p$ and $\D \chi|_0^*\Phi_p = \Phi_0$, where $\Phi_0$ is the standard Cayley form on $\R^8$. We say that two $\Spin(7)$-parametrisations around $p$ are \textbf{equivalent} if their derivatives agree at $p$.
\end{dfn} 

\begin{dfn}
Let $N^n \subset (M, g)$ be a closed subset, and suppose that there are $z_1, \dots, z_l \in N$ such that $\hat{N} = S \setminus \{z_1, \dots, z_l\}$ is a smooth, embedded submanifold of $M$. For any $1 \le j \le l$ let $\chi_j$ be a $\Spin(7)$-coordinate system around $z_j$  and let $L_j \subset S^{7}$ be a connected $(n-1)$-dimensional Riemannian submanifold of the round sphere in $\R^8$. Then $N$ is an $\CS_{\bar{\mu}}$ \textbf{submanifold} of $(M,g)$ ($\bar{\mu}  = (\mu_1, \dots, \mu_l), 1 < \mu_j < 2$), asymptotic to the cones $C_j = \R_+ \cdot L_j \subset \R^8$ ($1 \le j \le l$) if the following holds. There is a compact subset $K \subset N$ such that $N = K \sqcup \bigsqcup_{j=1}^s U_j$ with $z_j \in U_j$ open, and  diffeomorphisms $\Psi_j = \chi_j \circ \Theta_j: (0, \R_0) \times L_j \ra U_j\setminus \{z_j\}$ for $1 \le j \le l$ such that if $\iota_j(p, r) = r\cdot p$ is the embedding of the cone $C_j$, then we have for every $i \in \N$:
\ea
&\io_j(r, p)-\Th_j(r, p)\in \nu_{(r, p)}(C_j) \nonumber \\
&\md{\nabla^i(\Th_j(r,p)-\io_j(r,p))} \in O(r^{\mu_j-i}), \text { as } r \ra 0. \label{2_3_extrinsic_cs}
\ea
Here the norm is computed with respect to the conical metric on $(0, R_0) \times L_j$ coming from the embedding $\iota_j$, and the $\nabla^i$ are the higher covariant derivatives coming from the conical metric on $C_j$ together with the flat connection on $ C_j\times \R^8$ given by the usual derivative on $\R^8$. 
\end{dfn}

\begin{rem}
 If we require that an embedded $\CS_{\bar{\mu}}$ submanifold is $\CS_{\bar{\mu}}$ with regards to any choice of $\Spin(7)$-parametrisations in the equivalence classes of $\chi_j$, we must restrict to $\mu_j < 2$. This is because the equivalence class of $\chi_j$ only determines it up to first order at the origin. The condition $\mu_j > 1$ ensures that the asymptotic cone is unique.
\end{rem}

We next define a generalisation of the radial coordinate on cones.
 
\begin{dfn}
Let $(M,\Phi)$ be $\Spin(7)$-manifold, and consider an embedded $\CS_{\bar{\mu}}$ submanifold $N \subset M$. A smooth function $\rho: M \ra [0,R_0]$ (with $R_0>0$) is a \textbf {radius function} for $N$ if near a singular point $z \in N$ is is given by the distance to $z$. Similarly, if $A \subset \R^8$ is an asymptotically conical submanifold, we say that the radial coordinate $r$ on $\R^8$ is a radius function for $A$.
\end{dfn}

\subsubsection*{Tubular neighbourhoods}

\label{2_3_tubular}
We introduce tubular neighbourhoods of the noncompact $\CS_{\bar{\mu}}$ and $\AC_\la$ manifolds, which shrink or grow like the asymptotic cones. This is a straightforward extension of the cited proposition.

\begin{prop}[{\cite[Prop. 3.4]{mooreDeformationsConicallySingular2019}}]
\label{2_3_CS_AC_tubular}
Let $C$ be either an $\AC_\la$ submanifold of $(\R^8, \Phi)$ or a $\CS_{\bar{\mu}}$ submanifold of $(M, \Phi)$. Suppose that $\rho: C \longra \R$ is a radius function. Let $\ep > 0$. Define the open subset $\nu_\ep(C) \subset\nu(C)$ as: 
\e
 \nu_\ep(C) = \{(p,v)\in \nu(C): \md{v} \le \ep\rho(p)\}.
\e
Then for sufficiently small $\ep > 0$ there is an open neighbourhood $N \subset U$ such that: 
\e
	\exp: \nu_\ep(C) \longrightarrow U \nonumber
\e
is a diffeomorphism.
\end{prop}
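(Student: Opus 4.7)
The plan is to combine the standard tubular neighbourhood theorem on a compact piece of $C$ with a scaling argument on the asymptotic (or singular) ends, using the conical model to which $C$ converges.

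First I would treat the exact cone $C_0 = \R_+\cdot L$ with its standard conical metric, sitting inside $(\R^8,\Phi_0)$. The dilation $\de_R(r,p)=(Rr,p)$ scales the conical metric by $R^2$ and commutes with the flat exponential map of $\R^8$, so if I apply the standard tubular neighbourhood theorem on the compact annular region $\{1\le r\le 2\}$ to find some $\ep_0>0$ for which $\exp$ is a diffeomorphism there on $\{\md{v}\le \ep_0\}$, rescaling by $\de_R$ for all $R>0$ propagates this to the full cone and gives the statement for $C_0$ with tubular radius $\ep_0 r$.

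Next I would address the $\AC_\la$ case. The bound \eqref{2_3_extrinsic_ac}, combined with the $\AC_\eta$ decay of $\Phi$, ensures that pulling back by $\de_R$ and rescaling lengths by $R^{-1}$ produces a family of embedded submanifolds with ambient metrics which converge smoothly on the annulus $\{1\le r\le 2\}$ to the pair $(C_0\subset\R^8,\Phi_0)$ as $R\to\infty$. Since being an injective immersion onto an open set is an open condition in the $C^1$-topology, the tubular neighbourhood property at the cone transfers, for all $R$ large enough, to the rescaled submanifold at scale $R$; undoing the rescaling yields uniform $\ep_1\in(0,\ep_0]$ and $R_1>0$ such that $\exp$ is a diffeomorphism on $\{(p,v)\in\nu(A):r(p)\ge R_1,\ \md{v}\le \ep_1 r(p)\}$. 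On the remaining compact piece $\{r\le R_1\}$ the standard tubular neighbourhood theorem gives some $\de>0$, and since $\rho$ is bounded below there, shrinking $\ep$ so that $\ep\rho\le\de$ handles this piece. The $\CS_{\bar\mu}$ case is entirely analogous, working near each singular point $z_j$ via the $\Spin(7)$-parametrisation $\chi_j$ with $R\to 0$ in place of $R\to\infty$; the fact that $\chi_j^*\Phi$ is smooth with $\chi_j^*\Phi(0)=\Phi_0$ plays the role of the $\AC_\eta$ decay.

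The main obstacle will be the convergence step in the rescaling argument: making it precise that, after applying $\de_R$ and dividing the ambient metric by $R^2$, the submanifold, its induced and normal-bundle metrics, and the ambient $\Spin(7)$-structure all converge in $C^\infty_\loc$ on a fixed annulus to the conical model, uniformly enough that a single $\ep_1$ can be chosen for all sufficiently large (respectively, small) $R$. This reduces to careful book-keeping with the covariant derivative bounds in \eqref{2_3_extrinsic_ac} and \eqref{2_3_extrinsic_cs}, but the joint dependence of $\exp$ on the normal bundle, induced metric, and ambient metric requires keeping track of which geometric data are being perturbed. Once this is in place, global injectivity of $\exp$ on $\nu_\ep(C)$ follows from a standard covering argument splitting into the compact middle region and the rescaled annular pieces, and the whole proof should be only a cosmetic extension of the cited Moore reference.
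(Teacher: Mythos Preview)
The paper does not give its own proof of this proposition: it is stated with a citation to \cite[Prop.~3.4]{mooreDeformationsConicallySingular2019} and accompanied only by the remark ``This is a straightforward extension of the cited proposition.'' Your sketch is a correct and standard way to establish the result, and is presumably close to what Moore does; there is nothing to compare against in the present paper.
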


We note that in both cases the tubular neighbourhood scales like the radius function $\rho$ as one approaches the singular points in a $\CS$ manifold, or infinity in the $\AC$ case.

\subsubsection{Banach spaces}

We now introduce the weighted Banach spaces that appear in the deformation theory of manifolds with ends. 

\subsubsection*{Sobolev spaces}

Let  $(M,g)$ be an asymptotically conical or conically singular $n$-manifold with radius function $\rho$, with $(E, h)$ a metric real vector bundle over $M$, which admits a connection $\nabla^E$. Then the $L^p_{k, \de}$ weighted Sobolev norm is defined to be: 
\e
\nm{s}_{p, k, \de} = \left(\sum_{i = 0}^k \int_{M} \md{(\nabla^E)^i s\rho^{-\de+i}}_h^p \rho^{-n} \d \mu\right)^\frac{1}{p},
\e
and the \textbf{weighted Sobolev space} $L^p_{k,\de}(E)$ is defined to be the completion of the compactly supported sections under this norm. The sections in these spaces should be thought of as $L^p_{k, \loc}$ sections that have decay in $o(r^\delta)$. Naturally one can extend this definition to include different weights on multiple singularities. For a vector of weights $\bar{\de} \in \R^l$ these spaces will be denoted $L^p_{k, \bar{\de}}(E)$. In the above definition $\de$ must be replaced by a smooth function $w : M \longra \R$ which interpolates between the different weights. One can verify that the choice of $w$ does not impact the Banach space structure. If $E$ is a bundle of tensors, these spaces correspond to the spaces $W^p_{k, -\de, -\frac{n}{p}}$ ($\AC$ case) and $W^p_{k, \de, -\frac{n}{p}}$ ($\CS$ case) of \cite[Ch.4]{lockhartFredholmHodgeLiouville1987}, so we are able to translate their results into our setting. First of all, we have a Sobolev embedding theorem for the weighted spaces, which is an adaptation of Theorem 4.8 in \cite{lockhartFredholmHodgeLiouville1987}.

\begin{thm}
\label{2_3_Sobolev_embedding_acyl_ac}
Let $(M, g)$ be an $\CS/\AC$ manifold. Denote by $L^p_{k, \de}(E)$ the corresponding weighted Sobolev space. Suppose that the following hold: 
\begin{itemize}
\item[i)] $k - \bar{k} \ge n\left(\frac{1}{p}-\frac{1}{\bar{p}}\right)$ and either:
\item[ii)] $ 1 < p \le \bar{p} < \infty$ and $\bar{\de} \ge \de$ ($\AC$) or $\bar{\de} \le \de$ ($\CS$)
\item[ii')]$ 1 <  \bar{p} < p < \infty$ and $\bar{\de} > \de$ ($\AC$) or $\bar{\de} < \de$ ($\CS$)
\end{itemize}
Then there is a continuous embedding: 
\e
L^p_{k, \de}(E) \longra L^{\bar{p}}_{\bar{k}, \bar{\de}}(E).
\e

\end{thm}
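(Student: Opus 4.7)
The plan is to reduce the weighted estimate to the standard unweighted Sobolev embedding on a fixed model annulus, via a dyadic decomposition of the end that is adapted to the radius function $\rho$. Fix a dyadic scale and, in the $\AC$ case, let $A_j = \{p : 2^{j-1} \le \rho(p) \le 2^{j+1}\}$ for $j \ge j_0$; in the $\CS$ case let $A_j = \{p : 2^{-j-1} \le \rho(p) \le 2^{-j+1}\}$. By the $\AC_\eta$ (resp.\ $\CS_{\bar{\mu}}$) assumption together with Proposition \ref{2_3_CS_AC_tubular}, after rescaling the metric by $2^{-2j}$ (resp.\ $2^{2j}$), each annulus $A_j$ becomes diffeomorphic to a fixed annulus $\tilde A$ in the asymptotic cone, on which the rescaled metrics converge smoothly to the conical metric $g_{\con}$. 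Consequently there is a \emph{uniform} (in $j$) equivalence of metrics, hence of Sobolev norms, between the rescaled annulus and $(\tilde A, g_{\con})$. Cover the compact core of $M$ by finitely many coordinate patches, on which the statement reduces to the classical compact Sobolev embedding.

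On the rescaled annulus $\tilde A$ the classical Sobolev embedding gives, under hypothesis (i), a bound $\|s\|_{L^{\bar p}_{\bar k}(\tilde A)} \le C \|s\|_{L^p_k(\tilde A)}$ with $C$ independent of $j$. Undoing the rescaling and tracking the powers of $\rho$ in the weighted norm (the factor $\rho^{-n}$ in the measure is exactly the scale-invariant one, so only the weight $\rho^{-\delta+i}$ and the number of derivatives contribute extra powers of $2^j$), one obtains in the $\AC$ case
\e
\|s\|_{\bar p,\bar k,\bar\delta; A_j} \;\le\; C \, 2^{j(\bar\delta-\delta)} \, \|s\|_{p,k,\delta; A_j'},
\e
where $A_j' \supset A_j$ is a slight enlargement used to absorb derivatives on cut-offs; the $\CS$ case is identical with $2^{-j(\bar\delta-\delta)}$. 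Finally we sum over $j$. When $p \le \bar p$, the embedding $\ell^p \hookrightarrow \ell^{\bar p}$ of sequence spaces combined with the non-strict condition $\bar\delta \ge \delta$ (resp.\ $\bar\delta \le \delta$) yields the desired global bound. When $\bar p < p$, one instead applies Hölder's inequality to redistribute the weight across the annular sum, producing a geometric series whose ratio is $2^{(\bar\delta - \delta)s}$ for some $s > 0$; this sum converges precisely by virtue of the \emph{strict} inequality in (ii').

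The main obstacle is the case (ii') of decreasing integrability $\bar p < p$, where summability fails under a non-strict weight inequality and must be recovered from the geometric decay provided by Hölder. A secondary subtlety is that one must verify the uniform constant $C$ in the local Sobolev embedding really is $j$-independent; this is where the $\AC$ and $\CS$ asymptotic conditions \eqref{2_3_ac_conv}, \eqref{2_3_extrinsic_ac}, \eqref{2_3_extrinsic_cs} are essential, since they force the rescaled metrics to lie in a precompact family converging to the fixed conical model, and a standard compactness argument on the link $L$ then provides a single $C$ that works for all sufficiently large $j$. Once this machinery is in place, the statement follows directly.
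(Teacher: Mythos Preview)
The paper does not actually prove this theorem: it is stated as ``an adaptation of Theorem 4.8 in \cite{lockhartFredholmHodgeLiouville1987}'' and left at that. Your dyadic-annulus argument is precisely the standard method by which such weighted embeddings are established (and is essentially what one finds in Lockhart's paper), so in that sense your proposal is both correct in strategy and \emph{more} than what the paper itself supplies.

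One genuine slip: your annular estimate has the sign in the exponent reversed. On $A_j$ where $\rho \sim 2^j$ (AC case), the scale-invariant measure $\rho^{-n}\,d\mu$ means that $\|s\|_{q,0,\gamma;A_j} \approx 2^{-j\gamma}\|s\|_{L^q(\tilde A,\tilde g)}$, so after applying the unweighted Sobolev inequality on $\tilde A$ and converting back one obtains
\[
\|s\|_{\bar p,\bar k,\bar\delta;A_j}\;\le\;C\,2^{\,j(\delta-\bar\delta)}\,\|s\|_{p,k,\delta;A_j'},
\]
not $2^{\,j(\bar\delta-\delta)}$. With the correct sign, the factor is bounded (resp.\ geometrically decaying) under the non-strict (resp.\ strict) hypothesis $\bar\delta\ge\delta$, and your $\ell^p\hookrightarrow\ell^{\bar p}$ and H\"older arguments then go through exactly as you describe. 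As written, your displayed factor would blow up under condition (ii), so this is worth correcting even though the underlying idea is right.
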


\subsubsection*{Hölder spaces}

Let $(M, g)$ be a Riemannian manifold, and consider the induced geodesic distance function $d: M \times M \rightarrow \R$ on $M$. 

\begin{dfn}[Spaces of differentiable sections]
Let $E$ be e a metric vector bundle with a connection $\nabla$. For a section $s \in C^k(E)$ we define the $C^k$\textbf{-norm} as: 
\e
\nm{s}_{C^k} = \sum_{i=0}^k\sup_{p\in M}\md{\nabla^i s}(p).
\e
If $(M, g)$ is a conical manifold with a given radius function $\rho$, we also consider the $C^k$\textbf{-norm with weight} $\de\in\R$ instead: 
\e
\nm{s}_{C^k_\de} = \sum_{i=0}^k\sup_{p\in M}\md{\rho^{i-\de}\nabla^i s}(p).
\e
Denote the set of $C^k_{\loc}$-sections with finite $C^k_\de$-norm by $C^k_\de(E)$ and set: 
\e
C^\infty_\de (E) = \bigcap_{i=0}^\infty C^i_\de (E).
\e
Then $C^k_\de(E)$ are Banach spaces and $C^\infty_\de (E)$ is a Fréchet space. If multiple conical ends are present, the spaces $C^k_{\bar{\de}}(E)$ and $C^\infty_{\bar{\de}}(E)$ are defined in the obvious way.
\end{dfn}

For any point $p \in M$ there is an open neighbourhood $p \in U_p \subset M$ such that for any $q \in U_p$, there is a unique shortest geodesic of length $d(p,q)$ joining $p$ and $q$. In particular there is an open neighbourhood $V \subset M \times M$ of the diagonal such that for $(p,q) \in V$ we have $q \in U_p$. Let now $E$ be a metric vector bundle together with a connection. For $(p,q)\in V$ we identify the fibres $E_p$ and $E_q$ via parallel transport along the unique shortest geodesic connecting $p$ and $q$.

\begin{dfn}[Hölder spaces]
For a section $s \in C^k(E)$ and a constant $0<\al\le 1$ we define the $C^{0,\alpha}$\textbf{-semi-norm} as: 
\e
[s]_{\al} = \sup_{(x,y)\in V} \frac{\md{s(x)-s(y)}}{d(x,y)^\alpha}.
\e
The $C^{k,\al}_\de$\textbf{-Hölder norm} is then defined as: 
\e
 \nm{s}_{C^{k,\al}_\de} = \nm{s}_{C^{k}_\de}+[\rho^{k-\de+\al}\cdot \nabla^k s]_{\al}.
\e
The \textbf{Hölder space}  $C^{k, \al}_\de(E)$ is the subset of $C^{k}_\de(E)$ with finite $C^{k,\al}_\de$-Hölder norm. In the case of multiple weights, the we denote by $C^{k, \al}_{\bar{\de}}$ the corresponding Hölder space.
\end{dfn}
We also have a Sobolev embedding theorem into weighted Hölder spaces.
\begin{thm}[cf. {\cite[Thm. 2.9]{joyceReg}}]
\label{2_3_Sobolev_embedding_hoelder_acyl_ac}
Let $(M, g)$ be an $\CS/\AC$ manifold. Let $p > 1$, $k, l \ge 0$, $0< \al < 1$ and $\de \in \R$. If $k-\frac{n}{p} \ge l + \al $ then there is a continuous embedding: 
\e
L^p_{k, \de}(E) \longra C^{l, \al }_{\de}(E). 
\e
\end{thm}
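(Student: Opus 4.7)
The plan is a dyadic scaling argument that reduces the weighted embedding on a noncompact manifold with conical ends to the classical Sobolev embedding on a fixed compact model domain. First I would decompose $M = K \cup E$ into a compact core $K$ and a union $E$ of the conical ends, identified via $\Psi$ with $(r_0, \infty) \times L$ in the $\AC$ case or $(0, R_0) \times L$ in the $\CS$ case. On $K$ the radius function $\rho$ is bounded above and away from zero, so all weighted norms are equivalent to their unweighted counterparts and the result reduces to the classical Sobolev embedding on a compact Riemannian manifold with boundary, which applies under the hypothesis $k - n/p \ge l + \al$.

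Next, on $E$ I would take a cover by overlapping dyadic annuli $A_j = \Psi([\tfrac12 r_j, 2 r_j] \times L)$, with $r_j = 2^j r_0$ in the $\AC$ case and $r_j = 2^{-j} R_0$ in the $\CS$ case. Each $A_j$ is diffeomorphic to a fixed unit annulus $A_0 = [\tfrac12, 2] \times L$ via the radial rescaling $\phi_j(r, p) = (r_j r, p)$, which satisfies $\phi_j^* g_{\con} = r_j^2 \, g_{\con}|_{A_0}$. By the $\AC/\CS$ decay conditions the actual metric $\phi_j^* g$ differs from $r_j^2 g_{\con}|_{A_0}$ by a term uniformly small in $j$, so estimates on the fixed piece $A_0$ transfer back to $A_j$ with constants independent of $j$. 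Setting $\tilde s_j = \phi_j^* s$, the scaling relations $\md{\nabla^i s} \sim r_j^{-i} \md{\nabla_0^i \tilde s_j}$, $\rho \sim r_j$, $\d \mu \sim r_j^n \d \mu_0$, and $d(x,y) \sim r_j\, d_0(\tilde x, \tilde y)$ yield the key equivalences
\[
    \nm{s}_{L^p_{k,\de}(A_j)} \sim r_j^{-\de} \nm{\tilde s_j}_{L^p_k(A_0)}, \qquad \nm{s}_{C^{l,\al}_\de(A_j)} \sim r_j^{-\de} \nm{\tilde s_j}_{C^{l,\al}(A_0)};
\]
the factor $\rho^{-n}$ in the Sobolev measure absorbs the volume scaling, while the weights $\rho^{i-\de}$ and $\rho^{k-\de+\al}$ in the Hölder norm exactly cancel the covariant-derivative and distance rescalings. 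Applying the classical unweighted Sobolev embedding $L^p_k(A_0) \hookrightarrow C^{l,\al}(A_0)$ on the fixed piece $A_0$ then yields $\nm{s}_{C^{l,\al}_\de(A_j)} \lesssim \nm{s}_{L^p_{k,\de}(A_j)}$ with a uniform constant.

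Finally, the global estimate follows by taking a supremum: since $C^{l,\al}_\de$ is a sup-type norm, $\nm{s}_{C^{l,\al}_\de(M)}$ is controlled by $\sup_j \nm{s}_{C^{l,\al}_\de(A_j)} \lesssim \sup_j \nm{s}_{L^p_{k,\de}(A_j)} \le \nm{s}_{L^p_{k,\de}(M)}$, where the last inequality uses the bounded overlap of the cover. The main subtlety I expect is the Hölder seminorm for pairs $(x,y) \in V$ lying in different annuli: since $V$ is concentrated near the diagonal, $d(x,y)$ is small enough to force both points into the enlarged annulus $A_j \cup A_{j \pm 1}$, which is still diffeomorphic to a fixed compact model under the same rescaling, so the uniform rescaled estimate continues to apply there. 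Proving the embedding for $L^p_{k,\de}$ thus reduces to establishing it on compactly supported sections, after which completion finishes the argument.
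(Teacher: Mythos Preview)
The paper does not actually prove this theorem: it is stated with a citation to \cite[Thm.~2.9]{joyceReg} and no argument is supplied. Your dyadic scaling argument is the standard proof of such weighted embeddings and is correct as sketched; the scaling bookkeeping (the $\rho^{-n}$ in the measure cancelling the volume factor, and the weights $\rho^{i-\de}$, $\rho^{k-\de+\al}$ cancelling the derivative and distance rescalings) is exactly right, and your treatment of the cross-annulus H\"older seminorm via bounded overlap is the usual fix.
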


\subsubsection{Elliptic operators and Fredholm results}

\label{sec_elliptic_fredholm}

Every elliptic operator on a compact manifold is Fredholm. However this useful fact does not generally hold in the noncompact setting. Consider the noncompact manifold $\R$ with the elliptic operator $\frac{\d}{\dt}$ acting on functions. 

\begin{prop}
\label{2_3_d_dt}
The elliptic operator $ \frac{\d}{\dt}: L^{2}_1(\R) \rightarrow L^2(\R) $ is not Fredholm. 
\end{prop}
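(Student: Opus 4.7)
The plan is to show that $\frac{\d}{\dt}$ has trivial kernel but fails to be bounded below, so that its range cannot be closed and it therefore cannot be Fredholm. The kernel of $\frac{\d}{\dt}$ on $L^2_1(\R)$ consists of constants, and the only constant in $L^2(\R)$ is zero, so $\ker\bigl(\frac{\d}{\dt}\bigr) = \{0\}$. A bounded operator with trivial kernel is Fredholm only if it satisfies a coercive estimate $\nm{f}_{L^2_1} \le c\,\nm{f'}_{L^2}$ for some $c > 0$, so it suffices to construct an approximate null sequence defeating this inequality.

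I would exhibit such a sequence by dilation. Fix a cutoff $\chi \in C_c^\infty(\R)$ with $\nm{\chi}_{L^2} = 1$ and set $f_n(t) = n^{-1/2}\chi(t/n)$ for $n \in \N$. Each $f_n$ is smooth and compactly supported, hence lies in $L^2_1(\R)$. A short change-of-variables computation yields $\nm{f_n}_{L^2} = \nm{\chi}_{L^2} = 1$, while
\e
\nm{f_n'}_{L^2}^2 = \int_\R n^{-3}\chi'(t/n)^2\,\d t = n^{-2}\nm{\chi'}_{L^2}^2.
\e
Hence $\nm{f_n}_{L^2_1} \to 1$ but $\nm{f_n'}_{L^2} \to 0$, which rules out any coercivity estimate with $c > 0$. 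Consequently the range of $\frac{\d}{\dt}$ is not closed, and the operator is not Fredholm.

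There is no serious obstacle here; the whole argument reduces to a scaling computation once one recognises that ellipticity on its own cannot force boundedness below on a noncompact base. The underlying point, relevant to the rest of the section, is that under Fourier transform $\frac{\d}{\dt}$ becomes multiplication by $i\xi$, whose vanishing at $\xi = 0$ mirrors the existence of a bounded-but-not-$L^2$ harmonic mode at infinity, namely the constants. This is precisely the kind of end behaviour that the weighted Sobolev spaces $L^p_{k,\de}$ introduced above are designed to isolate via a discrete set of critical weights, restoring Fredholmness away from those weights in the generalisation of Lockhart--McOwen.
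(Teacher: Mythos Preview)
Your proof is correct and takes a genuinely different route from the paper's. The paper exhibits an explicit element in the closure of the range that is not in the range: it builds functions $f_n \in L^2_1(\R)$ whose derivatives converge in $L^2$ to the characteristic function $\chi_{[-1,1]}$, while the $f_n$ themselves blow up in $L^2_1$, so $\chi_{[-1,1]}$ is a limit point of the image but has no preimage in $L^2_1(\R)$. You instead first note that the kernel is trivial, reduce closed range to a coercivity inequality via the open mapping theorem, and then kill coercivity with a clean dilation sequence $f_n(t) = n^{-1/2}\chi(t/n)$. Your approach is more conceptual and makes transparent the mechanism behind the failure (low-frequency modes, or equivalently the constant harmonic mode at infinity), which aligns well with the weighted-space philosophy the section is building towards; the paper's argument, on the other hand, has the minor advantage of directly producing a concrete witness in $\overline{\im\frac{\d}{\dt}}\setminus\im\frac{\d}{\dt}$, and generalises slightly more readily to the situation $\frac{\d}{\dt}+A(t)$ discussed immediately afterwards, where one multiplies by a kernel element of $A_\pm$.
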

\begin{proof}
We show that the image of $\frac{\d}{\dt}$ is not closed in $L^2(\R_+)$. Consider the functions $f_n \in L^{2}_1(\R_+)$ which are defined as follows:
\[ f_n(t) = \left\{ \begin{array}{cc}
     \frac{n}{t},  & t \le -n \\
     -1, & -n \le  t \le -1 \\
     t, & -1 \le t \le 1 \\
     1, & 1 \le  t \le n \\
     \frac{n}{t},  & n \le t
\end{array} \right.\]
Then clearly $f_n \in L^{2}_1(\R)$, since $\nm{f_n}_{L^2} = O(n) < \infty$ and $\nm{\frac{\d}{\dt}f_n}_{L^2} = O(1) <\infty$. As a consequence this family does not admit a limit in $L^2_1(\R_+)$. However the family of derivatives does converge in $L^2$ to the characteristic function $\chi_{[-1,1]}$, which is not in the image of $\frac{\d}{\dt}$. Thus the image of $\frac{\d}{\dt}$ is not closed, which precludes it from being Fredholm.
\end{proof}

More generally the same non-Fredholmness appears for operators on $\R \times N$ which are of the form $\frac{\d}{\dt} + A(t)$, where $A(t)$ is a self-adjoint elliptic operator on the cross-section $N$ which converges in a suitable sense to limiting operators $A_\pm$ with non-trivial kernel. In the example on $\R$ we had $A(t) = A_\pm=0$ over the point. The proof above can be generalised to this case, if we consider $f_n \psi$ instead, where $\psi$ is a non-zero element of the kernel of one of $A_\pm$. In fact, if $A_\pm$ have trivial kernel, the operator $\frac{\d}{\dt}+A$ will be Fredholm. A proof of this fact can be found in Robbin and Salamon's paper on the spectral flow \cite{robbinSpectralFlowMaslov1995}. Thus to ensure Fredholmness we need to shift the zero eigenvalues of $A_\pm$ to a non-zero value. This can be achieved by perturbing $A(t)$ to $A(t)-\de \id_N$. It turns out that this is equivalent to varying the Banach spaces by introducing the weight $e^{-\de t }$ into the norms, as we did in the previous section. Indeed, note that the norm $\nm{s}_{\cL^p_{k,\de}} = \nm{s e^{-\de\rho_t}}_{L^p_k}$ is equivalent to the previously introduced weighted norm $\nm{\cdot}_{L^p_{k,\de}}$. The advantage of this definition is that there is an isometry $\cL^p_{k, \de} \ra L^p_k$ given by sending $s \mapsto s e^{\de \rho_t}$. Thus an operator $\frac{\d}{\dt}+A(t): L^p_{k, \de} \ra L^p_{k-1, \de}$ will be Fredholm exactly when $e^{\de\rho_t}(\frac{\d}{\dt}+A(t))e^{-\de\rho_t}: L^p_k \ra L^p_{k-1}$ is. However: 
\eas
e^{\de\rho_t}\left(\frac{\d}{\dt}+A(t)\right)e^{-\de\rho_t} = \frac{\d}{\dt} +A(t)-\de \id.
\eas
In other words, perturbing the operator can be achieved by varying the weight in the definition of the Sobolev norm. This will recover the Fredholm results from the compact case. Note however that the index of an operator might depend on the weight chosen as seen in Proposition \ref{2_3_change_of_index}.
Let now $(M, g)$ be a cylindrical manifold, i.e. it admits ends which are isometric to Riemannian cylinders. Let $E$ and $F$ be two  metric vector bundles over $M$. A linear $r$-th order partial differential operator: 
\e
 D_\infty: C^{k+r}_{\loc} (E) \longra C^{k}_{\loc} (F) \nonumber
\e
is then \textbf{cylindrical} if for every section $f \in C^{k+r}_{\loc}(E)$ which is supported in an end $N = (0, \infty) \times L$ we have $(D_\infty s)(t+\cdot) = D_\infty (s(t+\cdot))$. Here $s(t+\cdot)$ denotes the translation action of $\R_+$ on the end. Now suppose that $ D: C^{k+r}_{\loc} (E) \ra C^{k}_{\loc} (F)$ is another operator and write these operators as:
\ea 
D(s) &= \sum_{i=0}^r D^i \nabla^i s, \label{2_3_operator_rep} \\
D_\infty(s) &= \sum_{i=0}^r D^i_\infty \nabla^i s, \label{2_3_operator_rep_inf}
\ea
for bounded coefficients $D^i_{(\infty)} \in C^\infty(TM^{\ot i} \ot F \ot E^*)$.  Then $D$ is \textbf{asymptotically cylindrical} if for any $j \in \N$:
\e
\md{\nabla^{j}(D^i_\infty - D^i)} \longra 0 \mbox{ as } t \ra \infty. \label{2_3_operator_conv}
\e
 Note that by translation invariance, the coefficients of $D_\infty$ are independent of $t$. Using this one can prove the following.

\begin{prop}
\label{2_3_cyl_bounded}
If $D$ is an asymptotically cylindrical operator, then for any $\de \in \R$, it extends to a well-defined map: 
\e 
	 D: L^p_{k+d, \de} (E, g) \longra L^p_{k, \de} (F, g).
\e
\end{prop}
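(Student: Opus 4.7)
My approach would be to reduce the claim to a uniform bound of the form
\[
\|Ds\|_{L^p_{k,\delta}(F)} \le C \|s\|_{L^p_{k+r,\delta}(E)}
\]
on compactly supported smooth sections $s$, and then extend by density. Using the representation \eqref{2_3_operator_rep}, this reduces by the Leibniz rule to showing that each coefficient tensor $D^i \in C^\infty(TM^{\otimes i} \otimes F \otimes E^*)$ together with all of its covariant derivatives is uniformly bounded on $M$. Once this is in place, expanding $\nabla^j(D^i \nabla^i s)$ as a sum of terms of the form $\nabla^a D^i \otimes \nabla^{i+b} s$ with $a+b=j$ and applying Hölder on each integrand against the weight $\rho^{-\delta+j}$ (which, crucially, appears on both sides of the bound with the same $\delta$) produces the desired inequality term by term.

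The uniform $C^\infty$-boundedness of the coefficients is the content of the asymptotic cylindricity hypothesis. I would split
\[
D^i \;=\; D^i_\infty \;+\; (D^i - D^i_\infty).
\]
Because $D_\infty$ is cylindrical, the tensor $D^i_\infty$ on the end $(0,\infty)\times L$ is pulled back from a smooth section on the compact cross-section $L$, so $|\nabla^j D^i_\infty|$ is bounded for every $j\in\mathbb{N}$. The difference $D^i - D^i_\infty$ is smooth on the (relatively compact) core and, by \eqref{2_3_operator_conv}, tends to zero in every $C^j$-norm along the end; hence it too is globally bounded with all covariant derivatives. Adding the two gives the required uniform control.

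With this control on coefficients, the estimate itself is essentially bookkeeping. The weighted Sobolev norm on a cylindrical manifold is equivalent (via the isometry $s\mapsto e^{\delta t}s$ on the end, as noted in the discussion preceding the proposition) to an unweighted Sobolev norm of a conjugated section. The conjugated operator $e^{\delta t}De^{-\delta t}$ has the same order and principal part as $D$ and inherits the property of having smooth bounded coefficients (up to the addition of lower-order terms involving powers of $\delta$, which are themselves bounded). Boundedness of a differential operator of order $r$ with bounded-and-smoothly-bounded-derivatives coefficients between standard $L^p_{k+r}$ and $L^p_k$ spaces is then the familiar computation from the compact case, localized by a partition of unity subordinate to a uniformly locally finite cover of $M$ by sets of bounded geometry.

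The only step that requires any care is the interaction between covariant differentiation and the weight in the definition of $\|\cdot\|_{L^p_{k,\delta}}$, specifically that powers of $\rho$ appear with $i$-dependent exponents in the definition of the norm. Since the radius function $\rho$ is comparable to $e^t$ on the end, $|\nabla \log\rho|$ and its higher derivatives are uniformly bounded there, so commuting $\rho^{-\delta+j}$ past $\nabla^j$ produces only lower-order terms already absorbed into the $D^i$ with $i<r$. I do not expect this to be a genuine obstacle — it is the standard verification that bounded-coefficient differential operators are bounded on weighted Sobolev spaces of cylindrical manifolds — but it is the one place where the estimate is not immediate from the uniform coefficient bound.
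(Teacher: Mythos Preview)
Your proposal is correct and follows precisely the line the paper indicates. The paper does not actually give a proof of this proposition; it only remarks ``Note that by translation invariance, the coefficients of $D_\infty$ are independent of $t$. Using this one can prove the following.'' Your argument is the natural elaboration of that hint: translation invariance gives uniform $C^\infty$-bounds on the $D^i_\infty$, the convergence \eqref{2_3_operator_conv} handles the difference $D^i-D^i_\infty$, and the rest is the standard bounded-coefficient estimate on weighted spaces.
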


\subsubsection*{Conical operators}

Suppose now that $(M, g)$ is $\AC_\la$ or $\CS_{{\bar{\mu}}}$, and assume that we have a radius function $\rho$ and a conical metric $g_{c}$ that $g$ is asymptotic to as $\rho \ra \infty$ and $\rho \ra 0$ respectively. We can now define  conical operators between bundles of exterior forms: 

\begin{dfn}
An linear $r$-th order operator $D: C^{k+r}_{\loc} (\La^m) \ra C^{k}_{\loc} (\La^{m'})$ is \textbf{conical with rate $\nu \in \R$} if
\e
	D^\nu = \rho^{-m'+\nu} D \rho^m	\nonumber
\e 
is an asymptotically cylindrical operator. Here the cylindrical metric is $\rho^{-2} g_c$.
\end{dfn}

 We now present the fundamental result concerning these operators, which is that they are Fredholm for almost all choices of weight $\de \in \R$. More concretely, we have the following:

\begin{thm}
\label{2_3_fredholm}
Let $D: C^{k+r}_{\loc} (\La^m) \ra C^{k}_{\loc} (\La^{m'})$ be a conical operator on an $(M,g)$ with rate $\nu$. Then for any $\de \in \R$, $P$ extends to a well-defined map: 
\e
D: L^{p}_{k+r,\de} (\La^m, g) \longra L^{p}_{k,\de-\nu}  (\La^{m'}).
\e
Furthermore if $D$ is elliptic, then this map is Fredholm for $\de$ in the complement of a discrete subset $\cD \subset \R$. This subset is determined by an eigenvalue problem on the asymptotic link.
\end{thm}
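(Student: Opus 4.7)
The plan is to reduce everything to the already-known asymptotically cylindrical theory via the standard conformal change of coordinates $t = \log \rho$ (for $\AC$ ends; $t = -\log \rho$ for $\CS$ ends), which converts a conical end $(r_0,\infty)\times L$ with metric $g_c = \d r^2 + r^2 h$ into a half-cylinder with metric $\d t^2 + h$, up to the conformal factor $\rho^{-2}$. The definition of a conical operator of rate $\nu$ is tailor-made for this: the conjugated operator $D^\nu = \rho^{-m'+\nu} D \rho^m$ is by assumption asymptotically cylindrical with respect to the rescaled metric $\rho^{-2} g_c$, so once the weighted Sobolev scales are pulled back to the cylindrical picture, the theorem will follow from the Lockhart--McOwen Fredholm theorem \cite{lockhartEllipticDifferentialOperators1985}, as adapted in \cite{lockhartFredholmHodgeLiouville1987}.

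First I would verify boundedness. The multiplication map $s \mapsto \rho^{-\de}\, s$ is an isomorphism (up to equivalent norms) between the weighted space $L^p_{k+r,\de}(\La^m, g)$ and the unweighted Sobolev space $L^p_{k+r}(\La^m, \rho^{-2} g_c)$ on the cylindrical model, because the factors $\rho^i$ appearing in the definition of $\nm{\cdot}_{p,k,\de}$ exactly compensate for the change in the metric of $T^*M$ and in the volume form under the conformal rescaling. Under the corresponding isomorphism on the target, the conical operator $D$ is conjugate to $D^\nu$ up to bounded zeroth-order factors in $\rho$, and since $D^\nu$ has bounded coefficients in the cylindrical sense by Proposition \ref{2_3_cyl_bounded}, boundedness of $D : L^p_{k+r,\de}(\La^m, g) \longra L^p_{k,\de-\nu}(\La^{m'}, g)$ follows for every $\de \in \R$.

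For the Fredholm statement, on the cylindrical side the rescaled operator $D^\nu$ is asymptotically cylindrical, so after reducing to a first-order system its asymptotic model on each end has the translation-invariant form $\pd_t + A_\infty$ with $A_\infty$ an elliptic operator on the link $L$. The norm $\nm{s}_{L^p_{k,\de}}$ transforms under the logarithmic change of variable into the exponentially weighted cylindrical norm $\nm{e^{-\de t} s}_{L^p_k}$, and the isometry $s \mapsto e^{\de t} s$ conjugates $\pd_t + A_\infty$ to $\pd_t + A_\infty - \de$. Lockhart--McOwen then show that $D$ is Fredholm between the corresponding spaces precisely when this shifted model is invertible on $L^2(L)$, i.e.\ when $\de$ is not the real part of an eigenvalue of $A_\infty$ acting on complex sections over the link. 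By ellipticity of $A_\infty$ and compactness of $L$, this set of critical weights is discrete, and this is the set $\cD \subset \R$ in the statement (with the obvious adaptation in the $\CS$ case and for multiple ends with weight vectors $\bar{\mu}$).

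The hard part is not any single analytic step but the bookkeeping that ties the four conventions together: conical vs.\ cylindrical metric, polynomial vs.\ exponential weight, the power of $\rho$ absorbed in each form degree ($m$ versus $m'$) and in the covariant derivatives, and the simultaneous treatment of several singular points or ends carrying different weights. Once these identifications are fixed consistently, boundedness is a direct computation and the Fredholm assertion is exactly the content of the Lockhart--McOwen theorem on manifolds with cylindrical ends pulled back through the exponential map.
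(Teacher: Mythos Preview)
Your proposal is correct and follows essentially the same route as the paper: reduce boundedness to that of the conjugated operator $D^\nu$ via the conformal/logarithmic change to the cylindrical picture (Proposition \ref{2_3_cyl_bounded}), and then invoke the Lockhart--McOwen Fredholm theorem for asymptotically cylindrical operators. The paper's proof is much terser than yours---it simply asserts that $D$ is bounded (resp.\ Fredholm) iff $D^\nu$ is and cites \cite[Thm.~6.1]{lockhartEllipticDifferentialOperators1985}---but you have unpacked exactly the mechanism behind those assertions.
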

\begin{proof}
The operator $D$ is bounded whenever $D^\nu$ is. Now $D^\nu$ is bounded by Proposition \ref{2_3_cyl_bounded}. It is also Fredholm whenever $D^\nu$ is. This in turn is the case for all but a countable set of weights, which are determined by the cylindrical operator $D$ asymptotes to, as in \cite[Thm. 6.1]{lockhartEllipticDifferentialOperators1985}.
\end{proof}

Let $D$ be a conical operator of rate $\nu$, and let $D_\infty$ be the cylindrical operator that $D^\nu$ asymptotes to, as in \eqref{2_3_operator_conv}. Then the set of exceptional weights $\cD$ can be determined as follows. With respect to the parametrisation by $(t, p) \in (0, \infty) \times L$ of the cylindrical end, $D_\infty$ takes the following form: 
\e
\label{2_3_form_infty}
D_\infty = \sum_{i+j \le r} a^{i,j}_\infty \partial_t^i \nabla^j_L.
\e
At the start of this section, we have seen that the Fredholm property fails for the first order operator $\partial_t + A(t)$ if the limit $ A_+ = \lim_{t\ra\infty} A(t)$ has a zero eigenvalue. This was because the kernel gained a solution whose growth was of order $O(1)$, and thus not integrable, but could nonetheless be approximated within $L^p_k$. More generally the operator $\partial_t + A(t)- \de \id$ will not be Fredholm if $A_+- \de \id$ admits a kernel, i.e. $A_+$ admits a $\de$-eigenvector. Thus $\partial_t + A(t)$ will not be Fredholm as a map $L^p_{k, \de} \longra L^p_{k-1,\de}$ for those values $\de$ where a solution to the eigenvalue problem $A_+v = \de v$ exists. The generalisation of this to a higher-order operator in the form \eqref{2_3_form_infty} is that we consider the eigenvalue problem for: 
\e
\label{2_3_eigenvalue}
\hat{D}_\infty(\la) = \sum_{i+j \le r} a^{i,j}_\infty (i\la)^i \nabla^j_L.
\e
Denote by $\cC \subset \C$ the set of all the complex values for $\la$ for which \eqref{2_3_eigenvalue} admits a non-zero eigenvector. As Lockhart and McOwen describe in more detail in their paper \cite{lockhartEllipticDifferentialOperators1985}, this is a discrete subset of $\C$. The subset $\cD \subset \R$ of exceptional weights, which again is discrete, is then given by: 
\eas
\cD = \{ \im \la : \la \in \cC\}.
\eas
In this way the exceptional weights can be related to an eigenvalue problem on the link. Similar to the model case $\frac{\d}{\dt}+A(t)$, the existence of solutions to the eigenvalue problem implies that solutions of a certain exponential decay rate $\de$ exist. These then get added to the kernel once the rate $\de$ is passed, which makes the index jump discontinuously. Thus the Fredholm property cannot hold at these weights. Note that the dependence on the link means that operators on different $\CS$ or $\AC$ manifolds will have the same set of exceptional weights if their links agree as Riemannian manifolds, and the two operators approach the same limiting operator over that link, in the sense that the associated cylindrical operators limit to the same operator. Consider now for $\de \in \cD$ the dimension $d(\de) < \infty $ of the set of solutions to $D_\infty u = 0$, which have the form $e^{-\de t}p$, where $p$ is a polynomial in $t$ whose coefficients are sections of $E|_L$ and do not depend on $t$ . This is exactly the jump in index as a weight is passed. To be more precise, let $\de_1< \de_2$ be given such that $\de_1, \de_2 \not\in \cD$. Then we define: 
\eas
N(\de_1, \de_2) = \sum_{\de \in (\de_1,\de_2)\cap \cD} d(\de).
\eas
The relation between the indices of differential operators on differently weighted Sobolev spaces in then given as follows: 

\begin{thm}[cf. {\cite[Thm. 1.2]{lockhartEllipticDifferentialOperators1985}}]
\label{2_3_change_of_index}
Let $D$ be an elliptic conical operator of order $r\ge 0$ and rate $\nu \in \R$. Let $1<p<\infty$ and $k\ge 0$. Denote by $i_\de(P)$ for $\de \in \R$ the index of the following operator: 
\eas
D: L^p_{k+r, \de}(E) \longra L^p_{k, \de-\nu}(E).
\eas
We then have that: $i_{\de_2}(P)-i_{\de_2}(P) = N(\de_1, \de_2)$.
\end{thm}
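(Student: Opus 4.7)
The plan is to reduce the result to the cylindrical case treated in \cite{lockhartEllipticDifferentialOperators1985} via the substitution $t = -\log \rho$ (in the $\CS$ case) or $t = \log \rho$ (in the $\AC$ case), and then invoke the Lockhart--McOwen change of index formula for asymptotically cylindrical operators.

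First I would formalise the passage between conical and cylindrical weighted spaces. By definition of rate $\nu$, the rescaled operator $D^\nu = \rho^{-m'+\nu}\, D\, \rho^{m}$ is asymptotically cylindrical on $(M, \rho^{-2} g_c)$. A short computation shows that the conformal change $g \mapsto \rho^{-2} g$ together with the weight-shift $s \mapsto \rho^{-\de} s$ gives an isometric identification
\[
L^p_{k, \de}(\La^m, g) \;\xrightarrow{\;\simeq\;}\; \cL^p_{k, \de'}(\La^m, \rho^{-2} g_c)
\]
for a weight $\de'$ which depends linearly on $\de$ and on the form-degree $m$, where $\cL^p_{k, \de'}$ denotes the exponentially weighted Sobolev space $e^{-\de' t}L^p_k$ on the cylindrical model. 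Under this identification, the diagram
\[
\begin{array}{ccc}
L^p_{k+r, \de}(\La^m, g) & \xrightarrow{\;D\;} & L^p_{k, \de-\nu}(\La^{m'}, g) \\
\downarrow & & \downarrow \\
\cL^p_{k+r, \de'}(\La^m) & \xrightarrow{\;D^\nu\;} & \cL^p_{k, \de'}(\La^{m'})
\end{array}
\]
commutes, so $D$ and $D^\nu$ have the same index (and in particular $D$ is Fredholm with the stated weights outside the discrete exceptional set $\cD$, as asserted in Theorem~\ref{2_3_fredholm}).

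Next I would apply \cite[Thm.~1.2]{lockhartEllipticDifferentialOperators1985} to the asymptotically cylindrical operator $D^\nu$ acting between $\cL^p_{k+r, \de'} \to \cL^p_{k, \de'}$. That theorem says the jump in Fredholm index as $\de$ crosses an exceptional value is the total dimension of certain cylindrical solutions of the limiting translation-invariant operator $D_\infty$ of the form $e^{-\de' t} p(t)\cdot \sigma$, where $p$ is a polynomial in $t$ with coefficients in $C^\infty(E|_L)$. It remains to match these solutions with the eigenvalue problem for $\hat D_\infty(\la)$ at $\la$ with $\im \la = \de$. This is a standard Mellin-transform argument: substituting $u = e^{i\la t} p(t)\cdot\sigma$ into $D_\infty u = 0$ and collecting powers of $t$ turns the equation into a Jordan-block system for $\hat D_\infty(\la)$, whose total multiplicity at each $\la$ with $\im \la = \de$ gives exactly $d(\de)$. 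Summing over the exceptional weights in $(\de_1, \de_2)$ then yields $i_{\de_2}(D) - i_{\de_1}(D) = N(\de_1, \de_2)$.

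The main obstacle is bookkeeping: tracking the weight shift $\de \mapsto \de'$ produced by the conformal and multiplicative rescalings so that the exceptional set $\cD$ on the conical side corresponds correctly to the exceptional set on the cylindrical side, and verifying that the definition of $d(\de)$ via the eigenvalue problem \eqref{2_3_eigenvalue} for $\hat D_\infty(\la)$ indeed reproduces the dimension of polynomial-exponential solutions used in Lockhart--McOwen. Both reductions are entirely formal once the substitution $t = \mp \log \rho$ is set up, so I would devote most of the write-up to carefully fixing these conventions and then quote \cite[Thm.~1.2]{lockhartEllipticDifferentialOperators1985}.
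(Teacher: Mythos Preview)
Your approach is correct and is exactly the reduction the paper has in mind: the paper does not actually prove this theorem but states it with a direct citation to \cite[Thm.~1.2]{lockhartEllipticDifferentialOperators1985}, relying on the same rescaling $D \mapsto D^\nu$ to the asymptotically cylindrical model that was already used in the proof of Theorem~\ref{2_3_fredholm}. Your write-up simply spells out the bookkeeping behind that citation, so there is nothing to compare.
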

\begin{prop}[cf. {\cite[Lem. 2.8]{joyceReg}}]
\label{2_3_dual}
Assume that $1<p,q< \infty$ are such that $\frac{1}{p}+\frac{1}{q}= 1	$. Then if $n \in \N$ is the dimension of the underlying manifold and $\de \in \R^l$ is a vector of weights, there is a perfect pairing $L^p_{\de} \times L^q_{-n-\de} \longra \R$, and thus $(L^p_{\de})^* = L^q_{-n-\de}$.
\end{prop}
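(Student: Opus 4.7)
The plan is to exhibit the pairing explicitly by integration, verify its continuity using H\"older's inequality, and reduce perfectness to the classical $L^p$--$L^q$ duality on $(M, g)$ via an isometric rescaling by a power of the radius function $\rho$.

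Concretely, I would define the bilinear map
\[
P : L^p_{\bar{\de}}(E) \times L^q_{-n-\bar{\de}}(E) \longra \R, \qquad P(s, t) = \int_M (s, t)_h \, \d \mu,
\]
where $(\cdot, \cdot)_h$ is the fibre metric and $\d \mu$ the Riemannian volume form. Denoting by $w : M \longra \R$ the smooth interpolating weight function associated to $\bar{\de}$, I split the pointwise bound as $|(s, t)_h| \le (|s|_h\, \rho^{-w - n/p})(|t|_h\, \rho^{w + n/p})$ and apply H\"older's inequality with conjugate exponents $p, q$. The exponents work out precisely because of the relation $1/p + 1/q = 1$: one checks that $q(w + n/p) = -q(-n - w) - n$, so
\[
\int_M |t|_h^q\, \rho^{q(w + n/p)} \, \d \mu = \nm{t}_{q, 0, -n-\bar{\de}}^q,
\]
and therefore $|P(s, t)| \le \nm{s}_{p, 0, \bar{\de}} \cdot \nm{t}_{q, 0, -n-\bar{\de}}$. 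In particular $P$ induces a bounded linear map $L^q_{-n-\bar{\de}}(E) \longra (L^p_{\bar{\de}}(E))^*$.

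For perfectness, I would consider the multiplication operators $\Phi : s \longmapsto s\, \rho^{-w - n/p}$ and $\Psi : t \longmapsto t\, \rho^{w + n/p}$. A direct verification of exponents shows that these are isometric isomorphisms $\Phi : L^p_{\bar{\de}}(E) \longra L^p(E, \d \mu)$ and $\Psi : L^q_{-n-\bar{\de}}(E) \longra L^q(E, \d \mu)$ onto the standard unweighted $L^p$ and $L^q$ spaces of sections against the Riemannian volume measure; the second statement uses that the interpolating weight for $-n - \bar{\de}$ is $-n - w$ by linearity of the interpolation procedure in $\bar{\de}$. Under these isomorphisms the two powers of $\rho$ cancel in the fibre pairing, so
\[
P(s, t) = \int_M (\Phi(s), \Psi(t))_h \, \d \mu,
\]
which is the classical $L^p$--$L^q$ pairing on the smooth part of $M$. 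Since this pairing is perfect by the standard identification $(L^p)^* = L^q$, so is $P$, yielding $(L^p_{\bar{\de}}(E))^* = L^q_{-n-\bar{\de}}(E)$. No deep obstacle arises; the only care required is in tracking the exponents of $\rho$ through the definitions and confirming that the interpolation $w$ is linear in the weight vector, so that $\Phi$ and $\Psi$ genuinely target the unweighted spaces.
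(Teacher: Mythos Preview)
Your argument is correct and is precisely the standard one: rescale by the appropriate power of $\rho$ to convert the weighted spaces isometrically into ordinary $L^p$ and $L^q$, where classical duality applies. The paper does not supply its own proof of this proposition; it simply records the statement with a reference to \cite[Lem.~2.8]{joyceReg}, so there is no alternative argument to compare against. Your exponent bookkeeping is accurate (the key identity $qn/p = n(q-1)$ follows from $1/p + 1/q = 1$), and your remark about choosing the interpolating function compatibly for $-n-\bar\de$ is the only point requiring care, which you have noted.
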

\begin{ex}
\label{2_3_cayley_plane}
The simplest example of Cayley cone is a Cayley plane  $\Pi = \R^4 \times 0 \subset \R^8$ which a round $S^3$ as link. The limiting operator in this case is $D_L$ from Equation \eqref{2_2_dirac_op_cone}. Even more than that, we can think of a Cayley plane as being induced by a special Lagrangian plane for a carefully chosen embedding CY4 structure on $\R^8$. By Proposition \ref{2_2_cayley_sl} we can write:
\e
D = -\d\star\op \d^- = \frac{\d}{\d r} + r^{-1} D_{S^3}.
\e
Combining the work done in \cite{lotayStabilityCoassociativeConical2009}, where coassociative cones were analysed, and slightly extending the work done in \cite{kawaiDeformationsHomogeneousAssociative2014}, where the non-coassociative Cayley deformations of cones where studied (but not other critical rates), we can give all the critical rates $\cD$ in the range $(-4, 2)$. They are ${-3, -1, 0,1}$, and the eigenspaces have dimensions: 
\e
 d(-3) = 1 + 0,\quad d(-1) = 1 + 0,\quad d(0) = 3+1, \quad d(1) = 8 + 4.
\e
Here the first summand corresponds to the coassociative contribution, and the second is the truly Cayley contribution. 
\end{ex}
\begin{ex}
\label{2_3_quadratic_cone}
Consider the complex cone $C_q = \{x^2 + y^2 + z^2 = 0, w = 0\} \subset \C^4$ which has link $L \simeq \SU(2)/ \Z_2$. In particular, this is also a Cayley cone. The critical rates in $(-2, 2)$  are ${-1, 0,1,  -1+\sqrt{5}}$, and the eigenspaces have dimensions: 
\e
d(-1) = 2 + 0,\quad d(0) = 7+1, \quad d(1) = 16 + 6,\quad d(-1+\sqrt{5}) = 3 + 3.
\e
Here the first summand corresponds to the coassociative contribution, and the second is the truly Cayley contribution. 
\end{ex}

\section{Almost Cayley submanifolds}

Denote the Grassmannian of oriented $4$-planes in an $8$-dimensional vector space $V$ by $\Gr_+(4, V)$. If  $(V, \Phi)$ is a $\Spin(7)$-vector space we can additionally consider the \textbf{Grassmannian of Cayley planes} in $(V, \Phi)$, which we denote by $\Cay(V, \Phi)$. The group $\Spin(7)_\Phi$ acts on $\Gr_+(4, V)$, and acts transitively on $\Cay(V, \Phi)$. As the stabiliser group of any Cayley is isomorphic to $H = (\SU(2)^3)/\Z_2$, we have $\dim \Cay(V, \Phi) = \dim \Spin(7)- \dim H = 21 - 9 = 12$. As $\dim \Gr_+(4, V) = 16$, we see that $\Cay(V, \Phi)$ is a codimension $4$ submanifold of the Grassmannian of oriented four-planes. In other words the Cayley condition can be given in terms of four independent equations. This is the reason why the bundle $E$, which will appear later as the co-domain of the deformation operator of a Cayley, is a rank $4$ bundle. We can think of the fibers of $E$ as corresponding to the normal bundle of a given Cayley in  $\Cay(V, \Phi)\subset \Gr_+(4, V)$. Since $\Spin(7)_\Phi$ is compact and the action is smooth, there is a metric $g_{\Spin(7)}$ on $\Gr_+(4, V)$, such that $\Spin(7)$ acts by isometries. Such a metric can be realised by embedding $\Gr_+(4, V) \hookra \Lambda^4V$ via $\spn \{e_1,e_2,e_3,e_4\} \mapsto e_1\wedge e_2 \wedge e_3 \wedge e_4$. The $\Spin(7)$-invariant metric is then the restriction of the euclidean metric on $\Lambda^4 V$. The resulting distance map is uniformly equivalent to the following $\Spin(7)$-invariant distance defined in terms of the orthogonal projections onto planes: 
\eas
d_{\Gr}(E, E') = \nm{\pi_E - \pi_E'}_{\ope}.
\eas
Here $\pi_E, \pi_{E'}$ are the orthogonal projections onto $E$ and $E'$ respectively. Let us take a closer look at a tubular neighbourhood of the Cayley planes inside $\Gr_+(4, V)$.

\begin{prop}
\label{3_1_almost_cayley}
Let $\ep_1 \in [0, 1), \ep_2 \in [0, 2), \ep_3 \in [0, \infty)$ be given and consider the sets:  
\ea 
 E_1 = \{ \xi& \in \Gr_+(4, V) :  \Phi|_\xi > (1-\ep_1) \dvol_\xi \}, \nonumber \\
 E_2 = \{ \xi& \in \Gr_+(4, V) : \nm{\tau(f_1, f_2, f_3, f_4)}_{\La^2_7} < \ep_2, \nonumber \\
  &\{f_i\}_{i = 1, \dots, 4} \text{ is an orthonormal basis of } \xi \},  \nonumber \\
  \text{ and } E_3 = &\{ \xi \in \Gr_+(4, V) :  \xi = \spn\left\{e_1, e_2, e_3, \frac{e_4 + \sqrt{{\al}}v}{\sqrt{1+{\al}}}\right\}, \nonumber \\ 
 & e_4 = e_1 \times e_2 \times e_3,\   e_i \in V\text{ orthonormal },\   v \perp e_i, \nm{v} = 1, 0 \le \al < \ep_3\}.
\ea
Note that $\nm{\tau(f_1, f_2, f_3, f_4)}_{\La^2_7}$ is independent of the choice of basis of $\xi$. 
Then for a choice of one of the $\ep_i$ we can determine the other two such that the three sets agree. 
\end{prop}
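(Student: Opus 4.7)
The idea is to show that each of $E_1, E_2, E_3$ is a $\Spin(7)_\Phi$-invariant tubular neighborhood of the Cayley Grassmannian $\Cay(V,\Phi) \subset \Gr_+(4,V)$, and that each is a sublevel set of a smooth $\Spin(7)_\Phi$-invariant function that depends on a single transverse coordinate. The three defining ``radii'' are then related by explicit strictly monotone identities that determine $\ep_1, \ep_2, \ep_3$ in terms of each other.

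First I would check invariance: for $E_1$ and $E_2$ this is immediate from the $\Spin(7)_\Phi$-invariance of $\Phi$ and $\tau$, while for $E_3$ it follows since $\Spin(7)_\Phi$ preserves orthonormality and commutes with the triple cross product. Next I would exploit the structure of the $\Spin(7)_\Phi$-action on $\Gr_+(4,V)$ near $\Cay(V,\Phi)$: since $\Spin(7)_\Phi$ acts transitively on $\Cay(V,\Phi)$ with stabilizer $H = (\Sp(1)^3)/\Z_2$, and $H$ acts transitively on the unit sphere in the normal fibre $\nu_{\Pi_0}\Cay(V,\Phi) \simeq \H$ via $(p_1,p_2,q)\cdot w = p_2 w \bar q$, each $\Spin(7)_\Phi$-orbit in a tubular neighborhood of $\Cay(V,\Phi)$ is a codimension-$1$ sphere bundle indexed by a single non-negative radial parameter. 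Fixing a reference Cayley plane $\Pi_0 = \spn\{e_1,e_2,e_3,e_4\}$ with $e_4 = e_1 \times e_2 \times e_3$ and a unit vector $v \perp \Pi_0$, each orbit near $\Cay(V,\Phi)$ therefore contains exactly one plane of the form
\e
\xi_\al = \spn\Bigl\{ e_1, e_2, e_3, \tfrac{e_4 + \sqrt{\al}\, v}{\sqrt{1+\al}} \Bigr\}, \qquad \al \ge 0,\nonumber
\e
so each of the three defining $\Spin(7)_\Phi$-invariant functions descends to a function of the single parameter $\al$.

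The third step is the explicit computation on $\xi_\al$. Using $\Phi(e_1,e_2,e_3,e_4) = 1$, $\Phi(e_1,e_2,e_3,v) = g(e_4, v) = 0$, and $\tau(e_1,e_2,e_3,e_4) = 0$ (because $\Pi_0$ is Cayley), multilinearity gives
\eas
1 - \frac{\Phi|_{\xi_\al}}{\dvol_{\xi_\al}} = 1 - \frac{1}{\sqrt{1+\al}}, \qquad \|\tau(f_1,f_2,f_3,f_4)\|_{\La^2_7} = \frac{\sqrt{\al}}{\sqrt{1+\al}} \cdot c,
\eas
where $\{f_i\}$ is any orthonormal basis of $\xi_\al$ and $c = \|\tau(e_1,e_2,e_3,v)\|_{\La^2_7} > 0$ is a constant independent of the choice of $v$ by transitivity of $H$ on unit normals. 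Hence the three sets become
\eas
E_1 = \{\xi : \al(\xi) < \tfrac{1}{(1-\ep_1)^2} - 1\}, \quad E_2 = \{\xi : \al(\xi) < \tfrac{\ep_2^2}{c^2 - \ep_2^2}\}, \quad E_3 = \{\xi : \al(\xi) < \ep_3\},
\eas
and choosing $\ep_2$ and $\ep_3$ so that the three thresholds coincide yields the claimed equivalence.

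The main obstacle is the identification of the constant $c$: one must show that $c$ is single-valued (independent of the unit normal $v$ and of the reference Cayley plane), which follows from $\Spin(7)_\Phi$-equivariance of $\tau$ combined with transitivity of $H$ on unit normals, and one must verify that $c$ equals the value $2$ matching the parameter range $\ep_2 \in [0, 2)$. This reduces to a direct computation in the standard model $(\R^8, \Phi_0)$ using the coordinate expression \eq{2_1_tau_coord} for $\tau$.
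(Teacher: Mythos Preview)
Your proposal is correct and follows essentially the same approach as the paper: establish $\Spin(7)_\Phi$-invariance of all three sets, use transitivity of $\Spin(7)_\Phi$ on $\Cay(V,\Phi)$ together with transitivity of the stabiliser $H$ on unit normal directions to reduce to the one-parameter family $\xi_\al$, and then compute the defining quantities explicitly on this family. Your formulas $1-\Phi|_{\xi_\al}/\dvol_{\xi_\al} = 1-(1+\al)^{-1/2}$ and $\nm{\tau}=c\sqrt{\al}/\sqrt{1+\al}$ with $c=2$ match the paper's values $1-r_1 = (1+\al)^{-1/2}$ and $r_2 = 2\sqrt{\al}/\sqrt{1+\al}$ exactly.
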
 
\begin{proof}
These three families of sets are $\Spin(7)$-invariant. The sets $E_1$ and $E_2$ are invariant by the definition of $\Phi$ and $\tau$ respectively. The invariance of $E_3$ follows from the fact that there are elements of $H \subset \Spin(7)$ which keep $\xi$ fixed while acting transitively on the unit sphere in $\xi^\perp$.

 Now note that the orbit of a single $\xi \in E_3$ is a sphere of a given radius in the open ball $E_3$. Thus choosing elements for every radius, we can exhaust all of $E_3$. Furthermore spheres in $E_1$ and $E_2$ (i.e. replacing the inequality with equality in the definition) must be unions of this set for different values of $\al$. What is left to show is that the value of $\al$ determines the radii of the spheres in $E_i$ ($i=1,2$) uniquely. For $E_1$ for instance it can be computed using equation \eq{2_1_cayley_form} that $1-r_1 = \frac{1}{\sqrt{1+\al}}$. For $r_2$ we see that $r_2 = \frac{2\sqrt{\al}}{\sqrt{1+\al}}$ , using the coordinate representation \eq{2_1_tau_coord} of $\tau$.
\end{proof}

For $\alpha \in (0, 1)$, consider consider the set of \textbf{almost Cayley planes}: 
\ea
\Cay_\al(V, \Phi) = \{ \xi \in \Gr_+(4, V):  \Phi|_\xi > \alpha \dvol_\xi\}.
\ea
By the $\Spin(7)$-invariance of $\Cay_\al(V, \Phi)$ this set also admits a canonical action of $\Spin(7)$. In fact, $\Cay_\al(V, \Phi)$ is a tubular neighbourhood of $\Cay(V, \Phi)$ under geodesic normal coordinates for $g_{\Spin(7)}$ and $\alpha$ sufficiently close to $1$. Let $0< \alpha_0 <1$ be such that for all $\al > \al_0$ the set $\Cay_\al(V, \Phi)$ has this tubular neighbourhood property. Let now $(M, \Phi)$ be an almost $\Spin(7)$-manifold. We can then consider the associated fibre bundles:

\begin{itemize}
\item $\Gr_+(4, TM) = P_{\Spin(7)} \times _{\Spin(7)} \Gr(4, \R^8)$
\item $\Cay(M) = P_{\Spin(7)} \times _{\Spin(7)} \Cay(\R^8, \Phi_0)$
\item $\Cay_\al(M) = P_{\Spin(7)} \times _{\Spin(7)} \Cay_\al(\R^8, \Phi_0)$
\end{itemize}

We see that a submanifold $N^4 \subset M$ is Cayley exactly when $TN$, seen as a section of $\Gr_+(4, TM)$ over $N$, takes values in $\Cay(M)$. Analogously we say that a submanifold of $M$ is $\al$-Cayley if the the section $TN$ takes values in $\Cay_\al(M)|_N$. Now for every $p \in M$, we have that as $\Spin(7)$-vector spaces $(T_pM, \Phi_p) \simeq (\R^8, \Phi_0)$, thus $\Cay_\al(T_pM, \Phi_p)$ will be a tubular neighbourhood of $\Cay(T_pM, \Phi_p)$ whenever $\al > \al_0$. In particular, for an $\al$-Cayley $N$ with $\al > \al_0$ we get a canonical section $\cay_N$ of $\Cay(M)|_N$ defined as the closest Cayley plane $\cay_N(p)$ to the given almost Cayley $T_pN$, as measured by the metric $d_{\Spin(7)}$. This Cayley plane is unique because of the tubular neighbourhood property. However, we cannot in general guarantee that this closest Cayley plane will have an intersection of dimension $3$ with our almost Cayley tangent plane. Putting this additional restriction on the canonical Cayley section is equivalent to finding a continuous section of a $\Gr_+(3, 4)$-bundle over $N$ (as a Cayley which has three-dimensional intersection with $T_pN$ is entirely determined by this three-plane, the fourth basis vector can be obtained via the triple product). However this bundle is in general non-trivial, and thus might not admit a global section. Thus we cannot in general expect to write $\cay_N$ in the form of elements of $E_3$ in Proposition \ref{3_1_almost_cayley}. We will therefore settle for a slightly more general form, which however still relates an adapted frame of $N$ with a $\Spin(7)$-frame adapted to $\cay_N$ in a satisfactory way.

\begin{prop}[Local coordinates around an $\al$-Cayley]
\label{3_1_local_coord}
Let $N$ be an $\al'$-Cayley submanifold of $M$, where $\al' > \al_0$, and let $\cay_N$ be the canonical Cayley section associated to $N$. Let $p \in N$. Write $\Phi|_N = \alpha \dvol_N$, for a smooth function $\alpha: N \ra (\al_0, 1]$. Then there is a constant $1>\al_1> \al_0$ such that for every $\al' > \al_1$ we can then find a $\Spin(7)$-frame $\{e_i\}_{i = 1, \dots, 8}$ adapted to $\cay_N$ around $p$ such that: 
\ea
T_pN = \spn\left\{\beta_i e_i + v_i  \right\}_{i = 1, \dots, 4} , \nonumber \\
\nu_p(N) = \spn\left\{\beta_i e_i + v_i  \right\}_{i = 5, \dots, 6}, 
\ea
where the basis vector fields $\beta_i e_i + v_i$ are orthonormal. Here $v_i$ for $i = 1, \dots, 8$ are vector fields such that: 
\eas
\left\{
\begin{array}{l}
v_1, v_2,v_3,v_4 \perp \cay_N ,\\
v_5, v_6,v_7,v_8 \parallel \cay_N, \\
\nm{v_i} \le C_{\al_1}( 1-\al) ,
\end{array}
\right.
\eas
and $\beta_i$ are functions such that $1-\beta_i \ge C_{\al_1}(1-\al)$.
\end{prop}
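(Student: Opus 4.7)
The plan is to parametrise $T_pN$ as the graph of a small linear map $A\colon \cay_N(p)\to\cay_N(p)^\perp$, diagonalise $A$ by singular value decomposition, and use the resulting orthonormal bases of $\cay_N(p)$ and $\cay_N(p)^\perp$ as the two halves of the required adapted $\Spin(7)$-frame. First I would fix any adapted $\Spin(7)$-frame $\{e_i^0\}$ at $p$; such a frame automatically respects the splitting $T_pM = \cay_N(p)\oplus\cay_N(p)^\perp$. Because $N$ is $\al'$-Cayley with $\al'$ close to $1$ and $\cay_N(p)$ is by construction the closest Cayley plane, Proposition \ref{3_1_almost_cayley} and the tubular-neighbourhood property of $\Cay_\al$ ensure that $T_pN$ is close enough to $\cay_N(p)$ in the Grassmannian that the projection $\pi\colon T_pN \to \cay_N(p)$ is an isomorphism and $T_pN = \mathrm{graph}(A)$ for some $A$ of small operator norm.

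Applying SVD to $A$ produces orthonormal bases $\{\tilde e_i\}_{i=1}^4$ of $\cay_N(p)$ and $\{\tilde e_i'\}_{i=1}^4$ of $\cay_N(p)^\perp$ and singular values $\si_i\ge 0$ with $A\tilde e_i = \si_i\tilde e_i'$. Setting $\be_i = (1+\si_i^2)^{-1/2}$, the vectors $f_i = \be_i\tilde e_i + \be_i\si_i\tilde e_i'$ form an orthonormal basis of $T_pN$ of the desired shape $\be_i\tilde e_i + v_i$ with $v_i = \be_i\si_i\tilde e_i'\in\cay_N(p)^\perp$. Dually, $\nu_p(N) = \mathrm{graph}(-A^*)$ is spanned by the orthonormal basis $g_i = -\be_i\si_i\tilde e_i + \be_i\tilde e_i'$, giving the required form for the indices $5,\dots,8$ with the same diagonal $\be_i$'s sitting on the $\cay_N(p)^\perp$-factor.

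The crux of the argument, and the main obstacle, is realising $\{\tilde e_i\}\cup\{\tilde e_i'\}$ as an adapted $\Spin(7)$-frame. Surjectivity of $H\to\SO(\cay_N(p))$ (via $[p_1,p_2,q]\mapsto[p_1,q]$) produces an adapted frame whose first half equals $\{\tilde e_i\}$; the second half is then pinned down only up to the action of $\ker(H\to\SO(\cay_N(p)))\cong\Sp(1)$, whose orbit inside the six-dimensional $\SO(\cay_N(p)^\perp)$ is only three-dimensional. Matching the SVD basis $\{\tilde e_i'\}$ with this $\Sp(1)$-orbit is the delicate point, and I would extract it from the fact that $\cay_N(p)$ is the \emph{closest} Cayley plane: by first-order optimality, $A$ lies in the four-dimensional normal space to $\Cay(T_pM,\Phi) \subset \Gr_+(4,T_pM)$ at $\cay_N(p)$, and under the cross product this normal space is identified $H$-equivariantly with the representation $E_{\cay_N(p)}$. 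The representation-theoretic structure of $E_{\cay_N(p)}$ is precisely what forces the SVD of such an $A$ to be compatible with the $\Sp(1)$-freedom, so that $\{\tilde e_i\}\cup\{\tilde e_i'\}$ can indeed be completed to an adapted $\Spin(7)$-frame.

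Finally, the quantitative estimates come from relating $\al$ to the $\si_i$'s. Since $\al = \Phi(f_1,\dots,f_4)$ is maximised at Cayley planes, its first variation vanishes there; a second-order expansion in the SVD bases above yields $1-\al \asymp \sum_i\si_i^2$. This gives $\si_i^2 \lesssim 1-\al$, whence $1-\be_i = 1-(1+\si_i^2)^{-1/2}\lesssim \si_i^2 \lesssim 1-\al$ and $\nm{v_i} = \be_i\si_i \lesssim (1-\al)^{1/2}$, with constants depending only on $\al_1$. The threshold $\al_1>\al_0$ is taken small enough that $\pi$ remains an isomorphism, the tubular-neighbourhood property of $\Cay_\al$ holds, and the entire construction goes through for any $\al'>\al_1$.
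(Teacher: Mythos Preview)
Your SVD approach is genuinely different from, and substantially more elaborate than, the paper's argument. The paper never diagonalises $A$: it simply fixes an \emph{arbitrary} adapted $\Spin(7)$-frame $\{e_i\}$, orthogonally projects $e_1,\dots,e_4$ onto $T_pN$, and applies Gram--Schmidt to the result. Because $\nm{\pi_{\cay_N(p)}-\pi_{T_pN}}_{\ope}$ is controlled by $1-\al$, the resulting orthonormal $f_i$ satisfy $\nm{f_i-e_i}\lesssim 1-\al$, and the paper reads off the decomposition $f_i=\be_i e_i+v_i$ directly; the same is done for $\nu_p(N)$ using $e_5,\dots,e_8$. In particular the paper never confronts your ``main obstacle'', because it does not insist that the frame diagonalise anything.

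There is a real gap in your resolution of that obstacle. First-order optimality of $\cay_N(p)$ only tells you that the \emph{initial tangent vector} of the geodesic from $\cay_N(p)$ to $T_pN$ is normal to $\Cay$; the graph map $A$ agrees with this tangent vector only to first order, so you get $A\in E + O(\nm{A}^2)$ rather than $A\in E$ exactly. A second-order correction could in principle rotate the SVD basis of $\cay_N(p)^\perp$ out of the three-dimensional $\Sp(1)$-orbit, and your representation-theoretic sentence does not explain why it cannot. If you want to salvage the SVD route, the cleanest fix is to invoke the $E_3$ normal form of Proposition~\ref{3_1_almost_cayley} directly: it shows that relative to the closest Cayley plane $A$ has rank one, so the SVD is essentially trivial and the matching with a $\Spin(7)$-frame is immediate. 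Finally, note that your second-order expansion yields $\nm{v_i}\lesssim(1-\al)^{1/2}$, which is weaker than the bound $\nm{v_i}\le C_{\al_1}(1-\al)$ asserted in the statement.
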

\begin{proof}
For every $p \in N$, the planes $\cay_N(p)$ and $T_pN$ are close enough together so that $\nm{\pi_{\cay_N(p)}-\pi_{T_pN}}_{\ope} \lesssim 1-\al$, where $\pi_V$ is the orthogonal projection onto $V\subset T_pM$. Thus, if we take a $\Spin(7)$-frame $\{e_i\}_{i = 1, \dots, 8}$ which is adapted to $\cay_N$, then the tangent vectors  $\tilde{f}_i = \pi_{T_pN}( e_i)$ for $i = 1, \cdots, 4$ will be such that $\nm{e_i-\tilde{f}_i} \lesssim 1-\al$. After applying the Gram-Schmidt orthogonalisation procedure to $\tilde{f}_i$ to obtain orthogonal vectors $f_i$, we still have that $v_i = \beta_i e_i-f_i \in \cay_N^\perp$ and $\nm{v_i} \lesssim 1-\al$, for some functions $\beta_i$ coming from the Gram-Schmidt algorithm. Note that the hidden constant in our $\lesssim$-notation only depends on $\al_1$. Similarly the coefficients $\beta_i$ tend to $1$ as $\al$ tends to $1$. An analogous argument applies to the normal vectors, using the fact that we also have $\nm{\pi_{\cay_N^\perp(p)}-\pi_{\nu_p(N)}}_{\ope} \lesssim 1-\al$. 
\end{proof}

\subsection{The deformation operator}
\label{3_2_deformation_operator}
Consider a $\Spin(7)$-manifold $M$. Let $\al_0 \in (-1, 1)$ be sufficiently close to $1$ such that $\Cay_{\al_0}(T_pM, \Phi_p)$ is a tubular neighbourhood of $\Cay(T_pM, \Phi_p) $ for every $p\in M$. Let $N$ be an $\al_1$-Cayley (where $\al_1 > \al_0$) with a tubular neighbourhood $N \subset U \subset M$. In other words, we require that the exponential map $\exp: V \subset \nu(N) \longra U$ defines a diffeomorphism onto its image, where $V$ is some open subset of the normal bundle of $N$. For $v \in C^\infty(N,V)$, i.e. $v$ is a normal vector field to $N$ with values in $V$, we define $\exp_v: N \hookra M$ to be the embedding given by $\exp_v(p) = \exp(v(p))$. This is a small perturbation of $N$ inside $U$, and in fact any $C^k$-small perturbation of $N$ (where $k > 0$) can be obtained as the image of $\exp_v$ of a unique $C^k$-small normal vector field $v$. We denote this image by $N_v$. Our goal is to construct a Cayley submanifold of the form $N_v$, whenever $N$ is close to being Cayley. As we have seen, $N$ admits a canonical section $\cay_N: N \ra \Cay(M)|_N$. This section can also be seen as a four-dimensional subbundle of $TM|_N$, where each fibre is a Cayley plane. This allows us to globalise the definition of the subspace $E_\xi \subset \La^2_7V^*$ of \eqref{2_2_E_bundle} and define the four-dimensional vector bundle: 
\e
	E_{\cay} = \{ \omega \in \La^2_7: \omega|_{\cay_N} = 0\}.
\e

 Let now $\eta \in \Omega^k(N_v, F|_{N_v})$ be a differential form with values in a bundle of tensors $F \rightarrow M$ over the submanifold $N_v$. The form $\tau$ is an example of such a form. Usually when one talks about the pull-back $\exp_v^* \eta$, the result is a form in $\Omega^k(N, \exp^*_v F|_{N_v})$. However when we write $\exp_v^*\eta$ in the following, we mean a form in $\Omega^k(N,F|_{N})$, which we define as follows. Extend the normal vector field $v$ on $N$ to a vector field on $U$, where $v(\exp_v(p))$ is defined to be the parallel transport of $v(p)$ along the geodesic which starts at $p$ and has initial velocity $v(p)$. By the definition of $U$ as a tubular neighbourhood for geodesic normal coordinates, this gives rise to a smooth extension of $v$ to all of $U$. This gives rise to a flow $\phi_t: U \ra M$ which has the property that $\phi_1(p) = \exp_v(p)$. We now define the pullback of a decomposable form $\eta = \omega \ot s$ to be given by the following formula: 
\e
\label{3_2_exp}
	\exp^*_v \eta = \phi_1^*\omega  \ot \D\phi_1^* s = \exp_v^*\omega  \ot \D\phi_1^* s.
\e
Here $\phi_1^*$ and $\exp_v^*$ are the usual pullback of differential forms, and $\D \phi_1^*: F_{\exp_v(p)}\ra F_p$ is the pullback induced from the linear isomorphism $\D \phi_1: F_{p} \ra F_{\exp_v(p)}$ induced by $\phi_1$ on any tensor bundle. To summarise, we extended the vector field $v$ in order to have a pullback operation that also pulls back the bundle in which the differential form is valued. We now define the \textbf{deformation operator} associated to $N$ as follows: 
\ea
\label{3_2_def_op}
\begin{array}{rl}
F: C^\infty (N,V) & \longrightarrow C^\infty(E_{\cay}) \\
v &\longmapsto \pi_E(\star_N \exp^*_v(\tau|_{N_v})) .
\end{array}
\ea
Here $\exp_v^*\tau$ is our non-standard definition of a pull back we introduced just now. This addition is necessary, otherwise the resulting sections would be valued in different bundles for varying $v$. Furthermore $\pi_E$ denotes the orthogonal projection $\La_7^2|_N \ra E_{\cay}$, which uses the chosen Cayley section $\cay_N$ over $N$. Note that in the definition of $F$, we include a projection onto $E_{\cay}$. This is so that the operator maps between bundles of the same rank, otherwise it could not be elliptic. However one might ask whether projecting onto a subspace loses information. Heuristically, the condition $F(v) = 0$ is given by ignoring $3$ of the $7$ equations obtained from the condition $\star_N \exp^*_v(\tau|_{N_v}) = 0$, one for each of the basis vectors of $\La_7^2|_{N_v}$. The fact that this still gives enough equations to determine the Cayleyness of a plane can be expected for a generic choice of a projection to a four-dimensional subbundle of $\La_7^2|_N$, since the Grassmannian of Cayley planes is of co-dimension 4 in the Grassmannian of oriented four-planes, i.e. the Cayley condition on a four-plane can be described with four independent equations. Notice however that the projection is onto a fibre of $E_{\cay}$ over $p\in N$, whereas the equations we chose are situated at $\exp_v(p)\in N_v$, so this is only approximately true, as we will see in the next proposition. We will crucially rely on the fact that we choose $v$ to be small in the $C^1$-norm to overcome this discrepancy and prove that we can ignore $3$ equations while still retain the Cayley-detecting property of $\tau$.

\begin{prop}[Detects Cayleys]
\label{3_2_does_detect_cayley}
Let $(M, \Phi)$ be a $\Spin(7)$ manifolds with uniformly bounded Riemmann curvature tensor $\md{R} < C$. Let $0 < \al_0  < 1$ such that $\Cay_{\al_0}(T_pM, \Phi_p)$ is a tubular neighbourhood of $\Cay(T_pM, \Phi_p)$. Then there is $\al_0 < \al_1 < 1$ and a constant $E_{C_1} = E_{C_1}(\Phi, \al_0, \al_1) < 1$ such that the following holds for any $\al_1$-Cayley $N$. If $v \in C^\infty(N, V)$ is such that for all $p \in N$:
\eas
	\md{v(p)} < \frac{E_{C_1}}{\min\{\md{R(q)}: q \in B(p, \md{v(p)})\}}, \quad  \md{\nabla v(p)} < E_{C_1},
\eas
then $N_v$ is Cayley exactly when $F(v) = 0$. 
\end{prop}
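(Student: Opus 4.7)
The plan is to handle the two implications separately. The direction $N_v$ Cayley $\Ra F(v)=0$ is immediate: $\tau|_{N_v}\equiv 0$ gives zero pullback, hence zero projection, and needs no smallness assumption on $v$. For the converse I proceed in three steps: (a) show that the hypothesis forces $N_v$ itself to be $\al$-Cayley for some $\al>\al_0$, so $\cay_{N_v}$ exists; (b) convert the equation $F(v)=0$ into a pointwise bound $\nm{\pi_{E_{\cay_{N_v}(q)}}\tau_{sc}(q)}\le\ep\nm{\tau_{sc}(q)}$, where $\tau_{sc}:=\star_{N_v}\tau|_{N_v}$ and $\ep$ can be made arbitrarily small by shrinking $E_{C_1}$; and (c) invoke a pointwise algebraic fact forcing $\tau_{sc}\equiv 0$ when $T_qN_v$ is uniformly close to its nearest Cayley plane.

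For step (a), decompose $T_{\exp_v(p)}N_v=\D\exp_v|_p(T_pN)+O(|\nabla v(p)|)$. Jacobi field estimates along the geodesic $t\mapsto\exp(tv(p))$ give $\nm{\D\exp_v|_p-P_v}\lesssim|v(p)|^2\sup_{B(p,|v(p)|)}|R|$, where $P_v$ denotes parallel transport. The curvature-weighted bound on $|v(p)|$ together with $|\nabla v(p)|<E_{C_1}$ therefore yields $\nm{\pi_{T_{\exp_v(p)}N_v}-P_v\pi_{T_pN}P_v^{-1}}_{\ope}\lesssim E_{C_1}$ uniformly in $p$. Since $N$ is $\al_1$-Cayley and $|\nabla\Phi|$ is bounded on $M$, $P_v(T_pN)$ is at worst $(\al_1-O(|v|\sup|\nabla\Phi|))$-Cayley, hence so is $T_{\exp_v(p)}N_v$, and $\cay_{N_v}$ is well-defined. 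For step (b), the extension of $v$ by parallel transport produces a local diffeomorphism $\phi_1$, and applying definition \eqref{3_2_exp} together with $\star_N$ gives $F(v)(p)=c(p)\pi_{E_{\cay_N(p)}}\D\phi_1^*(\tau_{sc}(\exp_v(p)))$ with $c(p)>0$. Hence $F(v)=0$ forces $\tau_{sc}(q)\in(\D\phi_1^*)^{-1}(\La^2_-\cay_N(p))$, a three-dimensional subspace of $\La^2_7|_q$ that coincides with $\La^2_-\cay_{N_v}(q)$ up to an error $\ep=O(E_{C_1}+|v|\sup|\nabla\Phi|)$, since $\D\phi_1$ is a metric isometry and $\D\phi_1(\cay_N(p))$ differs from $\cay_{N_v}(q)$ only by that same combined error.

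Step (c) is the algebraic heart. At $q\in N_v$ set $\xi=\cay_{N_v}(q)\in\Cay$ and $\xi'=T_qN_v$, and consider $\Psi:\Gr_+(4,T_qM)\to\La^2_7|_q$, $\Psi(\eta)=\tau|_\eta$. Because $\Cay\subset\Gr_+(4,T_qM)$ is cut out transversely in codimension $4$ and $E_\xi$ is by definition the image of $\D\Psi|_\xi$ on the normal bundle of $\Cay$ at $\xi$, this restriction is a linear isomorphism onto $E_\xi$ with a uniform lower bound $c>0$ on its operator norm by the transitive $\Spin(7)$-action. Taylor-expanding gives $\tau|_{\xi'}=\D\Psi|_\xi(\xi'-\xi)+O(\nm{\xi'-\xi}^2)$ with leading term in $E_\xi$, so in particular $\nm{\tau_{sc}(q)}\lesssim\nm{\xi'-\xi}$. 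The bound from (b) combined with the uniform lower bound on $\D\Psi|_\xi$ and the fact that $\xi$ is the Cayley plane closest to $\xi'$ then yields $c\nm{\xi'-\xi}\le C\nm{\xi'-\xi}^2+C'\ep\nm{\xi'-\xi}$, and choosing $E_{C_1}$ small enough to force both $C'\ep<c/2$ and $\nm{\xi'-\xi}<c/(2C)$ is incompatible with $\xi'\ne\xi$. Hence $T_qN_v$ is Cayley at every $q$, so $N_v$ is Cayley.

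The main obstacle is the simultaneous bookkeeping of constants across (a), (b), and (c): the uniform lower bound on $\D\Psi|_\xi$ must dominate both the quadratic Taylor remainder of (c) and the linear-in-$\nm{\xi'-\xi}$ error produced in (b), while (a) has to guarantee $\nm{\xi'-\xi}$ is itself small enough on the nose. Uniformity of the lower bound follows from the transitive $\Spin(7)$-action on $\Cay$ and the pointwise isomorphism of $(T_qM,\Phi_q)$ with $(\R^8,\Phi_0)$; the curvature-weighted form of the hypothesis on $|v(p)|$ is precisely what turns the otherwise scale-invariant $C^1$-smallness into genuine uniform closeness of tangent planes over all of $M$, and without it the Jacobi estimate in (a) would fail to be dimensionally consistent.
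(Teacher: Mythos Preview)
Your argument is correct and rests on the same key algebraic fact as the paper's proof: the differential of $\tau$ at a Cayley plane $\xi$, restricted to the normal bundle of $\Cay$ in $\Gr_+(4,V)$, is an isomorphism onto $E_\xi$. Where you proceed quantitatively---bounding the $E_\xi$-component of $\tau_{sc}(q)$ from below by $c\nm{\xi'-\xi}$ via this isomorphism and from above by $\ep\nm{\tau_{sc}(q)}$ via step~(b), then playing these off---the paper instead packages the desired implication as an open condition on the space of projections $\pi_{E'}$: it shows that $\pi_E$ (orthogonal projection onto $E_{\cay_{N_v}(q)}$) satisfies it, then argues by continuity that the transported projection $\pi_{\tilde E}=(\D\phi_1^*)^{-1}\pi_E\D\phi_1^*$ stays in this open set along the path $t\mapsto\phi_t$, extracting the constant $E_{C_1}$ from a scaling observation $t_0(\la v,\la\nabla v)=\la^{-1}t_0(v,\nabla v)$. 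Your explicit estimates make the interplay of constants in (a)--(c) more visible; the paper's open-set argument trades that for less bookkeeping. One small correction: your assertion in step~(b) that ``$\D\phi_1$ is a metric isometry'' is not literally true---it deviates from parallel transport by exactly the Jacobi-type error you invoked in step~(a)---but the discrepancy is already absorbed into your $\ep$, so the logic is unaffected.
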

\begin{proof}
The submanifold $N_v$ is Cayley exactly when $\tau|_{N_v} = 0$, which is equivalent to $\star_N \exp_v^*(\tau|_{N_v}) = 0$. Thus we need to prove that $\star_N \exp_v^*(\tau|_{N_v}) = 0$ if and only if $\pi_E(\star_N \exp_v^*(\tau|_{N_v})) = 0$. Let $p \in N$ be given. Let $\al_0 <\tilde{\al}< \al_1$ be given. The set of $\tilde{\al}$-Cayley planes in a given $\Spin(7)$-vector space is an open $\ep$-neighbourhood of the set of $\al_1$-Cayley planes, for some $\ep> 0$ dependent on $\tilde{\al}$
. The same holds true globally in $\Gr^+(4, TM)$, since everything is pointwise isometric to the standard model $(\R^8, \Phi_0)$. In particular, since $T_{\exp_v(p)}N_v$ is determined from $T_pN$ by knowing only $v(p)$ and $\nabla v(p)$, this implies that if $\md{v(p)}$ is small compared to the curvature at any point within a distance $\md{v(p)}$ and $\md{\nabla v(p)}$ is sufficiently small at every point $p \in N$, say smaller than some $\tilde{E}_{C_1}$, then $N_v$ is still $\tilde{\al}$-Cayley. Thus we still have a  canonical Cayley section $\cay_{N_v}$ over $N_v$. We then apply Proposition \ref{3_1_local_coord} to choose a $\Spin(7)$-frame $\{e_i\}_{i = 1, \dots, 8}$ adapted to $\cay_{N_v}$ such that an orthonormal basis of $T_{\exp_v(p)}N_v$ is given by $f_i = \beta_i e_i + v_i$, where $\beta_i:U \ra [0, 1]$  are smooth functions. Under $(\D \exp_v(p))^{-1}$ these in turn get mapped to a local frame $\{f'_i\}_{i = 1, \dots, 4}$  of $TN$, which are not necessarily orthogonal. We now have that: 
\ea
\det (f'_i) \star_N \exp_v^*(\tau|_{N_v})(p) &= \exp_v^*(\tau|_{N_v})_p(f'_1,f'_2,f'_3,f'_4)   \nonumber \\
&= \D\phi_1^*\tau_{\exp_v(p)}(f_1,f_2,f_3,f_4) \nonumber,
\ea
where $\det (f'_i)$ is the volume of the $4$-parallelepiped spanned by the $f'_i$. We similarly get:
\ea
\det (f'_i) \pi_E  \star_N \exp_v^*(\tau|_{N_v})(p)& = \pi_E \D\phi_1^* \tau_{\exp_v(p)}(f_1,f_2,f_3,f_4) \nonumber \\
&= \D\phi_1^* \pi_{\tilde{E}} \tau_{\exp_v(p)}(f_1,f_2,f_3,f_4) \nonumber.
\ea
Here $\pi_E$ is the orthogonal projection onto $(E_{\cay})_{p}$ and $\pi_{\tilde{E}} = (\D\phi_1^*)^{-1}\pi_{E} \D\phi_1^*$ is a not necessarily orthogonal projection onto a subspace $\tilde{E} \subset \La_7^2|_{\exp_v(p)}$. As $\D \phi_1^*$ is an isomorphism, it suffices to show that: 
\eas
\pi_{\tilde{E}} \tau_{\exp_v(p)}(f_1,f_2,f_3,f_4) = 0\Ra \tau_{\exp_v(p)}(f_1,f_2,f_3,f_4) = 0.
\eas
Let $q = \exp_v(p)$ for simplicity. Consider $\tau_q : \Gr^+(4, T_{q}M) \ra (\La^2_7)_{q} M$ as a smooth map. As $\tau_q$ vanishes on a $12$-dimensional submanifold in the $16$-dimensional manifold $\Gr^+(4, T_qM)$, its derivative  at a Cayley can have rank at most $4$, in other words $\im D\tau_q \subset (\La^2_7)_{q} M$ is four-dimensional at most. Now we use the coordinate expression for $\tau_q$ in \eqref{2_1_tau_coord} with regards to the frame $\{e_i\}_{i=1, \dots, 8}$, as well as the special form $f_i = \beta_i e_i + v_i$ with $v_i \perp \cay_{N_v}$ to see that $\im D\tau_q = E$. This is because in the $E^\perp$-component of $\tau_q$ (which is spanned by $e_1 \times e_2$, $e_1 \times e_3$ and $e_1 \times e_4$) varies quadratically in both $e_i$ and $v_i$. This implies that if $f_1,f_2,f_3$ and $f_4$ span a sufficiently small perturbation of $\cay_{N_v}(q)$, as is the case in our setting by the $\tilde{\al}$-Cayleyness of $N_v$, then $\pi_E\tau(f_1, f_2, f_3, f_4) = 0$ can only occur if $\tau(f_1, f_2, f_3, f_4) = 0$, as $E$ and $E^\perp$ are transversal subspaces. In fact, if $E'$ is any four-dimensional subspace and $\pi_{E'}$ is any projection onto $E'$, not necessarily orthogonal, such that $\ker_{E'} \cap E = \{0\}$, then the implication: 
\e
\label{3_2_tau_condition}
\pi_{E'} \tau_{\exp_v(p)}(f_1,f_2,f_3,f_4) = 0\Ra \tau_{\exp_v(p)}(f_1,f_2,f_3,f_4) = 0 
\e
holds, given that the $f_i$ span a sufficiently small perturbation of $\cay_{N_v}$. In other words for a given $E'$, if $\tilde{\al}$ is sufficiently close to $1$, the implication above will be true. Moreover, having fixed a neighbourhood of $\cay_{N_v}$ for the $f_i$ to vary in, satisfying the above implication is clearly an open condition on $\pi_{E'}$.

Let now an $\tilde{\al}$-Cayley plane $\xi_p \subset T_pM$ be given, with associated canonical Cayley plane $\cay_p$. If we restrict the $f_i$ to vary so that the planes they span are in a sufficiently small neighbourhood of $\cay_p \in \Gr^+(4, T_pM)$, then the implication \eqref{3_2_tau_condition} holds for an open subset of maps $\pi_{E'}$ which contains $\pi_E$. Note that this open subset of maps $\pi_{E'}$ depends on the neighbourhood of $\cay_p$ that we fixed. Moreover, this is true at every point $p \in M$. Denote this open subset of projections which contains $\pi_E$ by $\Pi$, and let the fibre over the point $p$ be $\Pi_p$. It thus remains to show that $\pi_{\tilde{E}}\in \Pi_q$. For this notice that from a fixed $v(p) $ and $\nabla v(p)$, we can determine $\D\phi_t : T_pM \ra T_q M$. This is because first order deviations of geodesics at a later time depend solely on the first order deviation of geodesics at an earlier time. Thus we get a family of maps $\pi^t_{\tilde{E}}$ which act on the fibre of $\La^2_7$ over the point $\phi_t(p)$. We have $\pi^0_{\tilde{E}} = \pi_E$, thus  $\pi^0_{\tilde{E}}\in \Pi_p$. Next, by what we have said above it is clear that $\pi^t_{\tilde{E}}$ being contained in $\Pi_{\exp_{tv}(p)}$ is an open condition in $t$. Thus for a fixed $v(p)$ and $\nabla v(p)$ there is a $t_0 = t_0(v(p), \nabla v(p))$ such that for all $0 \le t < t_0$ we have $\pi^t_{\tilde{E}} \in \Pi_{\exp_{tv}(p)}$. Now note that for $\la > 0$:
\eas
t_0(\la v(p), \la \nabla v(p)) = \la^{-1}t_0( v(p), \nabla v(p)).
\eas
This implies that for $\md{v(p)}\cdot \md{R} +\md{\nabla v(p)}$ sufficiently small we have $\pi_{\tilde{E}}\in \Pi_q$. Now because the Riemann curvature tensor is bounded on $M$, we can find a constant $E_{C_1} < \tilde{E}_{C_1}$ such that whenever $\md{v(p)}\cdot \md{R} +\md{\nabla v(p)} < E_{C_1}$, then the implication \eqref{3_2_tau_condition} holds at $q$. 
\end{proof}

Next, we are interested in studying the linearisation of this linear operator and to show that it is elliptic at the zero section.

\begin{prop}[Linearisation]
Let $N$ be an $\bar{\al}$-Cayley with $\bar{\al} > \al_1$ with deformation operator $F$, such that $\Phi|_N = \alpha \dvol_N$. Let $p\in N$ and suppose that near $p$ we have a $\Spin(7)$-frame $\{e_i\}_{i=1,\dots, 8}$ and a frame $\{f_j\}_{j = 1, \dots, 8}$ which respects the splitting $TM = TN \op \nu(N)$ as in Proposition \ref{3_1_local_coord}. The linearisation of $F$ at $0$ is then given by: 
\ea
\label{3_2_linearised_def_new}
\begin{array}{rl}
D: C^\infty (\nu(N))  &\longrightarrow C^\infty(E_{\cay}),  \\
v &\longmapsto \pi_E(\beta\sum_{i=1}^4 f_i \times \nabla^\perp_{f_i} v  \\
&+ \sum_{i=1}^4\sum_{j=1}^8 \beta_{ij} f_{j} \times \nabla^\perp_{f_i} v \\
&+ \nabla_v \tau (f_1, f_2, f_3, f_4)),
\end{array}
\ea
where $\nabla^\perp$ is the Levi-Civita connection on the normal bundle $\nu(N)$, and $\nabla^\top_{f_i} v$ is the projection of $\nabla_{f_i}v$ to $TN$. Here $\beta, \beta_{ij}$ are smooth functions such that $1-\beta \ge C_{\al_1}(1-\al)$ and $\md{\beta_{ij}} \le C_{\al_1}(1-\al)$. The constant $C_{\al_1}$ only depends on the the choice of $\al_1$. When $N$ is a Cayley, then the second line is the Dirac operator associated to a Cayley from Proposition \ref{2_2_dirac_bundle}. Furthermore the third line vanishes in this case. Finally the fourth line vanishes if $(M, \Phi)$ is torsion-free.
\end{prop}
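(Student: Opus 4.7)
The plan is to differentiate the map $v \mapsto \pi_E(\star_N \exp_v^*(\tau|_{N_v}))$ at $v = 0$, using the nonstandard pullback \eqref{3_2_exp}, and then reorganize the result on the frame $\{f_i\}$ of Proposition \ref{3_1_local_coord} into the three pieces appearing in the statement. First I would fix $p \in N$ and adapted frames $\{e_i\}$ and $\{f_j\}$ as in Proposition \ref{3_1_local_coord}, let $\widetilde v$ denote the extension of $v$ by parallel transport along radial geodesics that is used to define $\exp_v^*$, and let $\phi_t$ be its flow. Because $\phi_t^*$ acts on both the form and the bundle factor of $\tau \in \Omega^4(M, \Lambda^2_7 T^*M)$ via $D\phi_t$, the derivative
\[
 \tfrac{d}{dt}\big|_{t=0} \exp_{tv}^*(\tau|_{N_{tv}}) \;=\; \mathcal{L}^{\nabla}_{\widetilde v} \tau \big|_N
\]
is a covariant Lie derivative, which can be computed fibrewise via $(\mathcal{L}^{\nabla}_X \tau)(X_1,\ldots,X_4) = (\nabla_X \tau)(X_1,\ldots,X_4) - \sum_i \tau(X_1, \ldots, \nabla_{X_i} X, \ldots, X_4)$. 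Applied with $X = \widetilde v$ and $(X_1,\ldots,X_4) = (f_1,\ldots,f_4)$ this splits $Dv$ into a torsion-type piece $(\nabla_v \tau)(f_1,\ldots,f_4)$ and a principal piece $\sum_i \tau(f_1, \ldots, \nabla_{f_i} v, \ldots, f_4)$.

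Next I would analyze the principal piece by expanding $\nabla_{f_i} v = \nabla^\perp_{f_i} v + \nabla^\top_{f_i} v$. Since $\tau(f_1,\ldots,f_4) = 0$ up to error of size $1-\alpha$, the tangential part $\nabla^\top_{f_i} v$ contributes only $O(1-\alpha)$ terms that will be absorbed into the $\beta_{ij}$-summand. For the normal part, I would use the coordinate expression \eqref{2_1_tau_coord} for $\tau$ together with the identification of $\tau$ with the cross-product that underlies Proposition \ref{2_2_dirac_bundle} to rewrite $\tau(f_1, \ldots, \nabla^\perp_{f_i} v, \ldots, f_4)$ as $f_i \times \nabla^\perp_{f_i} v$ multiplied by a polynomial in the $\beta_j$, plus mixed terms $f_j \times \nabla^\perp_{f_i} v$ each containing at least one factor $v_k$ or $1-\beta_k$. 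The first group assembles to $\beta \sum_i f_i \times \nabla^\perp_{f_i} v$ and the second to $\sum_{i,j} \beta_{ij} f_j \times \nabla^\perp_{f_i} v$; the size control on $\beta$ and $\beta_{ij}$ stated in the proposition is then inherited directly from the corresponding bounds on $v_j$ and $1-\beta_j$ supplied by Proposition \ref{3_1_local_coord}, with the constant depending only on $\alpha_1$.

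Finally I would verify the three degenerate-case statements: (i) when $N$ is exactly Cayley one has $\alpha=1$, so $v_j = 0$, $\beta_j = 1$, $f_i = e_i$, whence $\beta = 1$, $\beta_{ij} = 0$, and the principal term becomes $\sum_{i=1}^4 e_i \times \nabla^\perp_{e_i} v$, which is the Dirac operator $\slashed{D}$ of Proposition \ref{2_2_dirac_bundle}; (ii) still in that case the third-line contribution vanishes because every $\beta_{ij}$ carries a factor of $(1-\alpha)$; (iii) in the torsion-free case $\nabla\Phi = 0$ implies $\nabla\tau = 0$, since $\tau$ is constructed algebraically from $\Phi$ and the induced metric, and hence $\nabla_v \tau = 0$. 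The main obstacle is the bookkeeping in the second paragraph: one must correctly combine the two parts of the covariant Lie derivative, expand each $f_i$ in the $e_i$-frame, and then sort the resulting monomials in $\beta_j$ and $v_k$ into the Dirac coefficient $\beta$, the correction coefficients $\beta_{ij}$, and the $\nabla_v\tau$ term, without letting interior derivatives of $\tau$ (which should belong to $\nabla_v \tau$) leak into the other pieces.
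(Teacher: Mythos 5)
Your strategy coincides with the paper's: differentiate $\pi_E(\star_N\exp_{tv}^*(\tau|_{N_{tv}}))$ at $t=0$ via the flow of the extended field, use torsion-freeness of $\nabla$ to split the derivative into $(\nabla_v\tau)(f_1,\dots,f_4)$ plus the principal piece $\sum_{i}\tau(f_1,\dots,\nabla_{f_i}v,\dots,f_4)$, decompose $\nabla_{f_i}v=\nabla^\perp_{f_i}v+\nabla^\top_{f_i}v$, and use the frame of Proposition \ref{3_1_local_coord} together with the coordinate expression \eqref{2_1_tau_coord} to assemble the coefficients $\beta$ and $\beta_{ij}$; the three degenerate cases are argued exactly as in the paper. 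One small slip: the identity you quote for the covariant Lie derivative has the wrong sign; for torsion-free $\nabla$ it reads $(\Lie_X\tau)(X_1,\dots,X_4)=(\nabla_X\tau)(X_1,\dots,X_4)+\sum_i\tau(X_1,\dots,\nabla_{X_i}X,\dots,X_4)$, which is the version you in fact use in the decomposition that follows.

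More substantively, your disposal of the tangential part does not yield the formula being proved. Since $\nabla^\top_{f_i}v$ is tangent and the $f_k$ are orthonormal, alternation leaves only its $f_i$-component, so $\tau(f_1,\dots,\nabla^\top_{f_i}v,\dots,f_4)=g(\nabla_{f_i}v,f_i)\,\tau(f_1,\dots,f_4)=-g(v,\nabla_{f_i}f_i)\,\tau(f_1,\dots,f_4)$, a term of order zero in $v$ (it sees $v$ through the second fundamental form, not through $\nabla^\perp v$). Such a term cannot be absorbed into the $\beta_{ij}$-summand, whose entries $f_j\times\nabla^\perp_{f_i}v$ are first order in $v$, and the statement has no zeroth-order slot other than $\nabla_v\tau$. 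The paper's proof instead discards this term outright, asserting that the tangential component does not contribute (it is proportional to $\tau(f_1,\dots,f_4)$ and hence vanishes when $N$ is Cayley, the case the later applications rely on). So to obtain the proposition as stated you must argue that this contribution is absent or negligible, not relocate it into the $\beta_{ij}$ coefficients.
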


\begin{proof}
We have for $v \in C^\infty (\nu(N)) $:
\ea
	\D F[v] &= \frac{\d}{\dt}\bigg\rvert_{t = 0} F(tv) \nonumber \\
	 &= \pi_E \star_N \frac{\d}{\dt}\bigg\rvert_{t = 0} \exp^*_{tv}(\tau_{N_v})\nonumber .
\ea
Note that $\phi_t = \exp_{tv}$ is the flow of a vector field on a neighbourhood of $N$ which extends $v$, as we discussed before the definition of the deformation operator in equation \eq{3_2_def_op}. Thus we get from the definition of the Lie derivative, the expression \eq{3_2_exp} for our non-standard definition of pull-back, and the definition of the Levi-Civita covariant derivative on forms: 
\ea
	\D F[v] &= \pi_E \star_N \Lie_v\tau 
	=  \pi_E \Lie_v( \tau(f_1, f_2, f_3, f_4))  \\
	&=  \pi_E \left(\tau(\nabla_{f_1} v, f_2, f_3, f_4) +\tau(f_1, \nabla_{f_2} v, f_3, f_4)\right.\nonumber \\
	&\left.+\tau(f_1, f_2, \nabla_{f_3} v, f_4)+\tau(f_1, f_2, f_3, \nabla_{f_4} v)+\nabla_v\tau(f_1, f_2, f_3, f_4)\right)  \nonumber .
\ea

Here the last line uses the fact that $\nabla$ is torsion-free. Now consider the term $\tau(\nabla_{f_1} v, f_2, f_3, f_4)$. Since we have the orthogonal splitting $TM|_N =  TN \op \nu(N)$, the connection $\nabla$ on $TM|_N$ splits into $\nabla^\top + \nabla^\perp$, where $\nabla^\top = \pi_{TN} \circ \nabla$ and $\nabla^\perp = \pi_{\nu(N)} \circ \nabla$. Thus in particular: 
\ea
\tau(\nabla_{f_1}v, f_2, f_3, f_4) &= \tau(\nabla^\perp_{f_1}v, f_2, f_3, f_4) + \tau(\nabla^\top_{f_1}v, f_2, f_3, f_4) \nonumber \\
&= \nabla^\perp_{f_1} \times ( f_2 \times f_3\times f_4) + \tau(\nabla^\top_{f_1}v, f_2, f_3, f_4)\nonumber \\
&= \nabla^\perp_{f_1} \times ( f_2 \times f_3\times f_4)\nonumber  \\
&= \nabla^\perp_{f_1} \times ( \beta_2 e_2 \times \beta_3 e_3\times (\beta_4 e_4 + v_4)) \nonumber \\
&= \nabla^\perp_{f_1} \times ( \beta_2 \beta_3 \beta_4 e_1 + \sum_{j=1}^8\tilde{\beta}_{1j} f_j) \nonumber \\
&= \nabla^\perp_{f_1} \times ( \beta f_1 + \sum_{j=1}^8\beta_{1j} f_j). \nonumber \
\ea
For the third line we used the fact that $\nabla_w v(p) \perp T_pN $, since $v \in C^\infty(\nu(N))$. For the rest we use the definition of $\tau$ as well as the coordinate representation of $\Phi_0$. Here $\beta = \beta_1 \beta_2 \beta_3 \beta_4$. The computations for the remaining three terms are similar and lead to the claimed formula.

If $N$ is Cayley, then the second line corresponds exactly to the Dirac operator associated to a Cayley from Propositions \ref{2_2_dirac_bundle}, since in this case $\alpha \equiv 1$, which implies $\beta \equiv 1$, and the cross product of $f_i$ and $\nabla_{f_i} v$ already lies in $E_{\cay} = E$, so no further projection is required. In the same situation we see that the third line vanishes, as all the $\be_{ij}$ vanish. Finally, $\tau$ is covariantly constant if $\Phi$ is torsion-free, and this the last line vanishes in this case. To see this note that we can choose a local $\Spin(7)$-frame which is covariantly constant, from which it is clear that the cross and triple product send parallel sections to parallel sections. Since $\tau$ is defined in terms of these two products, the same must hold true for $\tau$. From this it is immediate that $\tau$ is covariantly constant. 
\end{proof}

\begin{prop}[Ellipticity]
\label{3_2_ellipticity}
There is a universal constant $\al_2 > \al_1$, such that if $N$ is an $\al_2$-Cayley submanifold, then its associated deformation operator is elliptic at the zero-section.
\end{prop}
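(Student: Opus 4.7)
The plan is to compute the principal symbol of the linearised operator $D$ from \eqref{3_2_linearised_def_new}, identify its value on a genuine Cayley with the symbol of the Dirac operator from Proposition~\ref{2_2_dirac_bundle}, and then argue that ellipticity persists under the small deformation controlled by $1-\al$.

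First I would observe that the third line $\nabla_v \tau(f_1,f_2,f_3,f_4)$ in \eqref{3_2_linearised_def_new} is of order zero in $v$ and so does not contribute to the principal symbol. Using that $\{f_1,\dots,f_4\}$ is a local orthonormal frame of $TN$, so that $\sum_{i=1}^{4}\xi(f_i)f_i = \xi^\sharp$, the symbol in direction $\xi \in T_p^*N$ takes the form
\begin{equation*}
\sigma_D(\xi)(v) = \pi_E\Bigl(\beta\, \xi^\sharp \times v + \sum_{i=1}^{4}\sum_{j=1}^{8} \beta_{ij}\, \xi(f_i)\, f_j \times v\Bigr).
\end{equation*}
When $N$ is genuinely Cayley, i.e. $\al \equiv 1$, one has $\beta = 1$, $\beta_{ij} = 0$, $\cay_N = TN$ and $E_{\cay}=E$, so the symbol reduces to $v \mapsto \xi^\sharp \times v$. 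This is precisely the Clifford multiplication of the Dirac bundle $(\slashed{S},c,h,\nabla)$, which by Proposition~\ref{2_2_dirac_bundle} is an isometry $\nu(N)_p \to E_p$ for every unit covector. This elliptic model is what I would perturb from.

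For an $\al_1$-Cayley, Proposition~\ref{3_1_local_coord} supplies the uniform bounds $|1-\beta|, |\beta_{ij}| \le C_{\al_1}(1-\al)$. Consequently $\sigma_D(\xi)$ differs from the unperturbed Clifford symbol $v\mapsto \xi^\sharp\times v$ by an operator of norm at most $C(\al_1)(1-\al)|\xi|$. Since invertibility of linear maps between finite-dimensional spaces is open, and the norm of the inverse of Clifford multiplication by a unit covector is uniformly bounded in the basepoint (a pointwise $\Spin(7)$-equivariant statement reducing to the single compact computation on $\R^8$), there exists a universal threshold $\al_2 \in (\al_1,1)$ such that $\sigma_D(\xi)$ is invertible for every unit $\xi$ whenever $\al(p) > \al_2$.

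The only genuine obstacle is uniformity: the threshold $\al_2$ must be independent of both the basepoint $p\in N$ and the submanifold $N$ itself. This is exactly what the construction supplies, since the constants $C_{\al_1}$ in Proposition~\ref{3_1_local_coord} depend only on $\al_1$, and the lower bound on $|\xi^\sharp \times v|/|v|$ for unit $\xi$ is intrinsic to the $\Spin(7)$-structure at the modelled vector space. Combining these yields a single $\al_2$ that works globally, completing the proof.
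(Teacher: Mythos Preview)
Your proposal is correct and follows essentially the same route as the paper: compute the principal symbol from \eqref{3_2_linearised_def_new}, identify it at a Cayley with Clifford multiplication (the Dirac symbol of Proposition~\ref{2_2_dirac_bundle}), and then use openness of invertibility together with a uniformity argument to extract a universal $\al_2$. The only cosmetic difference is that the paper phrases the perturbation step as a continuity/openness argument directly on $\Gr_+(4,\R^8)$ (where compactness supplies uniformity for free), whereas you obtain the same uniformity by invoking the explicit constants $C_{\al_1}$ from Proposition~\ref{3_1_local_coord}; both yield the same conclusion.
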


\begin{proof}
Let $D$ be the linearisation of the deformation operator at $0$. From the the previous proposition we see that the symbol at $p \in N$  is given in an adapted frame as follows, where $\xi \in T^*_pN$ and $\xi_i = \xi(f_i)$: 

\ea
\si_\xi(D) &= \pi_E \left(\beta \sum_{i=1}^4 f_i \times \xi_i v + \sum_{i=1}^4\sum_{j=1}^8 \beta_{ij} f_{j} \times \xi_i v \right) \nonumber \\
&=  \pi_E \left( \left(\beta\xi^\sharp +\sum_{i=1}^4\sum_{j=1}^8 \beta_{ij} f_{j} \times \xi_i \right)\times  v \right).
\ea

Now if the $f_i$ span a Cayley plane, then this is exactly the symbol of the Dirac operator associated to a Cayley from Proposition \ref{2_3_clifford_mult}, and thus invertible. As we perturb the plane continuously, we see that both the product $\pi_E(\Box \times \Box)$ and the expression inside the bracket vary continuously. Thus invertibility of the composed expression is an open condition on the set of four-planes, which holds at Cayley planes. Since $\al$-Cayley planes for $\al \in [\al_1,1)$ form a neighbourhood basis of the Cayley planes in $\Gr_+(4, \R^8)$ we see that there is a universal $\al_2 > \al_1$ such that whenever the $f_i$ span a $\al_2$-Cayley, the symbol is invertible, and thus $D$ is elliptic.  
\end{proof}

\section{Deformation theory of Cayley submanifolds}

\subsection{Compact case}

\label{3_4_deformation_map}
We will now study the deformation theory of a compact Cayley submanifold in a almost $\Spin(7)$-manifold $(M, \Phi)$, where we also allow the $\Spin(7)$-structure to vary in a finite-dimensional smooth family $\{\Phi_s\}_{s \in \cS} \subset \mathscr{A}(M)$, with $\Phi = \Phi_{s_0}$ for some $s_0 \in \cS$. As we are only interested in the local deformation theory, we can and will assume that $M$ is compact.  The analysis will be done for almost Cayleys, which will be useful later when we will desingularise Cayleys with conical singularities. 

Let $N \subset (M, \Phi)$ be $\al$-Cayley with $\al$ strictly bigger than $\al_2$ from Proposition \ref{3_2_ellipticity}, so that the linearised deformation operator $D_N$ is elliptic. It is then clear that it will remain elliptic for small $C^1$-perturbations of $N$ and smooth perturbations of $\Phi$. For $\ep > 0$ denote by $\nu_\ep(N)$ the $\ep$-neighbourhood of the zero section in the normal bundle $\nu(N) = TM|_N / TN$, as measured by $g_{\Phi}$. For $\ep > 0$ sufficiently small its image under the exponential map corresponding to $\Phi_s$ will be a tubular neighbourhood of $N$, for any $s \in \cS$, after potentially restricting $\cS$ to a neighbourhood to $s_0$. Let $C^\infty(\nu_\ep (N)) \subset C^\infty(\nu (N))$ denote the subset of sections which take value in $\nu_\ep(N)$. Consider now the perturbation $N_v = \exp_v(N)$ of this compact almost Cayley. Its failure to be Cayley is measured by the deformation operator \eqref{3_2_def_op}, and is a section of $E_{\cay}$. We now mildly extend the results from the previous section for the compact case: 

\begin{prop}
\label{3_4_cpt_cayley_linearisisation}
Let $N$ be an almost Cayley submanifold of $(M, \Phi)$ almost $\Spin(7)$. Let $\{\Phi_s\}_{s \in \cS} \subset \mathscr{A}(M)$ be a smooth finite dimensional family of $\Spin(7)$ structures such that $\Phi = \Phi_{s_0}$ for some $s_0 \in \cS$. Consider the map: 
\[\begin{array}{rl}
F: C^\infty (\nu_\ep (N)) \times \cS & \longrightarrow C^\infty(E_{\cay}) \\
v &\longmapsto \pi_E(\star_N \exp^*_v(\tau_s|_{N_v})). 
\end{array}
\]
Here $\exp, \star$ and $\pi_E$ are induced from $\Phi$. After shrinking $\cS$, there is a constant $C > 0$ which depends on $M, \Phi$ and on the injectivity radius of $N$, such that if furthermore $\nm{v}_{C^1} < C$, then $N_v$ is Cayley for $\Phi_s$ exactly when $F(v, s) = 0$. Moreover, $F(\cdot, s)$ is elliptic at the zero section for every $s \in \cS$.
\end{prop}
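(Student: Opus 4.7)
The plan is to extend the two key results from Section 3 (Propositions on detection and ellipticity) to incorporate the parameter $s \in \cS$. The strategy rests on the observation that every structure used in those proofs --- the canonical Cayley section $\cay_N$, the projection $\pi_E$, the symbol of the linearised operator, the curvature bound on $M$, the injectivity radius of $N$ --- depends continuously on $s$, so that shrinking $\cS$ around $s_0$ preserves the conclusions uniformly on the compact submanifold $N$.

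First I would establish the detection property. For the base parameter $s = s_0$, Proposition \ref{3_2_does_detect_cayley} directly applies: compactness of $N$ gives a uniform curvature bound and a uniform lower bound on the injectivity radius, so the pointwise constant $E_{C_1}$ there becomes a uniform constant $C$, and $\nm{v}_{C^1} < C$ ensures $N_v$ is $\Phi$-Cayley iff $F(v,s_0) = 0$. To extend to $s$ near $s_0$, one checks that the ingredients of that proof all survive small perturbations of $\Phi$: since $\tau_s$ varies smoothly in $s$, the zero locus $\{\tau_s = 0\} \subset \Gr_+(4, T_qM)$ remains a smooth $12$-dimensional submanifold close to $\{\tau = 0\}$, and $\im D\tau_s$ at a $\Phi_s$-Cayley plane remains close to $\im D\tau = E$ at the nearby $\Phi$-Cayley plane. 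Consequently the bundle $E_{\cay}$, defined using $\Phi$, remains transverse to the set of $\Phi_s$-Cayley planes in a uniform neighbourhood, and the open condition on projections that concluded the proof of Proposition \ref{3_2_does_detect_cayley} persists. Shrinking $\cS$ ensures the pulled-back projection $\pi_{\tilde E}$ still lies in the open set $\Pi$ of admissible projections. Hence $F(v,s) = 0$ is equivalent to $\tau_s|_{N_v} = 0$, i.e.\ to $N_v$ being $\Phi_s$-Cayley, under the same $C^1$-smallness condition on $v$, after possibly decreasing $C$.

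For the ellipticity statement, I would compute the symbol $\si_\xi(D_{N,s})$ of the linearisation $D_{N,s}$ of $F(\cdot,s)$ at the zero section. By the formula \eqref{3_2_linearised_def_new} --- whose derivation never used $\Phi_s$-specific input except through $\tau_s$, the metric $g_{\Phi_s}$, and the adapted frame --- the symbol depends smoothly on $s$. At $s = s_0$, Proposition \ref{3_2_ellipticity} and the hypothesis $\al > \al_2$ yield invertibility of $\si_\xi(D_{N,s_0})$ for every $\xi \neq 0$ and every $p \in N$. Compactness of $N$ and of the unit cosphere bundle makes this invertibility uniform. Continuity of the symbol in $s$ then preserves invertibility on a neighbourhood of $s_0$; shrinking $\cS$ one last time delivers ellipticity of $F(\cdot,s)$ at the zero section for every $s \in \cS$.

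The main obstacle will be the bookkeeping in the first step: the projection $\pi_E$ and Hodge star $\star_N$ in the definition of $F$ are built from $\Phi$, not $\Phi_s$, so the argument of Proposition \ref{3_2_does_detect_cayley} must be run with this mismatch. Concretely, one has to check that after pulling back by $\D\phi_1$ the resulting (non-orthogonal) projection onto the image of $E_{\cay}$ still belongs to the open set of projections transverse to $\{\tau_s = 0\}$, uniformly in $s$ on a shrunken $\cS$. This is a continuity statement that follows from smooth dependence of $\tau_s$ on $s$, but it is the point where all the pieces --- compactness of $N$, smoothness of the family $\{\Phi_s\}$, and the openness conditions on $\pi_{E'}$ identified in the proof of Proposition \ref{3_2_does_detect_cayley} --- must be combined to produce a single constant $C$ depending only on $M$, $\Phi$, and $\inj(N)$.
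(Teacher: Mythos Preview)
Your proposal is correct and follows essentially the same approach as the paper: the paper's proof is a two-sentence sketch observing that the only non-trivial issue is the mismatch between $\Phi$ and $\Phi_s$ in defining $\exp$, $\star_N$ and $\pi_E$, and that this is resolved by shrinking $\cS$ using compactness of $N$ and openness of the detection and ellipticity conditions. You have simply unpacked this sketch into a detailed argument, correctly identifying the transversality of $E_{\cay}$ to $\{\tau_s=0\}$ and the symbol continuity as the concrete open conditions in play.
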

\begin{proof}
The only non-trivial issue is that for $s \ne s_0$, we used the exponential map, Hodge star and $\pi_E$ associated to $\Phi_{s_0}$, not $\Phi_s$.  However by shrinking $\cS$ we can insure that both ellipticity and the Cayley detecting property will still be maintained, by the compactness of $N$, and the openness of these conditions in the space of all smooth $\Spin(7)$-structures. 
\end{proof}
This is a generalisation of Proposition 2.3 from Kim Moore's paper \cite{mooreDeformationsConicallySingular2019}, where a more direct proof can be found for the case where $N$ is Cayley and the $\Spin(7)$-structure is fixed. This in turn is based on the work by McLean in his foundational paper \cite{mcleanDeformationsCalibratedSubmanifolds1998}. We will now consider $F$ and $D = \D F(0, s_0)$ as maps between Banach manifolds. For this, we need some auxiliary results. 
\begin{lem}
\label{3_4_F_Q}
Let $N, M, \Phi, \{\Phi_s\}_{s \in \cS}, F$ be as in Proposition \ref{3_4_cpt_cayley_linearisisation}. The map $F$ has the following form, for $v \in C^\infty(\nu_\ep(N))$ and $p \in N$:
\eas
	F(v, s)(p) =& \boF(p, v(p), \nabla v(p), T_pN, s) \\
	=&  F(0, s)(p) + (D_sv)(p) + \boQ(p, v(p), \nabla v(p), T_p N, s).
\eas 
Here $D_s$ is the linearisation of $F(\cdot, s)$ at $0$ and $\boF,\boQ$ are smooth fibre-preserving maps: 
\eas
\boF, \boQ: TM_\eps \times_N (T^*M \otimes TM )_\eps \times \Cay_{\al}(M) \times \cS  \longra \boE_{\cay},
\eas 
where $\boE_{\cay} = \{(p, \pi, e): (p, \pi) \in \Cay_{\al}(M), e \in E_\pi\}$ and $\al$ is sufficiently large. Here we see both sides as fibre bundles over $\Cay_{\al}(M) \times \cS$. We define the map $Q: C^\infty(\nu_\ep(N)) \times \cS \longra C^\infty(E_{\cay})$ as $Q(v, s) = F(v, s)-D_sv$.
\end{lem}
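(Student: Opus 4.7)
The strategy is to exhibit $F$ as a first-order \emph{pointwise} nonlinear operator: its value at $p$ depends only on the $1$-jet of $v$ at $p$ together with the geometric data $(p, T_pN, s)$. Once this pointwise structure is established, the decomposition $F = F(0,s) + D_s v + Q$ is just the first-order Taylor expansion of the resulting fibre map in the $1$-jet variables about zero.

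I would begin by analysing each factor in $F(v,s)(p) = \pi_E(\star_N \exp_v^*(\tau_s|_{N_v}))(p)$. The basepoint $\exp_v(p) = \exp_p(v(p))$ depends only on $v(p)$ and the exponential map of $g_\Phi$. For the differential $\D(\exp_v)|_p : T_pN \ra T_{\exp_v(p)}N_v$, a Jacobi-field computation along the geodesic $t \mapsto \exp_p(tv(p))$ shows that for $X \in T_pN$ one has $\D(\exp_v)|_p(X) = J(1)$, where $J$ is the Jacobi field with initial data $J(0) = X$ and $J'(0) = \nabla_X v|_p$. Similarly $\D\phi_1$ at $p$, on directions transverse to $T_pN$, depends only on the $1$-jet of $v$ at $p$: since by construction the extension of $v$ off $N$ is by parallel transport along the $v(p)$-geodesics, the normal directional derivatives of the extended field at $p$ reduce to $(v(p), \nabla v|_p)$ via the parallel transport equation. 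Because $\tau_s$ is a smooth section of $\La^4 T^*M \ot \La^2_7 T^*M$ depending smoothly on $s$, the non-standard pullback $\exp_v^*(\tau_s|_{N_v})_p$ is therefore a smooth function of $(p, v(p), \nabla v|_p, T_pN, s)$.

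Next, the Hodge star $\star_N$ at $p$ and the projection $\pi_E$ onto $E_\cay|_p$ depend only on $T_pN$, through the canonical Cayley section $\cay_N(p)$, which itself varies smoothly over $\Cay_\al(M)$ by the tubular neighbourhood property of Proposition~\ref{3_1_almost_cayley}. Composing all these smooth dependencies yields the desired smooth fibre-preserving map $\boF : TM_\eps \times_N (T^*M \ot TM)_\eps \times \Cay_\al(M) \times \cS \longra \boE_{\cay}$ satisfying $F(v,s)(p) = \boF(p, v(p), \nabla v|_p, T_pN, s)$.

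Finally, for the decomposition I would apply Taylor's theorem with integral remainder to $\boF$ in the fibre variables $(v(p), \nabla v|_p)$ about $(0,0)$, with $(p, T_pN, s)$ held fixed. The constant term equals $F(0,s)(p)$, the linear term equals $(D_sv)(p)$ by the very definition of the linearisation (matching the explicit formula from the preceding proposition), and the remainder defines a smooth fibre-preserving map $\boQ$ of the required form. The main bookkeeping obstacle is verifying that no second-order derivatives of $v$ enter at any stage; this is ensured precisely by the geodesic-parallel extension rule together with the Jacobi equation, which together reduce all relevant jet data of the extended field on $M$ to the $1$-jet $(v(p), \nabla v|_p)$ of $v$ on $N$.
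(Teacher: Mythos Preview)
Your proposal is correct and follows essentially the same approach as the paper's proof: both argue that $\exp_v(p)$ depends only on $(p,v(p))$, that $\D\exp_v|_p$ depends only on $(p,v(p),\nabla v|_p)$ via first-order variation of geodesics (you say ``Jacobi fields'', the paper says ``first order deviation of two geodesics'' --- these are the same), and that the remaining factors $\star_N$, $\pi_E$ depend only on $T_pN$; the decomposition into constant, linear and remainder terms is then immediate. Your version is in fact somewhat more detailed than the paper's, particularly in tracking the parallel-transport extension of $v$ and explicitly noting that the construction is independent of $N$ (depending only on the $\al$-Cayley plane $T_pN$), which is precisely what allows $\boF$ to be defined on the bundle over $\Cay_\al(M)\times\cS$.
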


Note that $Q$ is a map between function spaces, while $\boQ$ is a smooth map between manifolds of finite dimension, and the same naming convention is applied for $F$ and $\boF$. We also write $F_s(\cdot) = F (\cdot, s)$ to emphasise that $F_s$ is a differential operator depending on a parameter $s$, and similar for $Q$ and the smooth functions $\boF$ and $\boQ$.

\begin{proof}
The value of $\exp_v(p)$ is determined by $p\in N$ and $v(p)\in \nu_p(N)$ as a geodesic is uniquely determined by a starting point and an initial velocity vector. Similarly, $\D \exp_v(p)$ is a smooth function of $p, v(p)$ and $\nabla v(p) \in T^*_pN \ot \nu_p(N)$ since the first order deviation of two geodesics is determined by the first order deviation at a previous time. Finally $\exp^*_v(p)$ can be entirely determined form $p, v(p), \nabla v(p)$ and the tangent space $T_pN$. Thus $F$ itself is of the form $F(v, s)(p) = \boF(p, v(p), \nabla v(p), T_pN, s)$, as it is the pullback of a differential form (which depends on $s$) by $\exp_v$. Here $\bF$ is a smooth map which is independent of $N$. The smoothness of $\boF$ follows from the smoothness of $\exp_v$ in $v$. In particular, the same argument applies to the map $Q(v, s) = F(v, s)-D_s v$, since $(D_sv)(p)$ is a smooth map in $p, v(p), \nabla v(p)$ and $T_pN$ only, as it is a first order operator. 
\end{proof}

The name $\boQ$ is meant to suggest that the term $\boQ_{s, \pi}(p, x, y)$ contains all the quadratic and higher terms in the variables $x$ and $y$. Indeed, we clearly have $\boQ_{s, \pi}(p, 0, 0) = 0$, so no constant term. Let us denote by $\partial_x$ and $\partial_y$ respectively the partial derivatives with respect to $x$ and $y$. Note that this does not require choosing a connection, as $\boQ$ is a fibre-preserving map between subsets of metric vector bundles, and $x$ and $y$ are exactly the fibre coordinates. Let $v \in C^\infty(\nu_\eps(N))$ be such that $v(p) = x_0$ and $\nabla v(p) = 0$. Then: 
\eas
\partial_x \boQ_{s, \pi}(p, 0, 0)[x_0] &= \frac{\d}{\d t} \bigg\rvert_{t=0} \boQ_{s, T_pN}(p, tx_0, 0)  \\
&= \left(\frac{\d}{\d t} \bigg\rvert_{t=0} F_s(tv)-F_s(0)-tD_s v\right)(p) \\
&= (D_s v-D_s v)(p) = 0.
\eas 
An analogous derivation for the variable $y$ shows that $\boQ_{s, \pi}$ satisfies the following for $p \in M$:
\ea 
\label{3_4_Q_prop}
\boQ_{s, \pi}(p, 0, 0) = 0, \  \partial_x \boQ_{s, \pi}(p, 0, 0) = 0 \text{ and }  \partial_y \boQ_{s, \pi}(p, 0, 0) = 0.
\ea

From this we will now obtain bounds on $Q_s(v)-Q_s(w)$ which are formally similar to the bounds obtained for homogeneous quadratic polynomials on $\R^n$. If $q$ is such a polynomial, one can show that for a constant $C > 0$ there is an inequality of the form:
\eas
\md{q(x)-q(y)} \le C\md{x-y}(\md{x}+\md{y}).
\eas
In the following, we use the notation $\md{v}_{C^k} = \sum_{i=0}^k \md{\nabla^i v}$ for a pointwise norm of the derivative, and we also think of $TN$ as a section of $\La^4 T^*M|_N \rightarrow N$, so that $\md{TN}_{C^k}$ is well-defined. The analogous result for $Q_s$ is then the following:
\begin{lem}
\label{3_4_bound_Q}
There is an $\ep > 0$ which only depends on $\Phi_s$ for $s \in \cS$ such that for $k \ge 0$ and $v, w \in C^k(\nu_\ep(N))$ with $\nm{v}_{C^1}, \nm{w}_{C^1} < \ep$ we have the following inequality:
\ea
\label{3_4_bound_q_hard}
 \md{Q_s(v)-Q_s(w)}_{C^{k+1}} \le C\sum_{  
 \genfrac{}{}{0pt}{2}{i+\md{J}+r \le k+2}{0 \le r \le k}} \md{\nabla^i (v-w)} (\md{\nabla^J v}+\md{\nabla^J w})  \md{\nabla^r TN},  
\ea
where the summation is over a multi-index $I$, and $\nabla^I = \nabla^{I_1} \ot \dots \ot \nabla^{I_r}$. If we assume that $\md{v}_{{C^{k+1}}}, \md{w}_{{C^{k+1}}}$ are sufficietly small, then this can be simplified to yield: 
\ea
\label{3_4_bound_q_simple}
 \md{Q_s(v)-Q_s(w)}_{C^{k+1}} \le& C(1+ \md{TN}_{C^{k+1}}) \bigg(\md{v-w}_{C^{k+1}}(\md{v}_{{C^k}}+ \md{w}_{{C^k}}) \nonumber \\
 +& \md{v-w}_{C^{k}}(\md{v}_{{C^{k+1}}}+ \md{w}_{{C^{k+1}}}) \bigg).
\ea
\end{lem}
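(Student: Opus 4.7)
The plan is to exploit the second-order vanishing of $\boQ$ at $(x,y)=(0,0)$ from \eqref{3_4_Q_prop}, factor $Q_s(v)(p) - Q_s(w)(p)$ pointwise with one $(v-w,\nabla v - \nabla w)$ factor, one factor linear in $(v,\nabla v,w,\nabla w)$, and smooth coefficients, then differentiate $k+1$ times by Leibniz. Explicitly, integral-remainder Taylor in the $(x,y)$ variables combined with \eqref{3_4_Q_prop} gives
\begin{align*}
\boQ_{s,\pi}(p, x, y) = \sum_{\md{\alpha}=2} z^\alpha\, R^\alpha_{s,\pi}(p, x, y), \qquad z=(x,y),
\end{align*}
with smooth remainder coefficients $R^\alpha$. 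Setting $z(u)(p) := (u(p), \nabla u(p))$ and subtracting, $z(v)^\alpha - z(w)^\alpha$ splits via the identity $a_1 a_2 - b_1 b_2 = (a_1-b_1)a_2 + b_1(a_2-b_2)$, while $R^\alpha(z(v)) - R^\alpha(z(w)) = \int_0^1 \partial_z R^\alpha \cdot (z(v)-z(w))\, dt$. Both contributions factor out one copy of $z(v)-z(w) = (v-w, \nabla v - \nabla w)$ together with a factor linear in $z(v), z(w)$ bounded by $\md{v}+\md{\nabla v}+\md{w}+\md{\nabla w}$, and smooth coefficients uniformly bounded on the compact set $\md{z(v)}, \md{z(w)} \le 2\ep$. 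This yields the $k=0$ pointwise bound $\md{Q_s(v) - Q_s(w)} \le C(\md{v-w} + \md{\nabla v - \nabla w})(\md{v} + \md{\nabla v} + \md{w} + \md{\nabla w})$.

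For higher $k$ I would apply $\nabla^{k+1}$ to the pointwise identity and expand via Leibniz, using Faà di Bruno to compute the higher derivatives of the smooth coefficients $R^\alpha(p, z(v)(p), T_pN, s)$. Each resulting term retains the two structural factors $\nabla^i(v-w)$ and $\nabla^J v$ or $\nabla^J w$ inherited from the pointwise identity, while the chain-rule expansion of the coefficients produces factors $\nabla^r TN$ (with $TN$ viewed as a section of $\La^4 T^*M|_N \to N$) and possibly further derivatives of $v, w$. These auxiliary $v, w$-factors are absorbed into the constant $C$ using the smallness hypothesis $\nm{v}_{C^1}, \nm{w}_{C^1} < \ep < 1$, and careful order-counting yields the index constraints $i + \md{J} + r \le k+2$ and $r \le k$, giving \eqref{3_4_bound_q_hard}.

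The main technical obstacle is the Faà di Bruno bookkeeping of this last step, in particular verifying the index constraints and absorbing the auxiliary $v, w$-factors correctly. Once \eqref{3_4_bound_q_hard} is in hand, \eqref{3_4_bound_q_simple} follows by routine regrouping: for $i+\md{J}+r \le k+2$ with $v, w$ small in $C^{k+1}$, either $i \ge k+1$ (so $\md{\nabla^i(v-w)} \le \nm{v-w}_{C^{k+1}}$ and the remaining factors are dominated by $\nm{v}_{C^k}+\nm{w}_{C^k}$), or $\md{J} \ge k+1$ symmetrically, or $i, \md{J} \le k$ and the middle-order terms are absorbed using $C^{k+1}$-smallness of $v, w$; each factor $\md{\nabla^r TN}$ with $r \le k$ is uniformly bounded by $1+\nm{TN}_{C^{k+1}}$.
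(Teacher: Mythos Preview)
Your proposal is correct and follows essentially the same route as the paper: both use the second-order Taylor remainder of $\boQ$ in $(x,y)$ (the paper splits it into three pieces $\boR_1 x\otimes x + \boR_2 x\otimes y + \boR_3 y\otimes y$, you use the equivalent $\sum_{\md{\alpha}=2} z^\alpha R^\alpha$), factor the difference algebraically, bound the smooth coefficients uniformly via compactness, and then differentiate using the chain rule to obtain the structural expansion \eqref{3_4_bound_q_hard}. The only point you leave slightly implicit is that the uniform bound on the $R^\alpha$ and their derivatives requires compactness of the full base $M\times \Cay_\al(M)\times\cS$ (not just of the $(x,y)$-fibre), which the paper makes explicit via the compact set $\bV_c$.
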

\begin{proof}
 Let $x \in T_pM$  and $y \in T_p^*M \ot T_pM$ be of sufficiently small norm. Then Taylor's theorem gives us uniformly in $s \in \cS$ and $\pi \in \Cay_p(M)_{\al}$:
\ea
\boQ_{s, \pi}(p, x, y) &= \boQ_{s, \pi}(p,0,0) + \partial_x \boQ_{s, \pi}(p,0,0)  x +  \partial_y \boQ_{s, \pi}(p,0,0)  y \nonumber \\
&+\boR_{1, s, \pi}(p,x,y) x \ot x   + \boR_{2, s, \pi}(p,x,y) x \ot y \label{3_4_Taylor_Q} \\
&+\boR_{3, s, \pi}(p,x,y) y \ot y ,\nonumber
\ea
where the $\boR_{i, s, \pi}$ are smooth remainder terms. If we now consider small $x_1,x_2 \in T_pM$ and $y_1, y_2 \in T^*_pM \ot T_pM$ and use the properties $\eqref{3_4_Q_prop}$ of $\boQ$, we see that: 
\ea
\label{3_4_Q_diff}
	\boQ(p, x_2, y_2)-\boQ(p,x_1,y_1) &= \boR_1 x_2 \ot x_2   + \boR_2 x_2 \ot y_2+  \boR_3 y_2 \ot y_2  \nonumber \\ 
	&- \boR_1 x_1 \ot x_1   - \boR_2 x_1 \ot y_1 - \boR_3 y_1 \ot y_1 ,
\ea
Consider for the moment only the difference $\boR_1(p,x_2,y_2, s) x_2 \ot x_2 - \boR_1(p,x_1,y_1, s) x_1 \ot x_1$. We can rearrange and apply Taylor's theorem to give: 
\eas
\boR_1&(p,x_2,y_2, s) x_2 \ot x_2 - \boR_1(p,x_1,y_1, s) x_1 \ot x_1 \\
&=  \boR_1(p,x_2,y_2, s)(x_2 \ot x_2 - x_1 \ot x_1) + (\boR_1(p,x_2,y_2, s)-\boR_1(p,x_1,y_1, s)) x_1 \ot x_1 \\
&= \boR_1(p,x_2,y_2, s)(x_2 \ot (x_2-x_1) + (x_2-x_1) \ot x_1) \\
&+(\partial_x \boR_1(p,x_1, y_1, s)(x_2-x_1) +\partial_y \boR_1(p,x_1, y_1, s) (y_2-y_1))\ot(x_1\ot x_1).
\eas
 Now since $M$ is compact, after shrinking $\cS$, the subset $\bV_c$ of $(p,x,y, \pi s) \in TM \times (T^*M \ot TM) \times \Cay_{\al}(M) \times\cS$ such that $\md{x}, \md{y}, \md{\tau(\pi)} \le c$ for a fixed constant $c \in \R$ is also compact. As $\boQ$ and the $\boR_i$ are smooth, we can thus bound the norm of any derivative of fixed degree over such a subset. This gives us the following point-wise estimate, provided that $x_1, x_2, y_1$ and $y_2$ all have sufficiently small norms.
\eas
 \md{&\boR_1(p,x_2,y_2, s) x_2 \ot x_2 - \boR_1(p,x_1,y_1, s) x_1 \ot x_1} \\
 &\le C ((\md{x_1} + \md{x_2})\md{x_2-x_1} + (\md{x_2-x_1} + \md{y_2-y_1})\md{x_1}^2 \\
 &\le C (\md{x_2-x_1} + \md{y_2-y_1})(\md{x_1}+\md{x_2}+\md{y_1}+\md{y_2}). 
\eas
Here the constant $C$ is independent of $p \in M$ and $\pi \in \Cay_{\al}(M)$.
We can bound the rest of  equation \eqref{3_4_Q_diff} by the same expression, using similar arguments, i.e. there is a pointwise estimate: 
\e
\label{3_4_Q_ptw_bound}
 \md{\boQ_{s, \pi}(p, x_2, y_2)-\boQ_{s, \pi}(p,x_1,y_1)} \le C (\md{x_2-x_1} + \md{y_2-y_1})(\md{x_1}+\md{x_2}+\md{y_1}+\md{y_2}) .
\e
We will now adapt the above reasoning to obtain bounds on the covariant derivatives $\md{\nabla^k(Q_s(v)-Q_s(w))}$. For this consider $v \in C^\infty(\nu_\eps(N))$ and note that for a curve $\gamma : \R \ra M$ with $\gamma(0) = p \in N$ and $\gamma'(0) = \xi \in T_pN$:
\ea
\nabla_{\xi}(Q_s(v))(p) &=  \Pi_{TE \rightarrow E} \frac{\d}{\dt}\bigg \rvert \boQ_{s}(\ga(t), v(\ga(t)), \nabla v(\ga(t)), T_{\ga(t)}N)  \nonumber \\
\label{3_3_nabla_Q}
&= \partial_p \boQ_s [w] + \partial_x \boQ_s [\nabla_\xi v (p)] +  \partial_y \boQ_s [\nabla_\xi  \nabla v (p)] + \partial_\pi \boQ_s [\nabla_\xi TN].
\ea
Here $\Pi_{TE \ra E}$ is the connection map, which maps $T_{(p, Tp_N, s)} \bE \ra \bE_{(p, Tp_N, s)}$ as induced from the Levi-Civita connection on $N$, which only depends on $T_pN$ and not on the curvature of $N$. The derivative $\partial_p$ is given as: 
\eas
\partial_p \boQ [w] = \D\boQ(p, v, \nabla v, T_pN, s) [w, w^{h, TM}, w^{h,T^*M \ot T^M}, w^{h, TM}, 0].
\eas
Here $w^{h, E}$ means the horizontal lift of the vector $w$ to the corresponding bundle $E$. Finally, we consider $TN$ as a section of $\La^4T^*M$, so that $\nabla TN$ is well-defined. The conclusion is that the dependence of $\nabla(Q_s(v))(p)$ on $\nabla^2 v(p)$ and $\nabla TN(p)$ is affine, and the coefficients can be bounded on subsets of the form $\bV_c$. The same argument also applies to the $\boR_i$, and using Equation \ref{3_4_Q_diff} we can show that: 
\e
\label{3_4_nabla_Q_exp}
\nabla^k Q(v) = \sum_{\genfrac{}{}{0pt}{2}{\md{I}+j \le k+2 }{0 \le j \le k}}\boR^{I, j}(p, v, \nabla v, TN, s) \nabla^I v \ot \nabla^j TN,  
\e
where the $\boR^{I, j}$ are smooth maps on $\bV_c$, for sufficiently small $c$, and for $I = (i_1, \dots, i_r)$ a multi-index we set $\nabla^I v = \nabla^{i_1} v \ot \dots \ot \nabla^{i_l} v$. Notice that there are no products of the form $\nabla^{k+1} v \ot \nabla ^{k+1}v$ appearing. From this we can deduce the claimed bounds, since the $\boR^{I, j}$ are defined on compact sets. 
\end{proof}

\begin{cor}
\label{3_4_continuity_C_k}
The map $Q_s: C^\infty(\nu_\eps(N)) \rightarrow C^\infty(E_{\cay})$ is a continuous map of Fréchet manifolds, after restricting to an open neighbourhood of $0$. Similarly, the maps  $Q_s: C^{k+1}(\nu_\eps(N)) \rightarrow C^k(E_{\cay})$ and $Q_s: C^{k+1, \alpha}(\nu_\eps(N)) \rightarrow C^{k, \alpha}(E_{\cay})$ are continuous maps of Banach manifolds in the same way. 
\end{cor}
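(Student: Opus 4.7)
The plan is to read off all three continuity statements directly from the quantitative bounds that Lemma \ref{3_4_bound_Q} already provides; no new analytic ingredient is needed, only careful extraction of norms from the pointwise estimates.

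For the Banach continuity $Q_s : C^{k+1}(\nu_\varepsilon(N)) \to C^k(E_{\cay})$ with $k \ge 1$, I would apply the simplified bound \eqref{3_4_bound_q_simple} with $k$ replaced by $k-1$. Taking suprema over the compact $N$, and using that $\|TN\|_{C^k}$ is a fixed finite constant, this yields, on an open neighbourhood $\{\|v\|_{C^k} < \varepsilon\}$ of $0$,
\[
  \|Q_s(v) - Q_s(w)\|_{C^k} \le C\bigl(\|v\|_{C^k} + \|w\|_{C^k}\bigr)\|v - w\|_{C^k},
\]
so $Q_s$ is even locally Lipschitz as a map $C^k \to C^k$. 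Post-composing with the continuous inclusion $C^{k+1} \hookrightarrow C^k$ gives the asserted continuity $C^{k+1} \to C^k$. The case $k=0$ is handled separately via the pointwise bound \eqref{3_4_Q_ptw_bound}: taking the supremum over $p \in N$ gives $\|Q_s(v) - Q_s(w)\|_{C^0} \le C(\|v\|_{C^1} + \|w\|_{C^1})\|v - w\|_{C^1}$, which is the required $C^1 \to C^0$ bound.

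The Fréchet continuity of $Q_s : C^\infty(\nu_\varepsilon(N)) \to C^\infty(E_{\cay})$ is then immediate: the $C^\infty$-topology is generated by the countable family of semi-norms $\|\cdot\|_{C^k}$, $k \ge 0$, so a map into $C^\infty$ is continuous precisely when its composition with every $C^k$-projection is continuous, which is exactly what the previous paragraph established.

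For the Hölder statement $C^{k+1,\alpha} \to C^{k,\alpha}$, the $C^k$-part of the target norm has already been controlled, so the only remaining task is an estimate on the Hölder semi-norm $[\nabla^k(Q_s(v) - Q_s(w))]_\alpha$. I would start from the explicit polynomial expansion \eqref{3_4_nabla_Q_exp}, which writes $\nabla^k Q_s(v)$ as a sum of terms of the form $\boR^{I,j}(p, v, \nabla v, TN, s)\,\nabla^I v \otimes \nabla^j TN$ with smooth coefficients $\boR^{I,j}$. Applying the standard product and composition rules for the Hölder semi-norm,
\[
  [fg]_\alpha \le \|f\|_{C^0}[g]_\alpha + [f]_\alpha\|g\|_{C^0},\qquad [F \circ g]_\alpha \le \|DF\|_{C^0}[g]_\alpha,
\]
together with the fact that the smoothness of the $\boR^{I,j}$ on the compact sets $\bV_c$ supplies uniform bounds on their $C^1$-norms, one repeats the telescoping argument of the proof of Lemma \ref{3_4_bound_Q} verbatim, obtaining the analogue of \eqref{3_4_bound_q_simple} with every $\|\cdot\|_{C^i}$ upgraded to $\|\cdot\|_{C^{i,\alpha}}$. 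The conclusion then follows as in the Banach case. The only mildly technical point here is the bookkeeping for Hölder semi-norms of products of sections, and this is the step I would expect to be the most tedious, though no genuinely new idea enters.
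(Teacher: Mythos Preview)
Your proposal is correct and follows essentially the same approach as the paper: both extract continuity directly from the pointwise estimates of Lemma~\ref{3_4_bound_Q} by taking suprema over the compact $N$, then obtain the Fr\'echet statement by combining the $C^k$ statements. The paper's proof is in fact terser than yours---it writes the single bound $\nm{Q_s(v)-Q_s(w)}_{C^k} \le \tilde{C}\nm{v-w}_{C^{k+1}}(\nm{v}_{C^{k+1}}+\nm{w}_{C^{k+1}})$ and declares the H\"older case ``identical''---so your sketch of the H\"older argument via the product and composition rules for $[\,\cdot\,]_\alpha$ actually fills in detail that the paper omits.
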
 
\begin{proof}
If $v \ra w \in C^{k+1}(\nu_{\eps}(N))$, then by Lemma \ref{3_4_bound_Q} and by compactness of $N$: 
\ea
\nm{Q_s(v)-Q_s(w)}_{C^k} \le \tilde{C}\nm{v-w}_{C^{k+1}}(\nm{v}_{C^{k+1}}+\nm{w}_{C^{k+1}}) \longrightarrow 0. 
\ea
The proof for the Hölder case is identical, and the statement about $C^\infty$ is obtained by combining the statements for $C^k$ for all finite $k$.
\end{proof}

\begin{lem}
\label{3_4_l_p_bound}
Let $p > 4$ and $k \ge 1$. Then there is an $\ep > 0$ and $C > 0$ which depend on  $\Phi_s$ for $s \in \cS$ and $N$ such that for $v, w \in L^p_k(\nu_\ep(N))$ with $\nm{v}_{L^p_k}, \nm{w}_{L^p_k}< \ep$ we have the following inequality:
\e
 \nm{Q(v)-Q(w)}_{L^p_{k}} \le C \nm{v-w}_{L^p_{k+1}}( \nm{v}_{L^p_{k+1}}+ \nm{w}_{L^p_{k+1}}). 
\e
\end{lem}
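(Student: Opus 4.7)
The strategy is to combine the pointwise bound of Lemma~\ref{3_4_bound_Q} with Sobolev embedding and Hölder's inequality. Since $p > 4$ and $N$ is a compact $4$-manifold, Morrey/Sobolev embedding yields continuous inclusions $L^p_{k+1}(N) \hookrightarrow C^k(N) \hookrightarrow L^\infty(N)$. Thus, after shrinking $\ep > 0$, smallness of the $L^p_{k+1}$ norms of $v,w$ implies pointwise smallness of $|v|_{C^1}$ and $|w|_{C^1}$, so the pointwise hypotheses of Lemma~\ref{3_4_bound_Q} are met (interpreting the statement so that the right-hand side is finite requires $v, w \in L^p_{k+1}$).

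Applying \eqref{3_4_bound_q_hard} with $k$ replaced by $k-1$ (valid because $k \ge 1$) gives the pointwise bound
\eas
|Q(v) - Q(w)|_{C^k}(x) \le C \sum_{\substack{i + |J| + r \le k+1 \\ 0 \le r \le k-1}} |\nabla^i(v-w)|\,(|\nabla^J v| + |\nabla^J w|)\,|\nabla^r TN|
\eas
at every $x \in N$. Since $N$ is compact and $TN$ is smooth, the factors $|\nabla^r TN|$ are uniformly bounded for $r \le k-1$ and can be absorbed into $C$. Raising to the $p$-th power, integrating over $N$, and using that the $L^p_k$ norm and $\nm{\,|\cdot|_{C^k}}_{L^p}$ are equivalent, the estimate reduces by the triangle inequality for $L^p$ to bounding each product term $\nm{|\nabla^i(v-w)| \prod_m |\nabla^{J_m} v|}_{L^p}$ (and analogously for $w$).

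The decisive combinatorial observation is that the constraint $i + \sum_m J_m \le k+1$ forces at most one of the derivative factors to have order $k+1$; all other factors have order $\le k$. By $L^p_{k+1} \hookrightarrow C^k$, every factor of order $\le k$ lies in $L^\infty$ with norm bounded by $C\nm{v}_{L^p_{k+1}}$ or $C\nm{w}_{L^p_{k+1}}$, while a factor of order $k+1$ lies in $L^p$ with norm bounded by $\nm{v-w}_{L^p_{k+1}}$, $\nm{v}_{L^p_{k+1}}$, or $\nm{w}_{L^p_{k+1}}$ respectively. Applying Hölder's inequality with the unique order-$(k+1)$ factor placed in $L^p$ and the remaining factors placed in $L^\infty$ (and the case where all factors are in $L^\infty$ handled trivially via $\mathrm{Vol}(N) < \infty$) yields for each term a bound of the form $C \nm{v-w}_{L^p_{k+1}} \cdot (\nm{v}_{L^p_{k+1}} + \nm{w}_{L^p_{k+1}})^j$ with $j \ge 1$. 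The smallness hypothesis lets us absorb higher powers through $(\nm{v}_{L^p_{k+1}} + \nm{w}_{L^p_{k+1}})^j \le (2\ep)^{j-1}(\nm{v}_{L^p_{k+1}} + \nm{w}_{L^p_{k+1}})$, producing a linear bound. Summing the finitely many terms gives the desired inequality.

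The main technical difficulty is the bookkeeping of the various Hölder splittings across all terms of the pointwise bound; the key algebraic fact that tames this is that each term contains at most one factor of top derivative order, which is precisely what allows a clean distribution of $L^p$ versus $L^\infty$ norms in Hölder's inequality.
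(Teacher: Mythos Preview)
Your proof is correct and follows essentially the same strategy as the paper: both invoke the Sobolev embedding $L^p_{k+1}\hookrightarrow C^k$ (valid since $p>4$ and $\dim N=4$), apply the pointwise estimate from Lemma~\ref{3_4_bound_Q}, and exploit the key structural fact that no term in the expansion of $\nabla^k Q$ contains two factors of top order $k+1$, so that Hölder's inequality cleanly splits the unique top-order factor into $L^p$ and the rest into $L^\infty$. The paper phrases the integration step slightly differently (citing the simplified bound \eqref{3_4_bound_q_simple} and Minkowski's inequality rather than \eqref{3_4_bound_q_hard} and Hölder), but the substance is identical; your write-up is in fact more explicit about the combinatorics of the Hölder splitting.
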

\begin{proof}
As $k+1 \ge 2$ and $p > 4$, we have that $L^p_{k+1} \hookrightarrow C^k$ continuously by the Sobolev embedding theorem. Thus by making $\eps > 0$ small, we can make sure that the $C^k$ norms of $v$ and $w$ are arbitrarily small, say less than $\delta$. We then prove the $L^p_k$ estimate on $Q$ using the pointwise estimate \eqref{3_4_bound_q_simple} from Proposition \ref{3_4_bound_Q} as follows: 
\eas
\int_N & \md{\nabla^k Q(v)-\nabla^k Q(w)}^p \dvol \\ 
\le C &\int_N \md{v-w}^p_{C^{k+1}}(\md{v}_{C^k}+\md{w}_{C^k})^{kp}+ \\
&\md{v-w}^p_{C^k}(\md{v}_{C^{k}}+\md{w}_{C^k})^{(k-1)p}(\md{v}_{C^{k+1}}+\md{w}_{C^{k+1}})^p \dvol  \\
\le C &\delta^{(k-1)p}(\md{v}_{C^k}+\md{w}_{C^k})^{p} \int_N \md{v-w}^p_{C^{k+1}}\dvol \\
&+  C \delta^{(k-1)p}\md{v-w}^p_{C^k}\int_N(\md{v}_{C^{k+1}}^p+\md{w}_{C^{k+1}}^p) \dvol \\
 \le C&\nm{v-w}_{L^p_{k+1}}^p (\nm{v}_{L^p_{k+1}}^p + \nm{w}_{L^p_{k+1}}^p).
\eas
Here, we used Minkowski's inequality in the second inequality and the Sobolev embedding of $L^p_{k+1} \hookrightarrow C^k$ in the third. This is also were the dependence of the constant $C$ on $N$ appears. Note that the key fact used in deducing this $L^p_k$ bound was that there were no terms of the form $\nabla^{k+1} v \ot \nabla^{k+1}v$ in our expression for $\nabla^k Q$. In fact the mapping $v \rightarrow \nabla^{k+1} v \ot \nabla^{k+1}v$ is not bounded from $L^p_{k+1}$ to $L^p$, thus the presence of such a term would make it impossible to deduce a bound of the above form on Sobolev norms. 
\end{proof}

One can capture the dependence on the parameter $s \in \cS$ in a similar fashion. 

\begin{lem}
\label{3_4_dependence_s}
For any $s_0 \in \cS$ and sufficiently small $\eps > 0$, there is an open neighbourhood $U_{s_0} \subset \cS$ of $s_0$ and a constant $C(\cS) > 0$, such that for all $s \in U_{s_0}$ and $v \in C^\infty(\nu_\ep(N))$ with $\nm{v}_{C^k} < \eps$ we have: 
\e
	\md{F_s(v)-F_{s_0}(v)}_{C^k} \le Cd(s, s_0).
\e
\end{lem}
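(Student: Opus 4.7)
The strategy is to exploit the pointwise representation $F_s(v)(p) = \boF(p, v(p), \nabla v(p), T_pN, s)$ from Lemma \ref{3_4_F_Q}, noting that the entire $s$-dependence enters only through the smooth family of forms $\tau_s$ associated to $\Phi_s$. Hence $\boF$ is jointly smooth in all of its arguments, and in particular smooth in $s$ with $s$-derivatives that are again smooth fibre-preserving maps on the same underlying total space.

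First I would fix a relatively compact neighbourhood $U_{s_0} \Subset \cS$ of $s_0$ and choose $\eps > 0$ small enough that for any $v \in C^\infty(\nu_\ep(N))$ with $\nm{v}_{C^1} < \eps$ the tuple $(p, v(p), \nabla v(p), T_pN)$ lies in a fixed compact subset $\bV \subset TM \times (T^*M \otimes TM) \times \Cay_\al(M)$; this uses only compactness of $N$ and of the Cayley Grassmannian fibre. On the compact set $\bV \times \overline{U_{s_0}}$ the partial derivative $\partial_s \boF$ is uniformly bounded, so a first-order Taylor expansion
\[
\boF(p,x,y,\pi,s) - \boF(p,x,y,\pi,s_0) = \int_0^1 \partial_s \boF(p,x,y,\pi, s_0 + t(s-s_0))[s-s_0]\, \d t
\]
yields, evaluated at $(p, v(p), \nabla v(p), T_pN)$, the pointwise inequality $\md{F_s(v)(p)-F_{s_0}(v)(p)} \le C\,d(s,s_0)$. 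This handles the case $k=0$.

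For $k \geq 1$ I would iterate the covariant derivative computation \eqref{3_3_nabla_Q}, exactly as in the derivation leading to \eqref{3_4_nabla_Q_exp}, to obtain an expansion of the form
\[
\nabla^k F_s(v) = \sum_{\substack{\md{I}+j \le k+1 \\ 0 \le j \le k}} \boR^{I,j}_s(p, v, \nabla v, TN)\, \nabla^I v \otimes \nabla^j TN,
\]
where each coefficient $\boR^{I,j}_s$ is smooth on $\bV \times \overline{U_{s_0}}$. Subtracting the analogous expression for $s_0$, each coefficient difference $\boR^{I,j}_s - \boR^{I,j}_{s_0}$ is bounded by $C\, d(s,s_0)$ by the same Taylor argument as above, while the factors $\nabla^I v \otimes \nabla^j TN$ are uniformly controlled by $\nm{v}_{C^k} < \eps$ and the intrinsic geometry of $N$ (which is fixed). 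Summing the finitely many terms produces the desired $C^k$-estimate.

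The only genuine subtlety is the order of quantifiers: the neighbourhood $U_{s_0}$ must be chosen before passing to uniform bounds on $\partial_s \boF$ and its $x,y,\pi$-derivatives, and only afterwards may one shrink $\eps$. No analytic machinery beyond pointwise Taylor expansion is required; this is a parameter-dependent counterpart of the estimates in Lemma \ref{3_4_bound_Q}, with $d(s,s_0)$ playing the role of the small parameter that was previously $\md{x}+\md{y}$.
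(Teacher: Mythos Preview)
Your approach is essentially identical to the paper's: use the pointwise representation $F_s(v)(p)=\boF(p,v(p),\nabla v(p),T_pN,s)$ from Lemma~\ref{3_4_F_Q}, apply Taylor's theorem in the $s$-variable, and bound $\partial_s\boF$ uniformly by compactness; the paper's proof is simply a terse two-line version of what you wrote out. One small slip: your expansion of $\nabla^k F_s(v)$ necessarily contains a term $\partial_y\boF\,[\nabla^{k+1}v]$, so the factors $\nabla^I v$ are controlled by $\nm{v}_{C^{k+1}}$ rather than $\nm{v}_{C^k}$ as you claim---this is in fact a minor imprecision already present in the lemma's hypothesis, and the paper's ``higher derivatives follow analogously'' glosses over it too.
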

\begin{proof}
Using Taylor's theorem we get that:
\eas
\md{F_s(v)-F_{s_0}(v)}(p) \le 2\md{\partial_s \boF(p, v(p), \nabla v(p), T_pN, s_0)} d(s, s_0). 
\eas
Thus the case $k = 0$ follows from the same argument as we had before for the $v$-dependence. Higher derivatives follow analogously to what we had before as well. 
\end{proof}

\begin{prop}
\label{3_4_def_smooth}
Let $p> 4$ and $k \ge 1$. For sufficiently small $\ep$, the map $F$ from Propositions \ref{3_4_cpt_cayley_linearisisation} extends to a $C^\infty$ map between Banach manifolds:
\[ F: \mathcal{L}_\eps = \{ v\in L^p_{k+1} (\nu_\ep (N)), \nm{v}_{L^p_{k+1}} < \ep \} \times \cS \longrightarrow L^{p}_{k}(E_{\cay})\]
for $4 < p < \infty$. Its linearisation $(0, s_0)$ is Fredholm. 
\end{prop}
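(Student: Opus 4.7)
The strategy is to decompose $F(v,s) = F(0,s) + D_s v + Q(v,s)$ and treat the three summands separately, then read off Fredholmness from the structure of the linearization. The constant term $s \mapsto F(0,s)$ is smooth from $\cS$ into $L^p_k(E_{\cay})$: since $N$ is compact and the family $\{\Phi_s\}$ is smooth, $F(0,s)$ is a smooth family in $C^\infty(E_{\cay}) \hookrightarrow L^p_k(E_{\cay})$, with all $s$-derivatives bounded by the argument of Lemma \ref{3_4_dependence_s} applied to higher Taylor remainders in $s$. The linear term $D_s$ is a first-order differential operator whose coefficients vary smoothly with $s$, so $(v,s) \mapsto D_s v$ extends to a smooth map $L^p_{k+1}(\nu(N)) \times \cS \to L^p_k(E_{\cay})$ by boundedness of multiplication by smooth coefficients.

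The essential work is the nonlinear remainder $Q$. The plan is to exploit the representation $\eqref{3_4_nabla_Q_exp}$ expressing $\nabla^k Q(v)$ as a sum of products
\eas
\boR^{I,j}(p, v, \nabla v, TN, s)\, \nabla^I v \otimes \nabla^j TN
\eas
in which no two factors carry the top-order derivative $\nabla^{k+1} v$, together with two standard facts: the Sobolev embedding $L^p_{k+1} \hookrightarrow C^k$ (valid since $p > 4 = \dim N$ and $k \ge 1$), and that $L^p_k$ is a Banach algebra under pointwise multiplication for $kp > 4$. The former makes the coefficient functions $\boR^{I,j}(\,\cdot\,, v, \nabla v, TN, s)$ continuous with pointwise bounds that are uniform for $v$ in a bounded neighborhood of $0$ in $L^p_{k+1}$; the latter lets us estimate the product in $L^p_k$. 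This gives continuity of $Q$, which is already essentially Lemma \ref{3_4_l_p_bound}.

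For $C^\infty$ regularity I would iterate the Taylor expansion $\eqref{3_4_Taylor_Q}$ of $\boQ$ to arbitrary order in its fibre variables $(x,y)$ and in $s$: the $n$-th Fréchet derivative of $Q$ at $(v,s)$ evaluated on directions $(w_i, \dot s_i)$ is pointwise a polynomial in the $\nabla^\alpha w_i$ ($|\alpha| \le 1$) and $\dot s_i$ with coefficients smooth in $(p, v(p), \nabla v(p), T_p N, s)$. Differentiating this expression $k$ times in the base direction produces, as in $\eqref{3_4_nabla_Q_exp}$, only sums of products in which at most one factor carries $\nabla^{k+1}$ of some $w_i$; the Banach algebra / Sobolev embedding argument then bounds each $n$-linear term as a continuous map $(L^p_{k+1})^n \times (T\cS)^n \to L^p_k$, and a standard Taylor-remainder estimate identifies it with the $n$-th derivative. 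This yields $F \in C^\infty$ on $\mathcal{L}_\eps$.

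Finally, the linearization at $(0, s_0)$ is $(v, \dot s) \mapsto D_{s_0} v + \partial_s F(0, s_0)[\dot s]$. The operator $D_{s_0}\colon L^p_{k+1}(\nu(N)) \to L^p_k(E_{\cay})$ is elliptic by Proposition \ref{3_2_ellipticity} (since $\al > \al_2$) and $N$ is compact, so it is Fredholm by standard elliptic theory. The second piece is a bounded linear map from the finite-dimensional $T_{s_0}\cS$ into $L^p_k(E_{\cay})$, hence finite rank, hence compact. Stability of the Fredholm property under compact perturbations gives the conclusion. The main obstacle throughout is the smoothness of the composition operator $Q$, whose proof hinges crucially on the fact that $\nabla^k Q(v)$ never involves a product of two top-order derivative factors; without this, no $L^p$-bound on $Q$ would be available from an $L^p_{k+1}$-bound on $v$.
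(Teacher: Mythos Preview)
Your proof is correct and follows essentially the same route as the paper: the same decomposition $F = F(0,s) + D_s v + Q$, the same use of the structural fact from \eqref{3_4_nabla_Q_exp} that $\nabla^k Q(v)$ contains no product of two top-order factors, and the same Fredholm argument via ellipticity of $D_{s_0}$ plus a finite-rank perturbation. The paper carries out the first Fr\'echet derivative explicitly (writing $L_{v,s}(w,\xi) = \partial_x\boQ[w] + \partial_y\boQ[\nabla w] + \partial_s\boQ[\xi]$ and checking continuous dependence on $(v,s)$) before asserting that higher derivatives follow analogously, whereas you phrase the higher regularity more abstractly in terms of the $n$-th multilinear derivative; this is a presentational difference only. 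One small remark: your invocation of the Banach-algebra property of $L^p_k$ is slightly imprecise, since the individual factors $\nabla^{i_l} v$ are not all in $L^p_k$---what is actually used (and what the paper does) is that the single top-order factor lies in $L^p$ while all remaining factors are controlled in $C^0$ via the Sobolev embedding $L^p_{k+1}\hookrightarrow C^k$, so the product lands in $L^p$.
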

\begin{proof}
Notice that $F$ is a continuous or $C^k$ map between Banach spaces exactly when $Q$ is. This is because the constant term $F_s(0)$ is smooth in $s$, as is the linear term $D_s$. And both those terms are clearly smooth in $v$, as they are constant and linear respective. Continuity of $Q$ between Sobolev spaces follows from Proposition \ref{3_4_l_p_bound} and \ref{3_4_dependence_s} in exactly the same way that continuity between $C^k$-spaces was proven in Corollary \ref{3_4_continuity_C_k}. It remains to show differentiability. We see from an application of Taylor's theorem that for $v,w \in C^\infty(\nu(N))$, $s: \R \rightarrow \cS$ a smooth curve and $t \in \R$ sufficiently small:
\e
\label{3_4_Q_linearisation}
Q(v+tw, s(t))-Q(v, s(0)) = \partial_x \boQ [tw] + \partial_y \boQ [t\nabla w] +\partial_r \boQ[t\dot{s}(0)]+ O(t^2). 
\e
Let now  $v \in \mathcal{L}_\eps$ and $s \in \cS$. Thus by the Sobolev embedding theorem we have that $v \in C^k(\nu_\eps(N))$ has bounded $C^k$-norm. Define the operator:
\eas
L_{v, s} (w, \xi) = \partial_x \boQ [w] + \partial_y \boQ [\nabla w] + \partial_s \boQ[\xi].
\eas
The operator $L_{v,s}$ is first order with continuous coefficients, and as such is a bounded operator $L^p_{k+1}\times T_s\cS \ra L^p_k$. From \eqref{3_4_Q_linearisation} it is clear that $L_{v, s}$ is the Fréchet derivative of $Q$ at the point $(v, s)$. It remains to show that varying $(v, s)$ continuously in $L^p_{k+1} \times \cS$ entails a continuous variation of $L_{v, s}$ in the space of bounded operators $B(L^p_{k+1} \times T_s\cS, L^p_k)$. By computations analogous to the ones from Proposition \ref{3_4_bound_Q} on may obtain a bound of the form: 
\eas
\nm{(L_{v, s}-L_{v+tu, s(t)})[w, \xi]}_{L^p_{k}} \le C t (\nm{u}_{L^p_{k+1}} \nm{w}_{L^p_2}+ \md{\dot{s}(0)}\md{\xi}) ,
\eas
were $t$ is assumed sufficiently small, $v \in \mathcal{L}_\eps$, $s$ a smooth curve in $\cC$, $u,w \in L^p_{k+1}$ and $\xi \in T_{s(0)}\cS$ (we identify the tangent spaces $T_{s(t)}\cS$ via a fixed trivialisation). Here we crucially use the fact that $v \in \mathcal{L}_\eps$ have uniformly bounded $C^k$-norm. From this we see that: 
\ea
\nm{(L_{v, s}-L_{v+tu, s(t)})}_{\ope} \le Ct (\nm{u}_{L^p_{k+1}}+\md{\dot{s}(0)}),
\ea
which shows that the derivative $L_{v, s}$ varies continuously as $(v, s)$ varies continuously. Higher differentiability follows analogously, as Finally $L_{0, s_0} = D \op T$ by \eqref{3_4_Q_prop}, where $T: T_{s_0}\cS \rightarrow L^p_{k}(E_{\cay})$ so the derivative of $F$ at $(0, s_0)$ is the sum of an elliptic operator on a compact manifold and a bounded linear map, and as such Fredholm. 
\end{proof}

Solutions to the equation $F_s(v) = 0$ which are in $L^p_2$ will be automatically smooth by elliptic regularity. 

\begin{prop}
\label{3_4_elliptic_reg}
Any $v \in \mathcal{L}_\ep $ such that $F_s(v) = 0$ for some $s \in \cS$ is smooth. 
\end{prop}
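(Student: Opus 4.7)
The plan is a bootstrap argument in Hölder spaces, exploiting the ellipticity of the linearisation of $F_s$ near the zero section together with the fact that $F_s$ is a first-order operator coming from a smooth fibre-preserving map $\boF_s$ (Lemma \ref{3_4_F_Q}). The key structural observation, used repeatedly below, is that because $\boF_s$ depends on $v$ only through $v$ itself and $\nabla v$, differentiating the equation $F_s(v) = 0$ along any vector field never produces a term of order strictly greater than $\nabla v$ after commuting covariant derivatives.

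First, since $v \in L^p_{k+1}(\nu_\ep(N))$ with $p > 4$ and $k \ge 1$ on the compact $4$-manifold $N$, the Sobolev embedding (Theorem \ref{2_3_Sobolev_embedding_hoelder_acyl_ac}) yields $v \in C^{k, \alpha}$ for some $\alpha \in (0, 1-4/p)$. After shrinking $\ep$, $\nm{v}_{C^1}$ is arbitrarily small, and by continuity of the ellipticity condition established in Proposition \ref{3_2_ellipticity} the linearisation $L_v$ of $F_s$ at $v$ is a first-order elliptic operator on $N$ whose coefficients, being smooth functions of $v$ and $\nabla v$, lie in $C^{k-1, \alpha}$.

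To gain one derivative I differentiate $F_s(v) = 0$ along an arbitrary smooth vector field $\eta$ on $N$. Expanding $\nabla_\eta \boF_s(p, v(p), \nabla v(p), T_pN, s)$ by the chain rule and converting $\nabla_\eta \nabla v$ into $\nabla(\nabla_\eta v)$ via the definition of the second covariant derivative (whose difference is a curvature-times-$v$ and $\nabla\eta$-times-$\nabla v$ term) yields an equation of the form
\begin{equation*}
L_v(\nabla_\eta v) = R_\eta[v, \nabla v],
\end{equation*}
in which $R_\eta$ is a smooth expression in $v$, $\nabla v$, $\eta$ and the background geometry, and contains no second derivatives of $v$. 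Both the coefficients of $L_v$ and the right-hand side $R_\eta$ are in $C^{k-1, \alpha}$, so the Schauder-type estimate $\nm{u}_{C^{k,\alpha}} \le C(\nm{L_v u}_{C^{k-1,\alpha}} + \nm{u}_{C^0})$ for first-order elliptic operators on compact manifolds (a standard consequence of the existence of a parametrix) gives $\nabla_\eta v \in C^{k, \alpha}$. Letting $\eta$ range over a local frame of $TN$ upgrades $v$ to $C^{k+1, \alpha}$.

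Iterating this differentiation-and-regularity step produces $v \in C^{k+j, \alpha}$ for every $j \ge 0$, hence $v \in C^\infty(\nu_\ep(N))$. The main point to verify carefully is the commutation step that yields the equation for $\nabla_\eta v$: one must track curvature contributions and show that no term of order $\nabla^2 v$ survives on the right-hand side. Given the explicit first-order form $\boF_s(p, v, \nabla v, T_pN, s)$ this is routine, and the remaining Schauder bootstrap on each iterate is standard.
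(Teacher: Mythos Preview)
Your argument is correct but follows a genuinely different route from the paper. The paper composes $F_s(v)=0$ with the formal adjoint $D_s^*$ of the linearisation at zero, obtaining a second-order quasilinear equation $K_{v,s}(v)=S_s(v,\nabla v)$ with $K_{v,s}$ a second-order elliptic operator whose coefficients are smooth functions of $(v,\nabla v)$; it then bootstraps directly on $v$ using standard second-order Schauder theory (specifically citing \cite[Thm.~1.4.2]{joyceRiemannianHolonomyGroups2007}). You instead keep the equation first-order and differentiate it along vector fields $\eta$, arriving at $L_v(\nabla_\eta v)=R_\eta[v,\nabla v]$ with $L_v$ the linearisation of $F_s$ at $v$, and bootstrap on $\nabla_\eta v$ using first-order Schauder estimates. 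Both approaches exploit the same underlying structure, namely that $\boF_s$ depends on $v$ only through $(v,\nabla v)$ so that no genuine second-order terms appear on the right-hand side after one differentiation. The paper's version has the advantage of invoking only the textbook second-order Schauder result and of working with a single fixed operator $D_s^*D_s$ with smooth coefficients in the leading part; your version is slightly more direct and avoids introducing the adjoint, at the cost of needing the less frequently stated first-order Schauder/parametrix regularity and of verifying ellipticity of $L_v$ (not just $L_0$), which you handle correctly via the $C^1$-smallness of $v$.
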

\begin{proof}
Denote by $D_s$ the linearisation of $F_s$ at $0$. This is an elliptic operator with smooth coefficients. It thus admits a formal adjoint $D_s^*$. Since $F_s(v) = 0$, we of course also have that $D_s^*F(v) = 0$. From the Taylor expansion $F_s(v) = F_s(0)+D_sv + Q_s(v)$ and our expansion of $\nabla Q_s (v)$ from equation \eqref{3_4_nabla_Q_exp} we obtain that: 
\eas
D_s^*F(v) &= D_s^*D_s v + \tilde{S_s}(v, \nabla v) + \tilde{R_s}(v, \nabla v) \nabla^2 v \\
&=  R_s(v, \nabla v) \nabla^2 v +S_s (v, \nabla v).
\eas
Here $S, \tilde{S}, R, \tilde{R}$ are smooth in their arguments. For fixed $v \in \mathcal{L}_\eps$ define the linear differential operator $K_{v_s}$ as follows: 
\eas
K_v :\ & L^p_{k+1}(\nu(N)) \longra L^p_{k-1}(E_{\cay}) \\
& w \longmapsto R_s(v, \nabla v) \nabla^2 w.
\eas
As $v \in L^p_{k+1} \subset C^{k, \alpha}$ and $R$ is smooth in its arguments, this linear differential operator has coefficients in $C^{k-1,\alpha}$. It is elliptic, as $D_s^*F$ is elliptic at $0$. Thus $v$ is a solution to the following equation: 
\eas
K_{v, s}(v) = S_s(v),
\eas 
which is a second order elliptic equation with $C^{k-1,\alpha}$ coefficients. However $S_s(v, \nabla v)$ is actually in $C^{k,\alpha}$. Thus we can apply Schauder regularity results such as Theorem 1.4.2 in \cite{joyceRiemannianHolonomyGroups2007}, which allows us to improve the regularity of $v$ to $C^{k+2,\alpha}$. Consequently, the coefficients of $K_{v, s}$ will have regularity $C^{k+1,\alpha}$, as will the section $S_s(v)$. It follows by bootstrapping that $v \in C^\infty(\nu(N))$.
\end{proof}

We can now specialise to Cayley submanifolds, and describe their family moduli spaces locally. To be precise, we consider the following moduli space for $N \subset M$ an immersed submanifold and $\{\Phi_s\}_{s \in \cS}$ a smooth family of $\Spin(7)$-structures. 
\ea
\label{3_3_cmpt_moduli}
\cM(N, \cS) = \{ (\tilde{N}, s): \tilde{N}& \text{ is an immersed Cayley submanifold of } (M, \Phi_s) \nonumber\\
 &\text{ with } \tilde{N} \text{ isotopic to } N\}. 
\ea
We endow $\cM(N, \cS)$ with the $C^\infty$ topology. Note that if $N$ is Cayley, then the bundles $E_{\cay}$ and $E$ agree, as $TN = \cay_N$. Now we can apply the theory of Kuranishi models to the Fredholm map $F$ to arrive at the following structure theorem for its zero set.

\begin{thm}[Structure]
\label{3_4_structure}
Let $N$ be an immersed Cayley submanifold of $(M, \Phi_{s_0})$. Then there is a family of non-linear deformation operators $F_s$ which for $\ep> 0$ sufficiently small give a $C^1$ map: 
\eas
F: \mathcal{L}_\eps = \{ v\in L^p_{k+1} (\nu_\ep (N)), \nm{v}_{L^p_{k+1}} < \ep \} \times \cS \longrightarrow L^{p}_{k}(E).
\eas
Then a neighbourhood of $(N, s_0)$ in $\cM(N, \cS)$ is homeomorphic to the zero locus of $F$ near $(0, s_0)$. Furthermore we can define the \textbf{deformation space} $\cI(N, \cS) \subset C^\infty (\nu (N))$ to be the the kernel of $D_{N, s_0} = \D F (0, s_0)$, and the \textbf{obstruction space} $\cO(N, \cS) \subset C^\infty(E_{\cay})$ to be the cokernel of $D_{N, s_0}$. Then a neighbourhood of $(N, s_0)$ in $\cM(N, \cS)$ is also homeomorphic to the zero locus of a Kuranishi map: 
\eas
	\kappa:  \cI(N, \cS) \longra \cO(N, \cS).
\eas
In particular if $\cO(N, \cS) = \{0\}$ is trivial, $\cM(N, \cS) $ admits the structure of a $C^1$-manifold near $(N, s_0)$. We say that $N$ is \textbf{unobstructed} in this case.
\end{thm}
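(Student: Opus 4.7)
The plan is to prove the theorem by combining the analytic machinery from Propositions \ref{3_4_cpt_cayley_linearisisation}, \ref{3_4_def_smooth} and \ref{3_4_elliptic_reg} with the standard Banach space Kuranishi construction.

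First, I would identify a neighbourhood of $(N, s_0)$ in $\cM(N,\cS)$ with the zero set of $F$. Since $N$ is compact and embedded (or immersed, with a tubular neighbourhood of its image), any submanifold $\tilde{N}$ sufficiently $C^1$-close to $N$ can be written uniquely as $\tilde{N} = \exp_v(N) = N_v$ for a normal vector field $v \in C^\infty(\nu_\eps(N))$ with small $C^1$-norm. By Proposition \ref{3_4_cpt_cayley_linearisisation}, $N_v$ is Cayley for $\Phi_s$ if and only if $F(v,s) = 0$. Using the Sobolev embedding $L^p_{k+1} \hookrightarrow C^k$ (valid since $p > 4$ and $k \ge 1$), any $v$ in a small $L^p_{k+1}$-ball gives a $C^1$-small deformation, and conversely any $C^1$-small deformation lies in this ball after shrinking $\eps$. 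Finally, Proposition \ref{3_4_elliptic_reg} guarantees that every zero of $F$ in $\mathcal{L}_\eps$ is smooth, which allows one to compare the $L^p_{k+1}$-topology on $F^{-1}(0)$ with the $C^\infty$-topology on $\cM(N,\cS)$ and obtain the desired homeomorphism.

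Next I would construct the Kuranishi model. By Proposition \ref{3_4_def_smooth}, the total linearisation $L_{0, s_0} = D_{N, s_0} \op T : L^p_{k+1}(\nu(N))\op T_{s_0}\cS \longra L^p_k(E)$ is Fredholm, so its kernel $\cI(N,\cS)$ (which injects into $C^\infty(\nu(N))$ by elliptic regularity) and its cokernel $\cO(N,\cS)$ are finite-dimensional. Fix closed complements so that $L^p_{k+1}(\nu(N))\op T_{s_0}\cS = \cI \op X$ and $L^p_k(E) = Y \op \cO$, where $L_{0,s_0}: X \longra Y$ is a Banach space isomorphism. Writing $\pi_Y: L^p_k(E) \longra Y$ for the projection, the composition $\pi_Y \circ F : \cI \times X \longra Y$ has surjective partial derivative in the $X$-direction at the origin, so the Banach space implicit function theorem (applicable since $F$ is $C^\iy$ by Proposition \ref{3_4_def_smooth}) yields a smooth map $\phi: U \subset \cI \longra X$ with $\phi(0) = 0$ and $\pi_Y F(u, \phi(u)) = 0$ for all $u \in U$. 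The Kuranishi map is then
\eas
\kappa(u) = (\id -\pi_Y) F(u, \phi(u)) \in \cO,
\eas
and by construction $F(v, s) = 0$ in a neighbourhood of $(0, s_0)$ if and only if $(v, s) = (u, \phi(u))$ with $\kappa(u) = 0$. This gives the second claimed homeomorphism.

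Finally, in the unobstructed case $\cO = \{0\}$ the full linearisation $L_{0, s_0}$ is surjective, so the implicit function theorem applied directly to $F$ presents $F^{-1}(0)$ locally as the graph of a smooth function over $\cI$; combined with the identification from the first step this gives $\cM(N,\cS)$ the structure of a $C^1$ (indeed $C^\iy$) manifold near $(N, s_0)$ of dimension $\dim \cI(N,\cS)$. The main technical subtlety, as I see it, is the interplay between topologies: one must verify that the $L^p_{k+1}$-identification of nearby immersed submanifolds via the exponential map is compatible with the natural $C^\iy$-topology on $\cM(N,\cS)$, and here elliptic regularity, together with continuous dependence of the tubular neighbourhood parametrisation on $\Phi_s$, does the work. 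Once this is handled, everything else reduces to a routine application of the Banach space implicit function theorem to the Fredholm map $F$.
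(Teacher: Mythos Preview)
Your proposal is correct and follows exactly the paper's approach: the paper does not give a standalone proof of this theorem, but rather remarks just before the statement that one can ``apply the theory of Kuranishi models to the Fredholm map $F$'' once Propositions \ref{3_4_cpt_cayley_linearisisation}, \ref{3_4_def_smooth} and \ref{3_4_elliptic_reg} are in hand. You have simply spelled out that standard Kuranishi argument in detail, and your account of the interplay between the $L^p_{k+1}$ and $C^\infty$ topologies via elliptic regularity is precisely the point the paper leaves implicit.
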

To conclude, we refer to the DPhil thesis of Robert Clancy \cite[Theorem 6.3.1]{clancySpinManifoldsCalibrated2012} for a proof of the following formula for the index of $D_{N, s_0}$. We simply add $\dim \cS$ since our deformation problem also allows for deformations of the $\Spin(7)$-structure. 

\begin{thm}[Index]
\label{3_4_index}
Suppose we are in the situation of Theorem \ref{3_4_structure}. The index of $D_{N, s_0}$ is given by: 
\e
\label{3_3_form_index}
\ind D_{N, s_0} = \frac{1}{2}(\si(N) + \chi(N)) - [N] \cdot [N] + \dim \cS.
\e
Here $\si(N)$ denotes the signature of $N$ as a compact oriented four-manifolds, $\chi(N)$ the Euler characteristic, and $[N]\cdot [N]$ the self-intersection number in $M$. This is the expected dimension of the moduli space $\cM(N, \cS)$. In particular if $N$ is unobstructed $\cM(N, \cS) $ will be a smooth manifold of this dimension. 
\end{thm}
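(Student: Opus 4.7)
The plan is to decompose $D_{N,s_0}$ into the pure Cayley deformation operator on the fixed $\Spin(7)$-structure plus a finite-rank ``family'' contribution, reduce to the index formula in the fixed-structure case, and then cite the computation of Clancy.

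First I would observe from the proof of Proposition \ref{3_4_def_smooth} that the linearisation at $(0, s_0)$ has the form
\[
D_{N,s_0}(v, \xi) \;=\; D_N v + T\xi, \qquad (v, \xi) \in L^p_{k+1}(\nu(N)) \oplus T_{s_0}\cS,
\]
where $D_N$ is the Cayley deformation operator for $\Phi_{s_0}$ and $T: T_{s_0}\cS \to L^p_k(E_{\cay})$ is a bounded linear map of finite-dimensional domain (the $\partial_s \boF$-piece from Lemma \ref{3_4_F_Q}). A short algebraic computation shows that augmenting the domain of a Fredholm operator $A\colon V\to W$ by a finite-dimensional summand $X$ and extending by any bounded linear map $T\colon X \to W$ yields a Fredholm operator of index $\ind A + \dim X$. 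Indeed, projecting $\ker(A\oplus T)$ onto $X$ gives a short exact sequence involving $\ker A$ and $T^{-1}(\operatorname{im} A)$, while $\operatorname{coker}(A \oplus T) = \operatorname{coker} A / \pi(\operatorname{im} T)$, and combining $\dim T^{-1}(\operatorname{im} A) + \dim \pi(\operatorname{im} T) = \dim X$ gives the claim. This reduces the theorem to the identity $\ind D_N = \tfrac{1}{2}(\sigma(N) + \chi(N)) - [N]\cdot [N]$.

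For this residual identity I would invoke Clancy's Theorem 6.3.1 directly. The idea of Clancy's proof, which I would also outline for completeness, is to identify $D_N$ with the negative Dirac operator associated to the Dirac bundle $\slashed{S} = E \oplus \nu(N)$ constructed in Proposition \ref{2_2_dirac_bundle}, and then apply the Atiyah--Singer index theorem. In the spin case one writes $\nu(N) \simeq \slashed{S}_-(N) \otimes_\H L$ and $E \simeq \slashed{S}_+(N) \otimes_\H L$ for the quaternionic line bundle $L$ introduced after Proposition \ref{2_2_dirac_bundle}, computes $\int_N \hat{A}(N)\,\operatorname{ch}(L)$, and re-expresses the characteristic numbers via the Hirzebruch signature formula $\sigma(N) = \tfrac{1}{3}p_1(TN)[N]$, Gauss--Bonnet $\chi(N) = e(TN)[N]$, and the self-intersection identity $[N]\cdot [N] = e(\nu(N))[N]$.

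The main obstacle is the non-spin case, since then $\slashed{S}_\pm$ and $L$ are not individually defined and $\slashed{S}$ exists only as a Dirac bundle. Clancy handles this by noting that the Atiyah--Singer integrand for a Dirac operator on a Clifford module depends only on the rational Pontryagin and Euler classes of $TN$ and $\nu(N)$, which are well-defined regardless of a spin lift; alternatively one can apply the splitting principle after passing to the oriented two-fold cover trivialising $w_2$, compute there, and descend. Assuming this step, adding the $+\dim \cS$ contribution established above completes the proof.
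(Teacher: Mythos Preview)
Your proposal is correct and follows essentially the same route as the paper: the paper simply cites Clancy's Theorem~6.3.1 for the formula $\ind D_N = \tfrac{1}{2}(\sigma(N)+\chi(N)) - [N]\cdot[N]$ and then adds $\dim\cS$ without further comment, whereas you give an explicit justification for the $+\dim\cS$ contribution via the finite-rank domain augmentation argument and then outline Clancy's Atiyah--Singer computation. The only caveat is that your remark about ``passing to the oriented two-fold cover trivialising $w_2$'' in the non-spin case is not quite right in general (no such cover need exist), but since you are merely sketching Clancy's argument and ultimately citing it, this does not affect the validity of your proof.
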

\begin{ex}
\label{3_3_complex_cayley}
Suppose that $N \subset M$ is a complex submanifold in a CY4 manifold $(M, \om,g,J,\Om)$. Then $N$ will also be Cayley in the $\Spin(7)$-manifold $(M, \Re \Om + \ha\om \wedge \om)$. If $N$ is compact, then Hodge theory on the Kähler manifold $N$ allows us to conclude that $\ker \bar{\partial}+\bar{\partial}^* = \ker \bar{\partial}\op\bar{\partial}^*$, and so according to Example \ref{2_2_ex_cplx_sl}, infinitesimal Cayley deformations and infinitesimal complex deformations agree. Now, if we furthermore assume that $N$ is unobstructed as a complex submanifold, which means that $\Ho^1(N, \nu^{0,1}) = 0$, then it is also unobstructed as a Cayley manifold. This is because:
\eas
\dim _\R \mathcal{I}(N) &=\dim _\R \{\text{inf. complex deformations}\}\\
&=2\dim _\C \Ho^0(N, \nu)\\
&= \dim _\C \Ho^0(N, \nu)-\Ho^1(N, \nu)+\Ho^2(N, \nu)\\
&= \chi(N, \nu) = \ind (\bar{\partial}+\bar{\partial}^*)\\
&= \ind(\slashed{D}_N)\\
&= \dim _\R \mathcal{I}(N)-\dim _\R \mathcal{O}(N).
\eas
Thus the obstruction space $\mathcal{O}(N)$ vanishes. Here we used the fact that since $M$ is CY4 the canonical class on $N$ is given by $\det ( \nu)$, and we thus get by Serre duality $\Ho^0(N, \nu) \simeq  \Ho^2(N, \nu) $. Moreover, the index of $\bar{\partial}+\bar{\partial}^*$ is exactly the holomorphic Euler characteristic of the coefficient bundle. 
\end{ex}
\begin{ex}
\label{3_3_sl_cayley}
If $L \subset M$ is a compact special Lagrangian submanifold in a CY4 manifold $(M, \om,g,J,\Om)$ then it is Cayley in the $\Spin(7)$-manifold $(M, \Re \Om + \ha\om \wedge \om)$. For Lagrangian submanifolds the normal bundle is intrinsic, as $\nu(N) \simeq TN$. Thus the formula for the index \eqref{3_3_form_index} yields:
\eas
\ind \slashed{D}_N = \ha( \si(N) + \chi(N))- [N] \cdot [N] = \ha( \si(N) - \chi(N)) = b_1(N) - b_2^-(N)- 1.
\eas
Compare this to the special Lagrangian deformation theory as described in \cite{mcleanDeformationsCalibratedSubmanifolds1998}, where it is shown that the moduli space of special Lagranians isotopic to $N$ has dimension $b_1(N)$. Thus the obstruction space for compact Cayleys coming from special Lagrangians never vanishes, as the obstruction space necessarily has dimension:
\e
\dim \mathcal{O}(N) \ge b_2^-(N)+ 1.
\e
Looking at the explicit form for the Cayley operator in Proposition \ref{2_2_cayley_sl}, we see that in fact:
\eas
\Ker &\slashed{D}_N \simeq \mathcal {H}^1,\\
\Coker & \slashed{D}_N \simeq \mathcal {H}^0 \oplus \mathcal {H}^{2,-}.
\eas
Here ${H}^k$ is the space of harmonic $k$-forms on $N$, and ${H}^{2,-}$ is the space of harmonic anti-self-dual forms. On a compact manifold, if $\d ^- \si= 0$, then:
\eas
0 =\int_{\partial N }\si \wedge \d \si = \int_N \d \si \wedge \d \si =\int_N \d^+ \si \wedge \d^+ \si=\int_N \d^+ \si \wedge \star \d^+ \si = \nm{\d^+ \si}_{L_2}.
\eas
Now we obtain the first isomorphism by Hodge theory on $N$. As for the second isomorphism, note that the adjoint of $-\d \star \op \d ^-$ is exactly $-\d \star + \d ^ *: \Om^n \op \Om^{2,-} \longra \Om^1 $. Hodge theory again leads to the desired result.
\end{ex}

\subsection{Asymptotically conical case}

The deformation theory of noncompact Cayleys with conical ends is analogous to the compact case, however in order to stay in the Fredholm setting we need to consider a deformation map acting between weighted function spaces, as in Proposition \ref{2_3_d_dt}. 

Let $(\R^8,\Phi)$ be an $\AC_\eta$ almost $\Spin(7)$-manifold, and suppose that $A \subset \R^8$ is a an $\al$-Cayley submanifold that is $\AC_\la$ for some $\eta < \la < 1$. If $\mathcal{S}$ is a smooth family of $\AC_\eta$ perturbations of $\Phi =\Phi_{s_0}$, then we would like to examine the moduli space:
\ea
\label{3_4_ac_moduli}
\cM_\AC^\la(A, \mathcal{S}) = \{ (\tilde{A}, \Phi_s): \tilde{A}& \text{ is an } \AC_\la \text{ Cayley submanifold of } (\R^8, \Phi_s) \nonumber\\
 &\text{ isotopic to } A\text{ and asymptotic to the same cone} \}. 
\ea
For $\al$ sufficiently close to $1$, $A$ admits is a canonical deformation map, just as in the compact setting. However we need to modify the definition from section \ref{3_2_deformation_operator} to account for the $\AC_\la$ condition and to ensure Fredholmness of the linearised problem. Thus we define:
\ea
F_{\AC} :  C^\infty_{\la}(\nu_\ep(A)) \times \cS & \longrightarrow C^\infty_{\loc}(E_{\cay}),
\ea
for $\eps > 0$ sufficiently small, and after potentially shrinking $\cS$. Recall that $\nu_\ep(A)$ is a tubular neighbourhood that grows linearly in the distance from the origin, so any sufficiently small $\AC_\la$ deformation will be contained in this neighbourhood. We would like to show the following result:
\begin{prop}
\label{3_4deformation_map_ac}
Let $p > 4$ and $k \ge 1$. For sufficiently small $\ep > 0$ and $\eta < 0 $ the map $F_{\AC}$ extends to a $C^\infty$ map of Sobolev spaces:
\[ F_\AC: \cL_{\eps} = \{ v \in L^p_{k+1, \la} (\nu_\ep (A)): \nm{v}_{L^p_{k+1, \la}} \le \eps\}\times \cS\longrightarrow L^{p}_{k, \la-1}(E_{\cay}).\]
Furthermore, any $v\in L^p_{k+1, \la} (\nu_\ep (A)) $ such that $F_{\AC}(v) = 0$ is smooth and lies in $C^\infty_{\la+1}$. The linearisation at $0$ is the bounded linear map: 
\e
D_{\AC} : L^p_{k+1, \la} (\nu (A))\times T_{s_0}\cS \longrightarrow L^{p}_{k, \la-1}(E_{\cay}). \nonumber
\e
Finally, $D_{\AC}$ is Fredholm for $\la$ in the complement of a discrete set $\cD_L \subset \R$, which is determined by the metric structure on the asymptotic link $L \subset S^7$. 
\end{prop}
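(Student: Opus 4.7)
The plan is to mirror the compact analysis of Propositions \ref{3_4_cpt_cayley_linearisisation}--\ref{3_4_elliptic_reg} in weighted Sobolev spaces and then invoke the Fredholm theorem for conical operators (Theorem \ref{2_3_fredholm}). First, I decompose $F_\AC(v,s) = F_\AC(0,s) + D_s v + Q_s(v)$ as in Lemma \ref{3_4_F_Q}. Because $A$ is asymptotic to a Cayley cone at rate $\la$ and $\Phi$ is asymptotic to $\Phi_0$ at rate $\eta$, applying Proposition \ref{3_1_local_coord} along the end yields the pointwise bound $|F_\AC(0,s)|(p) \lesssim \rho(p)^{\la-1}$ together with analogous bounds on all covariant derivatives, so $F_\AC(0,s) \in L^p_{k,\la-1}$. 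The linearisation \eqref{3_2_linearised_def_new} is a first order operator whose rescaling by $\rho$ in the sense of Theorem \ref{2_3_fredholm} is cylindrical-type; hence $D_s$ is conical of rate $1$ and extends to a bounded map $L^p_{k+1,\la} \to L^p_{k,\la-1}$ by Proposition \ref{2_3_cyl_bounded}.

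For the quadratic remainder $Q_s$, I would translate the pointwise estimate of Lemma \ref{3_4_bound_Q} into weighted form. Since the fibre-preserving smooth map $\boQ$ vanishes to second order in the fibre variables, an argument parallel to Lemma \ref{3_4_l_p_bound} carried out in the scaled metric $\rho^{-2}g$ on the ends gives
\eas
\nm{Q_s(v)-Q_s(w)}_{L^p_{k,\la-1}} \le C \nm{v-w}_{L^p_{k+1,\la}}\bigl(\nm{v}_{L^p_{k+1,\la}}+\nm{w}_{L^p_{k+1,\la}}\bigr),
\eas
with the weighted Sobolev embedding of Theorem \ref{2_3_Sobolev_embedding_hoelder_acyl_ac} in the role of its unweighted counterpart. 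Smoothness of $F_\AC$ as a map of Banach manifolds then follows as in Proposition \ref{3_4_def_smooth}. Elliptic regularity of solutions $F_\AC(v,s)=0$ is obtained by applying $D_s^*$ to reduce to a quasilinear elliptic second order equation, then bootstrapping with local Schauder estimates as in Proposition \ref{3_4_elliptic_reg}; the weighted decay of $v$ is preserved because the Schauder estimates are local and do not disturb the behaviour at infinity.

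The main step is Fredholmness of $D_\AC$. It suffices to prove that $D_s : L^p_{k+1,\la}(\nu(A)) \to L^p_{k,\la-1}(E_{\cay})$ is conical of rate $1$ whose rescaled form $D_s^1$ satisfies \eqref{2_3_operator_conv}, with limit the cylindrical operator $\partial_r + D_L$ obtained by rewriting the cone Dirac operator $\slashed{D}$ of \eqref{2_2_dirac_op_cone} in logarithmic coordinates. Combining the $\AC_\la$ asymptotics of $A$, the $\AC_\eta$ asymptotics of $\Phi$, and the asymptotic adapted frame from Proposition \ref{3_1_local_coord} along the end, the coefficients $D_s^i - D_\infty^i$ and all their covariant derivatives decay at a polynomial rate in $r$, so \eqref{2_3_operator_conv} holds. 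Theorem \ref{2_3_fredholm} then delivers Fredholmness of $D_s$ for all $\la$ outside the discrete set $\cD_L \subset \R$ coming from the eigenvalue problem \eqref{2_3_eigenvalue} on the link, and adding the finite-dimensional factor $T_{s_0}\cS$ preserves Fredholmness. The main technical subtlety I anticipate is verifying this asymptotic convergence at all orders simultaneously, which requires tracking how the perturbations of $A$ and of $\Phi$ combine inside the cross product, the projection $\pi_E$, and the torsion term $\nabla_v\tau$ appearing in \eqref{3_2_linearised_def_new}; it is here that the assumption $\eta < \la < 1$ gets used to guarantee error rates strictly faster than $\rho^{-1}$ relative to the cylindrical model.
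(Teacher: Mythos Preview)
Your approach is correct and matches the paper's own strategy closely: decompose $F_\AC = F_\AC(0,s) + D_s + Q_s$, prove weighted bounds on each piece (the paper isolates these as Propositions \ref{3_4F_0_bound_ac} and \ref{3_4_Q_ac}, using scaling of $\boQ$ for the flat structure and then perturbing by $O(r^{\eta})$), verify that $D_s$ is conical of rate $1$, and apply Theorem \ref{2_3_fredholm} for Fredholmness. One point the paper singles out that you gloss over is smoothness of $F_\AC$ in the $\cS$-direction: in the noncompact setting one must check that the partial derivatives $\partial_s^j F_\AC(v,s)$ actually lie in $L^p_{k,\la-1}$, and this is where the hypothesis that the family $\{\Phi_s\}$ consists of $\AC_\eta$ perturbations with $\eta<\la$ enters, since variations of $\Phi$ then lie in $C^\infty_\eta \subset L^p_{k,\la}$ and hence contribute to $F_\AC$ at weight $\la-1$.
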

The proof of this result follows the same outline as in the compact case. The crucial step is to obtain estimates on the weighted $C^k_{\de}$ and $L^p_{k, \de}$ norms of the various terms involved in the Taylor expansion: 
\eas
F_{\AC}(v, s) = F_{\AC}(0, s) + D_{\AC,s}[v] + Q_{\AC}(v,s).
\eas
Moreover we need to investigate the dependence on the parameter $s \in \cS$. First let us examine the constant term. 
\begin{prop}
\label{3_4F_0_bound_ac}
Suppose that $A$ is $\AC_\la$ to a Cayley cone, $\al$-Cayley for $\al$ sufficiently close to $1$, and let $k \in \N$. Then, after shrinking $\cS$  there is a constant $C> 0$, independent of $s \in \cS$ such that $\nm{F_{\AC}(0,s)}_{C^k_{\la-1}}  < C$. Thus $F_{\AC}(0, s) \in C^{\infty}_{\la-1}(E_{\cay})$.
\end{prop}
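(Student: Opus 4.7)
The plan is to establish the pointwise weighted estimate $\md{\nabla^i F_{\AC}(0,s)(p)} \le C\,\rho(p)^{\la - 1 - i}$ for every $i \ge 0$, uniformly in $s \in \cS$ after shrinking $\cS$ to a relatively compact neighbourhood of $s_0$. This is exactly what is required for $F_{\AC}(0, s) \in C^k_{\la - 1}(E_{\cay})$ for every $k$, hence in $C^\infty_{\la-1}$. Split $A = K \sqcup (A \setminus K)$ as in the $\AC_\la$ definition. Over the compact piece $K$, smoothness of $F_{\AC}(0,s)$ jointly in $p$ and $s$ together with $\rho$ being bounded above and below on $K$ gives the desired bound immediately.

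On the end $A \setminus K$, the crucial observation is that $\tau_0$ vanishes identically on the asymptotic cone $C$ since every tangent plane of $C$ is Cayley for $\Phi_0$. Writing $p = \Th(r,q)$ and decomposing
\eas
\tau_s\vert_{T_pA} = \bigl(\tau_s\vert_{T_pA} - \tau_s\vert_{T_{\io(r,q)}C}\bigr) + \bigl(\tau_s\vert_{T_{\io(r,q)}C} - \tau_0\vert_{T_{\io(r,q)}C}\bigr),
\eas
the first bracket is controlled by the Lipschitz dependence of $\tau_s$ on the plane together with the extrinsic bound \eqref{2_3_extrinsic_ac}, which yields $d_{\Gr}(T_pA, T_{\io(r,q)}C) = O(\rho^{\la-1})$ since this distance is a one-derivative quantity in the normal perturbation of $A$ from $C$. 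The second bracket is $O(\rho^{\eta-1})$ since $\tau_s$ depends smoothly on $\Phi_s$ and $\Phi_s$ is $\AC_\eta$ to $\Phi_0$, with uniformity in $s$ obtained by shrinking $\cS$. Since $\eta < \la$, the decaying rate $\rho^{\eta-1}$ is absorbed into $\rho^{\la-1}$ at infinity, and applying the bounded tensorial operations $\star_A$ and $\pi_E$ preserves this estimate to give $\md{F_{\AC}(0,s)(p)} \le C\rho(p)^{\la-1}$.

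The higher-derivative bounds follow by the same mechanism applied to $\nabla^i F_{\AC}(0,s)$. Parametrising the end via $\Th$ and writing $F_{\AC}(0,s)$ in an adapted frame as a smooth function of $p$, the normal perturbation $N$ (satisfying $\md{\nabla^j N} = O(\rho^{\la-j})$), its first derivative, and $\Phi_s$ (with $\md{\nabla^j(\Phi_s - \Phi_0)} = O(\rho^{\eta - 1 -j})$), one expands $\nabla^i F_{\AC}(0,s)$ via the chain rule, exactly as in the derivation of \eqref{3_4_nabla_Q_exp} in the compact case. Each summand is bounded by $C\rho^{\la - 1 - i}$ because every worst factor is either a $\nabla^{j+1}N$ term contributing $\rho^{\la - 1 - j}$ or a $\nabla^{j}(\Phi_s-\Phi_0)$ term contributing $\rho^{\eta - 1 - j} \le C \rho^{\la - 1 - j}$. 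The main obstacle is the careful bookkeeping of these weights across the chain-rule expansion, along with verifying that the tensorial operations $\star_A$ and $\pi_E$ and the comparison between the conical covariant derivative and the induced covariant derivative on $A$ introduce no additional growth; this is standard but tedious, and mirrors the $Q$-estimate argument of the compact setting.
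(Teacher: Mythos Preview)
Your proposal is correct and follows essentially the same strategy as the paper: decompose $\tau_s\vert_{T_pA}$ by comparing to the cone (using the $\AC_\la$ bound $\md{\nabla^i(T_pA-T_pC)}=O(\rho^{\la-1-i})$) and comparing $\tau_s$ to $\tau_0$ (using the $\AC_\eta$ condition, which gives $O(\rho^{\eta-1-i})$, absorbed since $\eta<\la$), then observe that $\pi_E$ and $\star_A$ affect only constants. The paper presents this slightly more tersely by viewing $\tau$ as a bundle map on $\La^4\R^8$ and applying Taylor's theorem directly, but the content is the same.
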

\begin{proof}

We can think of $\tau$ at any given point $p \in \R^8$ as a smooth map: 
\eas
\tau_p : \La^4 \R^8 \longrightarrow \La^2_7 \R^8.
\eas
As $\Phi$ is $\AC_\eta$, this linear map approaches $\tau_0$ (corresponding to the standard $\Spin(7)$-structure) uniformly in $O(r^{\eta-1})$ and for all $s \in \cS$. Moreover, all derivatives up to a finite order can be bound by a constant independent of $s$. We now think of the tangent bundles of $A$ and $C$ as maps $C \longrightarrow \La^4 \R^8$, ignoring the compact region of $A$. The $\AC_\la$ condition on $A$ now gives us that for $p \in C$: 
\eas
\md{\nabla^i(T_pA- T_pC)} \le K_i r^{\la-1-i}, \text { as } r \ra \infty.
\eas
Here $\nabla$ is with respect to the cone metric on $C$ and the flat metric on $\R^8$. However the same is true for the metric induced from the embedding of $A$ in $(\R^8, \Phi_s)$ and the connection associated to $\Phi_s$. This is because changing the metric to an asymptotically conical one only introduces errors which are asymptotically smaller than the right hand side. Thus an application of Taylor's theorem leads to: 
\eas
\md{\nabla^k \tau(T_pA)} &\le \md{\nabla^k \tau(T_pC)}+\md{\nabla^k (\tau(T_pA)-\tau(T_pC))} \\
&\le C r^{\eta-k-1}+ \sum_{i+j = k} \md{D^i\tau}\md{\nabla^j(T_pA- T_pC)} \le C r^{\la-1-k}.
\eas
Here we used that if $\Phi$ is $\AC_\eta$ to $\Phi_0$ then $\md{\nabla^k \tau(T_pC)} < C r^{\eta-1-k}$, as $C$ is a $\Phi_0$-Cayley cone. The projection $\pi_E$ worsens this bound by a constant factor by an analogous argument.
\end{proof}

Next, let us look at the quadratic term. 

\begin{prop}
\label{3_4_Q_ac}
Suppose that $A$ is $\AC_\la$ to a Cayley cone with $\eta < \la <  1$ and $\al$-Cayley for $\al$ sufficiently close to $1$. Fix $k \in \N$ and assume that $\eta \le C_k$ for some universal constants $C_k \le 0$. Let $\eps > 0$ be sufficiently small. Suppose $u, v \in C^k_{\la}(\nu_\ep(A))$ satisfy $\md{u}_{C^1_{1}}, \md{v}_{C^1_{1}} \le \eps$. Then: 
\eas
\md{Q_{\AC}(u,s)-Q_{\AC}(v,s)}_{C^k_{\la-1}} &\lesssim \md{u-v}_{C^{k+1}_{\la}}\bigg(\md{u}_{C^{k}_{\la}}+\md{v}_{C^{k}_{\la}}\bigg) + \\
&\md{u-v}_{C^{k}_{\la}}\bigg(\md{u}_{C^{k+1}_{\la}}+\md{v}_{C^{k+1}_{\la}}\bigg)  .
\eas
The constant is independent of $p,u,v$ and $s$.
\end{prop}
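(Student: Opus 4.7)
The proof is a direct adaptation of Lemma \ref{3_4_bound_Q} to the setting of weighted $C^k_\de$ spaces on an asymptotically conical submanifold. The strategy is to revisit the pointwise expansion of $\nabla^k Q$, translate every factor into its weighted-norm bound, and combine.

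First, I would reuse the algebraic expansion from the compact case. By Lemma \ref{3_4_F_Q}, $Q_{\AC}(v,s)(p) = \boQ(p, v(p), \nabla v(p), T_pA, s)$ where $\boQ$ is a smooth fibre-preserving map vanishing to second order in the fibre variables. Applying Taylor's theorem and the Leibniz/chain rule as in the derivation of \eqref{3_4_nabla_Q_exp}, one obtains
\begin{align*}
\nabla^k Q_{\AC}(v) = \sum_{\substack{|I|+j \le k+2 \\ 0 \le j \le k,\ r \ge 2}} \boR^{I,j}(p, v, \nabla v, TA, s)\, \nabla^I v \otimes \nabla^j TA,
\end{align*}
where $I = (i_1, \ldots, i_r)$ is a multi-index with $\nabla^I v = \nabla^{i_1} v \ot \cdots \ot \nabla^{i_r} v$. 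The analogous identity for the difference $Q_{\AC}(u) - Q_{\AC}(v)$, with at least one factor of the form $u-v$ or $\nabla^i(u-v)$ in each summand, follows from the same Taylor-based rearrangement used in \eqref{3_4_Taylor_Q}--\eqref{3_4_Q_diff}.

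Second, I would establish weighted estimates on each factor. Since $\Phi_s$ is $\AC_\eta$ to the flat $\Spin(7)$-structure on $\R^8$, the maps $\boR^{I,j}$ are uniformly bounded on compact subsets of small fibre arguments, while their $p$-derivatives decay at rate $O(\rho^{\eta-1})$. The $\AC_\la$ condition on $A$ gives $\md{\nabla^j TA}(p) \lesssim \rho(p)^{-j}$ (compare $TA$, seen as a section of $\Lambda^4 T^*\R^8$, with the exactly conical $TC$). Finally, by definition $\md{\nabla^i v}(p) \le \rho(p)^{\la - i}\nm{v}_{C^i_\la}$. Substituting into a typical term gives
\begin{align*}
\bmd{\boR^{I,j}\,\nabla^I v \ot \nabla^j TA}(p) \lesssim \rho(p)^{\alpha}\prod_l \nm{v}_{C^{i_l}_\la},
\end{align*}
where $\alpha$ depends on $r$, $|I|$, $j$, and on the number of derivatives falling on $\boR^{I,j}$ via the chain rule. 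The hypothesis $\eta \le C_k$ is designed precisely so that the decay contributed by these derivatives of $\boR^{I,j}$ forces $\alpha \le \la - 1 - k$ across all admissible index combinations, placing each term in $C^k_{\la-1}$. Applying this bookkeeping to the difference expansion, grouping by whether the top-order derivative $\nabla^{k+1}$ falls on $u-v$ or on one of $u, v$, and taking the supremum of $\rho^{k-\la+1}|\cdot|$ over $p$ then yields the two-part inequality in the statement.

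The principal obstacle is the bookkeeping of chain-rule derivatives: repeatedly differentiating the quadratic term $\boR(v,\nabla v)\, v \ot v$ can produce up to $k+2$ factors of $v$ in a single summand, so the weighted estimate only closes if the coefficient maps $\boR^{I,j}$ decay rapidly enough at infinity. The constant $C_k$ in the hypothesis $\eta \le C_k$ is exactly the threshold at which this cancellation goes through; producing it requires a careful tally of weights across all multi-indices $(I, j)$ appearing in $\nabla^k Q$, and is the only step where the proof departs substantively from the compact case.
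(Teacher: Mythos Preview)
Your proposal has a genuine gap at the step where you claim the maps $\boR^{I,j}$ are ``uniformly bounded on compact subsets of small fibre arguments''. On an $\AC$ submanifold the tubular neighbourhood $\nu_\ep(A)$ scales with the radius, so the hypothesis $\md{v}_{C^1_1}\le\eps$ only gives $\md{v(p)}\le\eps\rho(p)$: the fibre variable $x=v(p)$ is \emph{unbounded} as $\rho\to\infty$. The compactness argument from Lemma \ref{3_4_bound_Q}, which produced the bounds on the $\boR_i$, therefore does not apply, and your subsequent pointwise substitution collapses for exactly this reason.

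The paper's proof closes this gap by a different mechanism. It first treats the flat structure $\Phi_0$, for which $\boQ_{\pi}(p,x,y)$ is independent of $p$ and satisfies the dilation identity $\boQ_{\pi}(\ga x,y)=\boQ_{\pi}(x,y)$. This lets one \emph{extend} the remainder terms to large $x$ with explicit scaling $\partial_x^a\partial_y^b\boR_{i,\pi}(\ga x,y)=\ga^{-3+i-a}\boR_{i,\pi}(x,y)$, which is what converts the growing $x$-variable into the needed weighted decay. The general $\AC_\eta$ structure is then handled by bounding $\boF_\Phi-\boF_{\Phi_0}$ (via the geodesic equation) and showing the flat scaling estimates persist up to an error controlled by $\nm{\Phi-\Phi_0}_{C^{k+1}_\eta}$; the condition $\eta\le C_k$ enters here, to make this error compatible with the scaling, not (as you suggest) through decay of $p$-derivatives of $\boR^{I,j}$. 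Without invoking this dilation structure or an equivalent device to control $\boR$ on unbounded fibre arguments, your argument does not go through.
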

\begin{proof}
We first consider the flat, translation-invariant $\Spin(7)$-structure $\Phi_0$. For this structure $\boQ_{\pi}(p, v, w)$ is independent of the point $p$. Furthermore, $\boQ$ is translation invariant in the following sense: for $\ga > 0$ we have:
\e
\label{3_4_scaling_invariance}
\boQ_{\pi}(\ga \cdot v, w) = \boQ_{\pi}( v, w).
\e
This is a reformulation of the fact that $Q_\AC(\ga \cdot v) = Q_\AC(v)$ after identifying $\R^8 \simeq T_p\R^8$ for each $p \in \R^8$. Recall the Taylor expansion \eqref{3_4_Taylor_Q} for small $v,w$:
\e
\label{3_4_taylor_AC}
\boQ_{\pi}( v, w) = \boR_{1, \pi} v \ot v + \boR_{2, \pi} v \ot w + \boR_{3, \pi} w \ot w.
\e
For $v$ outside the initial domain of definition, we can define:
\eas
\boR_{1, \pi}(\ga \cdot v, w) &=  \ga^{-2} \boR_{1, \pi}(v, w), \\ 
\boR_{2, \pi}(\ga \cdot v, w) &=  \ga^{-1} \boR_{2, \pi}(v, w), \\ 
\boR_{3, \pi}(\ga \cdot v, w) &=  \boR_{3, \pi}(v, w). 
\eas
The extended $\boR_{i,\pi}$ then also satisfy Equation \eqref{3_4_taylor_AC}. Near infinity, the derivatives have the following scaling behaviour: 
\e
\label{3_4_scaling_R}
\partial_x^k \partial_y^l \boR_{i, \pi} (\ga \cdot v, w ) = \ga^{-3+i-k}  \boR_{i, \pi} (v, w). 
\e
In particular for $v \in \nu_\eps(A)$ and $\md{w} \le \eps$ with $\eps$ sufficiently small, there are bounds $\md{\partial_x^k \partial_y^l \boR_{i, \pi} (v, w )} \le C_{k,l} \md{v}^{-3+i-k}$. From this we deduce for $u, v \in C^\infty_{\la} (\nu_\eps (A))$: 
\eas
\md{Q_\AC(u)-Q_\AC(v)}\rho^{-2 (\la-1)} &\le \md{\rho^{2}R_1(u)} \md{u-v}\rho^{-\la}(\md{u} + \md{v})\rho^{-\la} \\ 
&+ \md{\rho^{2+\la}\partial_x R_1(v)} \md{u-v}\rho^{-\la}(\md{v}\rho^{-\la})^2 \\ 
&+ \md{\rho^{1+\la}\partial_y R_1(v)} \md{\nabla u-\nabla v}\rho^{1-\la}(\md{v}\rho^{-\la})^2 + (\cdots) \\
&\le C(1+\rho^{\la+1})\md{u-v}_{C^1_{\la}}(\md{u}_{C^0_{\la}} + \md{v}_{C^0_{\la}}) + (\cdots) \\
&\le C\md{u-v}_{C^1_{\la}}(\md{u}_{C^0_{\la}} + \md{v}_{C^0_{\la}}) + (\cdots).
\eas
Here we used the fact that $\rho^{\la-1} \ra 0$ as $\rho \ra +\infty$. The terms containing $R_2$ and $R_3$ have been omitted as they admit analogous scaling behaviour. We ultimately obtain $\md{Q_\AC(u)-Q_\AC(v)}_{C^0_{2(\la-1)}} \le C \md{u-v}_{C^1_{\la}}(\md{u}_{C^1_{\la}} + \md{v}_{C^1_{\la}})$. For higher derivatives, note that the translation invariance of $\boQ$ gives us the following analogue of Equation \eqref{3_3_nabla_Q}:
\eas
\nabla_{\xi}(R_i(v)) = \partial_x \boQ [\nabla_\xi v] +  \partial_y \boQ [\nabla_\xi  \nabla v ] + \partial_\pi \boQ [\nabla_\xi TA].
\eas 
Now again from the Taylor expansion \eqref{3_4_Taylor_Q} we see that: 
\eas
\partial_x \boQ (v, \nabla v) = \partial_x \boR_1 v\ot v + (\boR_1 + \partial \boR_2) v \ot \nabla v + (\boR_2 + \partial \boR_3) \nabla v \ot \nabla v. 
\eas
All the terms have the same scaling behaviour, so that $\partial_x \boQ(\ga \cdot v, \nabla v) = \ga^{-1} \partial_x \boQ(v, \nabla v)$. The terms $\partial_y \boQ$ and $\partial_\pi \boQ$ can be treated in a similar way. The upshot is that one can express $\nabla^k Q_\AC(u)- \nabla^k Q_\AC(v)$ as a sum of terms which are products of $\partial_x^k \partial_y^l\partial_\pi^m \boR_i$,  $\nabla^i u - \nabla^i v$, $\nabla^j u + \nabla^j v$ and $\nabla ^ r TA$. Then manipulations as above allow us to conclude that:
\eas
\md{\nabla^k Q_\AC(u)- \nabla^k Q_\AC(v)} \rho^{k-2(\la-1)}& \le C \md{u-v}_{C^{k+1}_{\la}}\bigg(\md{u}_{C^{k}_{\la}}+\md{v}_{C^{k}_{\la}}\bigg) + \\
&\md{u-v}_{C^{k}_{\la}}\bigg(\md{u}_{C^{k+1}_{\la}}+\md{v}_{C^{k+1}_{\la}}\bigg) ,
\eas
from which the claim of the proposition follows, in the flat case.

Now, if $\Phi$ is an $\AC_\eta$ perturbation of $\Phi_0$, one has for $k,l\ge 0$ and $\md{v} \le \eps r$, $\md{w} \le \epsilon$:
\e
\label{3_4_nonflat_F_bound}
\md{\partial^k_x\partial^l_y (\boF_\Phi(p,v,w) - \boF_{\Phi_0}(p,v,w))} = O(\nm{\Phi - \Phi_0}_{C^{k+1}_{\eta}}\md{p}^{\eta + 1-k-l}).
\e 
This can be seen by first observing that $\md{\nabla^k (\exp_{\Phi} -\exp_{\Phi_0})} = O(r^{\eta-k})$. This in turn can be obtained by analysing the geodesic equation for the curve $x(t)$ :
\e
\ddot{x}_k = \Ga^k_{ij} \dot {x}_i \dot{x}_j,
\e
where $\Ga^k_{ij}$ are the Christoffel symbols for the usual coordinates on $\R^8$. The $\AC_\eta$ condition implies that $\md{\Ga^k_{ij}} \lesssim r^{\eta-2}$. Now, as $\dot {x}$ is a vector uniformly bounded with respect to both $g_\Phi$ and the flat metric, we can deduce that
\eas
\md{\exp_{p,\Phi}(v) - \exp_{p,\Phi_0}(v) }&=\md{x(t) - x(0)-t\dot{x}(0)}\\ &\le \int_0^t \int_0^t \md{\Ga^k_{ij}} \d s \d s'\\
&= O(t^2\nm{\Phi - \Phi_0}_{C^{1}_{\eta}} r^{\eta-2}) = O(\eps^2\nm{\Phi - \Phi_0}_{C^{1}_{\eta-1}} r^{\eta}).
\eas
One can write down similar ODEs for the variation of $\exp$ with regards to the initial condition and perform an analogous analysis to bound $\nabla^k (\exp_{\Phi} -\exp_{\Phi_0})$ for $k \ge 1$. As $\tau_p$ is obtained from $\Phi_p$ by a smooth mapping, it too is in $C^\infty_\eta$ with the same norm, up to a universal multiplicative constant. Thus we see that: 
\eas
\md{\boF_\Phi(p,v,w) - \boF_{\Phi_0}(p,v,w))} &\le \md{ \exp_{\Phi} -\exp_{\Phi_0}}+\md{\nabla (\exp_{\Phi} -\exp_{\Phi_0})}+\md{ \tau - \tau_0}\\ &= O(\nm{\Phi - \Phi_0}_{C^{1}_{\eta}}\md{p}^{\eta}).
\eas
The proof for higher derivatives works in a similar way. We can now use this to get decay estimates similar to Equation \eqref{3_4_scaling_R}. For any $\AC_\eta$  $\Spin(7)$-structure $\Phi$ we can define 
\eas
\boR_1(p,v,w) = \int_0^1 (1-t) \partial^2_x \boF(p, t(v,w))\d t, \\
\boR_2(p,v,w) = \int_0^1 (1-t) \partial^2_{x,y} \boF(p, t(v,w))\d t, \\
\boR_3(p,v,w) = \int_0^1 (1-t) \partial^2_y \boF(p, t(v,w))\d t.
\eas
From the bounds \eqref{3_4_nonflat_F_bound} we see that for $\ga \ge 1$ and $\eta \le 0$:
\eas
\md{\boR_{1, \Phi}(p,\ga \cdot v,w)} &\le \md{\boR_{1, \Phi_0}( p,\ga \cdot v,w)}+(\ga\md{p})^{\eta-2}\\
&\le\ga^{-2}\md{\boR_{1, \Phi_0}( p,v,w)}+(\ga\md{p})^{\eta-2}\\
&\le\ga^{-2}(\md{\boR_{1, \Phi}( p,v,w)}+\nm{\Phi - \Phi_0}\md{p}^{\eta-2})+\nm{\Phi - \Phi_0}(\ga\md{p})^{\eta-2}\\
&\le\ga^{-2}\md{\boR_{1, \Phi}( p,v,w)}+\nm{\Phi - \Phi_0}\ga^{-2}\md{p}^{\eta-2}(1+ \ga^{\eta}) \\
&\lesssim \ga^{-2}(\md{\boR_{1, \Phi}( p,v,w)}+\nm{\Phi - \Phi_0}).
\eas
Thus formally similar estimates to the flat case hold if we replace the equality by an inequality and introduce an error term which depends on the size of the perturbation. Note also that the constant introduced in the last step only depends on $\nm{\Phi - \Phi_0}$, thus can be bounded uniformly in $\cS$. In fact the estimates on $\boR_{2, \Phi}$,  $\boR_{3, \Phi}$  and all the derivatives follow in a similar way, given that $\eta$ is sufficiently negative. We can now conclude the proof like in the flat case.
\end{proof}

\begin{proof}[Proof of Theorem \ref{3_4deformation_map_ac}]
In order to prove that $F_\AC(\cdot, s) : L^p_{k+1, \la} \ra L^p_{k,\la-1}$ is $C^\infty$ for a fixed $\Spin(7)$-structure, we can repeat the proof for the compact case, using our estimates from Propositions \ref{3_4F_0_bound_ac} and \ref{3_4_Q_ac} as well as the fact that $D_\AC$ is an asymptotically conical operator, and therefore bounded between the Sobolev spaces in question. Indeed, it can be seen from the presentation \eqref{3_2_linearised_def_new} of $D_\AC$ that its coefficients and all derivatives approach the values for the conical operator on $C$. It now also follows from the Lockhart and McOwen theory that there is a discrete set $\cD_L \subset \R$ such that $D_{\AC}$ is Fredholm for $\la$ in the complement of $\cD_L$. 

For $v \in L^p_{\la}$ such that $F_\AC(v) = 0$, elliptic bootstrapping applies locally like in Proposition \ref{3_4_elliptic_reg}, so that such $v$ are immediately in $C^\infty_{loc}$. Now we can invoke the Sobolev embedding theorem \ref{2_3_Sobolev_embedding_hoelder_acyl_ac} to get that $v \in C^\infty_{\la}$. 

What remains to show is that $F_\AC$ is also smooth with respect to the parameter $s \in \cS$. Certainly the derivatives $\partial^k_s F_\AC (v,s)$ exist as smooth functions. The key issue is that they might not in $L^p_{\la-1}$ a priori. Note however that the perturbations in the $\Spin(7)$-structure induced by a change in $s$ lie in $C^\infty_\eta \subset L^p_{k,\la}$ for any $k$, as $\eta<\la$ by assumption. From this it can easily seen that $\partial_s F_\AC (v,s)$ will be in $L^p_{k,\la-1}$ as well, and the argument applies equally to higher derivatives.
\end{proof}

We can now prove the analogue of Theorem \ref{3_4_structure} for the asymptotically conical case.

\begin{thm}[Structure]
\label{3_4_structure_ac}
Let $A$ be an $\AC_\la$ Cayley submanifold of $(\R^8, \Phi_{0})$, and let $\cS$ be a family of $\AC_\eta$ deformations of $\Phi_0$ with $\eta<\la < 1$ . Then there is a non-linear deformation operator $F_{\AC}$ which for $\ep> 0$ sufficiently small is a $C^\infty$ map: 
\eas
F_{\AC}: \mathcal{L}_\eps = \{ v\in L^p_{k+1, \la} (\nu_\ep (A)), \nm{v}_{L^p_{k+1, \la}} < \ep \} \times \cS \longrightarrow L^{p}_{k, \la-1}(E).
\eas
Then a neighbourhood of $(A, \Phi)$ in $\cM^\la_{\AC}(A, \cS)$ is homeomorphic to the zero locus of $F_{\AC}$ near $0$. Assuming that $\la  \not \in \cD_L$ we define the \textbf{deformation space} $\cI^{\la}_{\AC}(A) \subset C^\infty_{\la} (\nu (A))\times T_\Phi \cS$ to be the the kernel of $D_{\AC} = \D F_{\AC}(0)$, and the \textbf{obstruction space} $\cO^\la_{\AC}(A) \subset C^\infty_{\la-4}(E)$ to be the cokernel of $D_{\AC}$. Then a neighbourhood of $A$ in $\cM^\la_{\AC}(A, \cS)$ is also homeomorphic to the zero locus of a Kuranishi map: 
\eas
	\kappa^\la_{\AC}:  \cI^\la_{\AC}(A) \longra \cO^\la_{\AC}(A).
\eas
In particular if $\cO^\la_{\AC}(A) = \{0\}$ is trivial, $\cM^\la_{\AC}(A, \cS) $ admits the structure of a smooth manifold near $A$. We say that $A$ is \textbf{unobstructed} in this case.
\end{thm}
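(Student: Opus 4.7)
The plan is to mirror the argument establishing the compact case (Theorem \ref{3_4_structure}), with Proposition \ref{3_4deformation_map_ac} supplying the necessary analytic input in the asymptotically conical setting. The structure of the proof splits into two parts: (i) identifying a neighborhood of $(A, \Phi_0)$ in $\cM^\la_{\AC}(A, \cS)$ with the zero locus of $F_{\AC}$ near $(0, s_0)$, and (ii) applying a standard Kuranishi reduction to the smooth Fredholm map $F_{\AC}$.

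For (i), first fix $\ep > 0$ small enough that, by the weighted Sobolev embedding of Theorem \ref{2_3_Sobolev_embedding_hoelder_acyl_ac}, every $v \in \cL_\ep$ satisfies the pointwise smallness hypotheses of Proposition \ref{3_2_does_detect_cayley}. Here $\la < 1$ is crucial, since it forces $v$ and $\nabla v$ to be pointwise small enough relative to the curvature of $(\R^8, \Phi_s)$ for the Cayley-detection argument to apply uniformly along the end. Thus $F_{\AC}(v, s) = 0$ is equivalent to $A_v = \exp_v(A)$ being Cayley in $(\R^8, \Phi_s)$, and the elliptic regularity component of Proposition \ref{3_4deformation_map_ac} yields $v \in C^\infty_\la$, so that $A_v$ is genuinely $\AC_\la$ to the same asymptotic cone as $A$. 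Conversely, any $(\tilde A, \Phi_s)$ close enough to $(A, \Phi_0)$ in $\cM^\la_{\AC}(A, \cS)$ lies inside $\exp(\nu_\ep(A))$ by the tubular neighborhood Proposition \ref{2_3_CS_AC_tubular}, producing a unique normal vector field $v$ with $\tilde A = A_v$; the requirement that $\tilde A$ be $\AC_\la$ to the same cone translates into $v \in C^\infty_\la \subset L^p_{k+1,\la}$.

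For (ii), since $D_{\AC}$ is Fredholm at $(0, s_0)$, choose closed topological complements
\eas
L^p_{k+1,\la}(\nu(A)) \op T_{s_0}\cS = \Ker D_{\AC} \op K, \quad L^p_{k,\la-1}(E) = \im D_{\AC} \op O,
\eas
where $O$ is finite-dimensional and canonically isomorphic to $\Coker D_{\AC}$. Decompose $F_{\AC} = \Pi_{\im} F_{\AC} + \Pi_O F_{\AC}$ and write a neighborhood of the origin as pairs $(a,b) \in \Ker D_{\AC} \t K$. The restriction $D_{\AC}|_K : K \longra \im D_{\AC}$ is a Banach space isomorphism, so smoothness of $F_{\AC}$ (Proposition \ref{3_4deformation_map_ac}) together with the implicit function theorem applied to $\Pi_{\im} F_{\AC}(a, b) = 0$ in the variable $b$ gives a smooth solution $b = \beta(a)$ for $a$ in a neighborhood of $0$ in $\Ker D_{\AC}$. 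The Kuranishi map $\kappa(a) := \Pi_O F_{\AC}(a, \beta(a))$, taking values in $\Coker D_{\AC}$, has zero locus near $0$ homeomorphic to that of $F_{\AC}$, and hence to a neighborhood of $A$ in $\cM^\la_{\AC}(A, \cS)$. When $\cO^\la_{\AC}(A) = 0$, one has $\kappa \equiv 0$ and the moduli space is locally parametrized by $\cI^\la_{\AC}(A)$, yielding a smooth manifold structure.

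The main subtlety will be in step (i), namely verifying that the weighted Sobolev condition $v \in L^p_{k+1,\la}$ faithfully translates the geometric $\AC_\la$ condition on $\tilde A$ relative to the prescribed asymptotic cone. Two parametrizations of the same $\AC_\la$ end differ by a normal displacement of order $O(r^\la)$, so one must fix once and for all the diffeomorphism $\Theta$ realizing the $\AC_\la$ structure of $A$ and then check that, for any nearby $\tilde A$ asymptotic to the same cone, the resulting $v$ decays in the same weighted sense at all orders. This is also what guarantees that the $C^\infty_{\loc}$ topology used in the definition of $\cM^\la_{\AC}(A, \cS)$, together with the asymptotic matching, pulls back to the $L^p_{k+1,\la}$ topology on the zero set, so that the identification in (i) is a genuine homeomorphism and not merely a continuous bijection.
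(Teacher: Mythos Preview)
Your proposal is correct and follows precisely the approach the paper intends: the paper does not give an explicit proof of Theorem \ref{3_4_structure_ac}, but simply remarks that it is the analogue of Theorem \ref{3_4_structure} once Proposition \ref{3_4deformation_map_ac} has supplied smoothness, Fredholmness, and regularity of $F_{\AC}$, and then applies the standard Kuranishi model argument. Your write-up is in fact more detailed than what the paper provides, and your closing remarks on the correspondence between the geometric $\AC_\la$ condition and membership in $L^p_{k+1,\la}$ address a point the paper leaves entirely implicit.
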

The expected dimension of this moduli space can be expressed like in the compact case in Theorem \ref{3_4_index}, but we need to apply the more general Atiyah-Patodi-Singer theorem (cf. \cite{atiyahSpectralAsymmetryRiemannian1975}), which computes the index in terms of topological data, but also analytical data that depends on the cone. For $N$ a possibly non-compact $2n$-manifold, its signature $\si(N)$ is defined to be the signature of the non-degenerate pairing $H^n(N) \times H^n_{cs}(N) \ra \R$. Here $H^n_{cs}(N)$ denotes cohomology with compact support. This of course agrees with the usual definition of the signature for compact manifolds. For $A\subset \R^8$ an $\AC$ manifold asymptotic to the cone $C = \R_+ \times L$, we will consider the following version of intersection number. Pick a section $u \in  C^\infty (L, S\nu(A)|_L)$ in the sphere bundle of normal vector fields (to $A$) on $L$. For any  normal vector field $v \in C^\infty (A, \nu(A))$ that converges to $u$ at infinity, the algebraic count of its zeros will be only depend on the homotopy class of $u$ in the sphere bundle. We denote this number by $[A] \cdot_{[v]} [A]$.
\begin{prop}[Index]
\label{3_4_index_ac}
Let $A \subset \R^8$ be $\AC_\la$ with $\la < 1$, asymptotic to a Cayley cone $C = \R_+ \times L$, and $\al$-Cayley for $\al$ sufficiently close to $1$. Assume moreover that $(\la, 1) \cap \cD(L) = \emptyset$. Pick a homotopy class $[u] \in [L, S\nu(A)|_L]$ of a section $u: L\ra S\nu(A)|_L$. Then the following holds: 
\eas
\ind D_{\AC} &= \frac{1}{2}(\si(A) + \chi(A)) - [A] \cdot_{[u]} [A] + \eta(L) +  T([u]) - \dim \cM^{G_2}(L) .
\eas
Here $\eta(L)$ is a number that depends on the Riemannian manifold $L$, and $T([u])$ is a topological term depending on the homotopy class of $u$. 
\end{prop}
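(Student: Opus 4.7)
The plan is to apply the Atiyah--Patodi--Singer (APS) index theorem to a compact exhaustion of $A$, and then take a limit, exactly as suggested by the citation to \cite{atiyahSpectralAsymmetryRiemannian1975}. Truncate $A$ at radius $R$ to form a compact manifold-with-boundary $A_R$ whose boundary is diffeomorphic to $\{R\}\times L$, and on which $D_\AC$ is a small perturbation of the model conical operator $\partial_r + r^{-1} D_L$ from Example \ref{2_3_cayley_plane}. The change of variable $t = \log r$ (which is implicit in the definition of conical operator of rate $\nu$) converts this into a true cylindrical operator of the form $\partial_t + D_L$ on a half-cylinder, so that APS applies directly with boundary conditions given by the spectral projection of $D_L$.

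First I would carry out the APS computation on $A_R$, which gives
\[
\ind\bigl(D_\AC|_{A_R}\bigr) \;=\; \int_{A_R} \alpha_{D_\AC} \;-\; \tfrac{1}{2}\bigl(\eta(D_L) + \dim \Ker D_L\bigr),
\]
where $\alpha_{D_\AC}$ is the local index density. Because $D_\AC$ differs from the twisted Dirac operator $\slashed{D}$ of Proposition \ref{2_2_dirac_bundle} only in zeroth order terms (and the almost-Cayley correction from equation \eqref{3_2_linearised_def_new} affects coefficients but not the principal symbol), the interior integrand is the same as in the closed Cayley case treated by Clancy \cite[Thm. 6.3.1]{clancySpinManifoldsCalibrated2012}. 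His calculation identifies it pointwise with a combination of Pontryagin and Euler class densities for $TA$ and $\nu(A)$; integrating over $A_R$ and then letting $R\to\infty$, one obtains $\tfrac{1}{2}(\sigma(A)+\chi(A)) - [A]\cdot_{[u]}[A]$, where the choice of relative self-intersection number is dictated precisely by how one trivialises $\nu(A)$ at infinity, i.e. by a choice of homotopy class $[u]\in [L,S\nu(A)|_L]$. The discrepancy between Clancy's closed formula and the non-compact integral manifests as an additional boundary contribution $T([u])$, which depends only on $[u]$ and the topology of the link.

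Next I would identify the spectral boundary term with $\eta(L)$, defined as the APS eta invariant of the self-adjoint boundary operator $D_L$ of \eqref{2_2_dirac_op_cone}. The extra $\tfrac{1}{2}\dim \Ker D_L$ in APS together with the jump formula Theorem \ref{2_3_change_of_index} account for shifting the weight from the ``naive'' APS weight (at which the index is computed on $A_R$) to the weight $\lambda$ with $(\lambda,1)\cap\cD(L) = \emptyset$. The assumption $(\lambda,1)\cap \cD(L) = \emptyset$ guarantees that the only critical rate we cross corresponds to the zero eigenspace of $D_L$. From the decomposition indicated in Examples \ref{2_3_cayley_plane} and \ref{2_3_quadratic_cone}, this zero eigenspace splits as a sum of a ``coassociative'' contribution, which is topological and gets absorbed into $T([u])$, and a ``truly Cayley'' contribution which can be canonically identified with the tangent space to the moduli space $\cM^{G_2}(L)$ of associative deformations of $L\subset (S^7,\phi^1)$. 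Subtracting the dimension of this second piece then produces the final $-\dim\cM^{G_2}(L)$ term.

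The hardest step will be the last one: bookkeeping precisely how the kernel of $D_L$ splits into a coassociative part (contributing to $T([u])$) and an associative part (contributing $\dim\cM^{G_2}(L)$), and then verifying that no cross-terms arise. This decomposition was worked out in the coassociative setting by Lotay \cite{lotayStabilityCoassociativeConical2009} and, partially, for genuinely Cayley deformations by Kawai \cite{kawaiDeformationsHomogeneousAssociative2014}; the remaining task is to combine and slightly extend their analyses to handle both sorts of modes simultaneously, uniformly for any link $L$ satisfying our hypotheses, and to check that the $[u]$-dependence is absorbed entirely into the two topological terms $[A]\cdot_{[u]}[A]$ and $T([u])$.
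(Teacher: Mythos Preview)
Your overall strategy --- invoke the Atiyah--Patodi--Singer theorem and then correct the weight via Theorem~\ref{2_3_change_of_index} --- is exactly what the paper does, and the paper's own proof is in fact just a two-sentence sketch along these lines. So the core idea is fine.

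Where you go astray is in the final paragraph. You claim that the wall-crossing contribution at rate $1$ must be split into a ``coassociative'' piece (absorbed into $T([u])$) and a ``truly Cayley'' piece (giving $\dim\cM^{G_2}(L)$), and you flag this decomposition as the hardest step, requiring an extension of the Lotay--Kawai analysis. This is unnecessary and not what the paper intends: the entire jump $d(1)$ at rate $1$ equals $\dim\cM^{G_2}(L)$, with no splitting. Deformations of rate exactly $1$ on the cone are precisely the infinitesimal associative deformations of the link $L\subset S^7$ (including those induced by the ambient $\Spin(7)$ action), so $d(1)=\dim\cM^{G_2}(L)$ tautologically. You can check this in the two examples: for $L=S^3$ one has $d(1)=8+4=12=\dim\Cay(\R^8,\Phi_0)$, and for the quadric link $L_q$ one has $d(1)=16+6=22$, matching the computation $\dim\cM^{G_2}(L_q)=21+2-1=22$ in Remark~\ref{3_4_quadratic_cone}. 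The coassociative/Cayley split in Examples~\ref{2_3_cayley_plane} and~\ref{2_3_quadratic_cone} is informational only and plays no role in the index formula.

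A smaller point: the critical rate you cross is $1$, which does not correspond to the \emph{zero} eigenspace of $D_L$ but to the eigenvalue determined by the radial weight shift; your phrasing there is slightly off. Once you drop the spurious decomposition, the proof reduces to: APS gives the first five terms, and passing from the APS weight to $\lambda$ across the single critical rate $1$ subtracts $d(1)=\dim\cM^{G_2}(L)$.
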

\begin{proof}
This is a consequence  Atiyah-Patodi-Singer theorem.  The term $\dim \cM^{G_2}(L)$ appears, since we are considering an operator on the weighted space $L^p_{k+1, \la}$ with $\la < 1$ instead of $L^p_{k+1, 1}$. The difference in index can be computed using Theorem \ref{2_3_change_of_index}, noting that $d(1) = \dim \cM^{G_2}(L)$.
\end{proof}

\begin{rem}
\label{3_4_quadratic_cone}
Consider the complex fibration:
\eas
f: \C^4 &\longrightarrow \C^2 \\
(x,y,z,u) &\longmapsto (x^2 + y^2 + z^2, u).
\eas
Its singular fibres are cones of the form $C_q = f^{-1}(0, 0) = \{x^2 + y^2 + z^2 = 0, u = 0\} \subset \C^4$. For each $\eps \in \C\setminus \{0\}$ we get a complex surface $A_\eps = f(\eps, 0)$. We can write a compact subset of $A_\eps$ as the image of a normal section as follows:
\eas
C_q &\longrightarrow A_\eps \\
(p, u) &\longmapsto \bigg(p + \frac{\eps \bar{p}}{2\md{p}^2}, u\bigg).
\eas
Here $p = (x,y,z)$ and $\nu_{(p, 0)} (A_\eps) = \spn_\C \{\bar{p}\}$. From this we see that $A_\eps$ is $\AC_{-1}$ to the cone $C_q$. It can be shown that for $\de > 0$ small: 
\eas
\cM^{-1+\de}_\AC(A_\eps) &\simeq \C\setminus \{0 \} \\
[A_\eps] &\longmapsto \eps,
\eas
and that all $A \in \cM^{-1+\de}_\AC(A_\eps)$ are unobstructed. In particular:
\eas
\ind D_\AC = \dim \cM^{-1+\de}_\AC(A_\eps)  = 2.
\eas
The critical rates $\cD_L \cap [-1, 1]$ for this cone were determined in Example \ref{2_3_quadratic_cone} to be $\{-1, 0, 1\}$. As $d(-1) = 2$, this means that there are no deformations for rates below $-2$. The next critical rate above $-1$ is $0$, which corresponds to translations. Thus: 
\eas
\dim \cM^{\de}_\AC(A_\eps)  = 2 + 8.
\eas
Finally, the remaining critical rate is $1$, which corresponds to rotations and deformations of the link as an associative, but which our theory does not take into account so far, as it would correspond to $\la = 1$. 
\end{rem}

\begin{cor}
\label{3_4_index_AC_SL}
Suppose that $A \subset \C^4$ is a special Lagrangian $\AC_\la$ submanifold for $\la < 1$. With the notation from Proposition \ref{3_4_index_ac}, we have that the Cayley deformation operator of $A$ has index:
\e
\label{3_4_index_AC_SL_formula}
\ind D_{\AC} = \ha(\si(A)-\chi(A)) + \tilde{\eta}(L) - \dim \cM^{G_2}(L).
\e
\end{cor}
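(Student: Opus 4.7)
The plan is to specialise the general index formula of Proposition \ref{3_4_index_ac} by exploiting the additional Calabi-Yau structure to make a canonical choice of the homotopy class $[u]$. Since $A$ is a special Lagrangian submanifold of the CY4, the complex structure $J$ furnishes a global bundle isomorphism $J : TA \longra \nu(A)$. On the asymptotic cone $C = \R_+ \cdot L$ the outward radial vector $\partial_r$ is a unit tangent vector; consequently $J\partial_r$ is a canonical unit section of $\nu(C)|_L$. Using the $\AC_\la$ identification $\nu(A) \simeq \nu(C)$ over the end we obtain from this a canonical homotopy class $[u] \in [L, S\nu(A)|_L]$ to feed into Proposition \ref{3_4_index_ac}.

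The key step is then to verify that $[A] \cdot_{[u]} [A] = \chi(A)$ for this choice. Pick a generic smooth section $v \in C^\infty(A, \nu(A))$ asymptotic to $u$ at infinity, so that its zero set consists of finitely many transverse signed points. Applying the global isomorphism $J^{-1} : \nu(A) \longra TA$ produces a vector field $w = J^{-1}v$ on $A$ with the same signed zeros, and which is asymptotic to $\partial_r$. Truncating $A$ to $A_R$ at a large radius $R \gg 1$, the vector field $w$ is transverse to $\partial A_R \simeq L$ and points outward, so the Poincaré-Hopf theorem for manifolds with boundary yields that the algebraic zero count equals $\chi(A_R) = \chi(A)$. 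Hence $[A] \cdot_{[u]} [A] = \chi(A)$, in exact analogy with the compact case treated in Example \ref{3_3_sl_cayley}.

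Substituting this into the index formula of Proposition \ref{3_4_index_ac} and grouping link-dependent terms gives
\begin{align*}
\ind D_{\AC} &= \ha(\si(A) + \chi(A)) - \chi(A) + \eta(L) + T([u]) - \dim \cM^{G_2}(L) \\
&= \ha(\si(A)-\chi(A)) + \tilde{\eta}(L) - \dim \cM^{G_2}(L),
\end{align*}
once we set $\tilde{\eta}(L) := \eta(L) + T([u])$; note that this quantity depends only on $L$ together with its canonical structure as the link of a special Lagrangian cone, since the class $[u]$ was constructed entirely from the Calabi-Yau data on the cone. The main subtlety is checking that the sign conventions for the Poincaré-Hopf count agree with those implicit in the definition of $[A] \cdot_{[u]} [A]$, and that the isomorphism $J$ on $A$ interacts cleanly with the $\AC_\la$ identification of normal bundles used to define the pairing; both are a matter of unwinding definitions.
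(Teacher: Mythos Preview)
Your proof is correct and follows essentially the same approach as the paper: both use the isomorphism $J: TA \to \nu(A)$ to produce the canonical section $J\partial_r$ over the link, then invoke Poincar\'e--Hopf to identify $[A]\cdot_{[u]}[A]$ with $\chi(A)$. Your version is somewhat more detailed (making the truncation and boundary Poincar\'e--Hopf argument explicit, and spelling out the definition $\tilde\eta(L) = \eta(L)+T([u])$), but the core idea is identical.
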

\begin{proof}
There is a distinguished section of the normal bundle of a special Lagrangian cone in $(\C^4, \om, \Om, J)$, owing to the isomorphism 
\eas
\sharp: TA &\longra \nu(A) \\
v &\longmapsto J(v). 
\eas
The outward pointing radial vector field $\partial_r$ is tangent to the cone, and so $J(\partial_r)$ is normal. The Poincaré-Hopf theorem then allow us to equate $[A] \cdot_{[J(\partial_r)|_L]} [A] = \chi(A)$, from which the formula follows. 
\end{proof}
\begin{ex}[Cayley plane]
\label{3_4_plane_ex}
Consider the case of a Cayley plane $\Pi \subset \R^8$ as an $\AC_\la$ manifold of rate $\la < 1 $. It can also be seen as a special Lagrangian plane, by choosing an appropriate Calabi-Yau structure on $\R^8$. Up to translations, Cayley planes are rigid and unobstructed. This can be seen by solving the infinitesimal deformation equation from Proposition \ref{2_2_cayley_sl} explicitly. It is given by:
\eas
D_\AC = -\d\star \op \d ^-: \Om^1_{\la} \longra \Om^4_{\la-1} \op \Om^{2,-}_{\la-1}.
\eas
If $\si \in \Ker D_\AC$, we get that $\d \si \in \Om^{2,+}$, and so we can deduce:
\eas
\d^* \d \si = \star \d \star \d \si = \star \d \d \si = 0.
\eas
Together with $\d \star \si = 0$, we get $\De \si = 0$. As we are in flat $\R^4$, we see that $\si = \sum_{i=1}^4 f_i \d x_i$ with $f_i$ harmonic functions that decay like $r^{\la}$. Thus each of the $f_i$ must be a constant and $\dim \Ker D_\AC = 4$. Now for the obstruction space $\cO^\la_\AC$, we can equivalently look at the kernel of the adjoint map:
\e
(-\d\star \op \d^-)^* = -\d \star + \d^*: \Om^n_{-4 - \la}\op\Om^{2,-}_{-4-\la} \longra \Om^{1}_{-5-\la}.
\e
If $f\in \Om^n_{-4 - \la}$ and $\eta \in \Om^{2,-}_{-4-\la}$ satisfy $\d ^* \eta = \d \star f$, then $\d^* \d \star f = 0$, i.e. $\star f$ is a harmonic function on $\Pi$. Similarly, using the anti-self-duality of $\eta$, we see that 
\eas
\d \d^* \eta = \d \d \star f = 0,\\
\d^* \d \eta = -\star \d \d^* \eta = 0.
\eas
Thus $\eta$ is an anti-self-dual harmonic two form. Now for $\la > 0$, we have that both $f$ and $\eta$ are in $L^2$. Thus by \cite[Example 0.15]{lockhartFredholmHodgeLiouville1987} we see that both must vanish, as $\Ho^0_{cs}(\Pi) = \Ho^{2}_{cs}(\Pi) = 0$. Hence for a round sphere $S^3 \subset S^7$  we find from Corollary \ref{3_4_index_AC_SL} that: 
\eas
4 = \dim\cM^\la_{\AC}(\Pi) =  \ha(\si(\Pi)-\chi(\Pi)) + \tilde{\eta}(S^3) - \dim \cM^{G_2}(S^3).
\eas
This implies that $\tilde{\eta}(S^3)- \dim \cM^{G_2}(S^3) = 4\ha$. 
\end{ex}

\begin{ex}[Lawlor neck]
\label{3_4_ex_lawlor}
Consider two distinct Lagrangian subspaces $\Pi_1$ and $\Pi_2$ that intersect transversly. They are given as:
\eas
\Pi_1 = \spn\{v_1, v_2, v_3, v_4\} \text{ and } \Pi_2 = \spn\{e^{i\th_1}v_1, e^{i\th_2}v_2, e^{i\th_3}v_3, e^{i\th_4} v_4\},
\eas
 with $\th_1 + \th_2 + \th_3+ \th_4 = k \pi$, $0 \le \th_i \le \pi$ and $k = 1, 2, 3$. Lawlor showed in \cite{lawlorAngleCriterion1989} that if $k = 1, 3$, then there is a one parameter family of $\AC_{-3}$ manifolds asymptotic to the cone $\Pi_1\cup\Pi_2$, the \textbf{Lawlor necks} $L_{t }$. They are all diffeomorphic to $S^3 \times \R$. We can now apply Corollary \ref{3_4_index_AC_SL} again to determine the expected dimension of their Cayley moduli space: 
\ea
\label{3_4_lawlor_cayley}
\dim\cM^\la_{\AC}(L_{t}) &= \ha(\si(L_{t})-\chi(L_{t})) + 2\tilde{\eta}(S^3) - 2\dim \cM^{G_2}(S^3) \nonumber \\
&= \ha(0-0) + 2 \cdot (4\ha) = 9.
\ea
This corresponds to translations in $\R^8$ and the rescaling action. Thus there are no additional infinitesimal strictly Cayley deformations of the Lawlor necks. Note that if $k = 2$, then there are no minimal desingularisations of $\Pi_1\cup\Pi_2$ \cite{lawlorAngleCriterion1989}, so in particular no Cayley desingularisations.
Next, notice that the same argument that allowed us to show unobstructedness of the Cayley plane in the previous Example \ref{3_4_plane_ex} also gives us unobstructedness for the Lawlor necks for small $\la < 1$, since $\Ho^0_{cs}(L_t) = \Ho^{2}_{cs}(L_t) = 0$.
\end{ex}

Note that the Lawlor necks can priori only desingularise the union of two special Lagrangian planes. It turns out however that any pair of transversely intersecting Cayley planes can be realised as a pair of special Lagrangian planes for a suitably chosen $\SU(4)$ structure.
\begin{prop}
Let $\Pi_1, \Pi_2 \in \Cay(\R^8, \Phi_0)$ be two transversly intersecting Cayley subspaces. Then there is an $\SU(4)$-structure $(J, g, \om,\Om)$ on $\R^8$, such that both the $\Pi_i$ are special Lagrangian with respect to it.
\end{prop}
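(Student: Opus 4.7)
The plan is to construct the $\SU(4)$-structure explicitly by first normalizing $\Pi_1$ and then choosing a compatible complex structure that makes both planes Lagrangian. By the transitivity of the $\Spin(7)$-action on Cayley planes (as used throughout Section 3.1), I would assume without loss of generality that $\Pi_1 = \spn\{e_1, e_2, e_3, e_4\}$ in the standard coordinates of $\R^8$, so that $\Pi_1^\perp = \spn\{e_5, e_6, e_7, e_8\}$ is also a Cayley plane. Equip $\R^8 \simeq \C^4$ with the standard $\SU(4)$-structure $(J_0, \omega_0, \Omega_0)$ defined by $J_0 e_i = e_{i+4}$, so by Example \ref{2_1_ex_other_spin} we have $\Phi_0 = \Re \Omega_0 + \ha \omega_0 \wedge \omega_0$, and $\Pi_1$ is special Lagrangian with phase~$0$.

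Next, use the transversality of $\Pi_1$ and $\Pi_2$ to write $\Pi_2 = \{ Tv + v : v \in \Pi_1^\perp \}$ as the graph of a uniquely determined linear map $T : \Pi_1^\perp \to \Pi_1$. The stabiliser $H = (\Sp(1)^3)/\Z_2$ of $\Pi_1$ in $\Spin(7)$ acts on $\Pi_1$ and $\Pi_1^\perp$ by coupled $\SO(4)$-rotations, and additionally I have the $7$-dimensional family $\Spin(7)/\SU(4) \simeq \R P^7$ of complex structures compatible with $\Phi_0$ among which to choose a replacement for $J_0$. Combining these freedoms, I would reduce $T$ to a diagonal normal form in which
\[
\Pi_2 = \spn\bigl\{ \cos\theta_i \, e_i + \sin\theta_i \, J_0 e_i \bigr\}_{i=1}^4
\]
for some angles $\theta_1, \dots, \theta_4$. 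In this form $\Pi_2$ is automatically $\omega_0$-Lagrangian, since $\omega_0 = \sum_k \dd x_k \wedge \dd x_{k+4}$ pairs $e_k$ only with $J_0 e_k$ and the off-diagonal pairings in the bivector of $\Pi_2$ cancel by a direct computation. The Cayley condition then pins $\sum_i \theta_i$ modulo $\pi$ to a specific value $\alpha$; replacing $\Omega_0$ by $e^{i\alpha/2}\Omega_0$ (which preserves compatibility with $\Phi_0$) and correspondingly shifting both SL phases by $\alpha/2$ makes $\Pi_1$ and $\Pi_2$ simultaneously special Lagrangian.

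The main obstacle is the reduction of $T$ to the diagonal normal form above. Naively, singular value decomposition of an arbitrary $4\times 4$ matrix requires the full $\SO(4) \times \SO(4)$ of dimension $12$, while $H$ only has dimension $9$; the $7$-dimensional family of alternative complex structures $J \in \Spin(7)/\SU(4)$ supplies the missing degrees of freedom, but we must restrict throughout to those $J$ that still swap $\Pi_1$ and $\Pi_1^\perp$. The key calculation is therefore a dimension-matching argument: the Cayley condition on $\Pi_2$ cuts the $16$-dimensional space of linear maps $T$ down to a $12$-dimensional locus, and this matches the effective dimension of the combined $H$-and-$J$-action on that locus once one enforces preservation of the Lagrangian condition on $\Pi_1$. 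Verifying that the combined action is transitive onto the diagonal normal form, rather than merely having the right dimension, will require an explicit analysis of the $\Spin(7)$-orbit structure on the space of pairs of transverse Cayley planes, likely by exhibiting for each orbit a concrete representative satisfying the normal form.
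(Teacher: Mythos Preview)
Your proposal is honest about its own gap, and that gap is precisely where the paper's proof lives. The setup you describe --- normalising $\Pi_1$, writing $\Pi_2$ as a graph, aiming for a diagonal normal form --- is fine, but the content of the proposition is exactly the orbit analysis you defer to your final paragraph. The paper handles this directly and without the graph--and--$J$ machinery: it observes that $\SO(8)$-orbits on pairs of four-planes are parametrised by four characteristic angles $\beta_1\le\beta_2\le\beta_3\le\beta_4$ (Lawlor), that for Cayley pairs only three of these are independent because $\beta_4$ is forced by the triple product, and then does the dimension count $2\dim\Cay - 3 = 21 = \dim\Spin(7)$ to conclude that $\Spin(7)$ acts transitively on Cayley pairs with fixed generic angles. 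Since special Lagrangian pairs realise every admissible angle configuration, every generic Cayley pair is $\Spin(7)$-equivalent to an SL pair; a closure argument handles the non-generic cases. This is cleaner than trying to diagonalise $T$ by combining the $H$-action with a varying $J$.

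Two specific errors. First, $\Spin(7)/\SU(4)$ is six-dimensional (indeed $\SU(4)\simeq\Spin(6)$, so the quotient is $S^6$), not $\R P^7$; your dimension bookkeeping for ``missing degrees of freedom'' is therefore off. Second, and more importantly, replacing $\Omega_0$ by $e^{i\alpha}\Omega_0$ does \emph{not} preserve compatibility with $\Phi_0$, since $\Phi_0 = \Re\Omega_0 + \tfrac12\omega_0^2$ and $\Re(e^{i\alpha}\Omega_0)\ne\Re\Omega_0$ for $\alpha\ne 0$. Fortunately this step is unnecessary: once $\Pi_2$ is in the diagonal form $\spn\{\cos\theta_i\, e_i + \sin\theta_i\, e_{i+4}\}$, it is Lagrangian and one computes $\Phi_0|_{\Pi_2} = \cos(\sum_i\theta_i)\,\dvol_{\Pi_2}$, so the Cayley condition already forces $\sum_i\theta_i\equiv 0 \pmod{2\pi}$ and $\Pi_2$ is special Lagrangian of phase~$0$ automatically. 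The compatibility with $\Phi_0$ matters for the subsequent lemma on Cayley--Lawlor necks, so you should not give it up.
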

\begin{proof}
Recall from \cite[Thm. IV.1.8]{HarvLaws} that $\Spin(7)$ acts transitively on Cayley planes with stabilizer $H \simeq (\Sp(1) \times \Sp(1) \times \Sp(1))/ \pm \id$. We now show that generically the action of $\Spin(7)$ on pair of Cayley planes is free. First note that $\SO(8)$ acts transitively on pairs of four-planes with fixed characterizing angles $0 < \beta_1 \le \beta_2 \le \beta_3 \le \beta_4 \le \frac{\pi}{2}$, as explained in \cite[Section 4] {lawlorAngleCriterion1989}. Now $\dim \SO(8) = 2\dim \Gr(4,8) - 4$, and so this action can at most admit discrete stabilisers for generic choices of the $\beta_i$. More precisely, this is the case when the $\be_i$ are pairwise different and not equal to $\frac{\pi}{2}$, in which case the stabilisers are trivial. In these cases the action on Cayley plane pairs is free as well. Now note that for Cayley planes, the angle $\be_4$ can be derived from the other three, by using the triple product. Thus generically a family of Cayley plane pairs with fixed angles is $2\dim \Cay -3 = 21 = \dim \Spin(7)$ dimensional. In particular, since both $\Spin(7)$ and Cayley plane pairs of given angles are connected the action of $\Spin(7)$ is transitive when restricted to pairs with fixed angle. Now for a fixed $\SU(4)$-structure, the set of special Lagrangian plane pairs contains examples for all possible characteristic angles that can appear for Cayleys. Thus we can always $\Spin(7)$ rotate a pair of generic Cayley planes to a pair of special Lagrangian two planes. To conclude, notice that the continuous action of $\Spin(7)$ on transversely intersecting special Langrangian plane pairs sweeps out a closed and dense subset of the transversely intersecting Cayley plane pair. Hence it must reach them all, and this proves the proposition.
\end{proof}

\begin{lem}
\label{3_4_cayley_lawlor}
Suppose that $\Pi_1, \Pi_2 \in \Cay(\R^8)$ are two Cayley planes that intersect negatively in a single point. Then there is a one-dimensional family of unobstructed $\AC_{-\frac{1}{2}}$ Cayley submanifolds, the \textbf{Cayley-Lawlor necks} which are asymptotic to the cone $\Pi_1 \cup \Pi_2$. 
\end{lem}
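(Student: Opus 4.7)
The plan is to combine the preceding proposition with Lawlor's neck construction. First I would apply that proposition to obtain an $\SU(4)$-structure $(J, g, \omega, \Omega)$ on $\R^8$ compatible with $\Phi_0$ for which both $\Pi_1$ and $\Pi_2$ become special Lagrangian planes. Writing $\Pi_2 = \spn\{e^{i\theta_j} v_j\}_{j=1}^4$ relative to a basis $\{v_j\}$ of $\Pi_1$, one then needs to verify that the negative Cayley intersection hypothesis forces the characterising angle sum $\theta_1 + \theta_2 + \theta_3 + \theta_4 = k\pi$ to be odd, i.e.\ $k \in \{1, 3\}$. Once this correspondence is established, Lawlor's criterion \cite{lawlorAngleCriterion1989} produces a one-parameter family of $\AC_{-3}$ special Lagrangian necks $L_t \simeq S^3 \times \R$ asymptotic to $\Pi_1 \cup \Pi_2$. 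Since any special Lagrangian for $(J, g, \omega, \Omega)$ is automatically Cayley for $\Phi_0$ and $\AC_{-3}$ implies $\AC_{-1/2}$, the submanifolds $L_t$ give the desired family.

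It then remains to prove that each $L_t$ is unobstructed as an $\AC_{-1/2}$ Cayley in the sense of Theorem \ref{3_4_structure_ac}, i.e.\ that $\cO^{-1/2}_{\AC}(L_t) = 0$. The rate $-1/2$ sits in the gap $(-1, 0)$ between consecutive critical rates of the Cayley plane link $S^3$ listed in Example \ref{2_3_cayley_plane}, so the Fredholm theory of Section \ref{sec_elliptic_fredholm} applies. I would identify the cokernel with the kernel of the formal adjoint on the dual weighted space via Proposition \ref{2_3_dual}, and then use the special Lagrangian form of the Cayley deformation operator from Proposition \ref{2_2_cayley_sl} to rewrite this adjoint as $-\d\star + \d^*$, as was done in Example \ref{3_3_sl_cayley}. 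The argument in Example \ref{3_4_plane_ex} for the Cayley plane then transfers verbatim: any element of the kernel decomposes into a harmonic function and a harmonic anti-self-dual $2$-form on $L_t$. Since $L_t \simeq S^3 \times \R$, Poincar\'e--Lefschetz duality gives $H^0_{cs}(L_t) = H^2_{cs}(L_t) = 0$, and \cite[Example 0.15]{lockhartFredholmHodgeLiouville1987} forces both components to vanish.

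With unobstructedness in hand, Theorem \ref{3_4_structure_ac} guarantees that a neighbourhood of each $L_t$ in the moduli space is a smooth manifold of dimension $\ind D_\AC$. At a rate in $(0,1)$, Corollary \ref{3_4_index_AC_SL} together with $\sigma(L_t) = \chi(L_t) = 0$ and the identity $\tilde{\eta}(S^3) - \dim \cM^{G_2}(S^3) = 9/2$ from Example \ref{3_4_plane_ex} (applied once to each of the two asymptotic ends) gives $\ind D_\AC = 9$, corresponding to the $1$ Lawlor parameter together with the $8$ translational directions of $\R^8$ as in Example \ref{3_4_ex_lawlor}. Passing down to rate $-1/2$ crosses the single critical rate $0$, whose eigenspace on the disconnected link $L = S^3 \sqcup S^3$ has dimension $2 d(0) = 8$ by Example \ref{2_3_cayley_plane} and additivity over components. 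Theorem \ref{2_3_change_of_index} therefore drops the index by $8$, giving $\ind D_\AC = 1$ at rate $-1/2$, exactly matching the Lawlor parameter.

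The main obstacle will be the first paragraph: cleanly translating the Cayley intersection sign into the parity of the Lawlor angle sum, so as to rule out the case $k = 2$ in which no minimal desingularisation exists. Concretely this requires comparing the canonical orientations induced on the $\Pi_i$ by $\Phi_0$ with those induced by $\Re \Omega$ after the $\Spin(7)$-rotation producing $(J, g, \omega, \Omega)$; once this comparison is made the remainder of the argument is a direct generalisation of the single-plane computations already carried out in Examples \ref{3_4_plane_ex} and \ref{3_4_ex_lawlor}.
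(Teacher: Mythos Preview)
Your approach is essentially the paper's own: use the preceding proposition to place $\Pi_1,\Pi_2$ in a compatible $\SU(4)$-structure, identify the negative-intersection hypothesis with the angle-sum cases $k\in\{1,3\}$, and invoke Lawlor. The paper's proof is in fact terser than yours and stops there, deferring the unobstructedness claim to Example~\ref{3_4_ex_lawlor} rather than re-deriving it; your explicit cokernel vanishing argument at rate $-\tfrac12$ and the index drop computation via $d(0)=4$ per $S^3$ end are correct and supply detail the paper omits, and your closing paragraph correctly isolates the one step (orientation/sign versus parity of $k$) that both you and the paper leave as an assertion.
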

\begin{proof}
Two transversly intersecting special Lagrangian planes in $\C^4$ can be $\SU(4)$-rotated to be of the form $\spn \{ v_1, v_2, v_3, v_4\}$ and $\spn \{ e^{i\th_1}v_1, e^{i\th_2}v_2, e^{i\th_3}v_3, e^{i\th_4}v_4\}$ where $\th_1 + \th_2 + \th_3 + \th_4 = k\pi$ where $k = 1, 2, 3$. Now by the previous proposition, the same is true for Cayley planes. It now can be shown that the planes intersect negatively exactly when $k = 1, 3$, which are exactly the cases where Lawlor showed the existence of Lawlor necks. 
\end{proof}

Suppose that $0 < \la < 1$ is such that $(0, \la ) \cap \cD_L = \emptyset$. For any $\la < \tilde{\la} < 1$ we will then have the an isomorphism $\cM^{\la}_\AC \simeq \cM^{\tilde{\la}}_\AC$, as no deformations additional deformations appear for these rates. At $\tilde{\la} = 1$, which our theory does not cover at the moment, more deformations appear. To be precise they come from perturbing the link $L$ as an associative submanifold in the round $S^7$ with its nearly parallel $G_2$-structure. Denote their moduli space by $\cM^{G_2}(L)$.
We always assume that it is a smooth, finite-dimensional moduli space and of the expected dimension (at least in a neighbourhood $\cO$ of $L \in \cM^{G_2}(L)$), so that the family of deformations of the cone is smooth, and the cone is unobstructed. There then exists a smooth family $ \{A_f\}_{f \in \cO} $ of $\AC_\la$-manifolds, such that $A_f$ has link $f \in \cO \subset \cM^{G_2}(L)$. Such a family can be obtained by finding ambient isotopies which perturb the cones in the desired fashion (which we do in more detail in Proposition \ref{3_4ambiant_isotopy}). We obtain a smooth family of maps $\exp_{f, v}: \nu_\eps(A) \ra \R^8$ which form tubular neighbourhoods of the family $A_f$, all parametrised by the normal bundle of our initial $A$. We would like to study the moduli space: 
\eas
\cM^1_{\AC}(A) =  \bigsqcup_{L \in \cM^{G_2}(L)}\cM^\la_{\AC}(A_f).
\eas
This is independent of our choice of $\la$ as long as $(\la, 1) \cap \cD_L = \emptyset$. We define the following deformation operator: 
\ea
F_{\AC, 1} :  C^\infty_{\la}(\nu_\ep(N)) \times  \cO  &\longrightarrow C^\infty_{\loc}(E_{\cay}), \\
 (v, f) &\longmapsto \pi_E \star_4 \exp_f^*(\tau_{\Phi}|_{\exp_{f, v}(N)}). \nonumber
\ea
We can now give this operator the same treatment as $F_{\AC}$, with some mild modifications to make sure that we get a smooth map of Banach manifolds which is also smooth in the parameter variable $f \in \cO$. This is identical to the conically singular case, which we will treat in great detail in the next section. The upshot is the following theorem: 

\begin{thm}[Structure]
\label{3_4_structure_ac_0}
Let $A$ be an $\AC_\la$ Cayley submanifold of $(\R^8, \Phi_{0})$ with link $L$ which is unobstructed in its moduli space $\cM^{G_2}(L)$. Then there is a non-linear deformation operator $F_{\AC, 1}$ which for $\ep> 0$ sufficiently small is a $C^\infty$ map: 
\eas
F_{\AC, 1}: \mathcal{L}_\eps = \{ v\in L^p_{k+1, \la} (\nu_\ep (A)), \nm{v}_{L^p_{k+1, \la}} < \ep \} \times \cO \longrightarrow L^{p}_{k, \la-1}(E).
\eas
Here $\cO$ is a sufficiently small neighbourhood of $L \in \cM^{G_2}(L)$. Then a neighbourhood of $A$ in $\cM^1_{\AC}(A)$ is homeomorphic to the zero locus of $F_{\AC, 1}$ near $0$. Assuming that $\la \not \in \cD_L$ we define the \textbf{deformation space} $\cI^{\la}_{\AC}(A) \subset C^\infty_{\la} (\nu (A))$ to be the the kernel of $D_{\AC, 1} = \D F_{\AC, 1}(0)$, and the \textbf{obstruction space} $\cO^\la_{\AC}(A) \subset C^\infty_{4-\la}(E)$ to be the cokernel of $D_{\AC, 1}$. Then a neighbourhood of $A$ in $\cM^0_{\AC}(A)$ is also homeomorphic to the zero locus of a Kuranishi map: 
\eas
	\kappa^\la_{\AC, 0}:  \cI^\la_{\AC}(A)\op T_L\cO  \longra \cO^\la_{\AC}(A).
\eas
In particular if $\cO^\la_{\AC}(A) = \{0\}$ is trivial, $\cM^1_{\AC}(A) $ admits the structure of a smooth manifold near $A$. We say that $A$ is \textbf{unobstructed} in this case. The index of $D_{\AC, 1}$ is given as: 
\eas
\ind D_{\AC, 1} = \ind D_{\AC} + \dim \cM^{G_2}(L). 
\eas
Finally, the map  $\cM^1_{\AC}(A) \longrightarrow \cM^{G_2}(L)$ sending a manifold to its asymptotic link is a smooth fibre bundle.  
\end{thm}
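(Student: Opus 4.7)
The plan is to reduce this theorem to Theorem \ref{3_4_structure_ac} with an additional smooth parameter $f \in \cO$, and then to track how that parameter enters the linearisation. First I would construct the smooth family of ambient deformations. Since $L$ is unobstructed in $\cM^{G_2}(L)$, after shrinking $\cO$ we get a smooth family of associative embeddings $\iota_f : L \hookra S^7$. Extending these radially to the cones $C_f = \R_+ \cdot \iota_f(L)$ and interpolating with the identity on a compact region via a cutoff, I would produce a smooth family of diffeomorphisms $\Psi_f : \R^8 \ra \R^8$ with $\Psi_L = \id$, supported outside a compact set, which map the conical region of $C = C_L$ to that of $C_f$. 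Setting $A_f = \Psi_f(A)$ gives an $\AC_\la$ deformation of $A$ with link $\iota_f(L)$. Since $\Psi_f$ preserves the asymptotic conical structure, the exponential tubular neighbourhood $\exp_{f, \cdot}$ associated to $A_f$ can be pulled back to $\nu_\ep(A)$, yielding a family $\exp_{f,v}$ that is jointly smooth in $(v, f)$.

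Next I would verify that $F_{\AC, 1}$ is a $C^\infty$ map between the stated Banach manifolds. For each fixed $f$, the map $v \mapsto F_{\AC, 1}(v, f)$ is exactly the operator $F_{\AC}$ of Proposition \ref{3_4deformation_map_ac} associated to the Cayley $A_f$, so its smoothness in $v$ and the bounds of Propositions \ref{3_4F_0_bound_ac} and \ref{3_4_Q_ac} apply uniformly in $f \in \cO$ after possibly shrinking $\cO$. The $f$-dependence reduces to studying the smooth finite-dimensional family of pullback operations $\exp_{f,v}^*$ and of the Cayley section $\cay_{A_f}$; all derivatives in $f$ produce terms supported in the conical end but decaying at rate at most $r^{\la-1}$ in $L^p_{k, \la-1}$, thanks to the conical character of $\Psi_f$. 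A straightforward adaptation of the arguments in the proofs of Lemma \ref{3_4_dependence_s} and Proposition \ref{3_4deformation_map_ac} then yields joint smoothness.

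The linearisation of $F_{\AC, 1}$ at $(0, L)$ decomposes as
\ea
D_{\AC, 1}(v, \dot f) = D_{\AC}(v) + T(\dot f),
\ea
where $T : T_L \cO \ra L^p_{k, \la-1}(E)$ is bounded and finite-rank. Hence $D_{\AC, 1}$ is Fredholm with
\ea
\ind D_{\AC, 1} = \ind D_{\AC} + \dim T_L \cO = \ind D_{\AC} + \dim \cM^{G_2}(L).
\ea
Elliptic regularity for solutions carries over from Proposition \ref{3_4_elliptic_reg} since the new parameter is finite-dimensional and affects only the lower-order behaviour. The Kuranishi construction applied to the Fredholm map $F_{\AC, 1}$ gives the local homeomorphism of $\cM^1_{\AC}(A)$ near $A$ with the zero set of $\kappa^\la_{\AC, 0} : \cI^\la_{\AC}(A)\op T_L \cO \longra \cO^\la_{\AC}(A)$, and in the unobstructed case the implicit function theorem gives the smooth manifold structure.

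Finally, the fibre bundle claim follows from the parametric implicit function theorem: the projection $\cM^1_{\AC}(A) \ra \cM^{G_2}(L)$ is by construction the restriction of the second coordinate projection on the zero set of $F_{\AC, 1}$, whose linearisation $D_{\AC, 1}$ surjects onto $T_L\cO$ via $T$ modulo the image of $D_{\AC}$; when unobstructed, the implicit function theorem produces local smooth sections and trivialises the projection over a neighbourhood of $L$, with fibre diffeomorphic to $\cM^\la_{\AC}(A_f)$. The main technical obstacle is ensuring that the family of tubular neighbourhoods and the resulting deformation operator depend smoothly on $f$ as a map of weighted Banach spaces; once the family $\Psi_f$ is controlled in $C^\infty_\eta$ uniformly, all subsequent estimates reduce to $f$-uniform versions of those already established for fixed $\Spin(7)$-structure.
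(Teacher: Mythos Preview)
Your approach is essentially the one the paper intends. The paper does not give a separate proof of this theorem; immediately before the statement it says the operator $F_{\AC,1}$ ``can now be given the same treatment as $F_{\AC}$, with some mild modifications \ldots\ This is identical to the conically singular case, which we will treat in great detail in the next section.'' In other words the paper defers entirely to the argument for Proposition~\ref{3_4deformation_map_cs} and Theorem~\ref{3_4_structure_cs}, where the finite-dimensional parameter $f$ (there coming from $\cF$, here from $\cO\subset\cM^{G_2}(L)$) is handled by showing $\partial_f F$ lands in the correct weighted space. Your construction of the ambient isotopies $\Psi_f$ is exactly the $\AC$ analogue of Proposition~\ref{3_4ambiant_isotopy}, and your reduction of smoothness to $f$-uniform versions of Propositions~\ref{3_4F_0_bound_ac} and~\ref{3_4_Q_ac} together with a Lie-derivative computation for $\partial_f F$ mirrors the CS argument precisely. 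The index formula follows, as you say, because adding a finite-dimensional factor $T_L\cO$ to the domain and a finite-rank map $T$ to the operator shifts the Fredholm index by $\dim T_L\cO$.

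One point in your sketch deserves sharpening: the fibre-bundle claim. Your sentence ``$D_{\AC,1}$ surjects onto $T_L\cO$ via $T$ modulo the image of $D_{\AC}$'' is not the relevant condition. What you actually need is that the projection $\ker D_{\AC,1}\to T_L\cO$, $(v,\dot f)\mapsto\dot f$, is surjective; equivalently, that for every $\dot f$ there exists $v$ with $D_{\AC}v=-T\dot f$, i.e.\ $\im T\subset\im D_{\AC}$. This is precisely the statement that $D_{\AC}$ itself (fixed link) is surjective, which is a priori stronger than $\cO^\la_{\AC}(A)=\coker D_{\AC,1}=0$. So the local-triviality argument goes through cleanly only under this stronger unobstructedness; you should either note this as an additional hypothesis or argue separately that it holds in the situations of interest (the paper is silent on this point and does not supply a proof of the fibre-bundle assertion either).
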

\begin{rem}
Looking again at the fibration from remark \ref{3_4_quadratic_cone}, we note that the cone $C_q = L_q \times \R_+$ is in fact part of a two-dimensional family of Cayley cones, up to the action of $\Spin(7) $: 
\eas
\cC = \{C_{\bar{a}}: \{a_1 x^2 + a_2 y^2+ a_3 z^2 = 0, w = 0 \} | a_1 + a_2 + a_3 = 1, a_i \in \R_+\}.
\eas
A generic cone in this family has stabiliser $\{(e^{it}, e^{it}, e^{it}, e^{-3it}), t\in \R\} \subset \Spin(7)$, and so we have: 
\e
\dim \cM^{G_2}(L_q) = 21 + 2 -1 = 22. 
\e 
This corresponds exactly to the expected dimension $d(1)$, and so the link $L_q$ is unobstructed. Thus $\cM^{1}_\AC(A_\eps)$ is a smooth manifold of dimension $32$ near $A_\eps$.
\end{rem}

To conclude this section, we describe a natural completion of the moduli space $\cM^{\la}_\AC(A)$ obtained by adjoining the cone. 
\begin{dfn}
The \textbf{completed moduli space} $\overline{{\cM}}^{\la}_{\AC}(A)$ is the topological space $\cM^{\la}_\AC(A) \cup \{C\}$ such that the rescaling map  $A \mapsto t \cdot A$ where $0 \cdot A = C$ is continuous, and $\cM^{\la}_\AC(A)$ embeds homeomorphically into $\overline{{\cM}}^{\la}_{\AC}(A)$. 
\end{dfn}
We have for example that $\overline{\cM}^{\la}_\AC(A_\eps)$ from Remark \ref{3_4_quadratic_cone} is homeomorphic to $\C$. Note that the scaling action $A \mapsto t \cdot A$ acts as $\eps \mapsto t^2\epsilon$ in this picture. There is a notion of scale implicit in this description of the moduli space, which we now make precise. Suppose for this that every $ A \in \cM^{\la}_\AC(\tilde{A})$ is unobstructed an that we have chosen a smooth cross-section $S \subset \cM^{\la}_\AC(\tilde{A})$ of the scaling action. 
\begin{dfn}
The \textbf{scale} of  $A \in \overline{\cM}^{\la}_\AC(\tilde{A})$ with respect to the cross-section $S$ is:
\e
t(\ga A, S) = \ga,
\e
where $A \in S$. Note that the scale functions corresponding to different cross-sections are all uniformly equivalent. 
\end{dfn}

\subsection{Conically singular case}

Let $N \subset (M, \Phi)$ be a $\CS_{\bar{\mu}}$ Cayley submanifold with conical singularities at $\{z_1,\dots, z_l\}$ and rates $\bar{\mu} = (\mu_1, \dots, \mu_l)$, where the $\mu_i \in (1, 2)$ for $1 \le i \le l$. Fix a $\Spin(7)$-parametrisation $\chi_i$ around the singular point $z_i$. With regards to the parametrisation $\chi_i$, let $N$ be asymptotic to the cone $C_i \subset \R^8$. Thus $N$ decays to the cone $C_i$ in $O(r^{\mu_i})$ as the distance $r$ to the singular point goes to $0$. We denote the link of the cone $C_i$ by $L_i \subset S^7$. In the deformations of $N$ that we will consider, we will allow the singular points and the asymptotic cones to move via translations, rotations and associative deformations of the link. Furthermore we will allow the $\Spin(7)$-structure to vary in a family $\Phi \in \{\Phi_s\}_{s \in \cS}$. Thus we will study the following moduli space: 
\begin{align*}
\cM_{\CS}^{\bar{\mu}}(N, \cS) = \{ (\tilde{N}, s): & \tilde{N} \subset (M, \Phi_s) \text{ is a } \CS_{\bar{\mu}}\text{-Cayley with singularities } \tilde{z}_1, \dots, \tilde{z}_l \\ &\text{ and cones } \tilde{C}_1, \dots, \tilde{C}_l, 
\text{. Here } \tilde{N} \text{ is isotopic to } N \text{, where }\\ &\text{ the isotopy takes } z_i \text{ to } \tilde{z}_i   \text{, and  } \tilde{C}_i \text{ is a deformation of } C_i \}.
\end{align*}

Locally around the fixed Cayley $N$ this moduli space will be given as a zero set of a nonlinear operator between suitable Banach manifolds. This is an extension of the work done in \cite{mooreDeformationTheoryCayley2017}, where the deformations are required to fix the cones. In order to define the nonlinear operator, we first define the configuration space of small deformations of the tuple $(C_1, \dots, C_l)$. Let $U_i$ be an open neighbourhood of $z_i \in M$ and let $G_i \subset \Spin(7)$ be the stabiliser of the cone $C_i$, which we also assume is the stabiliser of any deformation of $C_i$. The configuration space is then given by:
\eas
 \cF = \prod_{i=1}^l\{(\tilde{L}_i, e_i, s) | e_i: \R^8 \ra T_{\tilde{z}_i}: e_i \Spin(7)\text{-frame for } \Phi_s, \tilde{z}_i \in U_i,\tilde{L}_i \in \cM^{G_2}(L_i) \} /G_i.
\eas

It is a $H = \prod_{i=1}^l \Spin(7)/G_i$-bundle over the spaces $\cU$ of possible vertex locations and cones for every member of $\Phi \in \{\Phi_s\}_{s \in \cS}$, i.e. $\cU =  \cS \times \prod_{i=1}^l (U_i \times \cM(C_i))$. Each element $(\bar{x}, \bar{L}, \bar{e}, s) \in \cF$ corresponds bijectively to a unique configuration of cones, since we took quotients by the stabilisers $G_i$. We now fix a reference $\CS_{\bar{\mu}}$-fourfold for each point in a small neighbourhood of the asymptotic data of $N$, which is $f_0 = (z_1, \dots, z_l, L_1, \dots, L_l, \D \chi_1(0), \dots, \D \chi_l(0), s_0)$. 

\begin{prop}
\label{3_4ambiant_isotopy}
There is a smooth family $N_f$ of $\CS_{\bar{\mu}}$-manifolds parametrised by $f\in \cO \subset \cF$, where $\cO$ is an open neighbourhood of $f_0$ and such that $N_f$ has asymptotic data $f$. We can choose $N_{f_0} = N$.
\end{prop}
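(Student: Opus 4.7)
The plan is to construct a smooth family of ambient diffeomorphisms $\Psi_f \colon M \to M$ depending smoothly on $f \in \cO$, with $\Psi_{f_0} = \id_M$, and to define $N_f := \Psi_f(\hat N) \cup \{\tilde z_1(f),\dots,\tilde z_l(f)\}$. Outside small neighbourhoods of the singular points $\Psi_f$ will be the identity; inside such a neighbourhood it will be assembled from three elementary moves corresponding to the factors of $\cF$: translating the vertex, rotating the $\Spin(7)$-frame at the vertex, and deforming the link inside $\cM^{G_2}(L_i)$.

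Near each $z_i$ I would work in the reference $\Spin(7)$-parametrisation $\chi_i$. Given $f = (\bar{\tilde z}, \bar{\tilde L}, \bar{\tilde e}, s)$ close to $f_0$, first choose a smooth family of $\Spin(7)$-parametrisations $\chi_{i,f}$ for $\Phi_s$ around $\tilde z_i$ with $\D\chi_{i,f}|_0 = \tilde e_i$ and $\chi_{i,f_0} = \chi_i$; such a family exists because the bundle of $\Spin(7)$-frames over $M$ varies smoothly with $s$ and $\tilde z_i$. Next, using the assumption that $L_i$ is unobstructed in $\cM^{G_2}(L_i)$, fix a smooth family of diffeomorphisms $\psi_{i,f} \colon S^7 \to S^7$ with $\psi_{i,f}(L_i) = \tilde L_i$ and $\psi_{i,f_0} = \id$, obtained by realising $\tilde L_i$ as a normal graph over $L_i$ and extending via the tubular neighbourhood theorem. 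Extend $\psi_{i,f}$ to a degree-one homogeneous diffeomorphism $\hat\psi_{i,f} \colon \R^8 \to \R^8$ by $\hat\psi_{i,f}(rp) = r\,\psi_{i,f}(p)$ with $\hat\psi_{i,f}(0) = 0$; this carries $C_i$ to $\tilde C_i$. Composing, $\chi_{i,f} \circ \hat\psi_{i,f} \circ \chi_i^{-1}$ is a smooth family of local diffeomorphisms matching the data in $f$.

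To patch these local diffeomorphisms to the identity on the compact core, write each one as the time-$1$ flow of a smoothly $f$-dependent time-dependent vector field $Y_{i,f}$ supported in a small ball around $z_i$; this is possible since for $f$ near $f_0$ the local diffeomorphism is $C^\infty$-close to $\id$. Let $\sigma$ be a cutoff function supported in the union of these balls and equal to $1$ on a smaller neighbourhood of each singular point, and define $\Psi_f$ as the time-$1$ flow of $\sigma \cdot \sum_i Y_{i,f}$. Then $\Psi_f$ is smooth in $f$, agrees with the local model near each $\tilde z_i$, and is the identity outside the support of $\sigma$. Setting $N_f = \Psi_f(N)$ gives a smooth family of topological submanifolds with the prescribed singular points and smooth away from them; by construction $N_{f_0} = N$.

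The main obstacle is to verify that $N_f$ is $\CS_{\bar\mu}$ against the new cones $\tilde C_i$ at the same rates $\mu_i$, uniformly and smoothly in $f \in \cO$. In the parametrisation $\chi_{i,f}$ the restriction of $N_f$ to a small punctured neighbourhood of $\tilde z_i$ is parametrised by $\Theta_{i,f} := \hat\psi_{i,f} \circ \Theta_i$, where $\Theta_i$ is the given parametrisation of $N$ near $z_i$ in $\chi_i$-coordinates. By hypothesis $\Theta_i(r,p) - \iota_i(r,p) \in \nu_{(r,p)}(C_i)$ and $|\nabla^k(\Theta_i - \iota_i)| = O(r^{\mu_i - k})$ for every $k$. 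Since $\hat\psi_{i,f}$ is positively homogeneous of degree one and smooth away from the origin, with $C^{k+1}$-norms on $S^7$ depending smoothly on $f$, one checks that it transports normal-direction perturbations of $C_i$ at rate $\mu_i$ to normal-direction perturbations of $\tilde C_i = \hat\psi_{i,f}(C_i)$ at the same rate, with constants uniform in $f$. This yields the required $\CS_{\bar\mu}$ estimates and completes the construction.
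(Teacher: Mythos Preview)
Your approach is essentially the same as the paper's: both construct the family $N_f$ by applying a smooth family of compactly supported ambient isotopies near each singular point, handling translation/rotation of the vertex and frame by an affine or coordinate change, and handling the link deformation separately via an isotopy of the asymptotic cone. Your account is in fact more explicit than the paper's, which simply invokes the Homogeneity Lemma for the affine part and asserts without detail that an isotopy of $L_i$ to $\tilde L_i$ induces a $\CS_{\mu_i}$-preserving perturbation.

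One small point to clean up: the homogeneous extension $\hat\psi_{i,f}(rp)=r\,\psi_{i,f}(p)$ is only $C^0$ at the origin unless $\psi_{i,f}$ is linear, so the resulting $\Psi_f$ is not literally a smooth diffeomorphism of $M$ through $\tilde z_i$. This is harmless for the statement, since $\CS_{\bar\mu}$-submanifolds are only required to be smooth on $\hat N$, but your opening sentence should not promise a diffeomorphism of all of $M$. Relatedly, $\hat\psi_{i,f}$ will not in general send $\nu(C_i)$ exactly to $\nu(\tilde C_i)$, so $\Theta_{i,f}-\iota_{\tilde C_i}$ need not be purely normal; a further tangential reparametrisation of $(0,R_0)\times L_i$ is needed to restore the condition $\Theta_{i,f}(r,p)-\iota_{\tilde C_i}(r,p)\in\nu_{(r,p)}(\tilde C_i)$, and you should note that this preserves the $O(r^{\mu_i-k})$ estimates.
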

\begin{proof}
Without loss of generality we can restrict to the case of a single vertex, while only perturbing $N$ in an arbitrarily small neighbourhood of the vertex to obtain the desired family. Let $z_0 \in N$ be singular with cone $C = \R_+ \times L$ with regards to the $\Spin(7)$-parametrisation $\chi_0$. Consider diffeomorphisms of the unit ball in $\R^8$ by the action of $(A, v) \in \GL(8) \rtimes \R^8$, denoted by $\phi_{A,v}.$ They are isotopic to the identity and in fact can be extended to a smooth family (also denoted by $\phi_{A, v}$) of self-diffeomorphisms of $\R^8$ which leave everything outside of the ball with radius $2$ unchanged (see for instance the Homogeneity Lemma in chapter 4 of \cite{milnorTopologyDifferentiableViewpoint1997}). This family, scaled down sufficiently, can be applied in the chart given by $\chi_0$ to apply any desired small translation and rotation to the asymptotic cone, while only perturbing an arbitrarily small neighbourhood of the vertex. Finally, since any $\tilde{L} \in \cM^{G_2}(L)$ is smoothly isotopic to $L$, we can perturb any $\CS_{\mu}$ manifold asymptotic to $C$ to be asymptotic to $\R_+ \times \tilde{L}$ instead, with the same rate. 
\end{proof}

If we restrict the previous family to a sufficiently small neighbourhood $\cO$ of $f_0$, its members will be $\alpha$-Cayley for any desired $\alpha < 1$. Let now $\rho$ be a radius function for $N$. By Proposition \ref{2_3_tubular} we know that $\nu_\ep(N)$ maps onto a tubular neighbourhood $U_{f_0}$ of $N$ inside $M$, for $\ep > 0$ sufficiently small. Composing this open embedding with the ambient isotopy from \ref{3_4ambiant_isotopy} taking $N_{f_0}$ to $N_f$ gives tubular neighbourhood $U_f$ of $N_{f}$, for $f$ sufficiently close to $f_0$. We denote these maps by: 
\[\exp_f : \nu_{\ep}(N) \rightarrow U_f.\]
Furthermore, given a normal vector field $v \in C^{\infty}(\nu_\ep(N))$ we define the embedding $\exp_{f, v}: N \rightarrow M$ as the composition $\exp_{f,v} = \exp_f \circ v$. Thus varying $f$ will perturb the asymptotic cones, while changing $v$ alters the shape of the $\CS_{\bar{\mu}}$-manifold, keeping the cones fixed. The moduli space $\cM_{\CS}^{\bar{\mu}}(N)$ is given as the zero set of the following non-linear differential operator: 
\ea
\label{3_4_F_CS}
F_{\CS} :  C^\infty_{\bar{\mu}}(\nu_\ep(N)) \times \cO &\longrightarrow C^\infty_{\loc}(E_{\cay}) \nonumber \\
 (v, f = (\bar{x}, \bar{L},\bar{e}, s)) &\longmapsto \pi_E \star_4 \exp_f^*(\tau_{\Phi_s}|_{\exp_{f, v}(N)}).
\ea

We will now address the necessary modifications to the proofs for compact Cayleys so that they extend to the conically singular setting. First, let us define the correct Banach manifolds. Let, for $\eps > 0$: 
\e
	\cL_{\ep} = \{ v \in L^p_{k+1, \bar{\mu}}(\nu_\ep(N)), \nm{v}_{L^p_{k+1, \bar{\mu}}} < \ep \}.
\e
In fact $F_{\CS}$ is mapping $\cL_{\eps} \times \cO \longra L^p_{k, \bar{\mu}-1}(E_{\cay})$ for sufficiently small $\eps$. We will again prove boundedness separately for the constant, linear and quadratic and higher terms in the expansion:
\e
F_{\CS}(v, f) = F_{\CS}(0, f) + D_{\CS, f} v + Q_{\CS}(v, f).
\e
First, note that the closeness of $N_f$ to a Cayley cone gives a bound on $F_{\CS}(0, f)$, which is measure of the failure of $N_f$ to be Cayley. 

\begin{prop}
\label{3_4F_0_bound}
There is a constant $C_k > 0$ such that for any $f \in \cO$ in an open neighbourhood $f_0 \in U_k$, we have $\nm{F_{\CS}(0, f)}_{C^k_{\bar{\mu}-1}}  \le C_k$.
\end{prop}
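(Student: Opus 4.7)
\smallskip

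\noindent\textbf{Proof plan.} The strategy mirrors Proposition \ref{3_4F_0_bound_ac}, but now the analysis has to be carried out near the singular points $z_i$ rather than at infinity. First I would localise: outside any fixed neighbourhood of the singularities, $\{N_f\}_{f \in \cO}$ is a smooth compact family of ordinary (non-singular) submanifolds, $\Phi_s$ varies smoothly in $s$, and the $\al$-Cayley condition is maintained. Hence $F_{\CS}(0, f)$ together with all of its derivatives is uniformly bounded on this compact region once $\cO$ has been shrunk to a small enough neighbourhood of $f_0$, and there is nothing to prove there. So the issue is purely local near each $z_i$, and it suffices to prove a bound of the form $|\nabla^k F_{\CS}(0, f)|(p) \le C_k\, \rho(p)^{\mu_i - 1 - k}$ for $p$ close to $z_i$.

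For such a point I would work in the $\Spin(7)$-parametrisation $\chi_i$ around $z_i$, so that $\chi_i^*\Phi_s$ agrees with $\Phi_0$ at the origin up to first order (as $s$ runs through a small neighbourhood of $s_0$, smoothness of the family together with the freedom in the adapted frame $e_i$ built into $f$ allows us to keep this true uniformly in $f$). Writing $q = \exp_f(p) \in N_f$ in these coordinates, the failure to be Cayley is governed by
\e
\tau_{\Phi_s}(T_q N_f) = \bigl[\tau_{\Phi_s}(T_q C_f)\bigr] + \bigl[\tau_{\Phi_s}(T_q N_f) - \tau_{\Phi_s}(T_q C_f)\bigr]. \nonumber
\e
The first bracket vanishes when $\tau_{\Phi_s}$ is replaced by the flat $\tau_0$ (since $C_f$ is a $\Phi_0$-Cayley cone), so its size is controlled by the pointwise distance from $\tau_{\Phi_s}$ to $\tau_0$; but since $\chi_i^*\Phi_s - \Phi_0$ vanishes at $0$ with smooth Euclidean derivatives, a direct computation in conical coordinates gives $|\nabla^j_{\con}(\tau_{\Phi_s} - \tau_0)| = O(\rho^{1-j})$, which is better than we need because $1 > \mu_i - 1$. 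The second bracket is handled by a Taylor expansion of $\tau$ in its tangent-plane argument combined with the $\CS_{\bar{\mu}}$ condition \eqref{2_3_extrinsic_cs}, which gives $|\nabla^j_{\con}(T_qN_f - T_qC_f)| = O(\rho^{\mu_i-1-j})$; feeding this into the Taylor expansion and using that the derivatives of $\tau_{\Phi_s}$ are smooth (hence bounded near $0$) yields a bound in $O(\rho^{\mu_i-1-j})$. Since $1 < \mu_i < 2$ implies $\mu_i - 1 < 1$, the second term dominates as $\rho \to 0$ and one obtains the desired bound for the $C^0_{\mu_i-1}$ component.

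For the higher $C^k_{\mu_i-1}$ bounds I would repeat the above differentiation argument, paying attention to the fact that each conical derivative worsens the power of $\rho$ by one; the same combination of the flatness of $\tau_0$ on $C_f$, the smooth vanishing of $\chi_i^*\Phi_s - \Phi_0$ at $z_i$, and the $\CS_{\bar{\mu}}$ estimates on $\Th_j - \io_j$ from \eqref{2_3_extrinsic_cs} then produces exactly the required $O(\rho^{\mu_i-1-k})$ decay. Uniformity of the constants in $f = (\bar{x}, \bar{L}, \bar{e}, s) \in \cO$ follows from the smoothness of the ambient isotopies used in Proposition \ref{3_4ambiant_isotopy} to construct $\{N_f\}$, together with compactness of the closure of a small enough neighbourhood $U_k \subset \cO$; the only step requiring care is to verify that the $\CS_{\bar{\mu}}$ bounds can be taken uniform in $f$, which is where the main technical effort lies and which is essentially packaged into the construction of the family $N_f$ by ambient isotopy. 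Finally, $\pi_E$ and $\star_N$ are zeroth-order operations whose coefficients are smooth on $N$ and bounded uniformly in $f$, so they only change the constant $C_k$ and preserve the weight.
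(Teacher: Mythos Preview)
Your proposal is correct and follows essentially the same route as the paper's own proof: localise near each singular point, split $\tau_{\Phi_s}(T_qN_f)$ into the cone contribution $\tau_{\Phi_s}(T_qC_f)$ (controlled by $\Phi_s-\Phi_0$ vanishing at the origin, giving the $O(\rho)$-type bound) and the difference $\tau_{\Phi_s}(T_qN_f)-\tau_{\Phi_s}(T_qC_f)$ (controlled by the $\CS_{\bar\mu}$ estimate on $TN_f-TC_f$), and then obtain uniformity in $f$ from continuity of the constants $K_{i,f}$ over a shrunk $\cO$. One small wording point: the $\Spin(7)$-parametrisation only guarantees that $\chi_i^*\Phi_s$ agrees with $\Phi_0$ at the origin (zeroth order), not to first order as you wrote, but this is exactly what your estimate $|\nabla^j_{\con}(\tau_{\Phi_s}-\tau_0)|=O(\rho^{1-j})$ needs and what the paper uses, so the argument is unaffected.
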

\begin{proof}
Let $C_f \subset \R^8$ be the asymptotic cone of $N_f$ near a fixed singular point $z \in M$, where $N_f$ has decay rate $\mu$, and consider everything in a small ball $B_\eta(0) \subset \R^8$ via the parametrisation $\chi$. Let $\io_f$ be the embedding of the abstract cone $C$ as $C_f$ and let $\Theta_f $ be a parametrisation of the end of $N_f$ by $C$. For both of these we implicitly choose some identification of the potentially different links for varying $f$. Note that in this formulation the $\Spin(7)$-structure on $B_\eta(0)$ only needs to agree with $\Phi_0$ at the origin. The assignment $(r, p, f) \rightarrow \Th_f(r, p)$ is smooth, and thus we have from the $\CS_{\mu}$-condition:
\e
\label{_3_4_bound_on_f}
\md{\nabla^i(\Th_f(r, p)-\io_f(r,p))} \le K_{i,f} r^{\mu-i},
\e
where the constant $K_{i,f}$ is continuous in $f$. In particular, after shrinking $\cO$, we can replace $K_{i,f}$ by a single constant $K_i$. Consider $\tau$ now as a vector bundle morphism: 
\[ \tau: \La^4 B_\eta(0) \rightarrow E_{\cay}.\]
The (higher) covariant derivatives of $\tau$ can then be considered as maps: 
\[ \nabla^i\tau: \La^4 B_\eta(0) \ot (TM)^{\ot^i} \rightarrow E_{\cay}.\]
By the compactness of the base, any finite number of derivatives can be bounded by a constant. We can also consider $TN_f$ and $TC_f$ as maps $C \rightarrow \La^4\R^8$, by the embedding $\Gr(4, M) \ra \La^4M$. The $\CS_\mu$ condition \eqref{_3_4_bound_on_f} then translates to: 
\begin{align*}
\md{\nabla^i(T_p N_f- T_p C_f)} \le K_i r^{\mu-1-i}.
\end{align*}
By Taylor's theorem, $\tau$ has the following decay behaviour near $z \in N_f$:
\begin{align*}
\md{\nabla^i\tau_{r, p}(T_{r,p}C_f)} = \md{\nabla^i(\tau_{r, p}(T_{r,p}C_f)-\tau_0(T_{r,p}C_f))} \le 2r\md{\nabla^{i+1} \tau_0}  \lesssim r
\end{align*}
Now we see that:  
\begin{align*}
\md{\nabla^i \tau_{r, p}(T_{r,p}N_f)} &\lesssim r + \md{\nabla^i(\tau_{r, p}(T_{r,p}N_f)-\tau_{r, p}(T_{r,p}C_f))} \\
&\lesssim r + \sum_{a+b = i}\md{\nabla^{a} \tau_0} \md{\nabla^b(T_p N_f- T_p C_f)}  \lesssim r^{\mu-1-i}.
\end{align*}
Thus the result holds for a single singular point. The generalisation to multiple singular points is straightforward.
\end{proof}

Next, we turn our attention to the quadratic estimates:
\begin{prop}
\label{3_4_Q_cs}
Suppose that $N$ is $\CS_{\bar{\mu}}$ and $\al$-Cayley for $\al$ sufficiently large. Fix $k \in \N$ and let $u, v \in C^k_{\bar{\mu}}(\nu_\ep(N))$ with $\eps > 0$ sufficiently small and $\md{u}_{C^1}, \md{v}_{C^1} \le \eps$. Then there is an open neighbourhood $f\in U \subset \cO$ such that: 
\eas
\md{Q_{\CS}(u, f)-Q_{\CS}(v, f)}_{C^k_{\bar{\mu}-1}} &\lesssim \md{u-v}_{C^{k+1}_{\bar{\mu}}}\bigg(\md{u}_{C^{k}_{\bar{\mu}}}+\md{v}_{C^{k}_{\bar{\mu}}}\bigg) + \\
&\md{u-v}_{C^{k}_{\bar{\mu}}}\bigg(\md{u}_{C^{k+1}_{\bar{\mu}}}+\md{v}_{C^{k+1}_{\bar{\mu}}}\bigg) . 
\eas
Here the constant hidden in $\lesssim$ is independent of $f$.
\end{prop}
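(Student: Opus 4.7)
The plan is to mirror the proof of Proposition \ref{3_4_Q_ac} but with the asymptotic region now being $r \to 0$ near each singular point, and with the additional requirement that constants be uniform in the parameter $f$ varying in a small neighborhood of $f_0$. First I would localize: pick a radius function $\rho$ and write $N = K \sqcup \bigsqcup_{j=1}^l U_j$ where $K$ is a compact core bounded away from the singularities and $U_j$ is a conical neighborhood of $z_j$ parametrized by $(0, R_0) \times L_j$ via $\Psi_j = \chi_j \circ \Theta_j$. On the compact piece $K$, the weighted norms $C^k_{\bar{\mu}}$ are uniformly equivalent to the unweighted norms $C^k$, so the required estimate reduces directly to Lemma \ref{3_4_bound_Q}; uniformity in $f$ follows from the smoothness of the family $\{N_f\}$ built in Proposition \ref{3_4ambiant_isotopy}, after possibly shrinking $\cO$.

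The substantive work happens in each $U_j$. By Lemma \ref{3_4_F_Q}, adapted to the present setting, one has $F_{\CS}(v, f)(p) = \boF(p, v(p), \nabla v(p), T_pN, f)$ for a smooth fibre-preserving map $\boF$, with a Taylor expansion
\begin{align*}
\boQ_{\pi}(p, x, y) = \boR_{1,\pi}(p,x,y)\, x\otimes x + \boR_{2,\pi}(p,x,y)\, x\otimes y + \boR_{3,\pi}(p,x,y)\, y\otimes y,
\end{align*}
the same Taylor expansion used in the AC case. For the model situation where $\Phi_s$ equals the flat structure $\Phi_0$ on the local chart and $N$ coincides with the cone $C_j$, the map $\boQ$ is translation invariant and satisfies the scaling identity $\boQ_\pi(\gamma \cdot v, w) = \boQ_\pi(v, w)$ for $\gamma > 0$, so that $\partial_x^k \partial_y^l \boR_{i,\pi}$ scales with explicit negative powers of $\gamma$, exactly as in \eqref{3_4_scaling_R}. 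Translating to $r$-scaling near the vertex, these bounds combine with the weight $\rho^{-\mu_j + k}$ in the definition of $C^k_{\bar{\mu}}$ to yield the desired pointwise estimate on each $U_j$; differentiating via the formula analogous to \eqref{3_4_nabla_Q_exp} and using $v, w \in C^1_{\bar{\mu}}$ of small norm to absorb the extra factors of $\rho^{-1}$ gives the claimed $C^k_{\bar{\mu}-1}$ bound. The details follow the calculation in the proof of Proposition \ref{3_4_Q_ac} essentially verbatim, with the only change being that one uses $\rho^{\mu_j - 1} \to 0$ as $\rho \to 0$ (rather than $\rho^{\lambda-1} \to 0$ as $\rho \to \infty$) to control the leading terms.

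It then remains to pass from the flat conical model to the true situation. Here two errors appear: $N$ is $\CS_{\bar{\mu}}$-close but not equal to $C_j$, and $\Phi_s$ differs from $\Phi_0$ in the local chart. The first error is controlled by $\md{\nabla^i(T_pN - T_pC_j)} = O(r^{\mu_j - 1 - i})$, and since $\mu_j > 1$ these deviations actually improve the decay. The second error is handled exactly as in the AC case, by differentiating the geodesic equation to bound $\nabla^k(\exp_{\Phi_s} - \exp_{\Phi_0})$; the $\Spin(7)$-parametrization ensures $\Phi_s - \Phi_0 = O(\rho)$ near $z_j$, which again only helps. Combining the Taylor expansion of $\boF_{\Phi_s}$ around $\boF_{\Phi_0}$ with these decay estimates produces a perturbed version of the scaling bounds on $\boR_{i,\Phi_s}$ with an additive error depending on $\nm{\Phi_s - \Phi_0}_{C^{k+1}}$, and this error is absorbed into the constant for $s$ in a small neighborhood of $s_0$.

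The main obstacle is keeping everything uniform in $f \in \cO$, since $f$ simultaneously moves the vertex $z_j$, rotates the $\Spin(7)$-frame, and deforms the asymptotic link. I would handle this by first shrinking $\cO$ so that the ambient isotopies $\exp_f$ from Proposition \ref{3_4ambiant_isotopy} are uniformly $C^{k+1}$-close to $\exp_{f_0}$ on compact sets, which transfers the weighted estimates on $N_{f_0}$ to estimates on $N_f$ with constants changing continuously in $f$. Since all arguments take place in a relatively compact neighborhood $U \subset \cO$ of $f_0$, the continuous constants are uniformly bounded, giving the $f$-independent constant claimed in the proposition.
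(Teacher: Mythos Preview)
Your outline is correct, but it takes a more circuitous route than the paper. You propose to mirror the AC argument of Proposition \ref{3_4_Q_ac}: localize, invoke the scaling identity $\boQ_\pi(\gamma v, w) = \boQ_\pi(v, w)$ in the flat conical model, and then pass to the true situation by controlling $\Phi_s - \Phi_0$ and $TN - TC_j$. The paper instead observes that the CS case is structurally closer to the \emph{compact} case than to the AC case. The hypothesis $\md{u}_{C^1}, \md{v}_{C^1} \le \eps$ already guarantees that $(u(p), \nabla u(p))$ ranges in a compact subset of the fibre for every $p \in N$, so the unweighted pointwise bound \eqref{3_4_bound_q_hard} from Lemma \ref{3_4_bound_Q} holds with a constant independent of $p$ and $s$; no scaling argument is needed, and there is no need to compare with the flat model or to bound $\exp_{\Phi_s} - \exp_{\Phi_0}$. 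The weighted estimate then follows by inserting powers of $\rho$ into \eqref{3_4_bound_q_hard} and using $1 < \mu_j < 2$ together with $\nm{TN}_{C^r_{\mu-1}} < \infty$ from the $\CS_{\bar{\mu}}$ condition; the decisive observation is that $\rho^{i+2-\mu}$ is bounded on $N$ because $\mu < 2$, which lets one embed $C^k_{(k+2)(\mu-1)} \hookrightarrow C^k_{\mu-1}$.

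The contrast with AC is worth noting: in the AC case the sections can grow like $\rho^\la$ as $\rho \to \infty$, so $\boR_i$ must be controlled at large arguments and the scaling identity is essential. In the CS case $u$ and $\nabla u$ decay toward the vertex, so the scaling identity you invoke is vacuous there. Your route would still go through, and buys a uniform treatment of AC and CS; the paper's route buys a substantially shorter argument that makes clear why CS is the easier of the two.
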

\begin{proof}
Without loss of generality, consider the case where $N$ has just one singular point $z$ with rate $\mu$. We then define the smooth function $\boQ(p, v, \nabla v, T_pN, s)$  as we did for the compact case in Lemma \ref{3_4_F_Q}. Now, even though $N$ is not compact, there are still bounds on all derivatives of $\boQ$ as in the compact case. From our assumptions on $u$ and $v$ we can ensure that $(u, \nabla u)$ vary in a compact set for all the sections in question and any point in $N \times \cS$. Thus we can prove the bound \eqref{3_4_bound_q_hard} for a constant independent of $(p,s) \in N \times \cS$ or the section in question. Thus we obtain: 
\eas
 &\md{\nabla^k(Q_{\CS}(u)-Q_{\CS}(v))}\rho^{-(k+2)(\mu-1) +k}  \\
 &\lesssim \sum_{\genfrac{}{}{0pt}{2}{i+\md{J}+r \le k+2}{0 \le r \le k}} \md{\nabla^i (u-v)}\rho^{i-\mu } (\md{\nabla^J u}+\md{\nabla^J v}) \rho^{\md{J}-\sharp J \cdot\mu} \md{\nabla^r TN}\rho^{r-\mu+1}  \\
 &\lesssim \sum_{\genfrac{}{}{0pt}{2}{i+\md{J}+r \le k+2}{0 \le r \le k}}\md{u-v}_{C^i_{\mu+1}} (\md{u}_{C^J_{\mu}}+\md{v}_{C^J_{\mu}}) \md{TN}_{C^r_{\mu-1}} 
\eas
Here $\sharp J$ denotes the number of entries in the multi-index $J$ and $C^J$ is the product of the norms $\prod_{s=1}^l\md{v}_{C^{i_s}}$. We used the fact that $\rho^{i+2-\mu} $ is bounded on $N$ to remove extraneous factors of $\rho$. For this it was crucial to assume $\mu < 2$. Now simply note that $\nm{TN}_{C^r_{\mu-1}} < \infty$ by the $\CS_\mu$ condition. The result now follows, since $C^k_{(k+2)(\mu-1)} \hookrightarrow C^k_{\mu-1} $ is a continuous embedding. 
\end{proof}

The deformation map $F_{\CS}$ then extends to a map between Sobolev spaces as follows:
 
\begin{prop}
\label{3_4deformation_map_cs}
Let $p > 4$ and $k \ge 1$. For sufficiently small $\ep > 0$ the map $F_{\CS}$ extends to a $C^\infty$ map of Sobolev spaces:
\[ F_\CS: \cL_{\eps} = \{ v \in L^p_{k+1, {\bar{\mu}}} (\nu_\ep (N)) : \nm{v} \le \eps \} \times \cO \longrightarrow L^{p}_{k, {\bar{\mu}-1}}(E_{\cay}).\]
Furthermore, any $v\in L^p_{k+1, \de} (\nu_\ep (N)) $ such that $F_{\CS}(v) = 0$ is smooth and lies in $C^\infty_{\bar{\mu}}$. The linearisation at $0$ is the bounded linear map: 
\e
D_{\CS} : L^p_{k+1, {\bar{\mu}}} (\nu (N)) \longrightarrow L^{p}_{k, {\bar{\mu}-1}}(E_{\cay}). \nonumber
\e
Finally, $D_{\CS}$ is Fredholm if all the rates in ${\bar{\mu}}$ are in the complement of a discrete set $\cD \subset \R$, which is determined by the metric structure on the asymptotic cones $C_i \subset \R^8$. 
\end{prop}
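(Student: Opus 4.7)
The plan is to mirror the proof of Theorem~\ref{3_4_structure_ac} for the asymptotically conical case, using the pointwise weighted estimates already established in Propositions~\ref{3_4F_0_bound} and \ref{3_4_Q_cs} as the engine, and handling the parameter $f\in\cO$ by the same argument used in Proposition~\ref{3_4deformation_map_ac}. Specifically, one decomposes $F_{\CS}(v,f) = F_{\CS}(0,f) + D_{\CS,f}v + Q_{\CS}(v,f)$ and proves boundedness termwise between the Sobolev spaces $L^p_{k+1,\bar{\mu}}\to L^p_{k,\bar{\mu}-1}$. The constant term lies in $C^\infty_{\bar{\mu}-1}\subset L^p_{k,\bar{\mu}-1}$ by Proposition~\ref{3_4F_0_bound} (for $p>4$ the weighted $C^k$--$L^p$ embeddings in Theorem~\ref{2_3_Sobolev_embedding_acyl_ac} give finiteness after one checks $\bar{\mu}-1$ decay is integrable against the weighted measure). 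The linear term $D_{\CS,f}$ is an asymptotically conical operator of rate $1$, so Theorem~\ref{2_3_fredholm} yields boundedness and, for $\bar{\mu}\notin\cD$ componentwise, the Fredholm property.

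For the nonlinear term $Q_{\CS}$, the critical step is to upgrade the pointwise bound of Proposition~\ref{3_4_Q_cs} to an $L^p_{k,\bar{\mu}-1}$ bound exactly as in Lemma~\ref{3_4_l_p_bound}. Since $p>4$ and $k\ge 1$, the weighted Sobolev embedding $L^p_{k+1,\bar{\mu}}\hookrightarrow C^k_{\bar{\mu}}$ of Theorem~\ref{2_3_Sobolev_embedding_hoelder_acyl_ac} makes $\|u\|_{C^k_{\bar{\mu}}}$ and $\|v\|_{C^k_{\bar{\mu}}}$ small and uniformly controlled; applying the pointwise bound, then Minkowski's inequality and Hölder to peel off one of the weighted $C^k_{\bar{\mu}}$ factors, gives the key quadratic estimate
\[
\|Q_{\CS}(u,f)-Q_{\CS}(v,f)\|_{L^p_{k,\bar{\mu}-1}} \le C \|u-v\|_{L^p_{k+1,\bar{\mu}}}\bigl(\|u\|_{L^p_{k+1,\bar{\mu}}}+\|v\|_{L^p_{k+1,\bar{\mu}}}\bigr).
\]
From here continuity, Fréchet differentiability and $C^\infty$-smoothness of $F_{\CS}(\cdot,f)$ follow verbatim from Proposition~\ref{3_4_def_smooth}. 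Smoothness in the parameter $f$ is obtained by the same mechanism as in the proof of Theorem~\ref{3_4deformation_map_ac}: variations in $f$ come from smooth variations of $\exp_f$, $\tau_{\Phi_s}$ and $\pi_E$, which are compactly supported modifications together with a globally smooth dependence, so the $\partial_f$-derivatives of $\boF_{\CS}$ satisfy the same weighted decay bounds.

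For the elliptic regularity claim, one proceeds in two pieces. Away from the singular points, $F_{\CS}(v)=0$ reduces locally to an elliptic second order equation with smooth coefficients, and the bootstrap of Proposition~\ref{3_4_elliptic_reg} applies to give $v\in C^\infty_{\loc}(\nu(N)\setminus\{z_1,\dots,z_l\})$. Near each singularity, the point is to show not merely smoothness but the weighted $C^\infty_{\bar{\mu}}$ decay. This follows by the same argument using Schauder estimates adapted to the conical geometry: write the equation as $D_{\CS}^*D_{\CS} v = R(v,\nabla v)\nabla^2 v + S(v,\nabla v)$ with $R,S$ smooth and vanishing at the Cayley locus, then pass to the cylindrical picture near the cone and bootstrap weighted Hölder estimates, combined with the Sobolev embedding $L^p_{k+1,\bar{\mu}}\hookrightarrow C^{l,\alpha}_{\bar{\mu}}$ from Theorem~\ref{2_3_Sobolev_embedding_hoelder_acyl_ac} to recover the prescribed decay rate $\bar{\mu}$. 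Finally, Fredholmness of $D_{\CS}$ is a direct consequence of Theorem~\ref{2_3_fredholm} together with the observation that the conical link of $D_{\CS}$ at each singularity is precisely the Dirac-type operator $D_{L_i}$ of Example~2.2, so its exceptional set $\cD$ is determined by the spectral data on the $L_i$.

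The main obstacle is the bootstrap to obtain the full weighted regularity at the singular points: raw elliptic regularity only gives interior smoothness, and pushing it up to $C^\infty_{\bar{\mu}}$ requires carefully exploiting that the nonlinear remainder $Q_{\CS}$ preserves the weight $\bar{\mu}-1$ provided by Proposition~\ref{3_4_Q_cs}, so that each bootstrap step does not lose any order of decay. Once this weighted Schauder bootstrap is in place, the rest of the proposition is a faithful transcription of the compact and asymptotically conical arguments.
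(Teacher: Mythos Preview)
Your overall plan is correct and matches the paper's: reduce to the $\AC$ argument via the Taylor decomposition $F_{\CS}=F_{\CS}(0,f)+D_{\CS,f}+Q_{\CS}$, invoke Propositions~\ref{3_4F_0_bound} and~\ref{3_4_Q_cs} for the constant and quadratic pieces, and use the conical operator theory for $D_{\CS}$. The regularity and Fredholm claims are handled exactly as you say.

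There is, however, a genuine gap in your treatment of the $f$-dependence, and this is precisely the point the paper singles out as the \emph{only} place where the $\CS$ argument is not identical to the $\AC$ one. You write that variations in $f$ are ``compactly supported modifications together with a globally smooth dependence'' and conclude that the weighted bounds follow. This is misleading: the ambient isotopies of Proposition~\ref{3_4ambiant_isotopy} are supported in a neighbourhood of the singular point, which is exactly where the weighted norms are delicate. Compact support away from infinity is irrelevant in the $\CS$ setting; what you need is the \emph{rate} near the singularity. The paper makes this explicit: an infinitesimal variation of $f$ (translation of the vertex, rotation of the cone, deformation of the link, change of $\Phi$) is equivalent to perturbing $N$ by a vector field $u\in C^\infty_1(\nu(N))$ together with a $C^\infty_1$ change of the $\Spin(7)$-structure, i.e.\ rate $1$, not rate $\mu$. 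One then computes
\[
\partial_f F_{v,f}[\dot f]=\D\boF\bigl[u,\ -\cL_u v,\ -\cL_u\nabla v,\ -\cL_u TN,\ \dot\Phi\bigr]
\]
and checks termwise that each entry is $O(r^{\mu-1})$ (e.g.\ $|\cL_u\nabla v|\lesssim |u|\,|\nabla^2 v|+|\nabla u|\,|\nabla v|=O(r^{\mu-1})$), so that $\partial_f F$ indeed lands in $L^p_{k,\bar\mu-1}$. This step has no analogue in the $\AC$ proof, where the parameter $s$ only moved the ambient $\Spin(7)$-structure in $C^\infty_\eta$ with $\eta<\la$; here the rate-$1$ behaviour of $u$ must be played off against the rate-$\mu$ decay of $v$ and the $\CS_{\bar\mu}$ condition on $TN$. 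Without this computation your claim that ``the $\partial_f$-derivatives satisfy the same weighted decay bounds'' is unsupported.
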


\begin{proof}
The proof is identical to the one for the $\AC$ case \ref{3_4deformation_map_ac}, except that one needs to check that the dependence in $f \in \cF$ respects the weighted space, i.e. that the derivatives $\partial_f ^ k \cF (v, f) $, which are a priori maps $C^\infty_{\bar{\mu}} \times (T_f \cF)^k \ra C^\infty_{loc}$ can be extended to maps $L^p_{k+1, \bar{\mu}} \times (T_f \cF)^k \ra L^p_{k, \bar{\mu}-1}$. For this, consider a smooth deformation $f(t) \in \cF$ of a manifold $N$ with a unique singular point at the origin of $\R^8$ and rate $\mu$. Up to first order this is equivalent to deforming the $\Spin(7)$-structure in $C^\infty_1$ (which also takes care of the translations, since we always compare to the model $\CS$ manifold $N_f$), and perturbing the manifold near the cone by a vector field $u \in C^\infty_1(\nu(N))$, while keeping the singular point fixed, as well as the $\Spin(7)$-structure at the origin. Let $\phi_t$ be the flow associated to $u$. We then have for $v \in C^\infty_{\mu} (\nu_\eps(N))$ and $p \in N$:
\eas
\partial_f F_{v, f(0)}[\dot{f}(0)](p) &= \frac{\d}{\d t}\vert_{t = 0} \boF(\phi_t(p), (\phi_t)_{*} v(p), (\phi_t)_{*} \nabla v(p), (\phi_t)_{*} T_pN, \Phi(t)) \\
&= \D \boF \left[ u, -\cL_u v, -\cL_u \nabla v, -\cL_u  TN, \dot{\Phi}(0)\right].  
\eas
Now we see that all the arguments in square brackets are in $O(r^{\mu-1})$, either by definition (like $u$ an $\dot{\Phi}(0)$), or as a consequence thereof. The norm of term $\cL_u \nabla v$ for instance can be bounded by $\md{u} \md{\nabla^2 v} +\md{\nabla u} \md{\nabla v}$, which is in $O(r^{\mu-1})$ by assumption. As $\D \boF$ can be bounded by a constant independent of the chosen $\CS$ manifold, we find that: 
\eas
\md{\partial_f F_{v, f(0)}} \lesssim \md{v}_{C^2_1}+ \md{TN}_{C^1_0} + 1.
\eas 
The argument we presented also applies to higher derivatives and so we see that $\partial_f F$ maps $L^p_{k+1, \bar{\mu}} \times (T_f \cF)^k \ra L^p_{k, \bar{\mu}-1}$, as required. 
\end{proof}

Using this we can now prove the following result about the local structure of the moduli space  $\cM_{\CS}^{\bar{\mu}}(N, \cS)$, just as in the $\AC$ case. 

\begin{thm}[Structure]
\label{3_4_structure_cs}
Let $N$ be an $\CS_{\bar{\mu}}$ Cayley submanifold of $(M, \Phi)$, and suppose $ \{\Phi_s\}_{s \in \cS}$ is a smooth family of small perturbations of $\Phi$. Then there is a non-linear deformation operator $F_{\CS}$ which for $\ep> 0$ sufficiently small give a $C^\infty$ map: 
\eas
F_{\CS}: \mathcal{L}_\eps = \{ v\in L^p_{k+1, {\bar{\mu}}} (\nu_\ep (N)), \nm{v}_{L^p_{k+1, \bar{\mu}}} < \ep \} \times \cO \longrightarrow L^{p}_{k, {\bar{\mu}-1}}(E).
\eas
Then a neighbourhood of $N$ in $\cM^{\bar{\mu}}_{\CS}(N, \cS)$ is homeomorphic to the zero locus of $F_{\CS}$ near $0$. Furthermore we can define the \textbf{deformation space} $\cI^{\bar{\mu}}_{\CS}(N) \subset C^\infty_{\bar{\mu}} (\nu (N))$ to be the the kernel of $D_{\CS} = \D F_{\CS}(0)$, and the \textbf{obstruction space} $\cO^{\bar{\mu}}_{\CS}(N) \subset C^\infty_{4-\bar{\mu}}(E)$ to be the cokernel of $D_{\CS}$. Then a neighbourhood of $N$ in $\cM^{\bar{\mu}}_{\CS}(N, \cS)$ is also homeomorphic to the zero locus of a Kuranishi map: 
\eas
	\kappa^{\bar{\mu}}_{\CS}:  \cI^{\bar{\mu}}_{\CS}(N) \op T_{f_0}\cO \longra \cO^{\bar{\mu}}_{\CS}(N).
\eas
In particular if $\cO^{\bar{\mu}}_{\CS}(N) = \{0\}$ is trivial, $\cM^{\bar{\mu}}_{\CS}(N, \cS) $ admits the structure of a $C^1$-manifold near $N$. We say that $N$ is \textbf{unobstructed} in this case.
\end{thm}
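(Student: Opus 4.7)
My plan is to reduce the structure theorem to an application of the Kuranishi model construction for the Fredholm map $F_{\CS}$ provided by Proposition \ref{3_4deformation_map_cs}, in the same spirit as the compact (Theorem \ref{3_4_structure}) and asymptotically conical (Theorem \ref{3_4_structure_ac}) cases, but with careful attention to the extra parameter space $\cO$ encoding the moving cones and singular points.

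First I would establish the homeomorphism between a neighbourhood of $N$ in $\cM^{\bar{\mu}}_{\CS}(N, \cS)$ and the zero set of $F_{\CS}$. Given $(\tilde N,s) \in \cM^{\bar{\mu}}_{\CS}(N,\cS)$ close to $(N,s_0)$, the isotopy taking $N$ to $\tilde N$ identifies the singular points $\tilde z_i$, cones $\tilde C_i$ and frames, producing a unique $f \in \cO$ with reference submanifold $N_f$; by Proposition \ref{2_3_CS_AC_tubular} the tubular neighbourhood $\nu_\eps(N_f)$, transported to $N$ by our family $\exp_f$, captures all sufficiently $C^1_{\bar\mu}$-close deformations, and the $\CS_{\bar\mu}$ rate of $\tilde N$ relative to $\tilde C_i$ translates into control $\md{v}_{C^k_{\bar\mu}} \lesssim \eps$ on the representing normal vector field. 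Because the Cayley condition on $\tilde N$ is encoded by $\tau|_{\tilde N}=0$, and by Proposition \ref{3_2_does_detect_cayley} (whose smallness hypothesis is met once $\eps$ is small and $\al$ close to $1$) this is equivalent to $F_{\CS}(v,f)=0$, the correspondence is a bijection onto the zero set. Elliptic regularity from Proposition \ref{3_4deformation_map_cs} then upgrades any $L^p_{k+1,\bar{\mu}}$ solution to a smooth section in $C^\infty_{\bar\mu}$, so the topology inherited from $L^p_{k+1,\bar{\mu}} \times \cO$ and from the $C^\infty$ topology on the moduli space agree.

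Second, I would build the Kuranishi model. The derivative $\D F_{\CS}(0,f_0): L^p_{k+1,\bar{\mu}}(\nu(N)) \oplus T_{f_0}\cO \longra L^p_{k,\bar{\mu}-1}(E_{\cay})$ differs from $D_{\CS}$ only by a bounded linear map on the finite-dimensional factor $T_{f_0}\cO$, hence is Fredholm with kernel $\cI^{\bar{\mu}}_{\CS}(N) \oplus T_{f_0}\cO$ (identifying $\cI$ as defined in the statement, and absorbing the parameter tangent space) and cokernel $\cO^{\bar{\mu}}_{\CS}(N)$. Choose a closed complement $K$ to the kernel inside $L^p_{k+1,\bar{\mu}}(\nu(N)) \oplus T_{f_0}\cO$ and a (finite-dimensional) complement $\cO^{\bar{\mu}}_{\CS}(N) \subset L^p_{k,\bar{\mu}-1}(E_{\cay})$ to the image, and let $\pi$ denote the projection onto the image. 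The map $\pi \circ F_{\CS}$ restricted to $K$ has invertible linearisation at $(0,f_0)$, so the Banach space implicit function theorem produces a $C^\infty$ map $K \supset V \ni \xi \mapsto w(\xi) \in K$ solving $\pi F_{\CS}(\xi + w(\xi))=0$ in a neighbourhood of the origin. Setting $\kappa^{\bar{\mu}}_{\CS}(\xi) = (1-\pi)F_{\CS}(\xi + w(\xi))$ then defines the Kuranishi map, whose zero set is locally homeomorphic to the zero set of $F_{\CS}$, and hence to $\cM^{\bar{\mu}}_{\CS}(N,\cS)$ near $N$. When $\cO^{\bar{\mu}}_{\CS}(N)=0$ the projection $(1-\pi)$ is trivial, $\kappa^{\bar{\mu}}_{\CS}\equiv 0$, and the moduli space inherits a $C^1$ (in fact $C^\infty$, by smoothness of $F_{\CS}$) manifold structure of dimension $\dim \cI^{\bar{\mu}}_{\CS}(N)+\dim \cO - \dim \cS$ absorbed appropriately into $\cI$.

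The main obstacle I anticipate is the parameter dependence in $\cO$: the family $N_f$ from Proposition \ref{3_4ambiant_isotopy} is not canonical, and different choices of ambient isotopy yield different $\exp_f$ and hence a priori different $F_{\CS}$, but the moduli space is intrinsic. The technical point is that any two such choices are related, modulo the stabiliser action already quotiented out in the definition of $\cF$, by a family of diffeomorphisms equal to the identity at each $z_i$ to first order, and these act by a smooth reparametrisation of the $L^p_{k+1,\bar{\mu}}$-source which preserves the weighted norms — so the zero sets are canonically identified and the Kuranishi data is well defined. Once this invariance is checked, together with the smooth dependence of $\partial_f F_{\CS}$ in weighted spaces (verified in Proposition \ref{3_4deformation_map_cs} by exploiting that the infinitesimal generator of cone deformations is $C^\infty_1$ and that the decay rates $\mu_i < 2$ leave room for two derivatives), the remaining arguments reduce to the standard Kuranishi machinery.
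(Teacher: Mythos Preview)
Your approach is essentially the same as the paper's: the paper gives no explicit proof for this theorem, instead pointing to the $\AC$ case (Theorem \ref{3_4_structure_ac}) and ultimately the compact case (Theorem \ref{3_4_structure}), both of which amount to ``apply the standard Kuranishi model construction to the Fredholm map supplied by Proposition \ref{3_4deformation_map_cs}''. Your write-up actually fills in more detail than the paper does, particularly the discussion of why the correspondence between the moduli space and the zero set of $F_{\CS}$ is well-defined and independent of the choice of ambient isotopies in Proposition \ref{3_4ambiant_isotopy}.

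One imprecision worth flagging: you assert that the full derivative $\D F_{\CS}(0,f_0)$ has kernel $\cI^{\bar{\mu}}_{\CS}(N) \oplus T_{f_0}\cO$. This is not generally true, since the derivative in the $f$-direction need not vanish (moving a cone or singular point typically changes $\tau|_{N_f}$). What is true, and what you actually use, is that $D_{\CS}$ (the partial derivative in $v$) is Fredholm with kernel $\cI^{\bar{\mu}}_{\CS}(N)$ and cokernel $\cO^{\bar{\mu}}_{\CS}(N)$; one then takes $K$ to be a complement to $\cI^{\bar{\mu}}_{\CS}(N)$ inside $L^p_{k+1,\bar{\mu}}(\nu(N))$ alone, and solves via the implicit function theorem for $w(\xi,f) \in K$ as a function of $(\xi,f) \in \cI^{\bar{\mu}}_{\CS}(N) \times \cO$. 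This yields the Kuranishi map on $\cI^{\bar{\mu}}_{\CS}(N) \oplus T_{f_0}\cO$ as stated. Your sentence ``$K \supset V \ni \xi \mapsto w(\xi) \in K$'' also has the domain and codomain swapped; $\xi$ should range over the kernel direction, not $K$. With these cosmetic corrections the argument goes through exactly as you describe.
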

\begin{rem}
\label{4_3_new_op}
The operator $F_{\CS}$ allows for the points of the singular cones to move. We could also fix the points while still allowing the links of the cones to deform, giving us an operator $F_{\CS, \text{cones}}$. We can give this operator the exact same treatment and reprove all the theorems in this section. Similarly one can consider an operator $F_{\CS, \text{fix}}$, where neither the points nor the cones are allowed to deform and again all the same statements are true for this operator.
\end{rem}
We again have a formula for the index, where we define $\si(N)$ and $[N] \cdot_{[u_1], \dots,  [u_l]} [N]$ in the same way as for the $\AC$ case. 
\begin{prop}[Index]
\label{3_4_index_cs}
Let $N$ be an $\CS_{\bar{\mu}}$ Cayley submanifold of $(M, \Phi)$ with cones $C_i = \R_i \times L_i (1 \le i \le l)$, and suppose $ \{\Phi_s\}_{s \in \cS}$ is a smooth family of small perturbations of $\Phi$. Assume that $(1, \mu_i] \cap \cD(L_i) = \emptyset $. Pick homotopy classes $[u_i] \in [L_i, S\nu(N)|_{L_i}]$. Then the following holds: 
\eas
\ind D_{\CS} &= \frac{1}{2}(\si(N) + \chi(N)) - [N] \cdot_{[u_1], \dots,  [u_l]} [N]   -\sum_{i = 1}^l ( \eta(L_i)+ T([u_i])) + \dim \cF.
\eas
Here $\eta(L)$ and $T([u_i])$ are the quantities from Proposition \ref{3_4_index_ac}.
\end{prop}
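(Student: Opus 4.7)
The plan is to reduce the computation to the asymptotically conical index formula of Proposition \ref{3_4_index_ac} applied near each singular point, together with the compact index formula of Theorem \ref{3_4_index} for the bulk, and an explicit accounting of the deformation parameters. Concretely, I would first strip the parameter space from the problem and compute the index of the operator $D_{\CS,\mathrm{fix}}$ (see Remark \ref{4_3_new_op}) which fixes the singular points and asymptotic cones; then add $\dim\cF$ to incorporate the deformations parametrised by $\cO\subset\cF$, since the linearisation in the $f$-direction contributes a finite-rank map that is transverse to $D_{\CS,\mathrm{fix}}$ (by the construction of the family $N_f$ in Proposition \ref{3_4ambiant_isotopy}, the differentials $\partial_f$ inject into $L^p_{k,\bar\mu-1}(E)$ modulo the image of $D_{\CS,\mathrm{fix}}$ generically).

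For the index of $D_{\CS,\mathrm{fix}}$, excise small geodesic balls $B_{\epsilon}(z_i)$ around each singular point, obtaining a compact manifold with boundary $N_\epsilon = N\setminus\bigsqcup_i B_\epsilon(z_i)$, whose boundary components are diffeomorphic to the links $L_i$. Near each singular point, after rescaling by $\epsilon^{-1}$, the manifold $N$ looks like an $\AC$ end of the cone $C_i$ with rate $\mu_i$. The Dirac-type operator $D_{\CS}$ is asymptotic to the conical operator $\frac{d}{dr}+r^{-1}D_{L_i}$ (compare Example \ref{2_3_cayley_plane}), so the Atiyah-Patodi-Singer index theorem applied to $N_\epsilon$ with APS boundary conditions computes the index as the integral of a local density (giving $\frac{1}{2}(\sigma(N)+\chi(N))-[N]\cdot_{[u_1],\dots,[u_l]}[N]$ after letting $\epsilon\to 0$, exactly as in the compact case of Theorem \ref{3_4_index} and the $\AC$ case of Proposition \ref{3_4_index_ac}, using the section $u_i$ to extend the normal bundle across the singularities for the self-intersection count) minus the boundary correction $\sum_i(\eta(L_i)+T([u_i]))$, where $\eta(L_i)$ is the $\eta$-invariant of the asymptotic Dirac operator $D_{L_i}$ on the link and $T([u_i])$ is the topological term arising from the trivialisation of the Dirac bundle induced by $[u_i]$.

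The weighted Fredholm index on $L^p_{k+1,\bar\mu}$ matches the APS index provided $\mu_i$ lies in the interval between the critical rate at infinity on the cylindrical model (which corresponds to rate $1$) and the first exceptional rate below. This is where Theorem \ref{2_3_change_of_index} enters: the hypothesis $(1,\mu_i]\cap\cD(L_i)=\emptyset$ ensures that no jump in dimension of the kernel occurs as we move the weight between the natural APS weight and the chosen $\mu_i$, so the index we compute agrees with $\ind D_{\CS,\mathrm{fix}}$ as defined by the Sobolev setup of Proposition \ref{3_4deformation_map_cs}. Finally, adding the $\dim\cF$ contribution from cone and vertex deformations (and $\Spin(7)$-structure deformations parametrised by $\cS$) yields the stated formula.

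The main obstacle will be matching the boundary contributions precisely in the Cayley setting: the Dirac bundle of Proposition \ref{2_2_dirac_bundle} is not the spinor bundle of a spin structure, so the $\eta$-invariant $\eta(L_i)$ and the topological term $T([u_i])$ must be defined intrinsically from the link's associative $G_2$-geometry rather than borrowed from classical spin geometry. I expect that one can carry this out by the same bookkeeping as in Clancy's thesis \cite{clancySpinManifoldsCalibrated2012}, applied separately to each end, with the scaling action on the end taking care of the passage from the $\CS$ picture to the $\AC$ index formula already proved.
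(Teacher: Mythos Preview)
The paper does not actually supply a proof for this proposition; it is stated without argument, presumably by analogy with Proposition \ref{3_4_index_ac}, whose proof is itself only two sentences invoking the Atiyah--Patodi--Singer theorem. Your outline is therefore the natural extrapolation and is essentially correct.

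One point deserves correction. Your justification for $\ind D_{\CS} = \ind D_{\CS,\mathrm{fix}} + \dim\cF$ via transversality is unnecessary and somewhat misleading. The identity holds for purely formal reasons: if $D: X \to Y$ is Fredholm and $T: V \to Y$ is \emph{any} bounded linear map from a finite-dimensional space $V$, then $D \oplus T : X \oplus V \to Y$ is Fredholm with $\ind(D \oplus T) = \ind D + \dim V$, regardless of whether the image of $T$ is transverse to the image of $D$. Your claim that the $\partial_f$-derivatives ``inject into $L^p_{k,\bar\mu-1}(E)$ modulo the image of $D_{\CS,\mathrm{fix}}$ generically'' is neither needed nor obviously true, and should be dropped.

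The remainder of your sketch is sound and matches the method the paper uses in the $\AC$ case: APS on the truncated manifold $N_\eps$ produces the local density $\tfrac{1}{2}(\sigma(N)+\chi(N)) - [N]\cdot_{[u_1],\dots,[u_l]}[N]$ together with the boundary contributions $-\sum_i(\eta(L_i)+T([u_i]))$, the sign being opposite to the $\AC$ formula because the conical ends point the other way after the change of variables $r=e^{-t}$. The hypothesis $(1,\mu_i]\cap\cD(L_i)=\emptyset$ then ensures, via Theorem \ref{2_3_change_of_index}, that the APS index agrees with the weighted Sobolev index at rate $\mu_i$.
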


\begin{rem}
\label{3_4_index_additivity}
Suppose that $N$ is a $\CS_{\bar{\mu}}$ Cayley in $(M, \Phi)$ with a unique singular point, with an unobstructed cone $C = \R_+ \times L$. We consider the deformations of $N$ for a fixed $\Spin(7)$-structure. Let $A \subset \R^8$ be an asymptotically conical Cayley of rate $\la < 1$, with the same cone. Consider an almost Cayley manifold $ \tilde {N} =  N \sharp_L A$ with deformation operator $ D_{\tilde{N}}$. Pick an arbitrary class $[u] \in [L, S\nu(N)|_{L}] \simeq [L, S\nu(A)|_{L}]$, and assume that $[\la, 1) \cap \cD = (1, \mu] \cap \cD = \emptyset$. Then we have the following, where we consider the deformation problem with fixed points and cones on the conically singular side:
\ea
 {\ind}_\mu D_{\AC} +  {\ind}_\mu D_{\CS, \text{fix}}  &= \frac{1}{2}(\si(A) + \chi(A)) - [A] \cdot_{[u]} [A] \nonumber \\ & +\frac{1}{2}(\si(N) + \chi(N)) - [N] \cdot_{[u]} [N] \nonumber \\
 &+\eta(L) + T([u])- \eta(L) - T([u]) \nonumber \\
 &=  \frac{1}{2}(\si(A) + \si(N) + \chi(A) + \chi(N))  
 -( [A] \cdot_{[u]} [A] + [N] \cdot_{[u]} [N]) \nonumber \\
 &= \frac{1}{2}(\si(\tilde {N}) + \chi(\tilde {N}))  
 - [\tilde {N}] \cdot[\tilde {N}]  \nonumber \\
 &= \ind D_{\tilde{N}}.
\ea
Note that we only proved our $\AC$ index formula for rates $< 1$. However, using the unobstructedness of the cone (meaning $d(1)= \dim \cM^{G_2}(L)$) and Theorem \ref{2_3_change_of_index} we see that $ {\ind}_\mu D_{\AC} =  {\ind}_\la D_{\AC} + \dim \cM^{G_2}(L)$. We note that by construction:
\eas
\ind D_{\CS}-\ind D_{\CS, \text{fix}} = \dim \cF - \dim \cS = \dim \cF,
\eas
as $\dim \cS = 0$ by assumption. For the third inequality, notice that the link $L$ is a compact three-manifold, and thus has Euler characteristic $\chi(L) = 0$. Thus $\chi(A) + \chi(N) = \chi(\tilde{N})$. Similarly, the signature is also additive (cf. Theorem 4.14 in \cite{atiyahSpectralAsymmetryRiemannian1975}). Finally, the intersection numbers with fixed boundary behaviour are also additive, as they can be obtained by simply counting self-intersection points. Thus the indices of the conical operators add up to the index of the glued manifold plus, which is to be expected, as perturbations of the glued manifold should correspond bijectively to perturbations in either piece.
Note that we equally well have: 
\ea
\ind D_{\tilde{N}} = {\ind}_\la D_{\AC} +  {\ind}_\mu D_{\CS, \text{cones}}.
\ea
Whereas before all perturbations of rates $\le 1$ were considered part of the asymptotically conical piece, now the perturbations of rate exactly $1$ are considered perturbations of the conically singular piece. If the cone satisfies has no critical rates between $0$ and $1$, we can even go one step further and pick some $\la' < 0$ with $[\la', 0) \cap \cD = \emptyset$. We then have:
\ea
\ind D_{\tilde{N}} = {\ind}_{\la'} D_{\AC} +  {\ind}_\mu D_{\CS}.
\ea

\end{rem}

\addcontentsline{toc}{section}{References}
\bibliographystyle{alpha}
\bibliography{library.bib}

\medskip

\noindent{\small\sc The Mathematical Institute, Radcliffe
Observatory Quarter, Woodstock Road, Oxford, OX2 6GG, U.K.

\noindent E-mail: {\tt gilles.englebert@maths.ox.ac.uk.}}

\end{document}